\pdfoutput=1
\documentclass[hidelinks, reqno, 10pt]{amsart}

\usepackage[numbers, square]{natbib}
\setcitestyle{citesep={,}}

\usepackage{graphicx}
\usepackage{subcaption}
\usepackage{tabularx}
\usepackage{booktabs}
\usepackage{array}
\usepackage{amsmath}
\usepackage{amsfonts}
\usepackage{amssymb}
\usepackage{amsthm}
\usepackage{thmtools}
\usepackage{tikz-cd}
\usepackage{yhmath}
\usepackage[scr = boondox]{mathalfa}	
\usepackage{bm} 
\usepackage{bbm} 
\usepackage[inline]{enumitem}
\usepackage{permute}
\usepackage{slashed}
\usepackage[dvipsnames]{xcolor}
\usepackage[pagebackref = true, colorlinks, linkcolor = Red, citecolor = Green, bookmarksdepth=2, linktocpage=true, hypertexnames=false]{hyperref}
\usepackage[capitalize]{cleveref}
\usepackage{comment}

\usepackage[T1]{fontenc}
\usepackage{makecell}

\usepackage{tikz}
\usetikzlibrary{calc}

\allowdisplaybreaks


\DeclareMathOperator{\interior}{int}
\DeclareMathOperator{\exterior}{ext}

\DeclareMathOperator{\Aut}{Aut}
\DeclareMathOperator{\Id}{Id}

\DeclareMathOperator{\supp}{supp}

\DeclareMathOperator{\SL}{SL}

\DeclareMathOperator{\SO}{SO}

\DeclareMathOperator{\PSL}{PSL}
\DeclareMathOperator{\PGL}{PGL}

\DeclareMathOperator{\Lip}{Lip}

\DeclareMathOperator{\proj}{proj}

\DeclareMathOperator{\T}{T}

\DeclareMathOperator{\diam}{diam}

\DeclareMathOperator{\len}{len}
\let\Pr\relax
\DeclareMathOperator{\Pr}{Pr}

\DeclareMathOperator{\Ad}{Ad}

\DeclareMathOperator{\Span}{span}

\DeclareMathOperator{\Fix}{Fix}

\DeclareMathOperator{\Lie}{Lie}
\DeclareMathOperator{\Li}{Li}

\newcommand{\C}{\mathbb{C}}
\newcommand{\N}{\mathbb{N}}
\newcommand{\bP}{\mathbb{P}}

\newcommand{\R}{\mathbb{R}}
\newcommand{\Z}{\mathbb{Z}}

\newcommand{\LieG}{\mathfrak{g}}
\newcommand{\LieA}{\mathfrak{a}}
\newcommand{\LieN}{\mathfrak{n}}
\newcommand{\LieK}{\mathfrak{k}}
\newcommand{\LieM}{\mathfrak{m}}
\newcommand{\LieP}{\mathfrak{p}}

\newcommand{\LieS}{\mathfrak{s}}
\newcommand{\LieSimple}{\mathfrak{S}}
\newcommand{\Fboundary}{\mathcal{F}_\Theta}
\newcommand{\iFboundary}{\mathcal{F}_{\involution(\Theta)}}
\newcommand{\Gboundary}{\partial\Gamma}
\newcommand{\involution}{\mathsf{i}}
\newcommand{\growthindicator}{\psi_\Gamma}

\newcommand{\limitset}{\Lambda_\Theta}
\newcommand{\ilimitset}{\Lambda_{\involution(\Theta)}}

\newcommand{\limitcone}{\mathcal{L}_\Theta}
\newcommand{\BMS}{m^\mathrm{BMS}_\sfv}

\newcommand{\jj}{j_0}

\newcommand{\sfv}{\mathsf{v}}
\newcommand{\sfw}{\mathsf{w}}
\newcommand{\ww}{\mathscr{w}}


\DeclareFontFamily{U}{mathb}{\hyphenchar\font45}
\DeclareFontShape{U}{mathb}{m}{n}{
<5> <6> <7> <8> <9> <10> gen * mathb
<10.95> mathb10 <12> <14.4> <17.28> <20.74> <24.88> mathb12
}{}
\DeclareSymbolFont{mathb}{U}{mathb}{m}{n}
\DeclareMathSymbol{\bigast}{2}{mathb}{"06}


\def\XXint#1#2#3{{\setbox0=\hbox{$#1{#2#3}{\int}$}
\vcenter{\hbox{$#2#3$}}\kern-.5\wd0}}

\theoremstyle{plain}
\newtheorem{theorem}{Theorem}[section]
\newtheorem{proposition}[theorem]{Proposition}
\newtheorem{lemma}[theorem]{Lemma}
\newtheorem{corollary}[theorem]{Corollary}

\newtheorem{questions}[theorem]{Questions}

\theoremstyle{definition}
\newtheorem{definition}[theorem]{Definition}

\theoremstyle{remark}
\newtheorem{remark}[theorem]{Remark}

\setlist[enumerate,1]{ref=(\arabic*)}
\setlist[enumerate,2]{ref=(\theenumi)(\alph*)}
\setlist[enumerate,3]{ref=(\theenumi)(\theenumii)(\roman*)}
\setlist[enumerate,4]{ref=(\theenumi)(\theenumii)(\theenumiii)(\Alph*)}

\newlist{alternative}{enumerate}{4}     
\setlist[alternative,1]{label=(\arabic*), ref=(\arabic*)}
\setlist[alternative,2]{label=(\alph*), ref=(\thealternativei)(\alph*)}
\setlist[alternative,3]{label=(\roman*), ref=(\thealternativei)(\thealternativeii)(\roman*)}
\setlist[alternative,4]{label=(\Alph*), ref=(\thealternativei)(\thealternativeii)(\thealternativeiii)(\Alph*)}


\Crefname{enumi}{Property}{Properties}
\Crefname{alternativei}{Alternative}{Alternatives}
\Crefname{subsection}{Subsection}{Subsections}

\begin{document}
\title[Exponential prime orbit theorems for Anosov subgroups]{Exponential prime orbit theorems for Anosov subgroups}

\author{Michael Chow}
\address{Department of Mathematics, Yale University, New Haven, CT 06511, USA}
\email{mikey.chow@yale.edu}

\author{Pratyush Sarkar}
\address{Department of Mathematics, UC San Diego, La Jolla, CA 92093, USA}
\curraddr{Department of Mathematics, University of Utah, Salt Lake City, UT 84112}
\email{p.sarkar@utah.edu}

\date{\today}

\begin{abstract}
Let $\Gamma$ be a Zariski dense Anosov subgroup of a connected semisimple real algebraic group---these are higher rank analogues of convex cocompact subgroups. Let us measure the Jordan projections with any linear form which is positive on the limit cone of $\Gamma$. We prove a corresponding counting theorem with a power saving error term for the conjugacy classes of loxodromic elements in $\Gamma$. The proof is based on interpreting the Jordan projections as periods of a natural flow associated to $\Gamma$ and proving exponential mixing. We also prove the existence of a spectral gap for the Selberg zeta function.
\end{abstract}

\maketitle

\setcounter{tocdepth}{1}
\tableofcontents

\renewcommand{\Fboundary}{\mathcal{F}_\Theta}
\renewcommand{\iFboundary}{\mathcal{F}_{\involution\Theta}}
\renewcommand{\limitset}{\Lambda_\Theta}
\renewcommand{\ilimitset}{\Lambda_{\involution\Theta}}
\renewcommand{\limitcone}{\mathcal{L}_\Theta}
\renewcommand{\growthindicator}{\psi_\Theta}
\newcommand{\BMSX}{m_{\mathcal{X}}}

\section{Introduction}
\label{sec:Introduction}
Prime orbit theorems have been studied for decades, perhaps since the work of Huber from 1961 (see \cite{Nau05} and references therein). Techniques to prove such theorems usually involve the spectral theory of Selberg or Ruelle zeta functions, spectral theory of transfer operators, or mixing of flows, to name a few. For instance, in Margulis's thesis from 1970 (see the translation \cite{Mar04}), he used mixing of the geodesic flow to prove the following prime geodesic theorem: on a Riemannian manifold $\mathcal{M}$ with negative sectional curvature, we have
\begin{align*}
	\#\mathcal{P}(T) \sim \frac{e^{\delta T}}{\delta T} \qquad \text{as $T \to +\infty$}
\end{align*}
where $\mathcal{P}(T)$ is the set of all primitive oriented geodesics in $\mathcal{M}$ of length at most $T > 0$, and $\delta > 0$ is the topological entropy of the geodesic flow on $\T^1(\mathcal{M})$. In particular, the above theorem applies to $\mathcal{M} = \Gamma\backslash \mathbb{H}^n$ for cocompact lattices $\Gamma < \SO(n, 1)^\circ$. In this setting, the above theorem can be formulated independently of the geometry by recalling that oriented geodesics in $\mathcal{M}$ are in one-to-one correspondence with conjugacy classes of loxodromic elements in $\Gamma$. Remaining in the hyperbolic manifold setting, the state of the art is that using effective versions of the aforementioned dynamical techniques, we have a prime geodesic theorem for convex cocompact (or even geometrically finite) discrete subgroups $\Gamma < \SO(n, 1)^\circ$, with a power saving error term \cite{Nau05,Sto11,MMO14,LP23}.

In the same vein, one of the motivations of this paper is to further develop the dynamical techniques and obtain a more general prime orbit theorem which simultaneously satisfies the following two features:
\begin{itemize}
\item the theorem applies to Anosov subgroups of higher rank semisimple real algebraic groups;
\item the theorem provides a power saving error term.
\end{itemize}
In present-day literature, Anosov subgroups are thought of as the natural generalization of convex cocompact subgroups to higher rank. They were first introduced by Labourie \cite{Lab06} and later generalized by Guichard--Wienhard \cite{GW12}, and includes many interesting geometric examples such as Schottky subgroups, and the images of Hitchin representations \cite{Lab06} into $\PSL_n(\R)$, strongly convex cocompact projective representations into $\PGL_n(\R)$ \cite{DGK17}, maximal representations into $\mathrm{PSp}_{2n}(\R)$ and $\mathrm{PO}(n, 2)$ \cite{BIL05,BIW24}, and Barbot representations into $\PSL_n(\R)$ \cite{Bar10}. Recently, there has been a lot of activity in understanding the dynamics related to Anosov subgroups (see for instance \cite{Sam15,BCLS15,LO20a,LO20b,ELO23,BLLO23,CS23,Sam24,KO24}).

For the sake of simplicity, we first state our prime orbit theorem with a power saving error term in terms of counting conjugacy classes of primitive loxodromic elements, and for the time being, without introducing any dynamical system. The appropriate dynamical system will be introduced in \cref{subsec:TheDynamicalSystemsAndMixing} and a more refined prime orbit theorem will be stated in \cref{subsec:EssentialSpectralGapAndPrimeOrbitTheorem}.

We first introduce the required objects; see \cref{sec:AnosovSubgroupsAndTheirTranslationFlows} for further definitions and details. Let $G$ be a connected semisimple real algebraic group, $A < G$ be a maximal real split torus, and fix a closed positive Weyl chamber $\LieA^+ \subset \LieA:= \Lie(A)$. Fix a nonempty subset $\Theta \subset \Pi \subset \LieA^*$ of the set of all simple roots for $(\LieG,\LieA^+)$ and define
\begin{align*}
\LieA_\Theta &:= \bigcap_{\alpha \in \Pi - \Theta} \ker\alpha \subset \LieA, & \LieA_\Theta^+ &:= \LieA_\Theta \cap \LieA^+.
\end{align*}
Let $\Gamma < G$ be a torsion-free Zariski dense \emph{$\Theta$-Anosov subgroup} (see \cref{subsec:TheDynamicalSystemsAndMixing}). Let $\lambda_\Theta: G \to \LieA_\Theta^+$ be the $\Theta$-Jordan projection and $\limitcone \subset \interior\LieA_\Theta^+ \cup \{0\}$ be the minimal closed cone containing $\lambda_\Theta(\Gamma)$, called the $\Theta$-limit cone of $\Gamma$.
For any linear form $\psi \in \LieA_\Theta^*$ which is positive on $\limitcone - \{0\}$ and $T > 0$, define
\begin{align*}
\mathcal{P}_\psi(T) := \{[\gamma] \in [\Gamma]: \text{$\gamma$ is primitive loxodromic, $\psi(\lambda_\Theta(\gamma)) \leq T$}\}
\end{align*}
where $[\Gamma]$ denotes the set of conjugacy classes in $\Gamma$.

\begin{theorem}
\label{thm:ExponentialPrimeOrbitTheoremSimple}
Let $\psi \in \LieA_\Theta^*$ be a linear form which is positive on $\limitcone - \{0\}$.
There exist $\delta > 0$, $\eta > 0$ and $C > 0$ such that for all $T > 0$, we have
	\begin{align*}
		\bigl|\#\mathcal{P}_\psi(T) - \Li\bigl(e^{\delta T}\bigr)\bigr| \leq C e^{(\delta - \eta)T}.
	\end{align*}
\end{theorem}

Here, $\Li: (2, +\infty) \to \R$ is the offset logarithmic integral function defined by
\begin{align}
	\label{eqn:Li function}
	\Li(x) = \int_2^x \frac{1}{\log(t)} \, dt = \frac{x}{\log(x)} + \frac{x}{\log(x)^2} + O\left(\frac{x}{\log(x)^3}\right) \qquad \text{for all $x > 2$}.
\end{align}

For $\#\Theta = 1$, the above theorem was obtained in \cite{DMS24} shortly before the completion of our work. Thus, the above theorem is new for $\#\Theta > 1$ and we obtain an independent proof for $\#\Theta = 1$.

\subsection{The dynamical systems and mixing}
\label{subsec:TheDynamicalSystemsAndMixing}
Let us now elaborate on the setting and introduce the principal dynamical systems of interest. We refer the reader to \cref{sec:AnosovSubgroupsAndTheirTranslationFlows} for further definitions and details.

Let $P_\Theta$ denote the standard parabolic subgroup of $G$ corresponding to $\Theta$ and $P_\Theta = S_\Theta A_\Theta N_\Theta$ be its Langlands decomposition where $S_\Theta$ centralizes $A_\Theta := \exp(\LieA_\Theta)$. The $\Theta$-Furstenberg boundary is $\Fboundary := G/P_\Theta$.
Denote the unique open $G$-orbit in $\Fboundary \times \iFboundary$, where $\involution:\LieA \to \LieA$ is the opposition involution, by $\Fboundary^{(2)}$.

We fixed $\Gamma < G$ to be a torsion-free Zariski dense \emph{$\Theta$-Anosov subgroup}. One definition which is useful from a dynamical systems point of view is the following: $\Gamma$ is a torsion-free Gromov hyperbolic group with continuous $\Gamma$-equivariant maps
\begin{align*}
\zeta_\Theta&: \Gboundary \to \Fboundary, & \zeta_{\involution\Theta}&: \Gboundary \to \iFboundary,
\end{align*}
from the Gromov boundary $\Gboundary$, such that both of the following hold:
\begin{enumerate}
\item $(\zeta_\Theta(x), \zeta_{\involution\Theta}(x))$ is a singular pair for all $x \in \Gboundary$;
\item $(\zeta_\Theta(x), \zeta_{\involution\Theta}(y)) \in \Fboundary^{(2)}$ for all distinct $x, y \in \Gboundary$.
\end{enumerate}
The images $\limitset := \zeta_\Theta(\Gboundary)$ and $\ilimitset := \zeta_{\involution\Theta}(\Gboundary)$ are the unique $\Gamma$-minimal subsets of $\Fboundary$ and $\iFboundary$. Define
\begin{align*}
\limitset^{(2)} := (\zeta_\Theta \times \zeta_{\involution\Theta})(\{(x, y) \in \Gboundary^2: x \neq y\}) \subset \Fboundary^{(2)}.
\end{align*}
Unless $\Theta = \Pi$ (in which case $S_\Pi$ is compact), the $\Gamma$-action is not necessarily properly discontinuous on $G/S_\Theta \cong \Fboundary^{(2)} \times \LieA_\Theta$
(see \cite[\S\,1.5]{GGKW17}).
However, the $\Gamma$-action is properly discontinuous on the $\Gamma$-invariant subset $\limitset^{(2)} \times \LieA_\Theta$. Let $\Omega := \Gamma \backslash  \bigl(\limitset^{(2)} \times \LieA_\Theta\bigr)$ be the corresponding locally compact Hausdorff metric space.

For each $\sfv \in \LieA_\Theta$, the translation action of $\R\sfv$ on the $\LieA_\Theta$-coordinate of $\limitset^{(2)} \times \LieA_\Theta$ induces the \emph{one-parameter diagonal flow} on $\Omega$ which we denote by $\{a_{t\sfv}\}_{t \in \R}$. Restricting to $\sfv \in \interior\limitcone$, we have a Bowen--Margulis--Sullivan measure $\BMS$ on $\Omega$. Altogether, we have the family of dynamical systems
\begin{align*}
\bigl\{\bigl(\Omega, \BMS, \{a_{t\sfv}\}_{t \in \R}\bigr)\bigr\}_{\sfv \in \interior\limitcone}.
\end{align*}
By arguments of the authors \cite{CS23} for general $\Theta$ when ignoring the $S_\Pi$-valued holonomy, and also of \cite[Appendix B]{Sam24}, the above dynamical systems are known to be \emph{locally mixing}: there exists $C_\sfv > 0$ such that for all $\phi_1, \phi_2 \in C_{\mathrm{c}}(\Omega)$, we have
\begin{align}
	\label{eqn:ThetaLocalMixing}
	\lim_{t \to +\infty} t^{\frac{\#\Theta - 1}{2}} \int_{\Omega} \phi_1(a_{t\sfv}x) \phi_2(x) \, d\BMS(x)
	= C_\sfv\BMS(\phi_1) \BMS(\phi_2).
\end{align}
See references therein for more details.
It is an interesting (and still open) question as to what the rate of mixing above should be in general. It is known to the authors that Dolgopyat's method cannot be carried out as the non-concentration property or its generalization (cf. \cite{SW21,CS22}) fails miserably whenever $G$ is of higher rank; and hence, the rate is expected to be subexponential in general when $\#\Theta > 1$. See the results below for the case $\#\Theta = 1$.

On the other hand, the situation is completely different for a related family of dynamical systems which we now describe. For each $\sfv \in \interior\limitcone$, the $\Gamma$-action on $\limitset^{(2)} \times \LieA_\Theta$ descends via the linear form $\psi_{\sfv} = \langle \growthindicator(\sfv)^{-1}\nabla\growthindicator(\sfv), \cdot \rangle$ to a $\Gamma$-action on $\limitset^{(2)} \times \R$ which is properly discontinuous and cocompact. It turns out that the projected compact Hausdorff metric spaces $\Gamma\backslash\bigl(\limitset^{(2)} \times \R\bigr)$ are all Lipschitz homeomorphic to each other and we may identify them as a single canonical object $\mathcal{X}$. As a result, applying the linear form $\psi_\sfv$, the family of dynamical systems $\bigl\{\bigl(\Omega, \BMS, \{a_{t\sfv}\}_{t \in \R}\bigr)\bigr\}_{\sfv \in \interior\limitcone}$ projects to another family of dynamical systems
\begin{align*}
\bigl\{\bigl(\mathcal{X}, \BMSX^{\sfv}, \{a^{\sfv}_t\}_{t \in \R}\bigr)\bigr\}_{\sfv \in \interior\limitcone}
\end{align*}
where
\begin{itemize}
\item $\mathcal{X}$ is a compact Hausdorff metric space over which $\Omega \to \mathcal{X}$ is a trivial $\R^{\#\Theta - 1}$-bundle,
\item $\BMSX^{\sfv}$ is a probability measure on $\mathcal{X}$ such that we have the product structure $\BMS = \BMSX^{\sfv} \otimes \mathrm{Leb}_{\R^{\#\Theta-1}}$ on $\Omega$,
\item $\{a^{\sfv}_t\}_{t \in \R}$ is the \emph{translation flow} on $\mathcal{X}$, whose topological entropy is $\delta_\sfv = \growthindicator(\sfv)$, induced by the one-parameter diagonal flow $\{a_{t\sfv}\}_{t \in \R}$ on $\Omega$ via $\psi_\sfv$.
\end{itemize}
See \cref{fig:DynamicalSystems} for a comprehensive diagram of the various spaces.

\begin{figure}[htbp]
\centering
\[
\begin{array}{ccc}
\left[
\begin{tikzcd}[column sep=2em, row sep=2em]
G \arrow[d] \\
G/S_\Theta \arrow[d, phantom, "\cong"] \\
\Fboundary^{(2)} \times \LieA_\Theta \arrow[d, "\psi_\sfv"] \arrow[r, phantom, "\supset"] & \limitset^{(2)} \times \LieA_\Theta \arrow[d, "\psi_\sfv"] \\
\Fboundary^{(2)} \times \R \arrow[r, phantom, "\supset"] & \limitset^{(2)} \times \R
\end{tikzcd}
\right]
& \xrightarrow{\Gamma\backslash\bullet} &
\left[
\begin{tikzcd}[column sep=2em, row sep=2em]
\Gamma\backslash G \arrow[d] \\
\Gamma\backslash G/S_\Theta \arrow[d, phantom, "\cong"] \\
\Gamma\backslash\bigl(\Fboundary^{(2)} \times \LieA_\Theta\bigr) \arrow[r, phantom, "\supset"] & \Omega \arrow[d] \\
{} & \mathcal{X}
\end{tikzcd}
\right]
\end{array}
\]
\caption{}
\label{fig:DynamicalSystems}
\end{figure}

Let us give some remarks on the metrics on $\Omega$ and $\mathcal{X}$; see \cref{sec:AnosovProperty} for details. We first put a metric on $\Fboundary$ induced from a bi-invariant Riemannian metric on a maximal compact subgroup of $G$ and then restrict it to $\limitset$. We may then put metrics on $\limitset^{(2)} \times \LieA_\Theta$ and $\limitset^{(2)} \times \R$ which are $\Gamma$-invariant and locally bi-Lipschitz equivalent to the product metrics. By $\Gamma$-invariance, they descend to metrics on $\Omega$ and $\mathcal{X}$.

\begin{remark}
\label{rem:Metrics}
We emphasize the following points (see \cref{rem:limitset homeomorphism,rem:MetricsMain}).
\begin{itemize}
\item Metrics defined in the above fashion are highly sensitive to the metric space used for the boundary coordinates.
\item In accordance with the above, a key point in our arguments is that it is advantageous to use the $\Theta$-limit set $\limitset$ rather than the Gromov boundary $\Gboundary$ to define the metric spaces $\Omega$ and $\mathcal{X}$.
\item Let $\Theta' \subset \Theta$ be a nonempty proper subset. Then $\Gamma$ is also a $\Theta'$-Anosov subgroup. With this viewpoint, we obtain metric spaces $\Omega'$ and $\mathcal{X}'$ as above, corresponding to $\Theta'$. They are homeomorphic to $\Omega$ and $\mathcal{X}$, respectively, but have crucial differences as metric spaces. Indeed, we have Lipschitz homeomorphisms $\Omega \to \Omega'$ and $\mathcal{X} \to \mathcal{X}'$, induced by the canonical projection $\Fboundary \to \mathcal{F}_{\Theta'}$, but their inverses need not be Lipschitz or even H\"older.
\end{itemize}
\end{remark}

So far in the literature on Anosov subgroups, translation flows (or similarly defined flows) have been studied more often as an auxiliary tool rather than a dynamical system of intrinsic interest. However, translation flows are natural dynamical systems which mimic (and in fact generalize) the geodesic flow for convex cocompact hyperbolic manifolds and they are H\"{o}lder conjugate to H\"{o}lder reparametrizations of the Gromov geodesic flow associated to $\Gamma$. In this vein, we prove that translation flows are \emph{exponentially} mixing for generic vectors $\sfv \in \interior\limitcone$. Indeed, we obtain a detailed characterization of the exponential mixing property for the whole family of translation flows.

\subsection{Exponential mixing}
The set of generic $\sfv \in \interior\limitcone$ for which we can prove exponential mixing of the translation flows can be described in terms of the $\Theta$-growth indicator $\growthindicator$. Denote
\begin{align*}
\ker\Theta := \bigcup_{\alpha \in \Theta} \ker\alpha.
\end{align*}
We say that $\sfv \in \interior\limitcone$ is \emph{$\growthindicator$-regular} if $\nabla\growthindicator(\sfv) \notin \ker\Theta$ and we define the \emph{exceptional cone}
\begin{align*}
\mathscr{E} := \{\sfv \in \interior\limitcone: \sfv \text{ is not } \growthindicator\text{-regular}\}.
\end{align*}
It is nonempty for the following reason. First, since $\growthindicator$ is strictly concave except along radial directions on $\interior\limitcone$ and vertically tangent on $\partial\limitcone$ (see \cref{thm:BasicProperties}\labelcref{itm:BasicProperties4}), we deduce that $\overline{\bigcup_{\sfv \in \interior\limitcone} \R_{> 0} \nabla\growthindicator(\sfv)}$ coincides with the dual cone $\limitcone^* \subset \LieA_\Theta$ of $\limitcone$. Then, $\limitcone \subset \interior\LieA_\Theta^+ \cup \{0\}$ (see \cref{thm:BasicProperties}\labelcref{itm:BasicProperties2}) and non-obtuseness of $\LieA_\Theta^+$ (see \cite[Chapter II, \S\,5, Proposition 2.48(e)]{Kna96}) implies that $\partial\LieA_\Theta^+ \subset \interior\limitcone^* \cup \{0\}$, whence nonemptyness follows. In fact, $\mathscr{E}$ is the image of $\ker\Theta$ restricted to $\interior\limitcone$ under a diffeomorphism on $\LieA_\Theta$ and bounds a connected open cone.
We remark that the translation flows are homothety equivariant, i.e., $a^{c \sfv}_t = a^{\sfv}_{ct}$ for all $t \in \R$ and $c > 0$. For $\alpha \in (0, 1]$, we denote by $C^{0, \alpha}(\mathcal{X})$ the space of real-valued $\alpha$-H\"{o}lder continuous functions on $\mathcal{X}$ equipped with the $\alpha$-H\"{o}lder norm $\|\cdot\|_{C^{0, \alpha}}$.

\begin{remark}
We view $G$ and $\Gamma$ as fixed groups and so here and throughout the paper, we view constants depending only on $G$ and $\Gamma$ as absolute.
\end{remark}

\begin{theorem}
\label{thm:ExponentialMixingOnX}
Let $\alpha \in (0,1]$. Let $\sfv \in \interior\limitcone - \mathscr{E}$. There exist
\begin{itemize}
\item $\eta_{\alpha, \sfv} = \frac{\alpha}{\alpha + 2} \cdot \growthindicator(\sfv) \cdot O\bigl(e^{-c\|\nabla\growthindicator(\sfv)\|}\bigr) > 0$ which is continuous in $\sfv$, for some absolute constant $c > 0$,
\item $C_\sfv$ (independent of $\alpha$) which is continuous and homothety-invariant in $\sfv$,
\end{itemize}
such that for all $\phi_1,\phi_2 \in C^{0, \alpha}(\mathcal{X})$, and $t > 0$, we have
\begin{align*}
\left|\int_{\mathcal{X}} \phi_1(a^{\sfv}_tx)\phi_2(x) \, d\BMSX^{\sfv}(x) - \BMSX^{\sfv}(\phi_1)  \BMSX^{\sfv}(\phi_2)\right| \leq C_{\sfv} e^{-\eta_{\alpha, \sfv}t} \|\phi_1\|_{C^{0, \alpha}} \|\phi_2\|_{C^{0, \alpha}}.
\end{align*}
\end{theorem}

\begin{remark}
\label{rem:GoodThetaTrick}
We make the following observations; see \cref{rem:Metrics,subsec:ReductionOfGradGrowthIndicatorInKernelToNotInKernel}.
\begin{enumerate}
\item For any compact subset $\mathcal{K} \subset \interior\limitcone - \mathscr{E}$, the exponential rate can be taken to be \emph{uniform} in $\sfv \in \mathcal{K}$.
\item If $\nabla\growthindicator(\sfv) \in \ker\Theta$, then we may exploit the fact that the linear form $\psi_{\sfv} = \langle \growthindicator(\sfv)^{-1}\nabla\growthindicator(\sfv), \cdot \rangle$ factors through an orthogonal projection map $\pi_{\LieA_{\Theta'}}|_{\LieA_\Theta}: \LieA_\Theta \to \LieA_{\Theta'}$ (defined in \cref{subsec:LieTheoreticPreliminaries}) for some nonempty proper subset $\Theta' \subset \Theta$:
\begin{align*}
\psi_\sfv &= \psi_{\sfv'} \circ \pi_{\LieA_{\Theta'}}|_{\LieA_\Theta}, & \psi_{\sfv'} &:= \langle \psi_{\Theta'}(\sfv')^{-1}\nabla\psi_{\Theta'}(\sfv'), \cdot \rangle,
\end{align*}
for some $\sfv' \in \interior\mathcal{L}_{\Theta'}$. We may choose $\Theta' \subset \Theta$ such that $\nabla\growthindicator(\sfv) \notin \ker\Theta'$ and hence also $\nabla\psi_{\Theta'}(\sfv') \notin \ker\Theta'$. Then, we can still deduce exponential mixing on the corresponding metric space $\mathcal{X}'$.
\item We warn the reader that although the above trick gives a dynamical system $\bigl(\mathcal{X}', m_{\mathcal{X}'}^{\sfv'}, \{a^{\sfv'}_t\}_{t \in \R}\bigr)$ which is exponentially mixing and we have the fact that $\bigl(\mathcal{X}, \BMSX^{\sfv}, \{a^{\sfv}_t\}_{t \in \R}\bigr)$ is conjugate to $\bigl(\mathcal{X}', m_{\mathcal{X}'}^{\sfv'}, \{a^{\sfv'}_t\}_{t \in \R}\bigr)$, it \emph{does not necessarily} imply that $\bigl(\mathcal{X}, \BMSX^{\sfv}, \{a^{\sfv}_t\}_{t \in \R}\bigr)$ is also exponentially mixing in the strict sense of the theorem. More precisely, the exponential error term depends on the regularity of the test functions and so we must be careful of the metric used; indeed, H\"older functions lift from $\mathcal{X}'$ to $\mathcal{X}$ but do not necessarily descend from $\mathcal{X}$ to $\mathcal{X}'$.

Nevertheless, it may be possible to prove that $\bigl(\mathcal{X}, \BMSX^{\sfv}, \{a^{\sfv}_t\}_{t \in \R}\bigr)$ is exponentially mixing in some cases by further investigating the $\Theta$-limit set $\limitset$ and $\Theta$-limit cone $\limitcone$ associated to $\Gamma$.
\end{enumerate}
\end{remark}

We obtain the following simple corollary when $\#\Theta = 1$ because in this case $\dim(\LieA_\Theta) = 1$ and so the family of translation flows collapses to a trivial one-dimensional family $\{\{a_{ct}\}_{t \in \R}\}_{c >0}$ coinciding with rescalings of the one-parameter diagonal flow.

\begin{theorem}
\label{thm:ExponentialMixingOnXSingletonTheta}
Suppose $\#\Theta = 1$. Let $\alpha \in (0,1]$. Then, there exist $\eta_\alpha > 0$ and $C > 0$ (independent of $\alpha$) such that for all $\phi_1,\phi_2 \in C^{0, \alpha}(\mathcal{X})$ and $t > 0$, we have
\begin{align*}
\left|\int_{\mathcal{X}} \phi_1(a_tx)\phi_2(x) \, d\BMSX(x) - \BMSX(\phi_1)  \BMSX(\phi_2)\right| \leq C e^{-\eta_\alpha t} \|\phi_1\|_{C^{0, \alpha}} \|\phi_2\|_{C^{0, \alpha}}.
\end{align*}
\end{theorem}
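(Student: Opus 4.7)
The plan is to derive \cref{thm:ExponentialMixingOnXSingletonTheta} as a direct specialization of \cref{thm:ExponentialMixingOnX}. When $\#\Theta = 1$ the fiber dimension $\#\Theta - 1$ is zero, so $\pi\colon \Omega \to \mathcal{X}$ is a homeomorphism under which the translation flow coincides with the one-parameter diagonal flow; moreover, by the homothety equivariance $a^{c\sfv}_t = a^{\sfv}_{ct}$ the entire family $\{a^{\sfv}_t\}_{\sfv \in \interior\limitcone}$ reduces, up to time rescaling, to the single flow obtained from any fixed $\sfv_0 \in \interior\limitcone$. It therefore suffices to fix such a $\sfv_0$, verify that $\sfv_0 \notin \mathscr{E}$, apply \cref{thm:ExponentialMixingOnX} at $\sfv_0$, and absorb $\|\sfv_0\|\eta_{\Theta,\alpha}(\sfv_0) > 0$ and $C_{\sfv_0}$ into the constants $\eta_\alpha$ and $C$ asserted in the statement, with $a_t := a^{\sfv_0}_t$ and $\BMSX := \BMSX^{\sfv_0}$.

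The only genuine task is the verification that $\interior\limitcone \cap \mathscr{E} = \emptyset$. Write $\Theta = \{\alpha\}$, so $\LieA_\Theta = \bigcap_{\beta \in \Pi - \{\alpha\}} \ker\beta$ has dimension $\#\Theta = 1$. Since the simple roots are linearly independent in $\LieA^*$, the functional $\alpha$ is not in the span of $\Pi - \{\alpha\}$; equivalently, $\alpha|_{\LieA_\Theta} \not\equiv 0$, and as $\dim \LieA_\Theta = 1$ this forces $\ker\alpha \cap \LieA_\Theta = \{0\}$. On the other hand, Zariski density of $\Gamma$ yields $\growthindicator(\sfv) > 0$ for every $\sfv \in \interior\limitcone$, and the $1$-homogeneity of $\growthindicator$ combined with Euler's identity $\growthindicator(\sfv) = \langle \nabla \growthindicator(\sfv), \sfv \rangle$ forces $\nabla \growthindicator(\sfv) \neq 0$. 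Since $\nabla \growthindicator(\sfv) \in \LieA_\Theta$, it cannot lie in $\ker\alpha = \ker\Theta$, so $\sfv \notin \mathscr{E}$.

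No substantive obstacle is expected: all of the dynamical and Lie-theoretic content---the Dolgopyat scheme, the verification of the local non-integrability condition for non-exceptional vectors, and the construction of the translation flow---is already packaged into \cref{thm:ExponentialMixingOnX}. The only additional work is the short dimensional argument above showing that the exceptional cone cannot meet $\interior\limitcone$ when $\dim \LieA_\Theta = 1$.
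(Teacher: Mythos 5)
Your proposal is correct and follows essentially the same route the paper intends: the paper presents \cref{thm:ExponentialMixingOnXSingletonTheta} as a ``simple corollary'' of \cref{thm:ExponentialMixingOnX} via the observation that when $\dim\LieA_\Theta = 1$ the whole family of translation flows collapses to rescalings of a single flow, and you supply the one missing verification (that the exceptional cone $\mathscr{E}$ is empty) by a clean dimensional argument combining linear independence of simple roots with Euler's identity for the homogeneous growth indicator.
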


Note that the above theorem strengthens the local mixing theorem from \cref{eqn:ThetaLocalMixing} for $\#\Theta = 1$. It also generalizes the case that $G$ is of rank one in which case we get the geodesic flow for convex cocompact locally symmetric spaces and follows from the work of Stoyanov \cite{Sto11}; however, see \cref{pro:LNIC} or \cite{CS22} of the authors for the \emph{local non-integrability condition}, and the latter also for the generalization to the \emph{frame flow}.

The above theorem also follows from prior works by the following argument. For an irreducible projective Anosov subgroup $\Gamma < \PSL_n(\R)$ (in which case $\#\Theta = 1$), it is shown in \cite{DMS24} that the translation flow on $\mathcal{X}$ extends to a contact Axiom A flow on a smooth manifold $\mathcal{M}$ with $\mathcal{X} \subset \mathcal{M}$ as its basic set. They also prove a \emph{strong local non-integrability condition} and thereby deduce exponential mixing by the theorem of \cite{Sto11}. The general \cref{thm:ExponentialMixingOnXSingletonTheta} can be now be deduced via a Pl\"ucker representation; however, one needs to be careful of the compatibility of metrics and hence requires arguments as in the proof of \cref{thm:TranslationFlowConjugateToRhoFlow} in \cref{subsec:TranslationFlowIsMetricAnosovProof}.

Using the spectral bounds of transfer operators in \cref{thm:SpectralBoundOnTransferOperator}, we actually prove the more detailed theorem below from which the above theorems follow. Note that one can repeat a standard convolution argument as in \cite[Appendix]{KM96} using Lipschitz continuous bump functions to handle general $\alpha$-H\"older functions (see also \cite[Corollary 5.2]{MW12} and \cite[Theorem 3.1.4]{Sar19}). The behavior of the exponential decay rate $\eta_\sfv$ for $\sfv$ near the boundary of the limit cone $\partial\limitcone$ is expressed in terms of $\|\nabla\growthindicator(\sfv)\|$ and follows, by rescaling back, from \cref{thm:RescaledExponentialMixingOnXWithPR-Resonances} corresponding to any fixed neighborhood $\mathcal{N} \supset \ker\Theta$. Note that $\eta_\sfv$ is homogeneous of degree $1$ because so is $\growthindicator$. The complex numbers $\{\mu_{k, \sfv}\}_{k = 1}^{k_\sfv}$ which appear are the Pollicott--Ruelle resonances for the translation flow associated to $\sfv \in\interior\limitcone$. The concept of Pollicott--Ruelle resonances originated in \cite{Pol85,Pol86,Rue86,Rue87}. Here, $L(\mathcal{X})$ denotes the space of real-valued Lipschitz continuous functions on $\mathcal{X}$ equipped with the Lipschitz norm $\|\cdot\|_{\Lip}$.

\begin{theorem}
\label{thm:ExponentialMixingOnXWithPR-Resonances}
Let $\sfv \in \interior\limitcone - \mathscr{E}$. There exist $k_\sfv \in \N$ and
\begin{itemize}
\item $\eta_\sfv = \frac{\growthindicator(\sfv)}{\|\nabla\growthindicator(\sfv)\|} \cdot O\bigl(e^{-c\|\nabla\growthindicator(\sfv)\|}\bigr) = \growthindicator(\sfv) \cdot O\bigl(e^{-c'\|\nabla\growthindicator(\sfv)\|}\bigr) > 0$ which is continuous in $\sfv$, for some absolute constants $c, c' > 0$,
\item $C_\sfv > 0$ (independent of $\alpha$) which is continuous and homothety-invariant in $\sfv$,
\item a finite set of complex numbers $\{\mu_{k, \sfv}\}_{k = 1}^{k_\sfv} \subset (-\eta_\sfv, 0) + i[-1, 1]$ which come in conjugate pairs and are homogeneous of degree $1$ in $\sfv$,
\item a finite set of finite-rank positive semi-definite bilinear forms $\{\mathcal{B}_{k, \sfv}\}_{k = 1}^{k_\sfv}$ which are homothety-invariant in $\sfv$,
\end{itemize}
such that for all $\phi_1,\phi_2 \in L(\mathcal{X})$ and $t > 0$, we have
\begin{multline*}
\left|\int_{\mathcal{X}} \phi_1(a^{\sfv}_tx)\phi_2(x) \, d\BMSX^{\sfv}(x) - \left(\BMSX^{\sfv}(\phi_1)  \BMSX^{\sfv}(\phi_2) + \sum_{k = 1}^{k_\sfv} e^{\mu_{k, \sfv} t} \mathcal{B}_{k, \sfv}(\phi_1, \phi_2)\right)\right| \\
\leq C_{\sfv} e^{-\eta_\sfv t} \|\phi_1\|_{\Lip} \|\phi_2\|_{\Lip}.
\end{multline*}
\end{theorem}

\begin{remark}
Of course, $\eta_\sfv$ depends first and foremost on $\Gamma$ and $\Theta$, and also on $\sfv \in \interior\limitcone - \mathscr{E}$. Restricting to unit vectors, $\eta_\sfv$ tends to $0$ as $\sfv$ tends to vectors in $\partial\limitcone$ (in which case the decay is exponential in $\|\nabla\growthindicator(\sfv)\|$) and possibly also to vectors in $\mathscr{E}$.
\end{remark}

\subsection{Essential spectral gap and effective prime orbit theorem}
\label{subsec:EssentialSpectralGapAndPrimeOrbitTheorem}
Again using the powerful \cref{thm:SpectralBoundOnTransferOperator}, we also obtain the following applications.

For all $\sfv \in \interior\limitcone$ and $T > 0$, define
\begin{align*}
\mathcal{P}_\sfv :={}&\{\gamma: \text{$\gamma$ is a primitive closed $\{a^\sfv_t\}_{t \in \R}$-orbit in $\mathcal{X}$}\}, \\
\mathcal{P}_\sfv(T) :={}&\{\gamma \in \mathcal{P}_\sfv: \ell_\sfv(\gamma) \leq T\},
\end{align*}
where $\ell_\sfv(\gamma)$ denotes the primitive period of $\gamma$. We may drop the subscript for the former and write $\mathcal{P}$ since the set of closed orbits coincide for all translation flows. Moreover, there is a canonical one-to-one correspondence
\begin{align*}
\mathcal{P} \leftrightarrow [\Gamma_{\mathrm{prim}}],
\end{align*}
where the latter is the set of conjugacy classes of $\Gamma$ corresponding to the subset of primitive loxodromic elements $\Gamma_{\mathrm{prim}} \subset \Gamma$. Note that in our setting, all nontorsion elements of $\Gamma$ are loxodromic (see \cref{thm:BasicProperties}). Under the above one-to-one correspondence, we also have
\begin{align*}
\mathcal{P}_\sfv(T) \leftrightarrow \{[\gamma] \in [\Gamma_{\mathrm{prim}}]: \psi_\sfv(\lambda_\Theta(\gamma)) \leq T\}.
\end{align*}
Recall that $\psi_{\sfv} = \langle \delta_\sfv^{-1}\nabla\growthindicator(\sfv), \cdot \rangle$ where $\delta_\sfv = \growthindicator(\sfv)$ is the topological entropy of the translation flow $\{a^\sfv_t\}_{t \in \R}$.

For all $\sfv \in \interior\limitcone$, recall the Selberg zeta function $Z_\sfv: (\delta_\sfv, +\infty) + i\R \to \C$ \cite{Sel56} defined by
\begin{align*}
Z_\sfv(\xi) = \prod_{k = 0}^\infty \prod_{\gamma \in \mathcal{P}} \bigl(1 - e^{-(\xi + k)\ell_\sfv(\gamma)}\bigr) \qquad \text{for all $\Re(\xi) > \delta_\sfv$}
\end{align*}
which converges absolutely using the noneffective prime orbit theorem in \cite[Eq. (1.8)]{CF23} (cf. \cite[Chapter 2, Definition 4.1]{Hej76} and the subsequent remark). By the work of Pollicott \cite{Pol86}, it then extends meromorphically to the domain $(\delta_\sfv - \eta_\sfv, +\infty) + i\R$ for some $\eta_\sfv > 0$ with a simple zero at $\delta_\sfv$. He also characterizes $\eta_\sfv$ in terms of the pressure functional, the expansion/contraction rate from the metric Anosov property, and the topological entropy.

Let us introduce a convenient terminology. For any nonempty subset $\Theta' \subset \Theta$, denote $(\interior\limitcone)_{\Theta'} := \{\sfv \in \interior\limitcone: \nabla\growthindicator(\sfv) \in \LieA_{\Theta'} - \ker\Theta'\}$. Then, we have the stratification $\mathscr{E} = \bigsqcup_{\varnothing \neq \Theta' \subsetneq \Theta} (\interior\limitcone)_{\Theta'}$. We say that a real-valued function $\eta$ on $\limitcone$ or $\interior\limitcone$ is \emph{continuous according to strata} if for all nonempty subsets $\Theta' \subset \Theta$, the restriction $\eta|_{(\interior\limitcone)_{\Theta'}}$ is continuous except possibly with points of discontinuities in $\bigsqcup_{\varnothing \neq \Theta'' \subsetneq \Theta'}(\interior\limitcone)_{\Theta''}$.

\begin{theorem}
\label{thm:EssentialSpectralGap}
Let $\sfv \in \interior\limitcone$. There exists $\eta_\sfv = \growthindicator(\sfv) \cdot O\bigl(e^{-c\|\nabla\growthindicator(\sfv)\|}\bigr) > 0$ which is continuous according to strata in $\sfv$, for some absolute constant $c > 0$, such that the Selberg zeta function $Z_\sfv$ has a simple zero at $\delta_\sfv$ and only finitely many other zeros on the half-plane $(\delta_\sfv - \eta_\sfv, +\infty) + i\R$.
\end{theorem}

The above theorem has the following equivalent formulation. For all $\sfv \in \interior\limitcone$, recall the Ruelle zeta function $\zeta_\sfv: (\delta_\sfv, +\infty) + i\R \to \C$ defined by
\begin{align*}
\zeta_\sfv(\xi) = \frac{Z_\sfv(\xi + 1)}{Z_\sfv(\xi)} = \prod_{\gamma \in \mathcal{P}} \bigl(1 - e^{-\xi\ell_\sfv(\gamma)}\bigr)^{-1} \qquad \text{for all $\Re(\xi) > \delta_\sfv$}
\end{align*}
which then extends meromorphically as above.

\begin{theorem}
\label{thm:EssentialSpectralGapRuelle}
Let $\sfv \in \interior\limitcone$. There exists $\eta_\sfv = \growthindicator(\sfv) \cdot O\bigl(e^{-c\|\nabla\growthindicator(\sfv)\|}\bigr) > 0$ which is continuous according to strata in $\sfv$, for some absolute constant $c > 0$, such that the Ruelle zeta function $\zeta_\sfv$ has a simple pole at $\delta_\sfv$ and only finitely many other poles on $(\delta_\sfv - \eta_\sfv, +\infty) + i\R$.
\end{theorem}

We say that the dynamical system $\bigl(\mathcal{X}, \BMSX^{\sfv}, \{a^{\sfv}_t\}_{t \in \R}\bigr)$ has an \emph{essential spectral gap} of size $\eta_\sfv > 0$. We emphasize that $\kappa_\Theta$ is an explicit function which dictates the behavior of the decay of the essential spectral gap near the boundary of the limit cone $\partial\limitcone$ whereas we have no information on the zeros of the Selberg zeta function $Z_\sfv$.

We have the following more refined version of the prime orbit theorem with a power saving error term stated in \cref{thm:ExponentialPrimeOrbitTheoremSimple}.

\begin{theorem}
\label{thm:ExponentialPrimeOrbitTheorem}
Let $\sfv \in \interior\limitcone$. There exist
\begin{itemize}
\item $\eta_\sfv = \growthindicator(\sfv) \cdot O\bigl(e^{-c\|\nabla\growthindicator(\sfv)\|}\bigr) > 0$ which is continuous according to strata in $\sfv$, for some absolute constant $c > 0$,
\item $C_\sfv$ which is continuous according to strata and homothety-invariant in $\sfv$,
\end{itemize}
such that for all $T > 0$, we have
\begin{align*}
\bigl|\#\mathcal{P}_\sfv(T) - \Li\bigl(e^{\delta_\sfv T}\bigr)\bigr| \leq C_\sfv e^{(\delta_\sfv - \eta_\sfv)T}.
\end{align*}
\end{theorem}

The above theorem effectivizes the prime orbit theorem of Sambarino \cite[Theorem 7.8]{Sam14b} and its generalization by Chow--Fromm \cite[Eq. (1.8)]{CF23} (cf. \cite[Corollary 1.4]{CF23}) where the authors obtain the cruder main term $\frac{e^{\delta_\sfv T}}{\delta_\sfv T}$. In fact, another plausible approach to prove \cref{thm:ExponentialPrimeOrbitTheorem} is by effectivizing the work of \cite{CF23} with \cref{thm:ExponentialMixingOnX} as input.

Prior effective results in the Anosov setting are those of Pollicott--Sharp \cite{PS24} and Delarue--Monclair--Sanders \cite{DMS24}. In \cite{PS24}, the authors study Hitchin surface subgroups $\Gamma < \PSL_n(\R)$, i.e., the image of a Hitchin representation of a surface group into $\PSL_n(\R)$, viewed as a projective Anosov subgroup (in which case $\#\Theta = 1$). They obtain a meromorphic extension of the Selberg zeta function to the entire complex plane, and for a more restricted subclass of Hitchin surface subgroups, they obtain an essential spectral gap and prime orbit theorem with a power saving error term. In \cite{DMS24}, the authors generalize such results to the general irreducible projective Anosov setting; however, they do not need irreducibility for the result on meromorphic extension to the entire complex plane. Our theorems hold for the general Zariski dense $\Theta$-Anosov setting; however, we do not address the question of meromorphic extension to the entire complex plane.

\begin{remark}
In the above two theorems, we are able to extract positivity of the essential spectral gap and the power saving exponent in the prime orbit theorem even if $\nabla\growthindicator(\sfv) \in \ker\Theta$, by using the trick from \cref{rem:GoodThetaTrick}---that is, the above two theorems only require the existence of \emph{some} associated dynamical system with appropriate periods which is exponentially mixing, and they are not sensitive to the metric on the dynamical system. However, we are not able to gain finer control, i.e., continuity of the of the essential spectral gap function and the power saving exponent function at points in a lower dimensional stratum. We think the below related questions are interesting.
\end{remark}

\begin{questions}
We raise the following questions:
\begin{enumerate}
\item Are the optimal essential spectral gap for the Selberg zeta function and the optimal power saving exponent in the prime orbit theorem continuous in $\sfv \in \interior\limitcone$ (not just according to strata)?
\item Are the above constants bounded below by a positive constant uniformly in $\sfv \in \interior\limitcone$?
\item How does the behavior depend on the geometry of $\Gamma$, i.e., on the $\Theta$-limit set and $\Theta$-limit cone?
\item How does the behavior vary with respect to deformations of $\Gamma$? (cf. \cite{DO24})
\end{enumerate}
\end{questions}

We think that for the first two questions, the \emph{generic} behavior is that the answer is negative.

\subsection{On the proof of \texorpdfstring{\cref{thm:ExponentialMixingOnX}}{\autoref{thm:ExponentialMixingOnX}}}
\label{subsec:OnTheProofOfMainTheorem}
For general $\Theta$-Anosov subgroups, we have a family of translation flows $\{\{a^\sfv_t\}_{t \in \R}\}_{\sfv \in \interior\limitcone}$ on $\mathcal{X}$ and we begin by showing that they are metric Anosov. We note that when $\Theta \ne \Pi$, the space $\limitset^{(2)} \times \R$ does not naturally inherit a $\Gamma$-invariant metric from the Riemannian metric on $G$ and so the metric on $\mathcal{X}$ must be chosen and constructed carefully. Moreover, we also show that the strong (un)stable laminations in $\mathcal{X}$ of a translation flow are projections of the horospherical foliations of $G$ (\cref{thm:TranslationFlowIsMetricAnosov}). Our proofs of these properties, following \cite{CS23}, use the fact that one can always construct a projective Anosov representation of $\Gamma$ using the Pl\"ucker representation and then use the results of Bridgeman--Canary--Labourie--Sambarino \cite{BCLS15} on the metric Anosov property in this case. It is likely that the arguments in \cite{BCLS15} can be extended to general Anosov subgroups to give more direct proofs of these properties but we do not pursue this. An immediate consequence of the metric Anosov property is that there exist Markov sections for the translation flows which are compatible with the corresponding strong (un)stable laminations. The constructed framework is an important tool which do not appear in previous works and for which reason our approach is fruitful in the general $\Theta$-Anosov setting.

Thanks to the Markov sections, we may freely introduce symbolic dynamics and thermodynamic formalism. In particular, we define transfer operators $\mathcal{L}_{\xi\tau^\sfv}: C\bigl(U, \C\bigr) \to C\bigl(U, \C\bigr)$ defined by
\begin{align*}
\mathcal{L}_{\xi\tau^\sfv}(H)(u) = \sum_{u' \in \sigma^{-1}(u)} e^{\xi\tau^\sfv(u')}H(u')
\end{align*}
where $\tau^\sfv: U \to \R$ is the first return time map associated to the translation flow $\{a^\sfv_t\}_{t \in \R}$ on $\mathcal{X}$. We wish to then execute Dolgopyat's method to obtain strong spectral bounds for the transfer operators. It is well-known that such spectral bounds can be used to derive:
\begin{enumerate}
\item exponential mixing;
\item essential spectral gap for the Selberg zeta function;
\item prime orbits theorem with a power saving error term.
\end{enumerate}
Building on the works of Dolgopyat \cite{Dol98}, Avila--Gou\"ezel--Yoccoz \cite{AGY06}, and Liverani \cite{Liv04}, Stoyanov \cite{Sto11} implemented Dolgopyat's method for Axiom A flows assuming a crucial \emph{local non-integrability condition (LNIC)}. However, in our setting, the translation flow on $\mathcal{X}$ does not \emph{a priori} extend to an obvious Axiom A flow and hence we opt to work with weaker properties; however, see the comments below on \cite{DMS24}. In any case, we need to prove LNIC (\cref{pro:LNIC}). We wish to use Lie theoretic techniques for this. However, we need to carefully deal with some topological issues due to the fractal nature of the limit set $\limitset$ and the $\Gamma$-action not being properly discontinuous on $\Fboundary^{(2)} \times \R$ in general. Following \cite{SW21}, the relationship between the strong (un)stable laminations of the translation flow and the horospherical foliations allows us to lift and extend the rectangles of the Markov section to open sets and use auxiliary smooth extensions of the Poincar\'{e} map and the first return time map. This is a rather crude approach to use the smooth structure but it is sufficient for the purposes of generalizing the Lie theoretic techniques in \cite{SW21,CS23} to prove a reverse Lipschitz version of LNIC on the original Markov section which is of fractal nature. In the process, it becomes clear that the aforementioned derivation holds for all $\sfv$ but those in a certain exceptional set $\mathscr{E} \subset \interior\limitcone$. Fortunately, Dolgopyat's method is robust enough that the above ingredients suffice to be able to run the required arguments (for all $\sfv \in \interior\limitcone - \mathscr{E}$).

Throughout the paper, we also take a unified viewpoint and carefully investigate the \emph{dependence} of the family of translation flows on the parameter $\sfv \in \interior\limitcone$. This requires additional important properties. The first one is the existence of a \emph{compatible family} of Markov sections for all $\sfv \in \interior\limitcone$. This allows us to identify the corresponding rectangles with each other and therefore use the same domain $U$ to define the transfer operators and only vary the first return time maps $\tau^\sfv$ which is smooth in $\sfv \in \interior\limitcone$. We also show some bounds for $\tau^\sfv$ which is used to carry through the dependence on $\sfv$ throughout the proofs.

Let us briefly compare our approach above with that of Pollicott--Sharp \cite{PS24} and Delarue--Monclair--Sanders \cite{DMS24}. In \cite{PS24}, the authors also use transfer operators but for a Bowen--Series type symbolic coding rather than a symbolic coding from a Markov section of a flow. In \cite{DMS24}, one of the main result is that indeed \emph{there exists} an extension of the translation flow on $\mathcal{X}$ to a contact Axiom A flow on a smooth manifold $\mathcal{M} := \Gamma\backslash \tilde{\mathcal{M}}$ with $\mathcal{X} \subset \mathcal{M}$ as its basic set \cite[Theorem A]{DMS24}. For this objective, the $\Gamma$-action not being properly discontinuous on $\Fboundary^{(2)} \times \R$ in general is a fundamental obstruction in \cite{DMS24} as well. Nevertheless, the authors prove that the $\Gamma$-action is properly discontinuous on a translation flow-invariant open subset $\tilde{\mathcal{M}} \subset \Fboundary^{(2)} \times \R$ containing $\limitset^{(2)} \times \R$.

\subsection{Organization of the paper}
\Cref{sec:AnosovSubgroupsAndTheirTranslationFlows} covers background on semisimple real algebraic groups, Anosov subgroups, and translation flows. In \cref{sec:AnosovProperty}, we prove that the translation flow is metric Anosov for a suitable metric and describe the laminations in terms of the $\Theta$-horospherical subgroups to establish Markov sections compatible with the Lie structure. In \cref{sec:TransferOperators}, we make some reductions and introduce transfer operators. \Cref{sec:DolgopyatsMethodAndProofs} contains the formulation of a key theorem for a Lipschitz version of Dolgopyat's method. We then outline the derivations for exponential mixing for Lipschitz continuous functions, essential spectral gap for the Selberg zeta function, and prime orbit theorem with power saving error term. In \cref{sec:LNIC} we prove the crucial LNIC. Finally, in \cref{sec:ProofOfTheorem}, we prove the aforementioned key theorem.

\subsection*{Acknowledgements}
We thank our advisor Hee Oh for her encouragements. We thank Rafael Potrie for useful correspondence regarding metric Anosov flows. We also thank Institut des Hautes \'{E}tudes Scientifiques (IH\'{E}S) and the Fields Institute for Research in Mathematical Sciences for their hospitality and facilitating conversations. P.S. acknowledges support by an AMS-Simons Travel Grant. The paper was also substantially revised while P.S. was visiting the Mathematical Sciences Research Institute (MSRI), now becoming the Simons Laufer Mathematical Sciences Institute (SLMath), which is supported by the National Science Foundation (Grant No. DMS-2424139).

\section{Anosov subgroups and their translation flows}
\label{sec:AnosovSubgroupsAndTheirTranslationFlows}

\subsection{Lie theoretic preliminaries}
\label{subsec:LieTheoreticPreliminaries}
Let $G$ be a connected semisimple real algebraic group with identity element $e$ and Lie algebra $\LieG$. Let $B:\LieG\times\LieG \to \R$ denote the Killing form and $\theta: \LieG \to \LieG$ be a Cartan involution, i.e., the symmetric bilinear form $B_\theta:\LieG\times\LieG\to \R$ defined by $B_\theta(x,y)=-B(x,\theta(y))$ for all $x,y\in \LieG$ is positive definite. We write $\langle *_1, *_2\rangle = B_\theta(*_1, *_2)$ for the inner product and $\|\cdot\|$ for the induced norm on $\LieG$. Let $\LieG = \LieK \oplus \LieP$ be the associated eigenspace decomposition corresponding to the eigenvalues $+1$ and $-1$ of $\theta$ respectively. Then $K = \exp(\LieK) < G$ is a maximal compact subgroup. Let $\LieA \subset \LieP$ be a maximal abelian subalgebra and let $A := \exp\LieA$ the corresponding maximal real split torus. Identifying $\LieA \cong \LieA^*$ via the Killing form, let $\Phi \subset \LieA^*$ be the restricted root system of $(\LieG, \LieA)$ and $\Phi^\pm \subset \Phi$ be a choice of sets of positive and negative roots. Let $\LieA^+ \subset \LieA$ be the corresponding closed positive Weyl chamber. We have the associated restricted root space decomposition
\begin{align*}
\LieG = \LieA \oplus \LieM \oplus \bigoplus_{\alpha \in \Phi} \LieG_\alpha,
\end{align*}
where $\LieM$ is the centralizer of $\LieA$ in $\LieK$.

Let $\Pi \subset \Phi^+$ denote the set of all simple roots. We denote by $\langle\Theta\rangle$ the root subsystem generated by any subset $\Theta \subset \Pi$ (it is empty if so is $\Theta$). Fix a nonempty subset $\Theta \subset \Pi$. We will introduce several objects associated to $\Theta$; we omit the subscript/superscript $\Pi$ when $\Theta=\Pi$. Define Lie subalgebras $\LieA_\Theta$ and $\LieN_\Theta^\pm$ of $\LieG$ by 
\begin{align*}
& \LieA_\Theta := \bigcap_{\alpha \in \Pi -\Theta}\ker\alpha, & \LieN^\pm_\Theta := \bigoplus_{\alpha \in \Phi^+ - \langle \Pi - \Theta\rangle} \LieG_{\mp\alpha}
\end{align*}
where for the former, we take the empty intersection to be $\LieA$. Denote $\LieA^+_\Theta :=\LieA_\Theta \cap \LieA^+$ and by $\interior\LieA^+_\Theta$ the interior of $\LieA^+_\Theta$ in the topology of $\LieA_\Theta$. Define
\begin{align*}
A_\Theta &= \exp(\LieA_\Theta), & A^+_\Theta &= \exp(\LieA^+_\Theta) \subset A_\Theta.
\end{align*}
Then, $G$ has the Cartan decomposition $G = KA^+K$. Let $\mathcal{W} = N_K(A)/Z_K(A)$ be the Weyl group which is a finite group. The adjoint action $\Ad$ induces an action $\mathcal{W} \curvearrowright \LieA$ which we also denote by $\Ad$. Let $\pi_{\LieA_\Theta}: \LieG \to \LieA_\Theta$ denote the orthogonal projection map. Then the restriction $\pi_{\LieA_\Theta}|_{\LieA}: \LieA \to \LieA_\Theta$ is in fact the unique projection map invariant under precomposition by all elements of the Weyl group $\mathcal{W}$ which act trivially on $\LieA_\Theta$. Note that in general $\ker\pi_{\LieA_\Theta}|_{\LieA} \neq \LieA_{\Pi - \Theta}$. Let $w_0 \in \mathcal{W}$ be the longest element with respect to the generating set consisting of root reflections. Then, $\Ad_{w_0}(\LieA^+) = -\LieA^+$ and the map $\involution=-\Ad_{w_0}:\LieA \to \LieA$ is called the \emph{opposition involution} of $\LieA$. The opposition involution preserves $\LieA^+$ and acts on $\Pi$ by precomposition. 

For a fixed $v \in \interior \LieA_\Theta^+$, the expanding/contracting \emph{$\Theta$-horospherical subgroups} are 
\begin{align*}
N^\pm_\Theta &:= \exp(\LieN^\pm_\Theta) = \left\{h \in G: \lim_{t \to \pm\infty} \exp(tv) h \exp(-tv) = e\right\}.
\end{align*}
The definition is independent of the choice of $v \in \interior \LieA_\Theta^+$ and $N_\Theta^\pm = w_0 N_{\involution\Theta}^\mp w_0^{-1}$. Let $P_\Theta$ denote the standard parabolic subgroup associated to $\Theta$, i.e., $P_\Theta$ is the normalizer of $N^-_\Theta$ in $G$. Its Levi subgroup $L_\Theta < P_\Theta$ is the centralizer of $A_\Theta$ in $G$. It satisfies
\begin{align*}
P_\Theta &= L_\Theta N_\Theta^-, & w_0P_{\involution\Theta}w_0^{-1} &= L_\Theta N_\Theta^+, & L_\Theta &= P_\Theta \cap w_0P_{\involution\Theta}w_0^{-1}.
\end{align*}
Also $L_\Theta = A_\Theta S_\Theta \cong A_\Theta \times S_\Theta$ where $S_\Theta$ is an almost direct product of a connected semisimple real algebraic group and a compact center. Moreover, $S_\Theta$ has a Cartan decomposition $S_\Theta = M_\Theta A_{\Pi - \Theta}^+ M_\Theta$. Note that the Lie algebra of $P_\Theta$ is given by $\LieP_\Theta = \LieA \oplus \LieM \oplus \bigoplus_{\alpha \in \Phi^+ \cup \langle \Pi - \Theta \rangle} \LieG_\alpha$.  We denote the \emph{$\Theta$-Furstenberg boundary} of $G$ by
\begin{align*}
\Fboundary = G/P_\Theta \cong K/M_\Theta.
\end{align*}
Note that we also have an action $\mathcal{W} \curvearrowright \Fboundary$ induced by the left translation action. For all $g \in G$, we denote 
\begin{align*}
g^+ &:= g[P_\Theta] \in \Fboundary, & g^- &:= gw_0[P_{\involution\Theta}] \in \iFboundary.
\end{align*}
There is a unique open $G$-orbit of $\Fboundary \times \iFboundary$ given by
\begin{align*}
\Fboundary^{(2)} := G\cdot(e^+,e^-) = G\cdot([P_\Theta], w_0[P_{\involution\Theta}]).
\end{align*}
Similarly, there is a unique $G$-orbit of \emph{singular pairs} (see \cite[Definition 4.8]{GW12}) given by
\begin{align*}
\mathcal{S}_\Theta := G\cdot([P_\Theta],[P_{\involution\Theta}]).
\end{align*}

\begin{definition}[Iwasawa cocycle]
The \emph{Iwasawa cocycle} is the map $\sigma: G \times \mathcal{F} \rightarrow \LieA$ which assigns to a pair $(g, \xi) \in G \times \mathcal{F}$ the unique element $\sigma(g,\xi) \in \LieA$ such that $gk \in K\exp (\sigma(g,\xi))N^-$, and satisfies the cocycle relation $\sigma(gh,\xi) = \sigma(g,h\xi) + \sigma(h,\xi)$, for all $g, h \in G$ and $\xi = kP \in \mathcal{F}$.
\end{definition}

\begin{definition}[$\Theta$-Busemann function]
The \emph{$\Theta$-Busemann function} is the map $\beta^\Theta: \Fboundary \times G \times G \to \LieA_\Theta$ defined by
\begin{align*}
\beta^\Theta_\xi(g, h) := \pi_{\LieA_\Theta}(\sigma(g^{-1}, \tilde{\xi}) - \sigma(h^{-1}, \tilde{\xi}))
\end{align*}
for all $\xi \in \Fboundary$ and $g, h \in G$, where $\tilde{\xi} \in \mathcal{F}$ is any lift of $\xi$ under the canonical projection $\mathcal{F} \to \Fboundary$. This is well-defined, i.e., independent of the choice of $\tilde{\xi}$ \cite[Lemma 6.1]{Qui02a}, left $G$-equivariant, right $K$-invariant, and satisfies the identity
\begin{align*}
\beta^\Theta_\xi(g_1, g_2) = \beta^\Theta_\xi(g_1, h) + \beta^\Theta_\xi(h, g_2)
\end{align*}
for all $\xi \in \Fboundary$ and $g_1, g_2, h \in G$. 
\end{definition}

Define a left $G$-action on $\Fboundary^{(2)} \times \LieA_\Theta$ by
\begin{align*}
g(\xi,\eta,v) = \bigl(g\xi,g\eta,v + \beta^\Theta_{g\xi}(e, g)\bigr) = (g\xi,g\eta,v + \pi_{\LieA_\Theta}(\sigma(g,\xi)))
\end{align*}
for all $g \in G$ and $(\xi, \eta, v) \in \Fboundary^{(2)} \times \LieA_\Theta$. Then the stabilizer of $(e^+,e^-,0)$ is $S_\Theta$ and we have the following diffeomorphism.

\begin{definition}[$\Theta$-Hopf parameterization]
\label{def:HopfParametrization}
The \emph{$\Theta$-Hopf parameterization} is the left $G$-equivariant diffeomorphism $G/S_\Theta \to \Fboundary^{(2)} \times \LieA_\Theta$ defined by
\begin{align*}
gS_\Theta \mapsto \bigl(g^+, g^-, \beta^\Theta_{g^+}(e, g)\bigr).
\end{align*}
We note that the right $A_\Theta$-action on $G/S_\Theta$ corresponds to the $\LieA_\Theta$-translation on the $\LieA_\Theta$-coordinate of $\Fboundary^{(2)} \times \LieA_\Theta$. 
\end{definition}

We introduce the $\Theta$-Gromov product as defined in \cite[\S\,2]{KOW23a} (see also \cite[\S\,4]{Sam15}).

\begin{definition}[$\Theta$-Gromov product]
For $(\xi, \eta) = (g^+, g^-) \in \Fboundary^{(2)}$, the $\Theta$-Gromov product is defined as
\begin{align*}
\mathcal{G}^\Theta[\xi, \eta] := \beta^\Theta_{g^+}(e, g) + \involution\beta^{\involution\Theta}_{g^-}(e, g)
\end{align*} 
which is independent of choice of $g$ \cite[Lemma 9.13]{KOW23b}.
\end{definition}

\subsection{Zariski dense \texorpdfstring{$\Theta$}{Θ}-discrete subgroups}
\label{subsec:PreliminariesOnDiscreteSubgroups}

Let $\Gamma < G$ be a Zariski dense discrete subgroup.

\begin{definition}[$\Theta$-limit set]
	The \emph{$\Theta$-limit set} $\limitset \subset \Fboundary$ of $\Gamma$ is defined by
	\begin{align*}
		\limitset = \{\xi \in \Fboundary : \exists \{\gamma_n\}_{n \in \N} \subset \Gamma, (\gamma_n)_*\nu \to \delta_\xi\}
	\end{align*}
	where $\nu$ denotes the unique $K$-invariant probability measure on $\Fboundary$ and $\delta_\xi$ denotes the Dirac measure at $\xi$. It is the unique minimal nonempty closed $\Gamma$-invariant subset of $\Fboundary$ \cite{Ben97}.
\end{definition}

See also \cite[\S\,7]{Qui02a} and \cite[\S\,5]{KOW23b} for equivalent definitions and further details.

The Jordan projection $\lambda: G \to \LieA^+$ is a map that assigns to each element $g \in G$ the unique element $\lambda(g) \in \LieA^+$ such that the hyperbolic component of the Jordan decomposition of $g$ is conjugate to $\exp(\lambda(g))$. We denote $\lambda_\Theta := \pi_{\LieA_\Theta} \circ \lambda$ and call it the \emph{$\Theta$-Jordan projection}.

\begin{definition}[$\Theta$-limit cone]
\label{def:ThetaLimitCone}
The \emph{$\Theta$-limit cone} $\limitcone \subset \LieA_\Theta^+$ of $\Gamma$ is the unique minimal closed cone in $\LieA_\Theta^+$ containing $\lambda_\Theta(\Gamma)$. It is a convex cone with nonempty interior \cite{Ben97}.
\end{definition}

The Cartan projection $\mu: G \to \LieA^+$ is a map that associates to each element $g \in G$ the unique element $\mu(g) \in \LieA^+$ such that $g \in Ka_{\mu(g)}K$. We denote $\mu_\Theta := \pi_{\LieA_\Theta} \circ \mu$. We say $\Gamma$ is \emph{$\Theta$-discrete} if $\mu_\Theta|_{\Gamma}$ is a proper map \cite[Definition 3.1]{KOW23b}. We now further assume that $\Gamma$ is $\Theta$-discrete and introduce the $\Theta$-growth indicator of $\Gamma$. Quint introduced and studied the growth indicator when $\Theta = \Pi$ and his work was generalized by \cite[\S\,3]{KOW23b}.

\begin{definition}[$\Theta$-growth indicator]
\label{def:ThetaGrowthIndicator}
The \emph{growth indicator} $\growthindicator : \LieA^+_\Theta \to \R \cup \{-\infty\}$ of $\Gamma$ is defined by
\begin{align*}
\growthindicator(v) = \|v\|\inf_{\substack{\text{open cones }\mathcal{C}\subset \LieA_\Theta^+ \\ v \in \mathcal{C}}}\tau_{\mathcal{C}}^\Theta \qquad \text{ for all }v \in \LieA_\Theta
\end{align*}
with the convention that $0 \cdot (-\infty) = 0$, where $\|\cdot\|$ is any Euclidean norm on $\LieA_\Theta$ and $\tau_{\mathcal{C}}^\Theta$ is the abscissa of convergence of the Poincar\'{e} series 
\begin{align*}
\sum_{\gamma \in \Gamma, \, \mu_\Theta(\gamma) \in \mathcal{C}} e^{-t\|\mu_\Theta(\gamma)\|}.
\end{align*} 
This definition does not depend on the choice of norm $\|\cdot\|$ (though we have fixed the one induced by the Killing form). The growth indicator $\growthindicator$ is homogeneous of degree $1$, concave, upper semicontinuous, and satisfies $\growthindicator|_{\exterior(\limitcone)} = -\infty$, $\growthindicator|_{\limitcone} \ge 0$, and $\growthindicator|_{\interior(\limitcone)} > 0$. Denote by $\mathsf{u}_\Theta \in \interior\limitcone$ any maximal growth direction of $\growthindicator$.
\end{definition}

The following generalization of Patterson--Sullivan measures \cite{Pat76,Sul79} is also due to Quint \cite{Qui02a} (see also \cite{Alb99}).

\begin{definition}[$(\Gamma,\psi)$-conformal measure]
Let $\psi \in \LieA_\Theta^*$. A Borel probability measure $\nu$ on $\Fboundary$ is called a \emph{$(\Gamma,\psi)$-conformal measure} if
\begin{align*}
\frac{d\gamma_*\nu}{d\nu}(\xi) = e^{\psi(\beta^\Theta_\xi(e, \gamma))} \qquad \text{ for all } \gamma \in \Gamma, \, \xi \in \Fboundary.
\end{align*}
\end{definition}

We say that $\psi \in \LieA_\Theta^*$ is \emph{tangent to $\growthindicator$ at $v \in \limitcone$} if $\psi \geq \growthindicator$ and $\psi(v) = \growthindicator(v)$ \cite[Definition 3.10]{KOW23b}. We also denote the dual limit cone
\begin{align}
\label{eqn:DualLimitCone}
\limitcone^* := \{\psi \in \LieA_\Theta^* : \psi|_{\limitcone} \ge 0\}.
\end{align}

The following is a generalization of \cite[Theorem 8.4]{Qui02b}, which is for $\Theta = \Pi$, to arbitrary $\Theta$.

\begin{theorem}[{\citealp[Proposition 5.9]{KOW23b}}]
If $\psi \in \LieA_\Theta^*$ is tangent to $\growthindicator$ at $v \in \interior \LieA_\Theta^+ \cap \limitcone$, then there exists a $(\Gamma,\psi)$-conformal measure.
\end{theorem}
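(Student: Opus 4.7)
The plan is to construct the measure by a higher-rank adaptation of Patterson's classical recipe. Consider the Poincaré series
\begin{align*}
Q_\psi(s) := \sum_{\gamma \in \Gamma} e^{-s\psi(\mu_\Theta(\gamma))}, \qquad s > 0,
\end{align*}
and its abscissa of convergence $\delta_\psi \in [0,\infty]$. The first step is to verify that $\delta_\psi = 1$. The inequality $\delta_\psi \leq 1$ follows from the hypothesis $\psi \geq \growthindicator$ on $\limitcone$: covering $\limitcone \cap \LieA_\Theta^+$ by finitely many open subcones on each of which $\psi$ dominates $\growthindicator - \varepsilon\|\cdot\|$, and using the definition of $\growthindicator$ in terms of the conical abscissae $\tau_{\mathcal{C}}^\Theta$, yields convergence of $Q_\psi(s)$ for every $s > 1$. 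For $\delta_\psi \geq 1$, the tangency $\psi(v) = \growthindicator(v)$ at the interior vector $v \in \interior\limitcone$ is crucial: it supplies a direction along which the Poincaré sum fails to be summable for $s < 1$, since in any sufficiently small open cone around $v$ one can produce enough group elements $\gamma$ with $\mu_\Theta(\gamma)$ aligned near $v$ to force divergence of $Q_\psi(s)$ at each $s < 1$.

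Having established $\delta_\psi = 1$, form the family of Borel probability measures on $\Fboundary$
\begin{align*}
\nu_s := \frac{1}{Q_\psi(s)} \sum_{\gamma \in \Gamma} e^{-s\psi(\mu_\Theta(\gamma))} \, \delta_{\gamma \cdot e^+}, \qquad s > 1.
\end{align*}
If $Q_\psi(1) = +\infty$, any weak-$\ast$ accumulation point of $\{\nu_s\}$ as $s \to 1^+$ is a candidate conformal measure; in the converse case, one first applies Patterson's classical modification, multiplying each summand by a slowly varying factor $h(\|\mu_\Theta(\gamma)\|)$ constructed exactly as in the original Patterson argument so that the modified series still has abscissa $1$ but now diverges there. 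A standard weak-$\ast$ compactness argument then extracts a limit measure $\nu$ supported on $\limitset$.

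To verify $(\Gamma, \psi)$-conformality, fix $\gamma_0 \in \Gamma$ and re-index the pushforward:
\begin{align*}
\gamma_{0*}\nu_s = \frac{1}{Q_\psi(s)} \sum_{\gamma \in \Gamma} e^{-s\psi(\mu_\Theta(\gamma_0^{-1}\gamma))} \, \delta_{\gamma \cdot e^+}.
\end{align*}
The Radon--Nikodym derivative of $\gamma_{0*}\nu_s$ against $\nu_s$ is then, term by term, the exponential of the difference $s\bigl[\psi(\mu_\Theta(\gamma)) - \psi(\mu_\Theta(\gamma_0^{-1}\gamma))\bigr]$. The key analytic input is the Cartan--Iwasawa comparison, namely that as $\gamma \cdot e^+ \to \xi$ through a regular sequence,
\begin{align*}
\mu_\Theta(\gamma_0^{-1}\gamma) - \mu_\Theta(\gamma) + \beta^\Theta_{\gamma \cdot e^+}(e, \gamma_0) \longrightarrow 0.
\end{align*}
Composing with $\psi$ and passing $s \to 1^+$ after pairing against a continuous test function yields the desired conformality identity $d\gamma_{0*}\nu/d\nu(\xi) = e^{\psi(\beta^\Theta_\xi(e, \gamma_0))}$.

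The principal obstacle is securing the Cartan--Iwasawa comparison in the $\Theta$-projected setting with enough uniformity to justify this limit. In rank one the estimate is essentially immediate from a $KAK$-decomposition; in higher rank, it requires that the sequence $\gamma_n \cdot e^+$ approach $\xi$ through the $\Theta$-regular part of $\Fboundary$, so that the $\LieA_\Theta$-component of the Iwasawa cocycle varies continuously in the limit. Here Zariski density of $\Gamma$ enters essentially: it guarantees that $\limitset$ sits in the appropriate regular stratum so that $\nu$-almost every point is approached regularly. Once this comparison is established, the conformality follows by a standard weak-$\ast$ argument, and the resulting $\nu$ is the required $(\Gamma,\psi)$-conformal measure.
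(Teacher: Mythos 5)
The paper does not prove this statement itself; it is cited from \cite{KOW23b}, so there is no internal proof to compare against. Your proposal follows the classical Patterson--Quint recipe---Poincar\'{e} series with critical exponent $1$, Patterson's modification, weak-$\ast$ limit, and a reindexing argument for conformality---which is the standard and correct route, and the one taken in that reference.

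The one place where your argument as written does not close is the justification of the Cartan--Iwasawa comparison. You write that one needs $\gamma_n e^+$ to approach $\xi$ ``through the $\Theta$-regular part of $\Fboundary$, so that the $\LieA_\Theta$-component of the Iwasawa cocycle varies continuously,'' and that Zariski density supplies this. That conflates two unrelated things. The Iwasawa cocycle $\sigma : G \times \mathcal{F} \to \LieA$, hence $\beta^\Theta$, is already smooth on all of $G \times \mathcal{F}$; there is no singular stratum of the flag variety to avoid, and continuity is not the obstacle. What the comparison $\mu_\Theta(\gamma_0^{-1}\gamma) - \mu_\Theta(\gamma) \to -\beta^\Theta_\xi(e,\gamma_0)$ actually requires is $\Theta$-regularity of the \emph{Cartan projections}: $\alpha(\mu(\gamma_n)) \to +\infty$ for every $\alpha \in \Theta$, so that $\mu(\gamma_n)$ escapes to infinity uniformly away from the $\Theta$-walls of $\LieA^+$. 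This is a hypothesis on how the group elements leave every compact set, not on where $\xi$ sits, and Zariski density does not furnish it. Nor does $\Theta$-discreteness, since $\|\mu_\Theta(\gamma)\| \to \infty$ is compatible with $\alpha(\mu(\gamma))$ staying bounded for some $\alpha \in \Theta$. The missing input is that $\Gamma$ is $\Theta$-regular (in \cite{KOW23b} this is packaged into the standing hypothesis that $\Gamma$ is $\Theta$-transverse); for the Anosov subgroups actually used in this paper it is automatic (cf.\ \cref{thm:BasicProperties}), but in the generality in which you argue it must be imposed or derived. A secondary point: your cone argument for $\delta_\psi \geq 1$ tacitly uses $\psi(v) = \growthindicator(v) > 0$, which is immediate if $v \in \interior\limitcone$ but requires a remark if $v \in \partial\limitcone \cap \interior\LieA_\Theta^+$.
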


\subsection{Anosov subgroups}
\label{subsec:AnosovSubgroups}
The notation introduced in the remainder of this section will be fixed throughout the paper. The following definition of Anosov subgroups in the Zariski dense case is appropriate for our purposes and is due to Guichard--Wienhard \cite[Theorem 4.11]{GW12} (see also \cite{Lab06,GGKW17,KLP17,BPS19} for other equivalent characterizations). For a (finitely generated) Gromov hyperbolic group $\Gamma$, we denote by $\Gboundary$ its Gromov boundary. We also denote
\begin{align*}
\Gboundary^{(2)} := \{(x, y) \in \Gboundary^2: x \neq y\}.
\end{align*}

\begin{definition}[$\Theta$-Anosov subgroup]
\label{def:Theta-AnosovSubgroup}
We call a Zariski dense discrete subgroup $\Gamma < G$ a \emph{($\Theta$-)Anosov subgroup} if it is a Gromov hyperbolic group and admits continuous $\Gamma$-equivariant \emph{boundary maps}
\begin{align*}
\zeta_\Theta&: \Gboundary \to \Fboundary, & \zeta_{\involution\Theta}&: \Gboundary \to \iFboundary
\end{align*}
such that
\begin{enumerate}
\item $(\zeta_\Theta(x), \zeta_{\involution\Theta}(x)) \in \mathcal{S}_\Theta$ for all $x \in \Gboundary$;
\item $(\zeta_\Theta(x), \zeta_{\involution\Theta}(y)) \in \Fboundary^{(2)}$ for all $(x, y) \in \Gboundary^{(2)}$.
\end{enumerate}
\end{definition}

By \cite[Lemma 3.18]{GW12}, if $\Gamma$ is $\Theta$-Anosov, then it is also $\Theta'$-Anosov for any other nonempty subset $\Theta' \subset \Theta$ (note that the notation $\Theta$ in loc. cit. is dual to ours).

Let $\Gamma$ be a Zariski dense $\Theta$-Anosov subgroup with boundary maps $\zeta_\Theta$ and $\zeta_{\involution\Theta}$. We note that $\Gamma$ is in particular $\Theta$-discrete \cite{GW12,KLP17}. We record some basic properties in the following theorem. The first 3 properties are \cite[Theorem 6.1]{BCLS15}, \cite[Proposition 4.6]{PS16}, \cite[Lemma 3.1]{GW12}, respectively, and the last three properties are \cite[Theorem 12.2, Lemmas 12.3, 4.3 and 4.5]{KOW23b} (cf. \cite[Theorem A]{Sam24} and \cite[Lemma 4.8]{Sam14a}).

\begin{theorem}
\label{thm:BasicProperties}
We have the following properties:
\begin{enumerate}
\item\label{itm:BasicProperties1} the boundary maps $\zeta_\Theta$ and $\zeta_{\involution\Theta}$ are H\"{o}lder homeomorphisms onto the limit sets $\limitset$ and $\ilimitset$, respectively;
\item\label{itm:BasicProperties2} the limit cone $\limitcone$ is contained in $\interior \LieA_\Theta^+ \cup \{0\}$;
\item\label{itm:BasicProperties3} every nontorsion element $\gamma \in \Gamma$ is loxodromic;
\item\label{itm:BasicProperties4} the growth indicator $\growthindicator$ is analytic and strictly concave except along radial directions on $\interior \limitcone$ and vertically tangent on $\partial\limitcone$;
\item\label{itm:BasicProperties5} if $\psi \in \LieA_\Theta^*$ is tangent to $\growthindicator$, then $\psi|_{\limitcone-\{0\}} > 0$;
\item\label{itm:BasicProperties6} if $\psi \in \interior\limitcone^*$, then there exists a finite positive constant
\begin{align}
\label{eqn:psi_TopologicalEntropy}
\delta_\psi := \limsup_{t \to +\infty} \frac{1}{t} \log\#\{[\gamma]: \psi(\lambda_\Theta(\gamma)) \leq t\} \in (0, +\infty),
\end{align}
where $[\gamma]$ denotes the conjugacy class of $\gamma \in \Gamma$, so that $\delta_\psi\psi$ is tangent to $\growthindicator$.
\end{enumerate}
\end{theorem}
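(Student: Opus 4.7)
The plan is a bookkeeping exercise that compiles this theorem from existing results in the literature and verifies each carries over to our Zariski dense $\Theta$-Anosov setting. Property \labelcref{itm:BasicProperties1} follows from the defining transversality hypothesis on $\zeta_\Theta$ and $\zeta_{\involution\Theta}$: continuity and $\Gamma$-equivariance are built in, injectivity comes from $(\zeta_\Theta(x),\zeta_{\involution\Theta}(y))\in\Fboundary^{(2)}$ for distinct $x,y\in\Gboundary$ together with minimality of $\limitset$, and H\"{o}lder regularity is the exponential expansion/contraction packaged into the Anosov property, as carried out in \cite{BCLS15}. Property \labelcref{itm:BasicProperties3} comes from north--south dynamics: every nontorsion element $\gamma$ of a Gromov hyperbolic group has an attracting/repelling fixed point pair on $\Gboundary$; pushing this pair forward via $(\zeta_\Theta,\zeta_{\involution\Theta})$ produces a transverse pair in $\Fboundary\times\iFboundary$, and Anosov contraction along the orbit of $\gamma$ identifies it as loxodromic in $G$, as in \cite[Lemma 3.1]{GW12}.

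For \labelcref{itm:BasicProperties2}, I would combine Benoist's theorem that $\limitcone$ is the closure of $\R_{\ge 0}\lambda_\Theta(\Gamma)$ with the Anosov quantitative separation of simple roots in $\Theta$, which prevents the Cartan, and hence Jordan, projections from landing on walls $\ker\alpha$ with $\alpha\in\Theta$; the packaging is \cite[Proposition 4.6]{PS16}. For \labelcref{itm:BasicProperties4}, the plan is to import \cite[Theorem 12.2]{KOW23b}, which extends Sambarino's earlier analysis of $\growthindicator$ from the $\Theta=\Pi$ case. The underlying method realizes $\growthindicator$ as a Legendre-type transform of a topological pressure functional on $\LieA_\Theta^*$; analyticity then follows from Ruelle perturbation theory for transfer operators attached to a H\"{o}lder reparametrization of the Gromov geodesic flow (the same flow that will reappear as $\{a^{\sfv}_t\}_{t\in\R}$ in this paper), and strict concavity along non-radial directions together with vertical tangency on $\partial\limitcone$ come from uniqueness of equilibrium states and Zariski density. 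I would verify the hypotheses match ours but not reprove the analytic content.

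Property \labelcref{itm:BasicProperties5} then follows by a convex analytic argument: since $\growthindicator>0$ on $\interior\limitcone$ and has vertical tangents on $\partial\limitcone$ by \labelcref{itm:BasicProperties4}, any linear functional $\psi\ge\growthindicator$ tangent at an interior point cannot vanish on a boundary ray of $\limitcone$, so $\psi>0$ on $\limitcone-\{0\}$. For \labelcref{itm:BasicProperties6} I would specialize the generalized Sambarino counting framework: for $\psi\in\interior\limitcone^*$, the abscissa of convergence of $\sum_{\gamma}e^{-t\psi(\mu_\Theta(\gamma))}$ is a well-defined positive number which coincides with \cref{eqn:psi_TopologicalEntropy} after an approximation exchanging Cartan and Jordan projections via the Anosov property, and the uniqueness of the scalar $\delta_\psi$ for which $\delta_\psi\psi$ is tangent to $\growthindicator$ follows from strict concavity in \labelcref{itm:BasicProperties4}. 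The main obstacle is not originality but consistency of citations, since the cited sources span the $\Pi$-Anosov, $\Theta$-Anosov, and projective Anosov frameworks; the substantive analytic core is \labelcref{itm:BasicProperties4}, for which I would simply cite \cite[Theorem 12.2]{KOW23b} rather than reproduce the thermodynamic formalism.
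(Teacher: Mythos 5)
Your proposal is correct and takes essentially the same approach as the paper, which simply compiles these six properties from the literature via the same citations (\cite[Theorem 6.1]{BCLS15}, \cite[Proposition 4.6]{PS16}, \cite[Lemma 3.1]{GW12}, and \cite[Theorem 12.2, Lemmas 12.3, 4.3 and 4.5]{KOW23b}); your narrative glosses of the underlying mechanisms (north--south dynamics, Benoist's limit cone theorem, Legendre duality with pressure, Ruelle perturbation theory) are accurate. One small point: in your derivation of \labelcref{itm:BasicProperties5} you restrict to tangency at an interior point, which is in fact the only nontrivial case since vertical tangency of $\growthindicator$ on $\partial\limitcone$ from \labelcref{itm:BasicProperties4} already precludes any linear form being tangent there, so the argument is complete once this is made explicit.
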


\begin{remark}
\label{rem:limitset homeomorphism}
For any nonempty subset $\Theta' \subset \Theta$, we have the canonical projection $\Fboundary \to \mathcal{F}_{\Theta'}$. It follows from definitions that it induces a surjective map $\limitset \to \Lambda_{\Theta'}$. As a consequence of \cref{thm:BasicProperties}\labelcref{itm:BasicProperties1}, this map is in fact a homeomorphism.
\end{remark}

For $\psi \in \LieA_\Theta^*$, let $\pi_\psi: \Fboundary^{(2)} \times \LieA_\Theta \to \Fboundary^{(2)} \times \R$ be the projection map defined by
\begin{align}
\label{eqn:ProjectionMap} 
\pi_\psi(x, y, v) = (x, y, \psi(v)) \qquad \text{ for all } (x, y, v) \in \Fboundary^{(2)} \times \LieA_\Theta.
\end{align}
The $\Gamma$-action on $G/S_\Theta \cong \Fboundary^{(2)} \times \LieA_\Theta$ descends to an action on $\Fboundary^{(2)} \times \R$ via the projection $\pi_\psi$ given explicitly by
\begin{align}
\label{eqn:GammaActionOnFboundary^(2)XR}
\gamma (x, y, s) &= \bigl(\gamma x, \gamma y, s + \psi(\beta^\Theta_{\gamma x}(e, \gamma))\bigr) \qquad \text{ for all } \gamma \in \Gamma, \, (x, y, s) \in \Fboundary^{(2)} \times \R.
\end{align}
Let $\limitset^{(2)} := (\zeta_\Theta \times \zeta_{\involution\Theta})(\Gboundary^{(2)}) \subset \Fboundary^{(2)}$. The $\Gamma$-action then restricts to $\limitset^{(2)} \times \R$ and we may define
\begin{align*}
\mathcal{X}_\psi := \Gamma \backslash \bigl(\limitset^{(2)} \times \R\bigr).
\end{align*}
A priori, this topological space may be a terrible, but we will see below that it is nice for $\psi \in \interior\limitcone^*$. The left $\Gamma$-action on $G/S_\Theta \cong \Fboundary^{(2)} \times \LieA_\Theta$ is not necessarily properly discontinuous. For example, when $G=\SL_3(\R)$ and $\#\Theta = 1$, the $\Gamma$-action is not properly discontinuous (see \cite{Ben96,Kob96,DGK17} for a general criterion) but on the other hand, when $G=\SL_n(\R)$ for some $n \ge 4$, there are examples where the $\Gamma$-action is properly discontinuous (see \cite[Corollaries 1.10 and 1.11]{GGKW17}). The $\Gamma$-action on $\Fboundary^{(2)} \times \R$ is also not necessarily properly discontinuous.

\begin{remark}
\label{rem: properly discontinuous}
In fact, there can be points in $\Fboundary^{(2)} \times \R$ with infinite cyclic stabilizer. We give a brief justification for certain choices of $\psi \in \interior\limitcone^*$. One can calculate for any loxodromic element $\gamma \in \Gamma$ with $\gamma \in g(\interior A^+)g^{-1}$ for some $g \in G$, and any element in the Weyl group $w \in \mathcal{W} = N_K(A)/Z_K(A)$, that
\begin{align*}
	\beta_{(gw^{-1})^+}(g, \gamma^j g) = j\Ad_w(\lambda(\gamma)) \qquad \text{for all $j \in \Z_{\geq 0}$}.
\end{align*}
Thus, the claim follows for the choices of $w \in \mathcal{W}$ and $\psi \in \interior\limitcone^*$ such that $\Ad_w(\lambda(\gamma)) \in \ker\psi$, and the point $(gw^{-1}e^+, gw^{-1}e^-, 0) \in \Fboundary^{(2)} \times \R$.
\end{remark}

However, as alluded to above, we have the following theorem for the $\Gamma$-action on $\limitset^{(2)}\times \R$.

\begin{theorem}
\label{thm:GammaAction}
If $\psi \in \interior\limitcone^*$, then the $\Gamma$-action on $\limitset^{(2)} \times \R$ induced by $\pi_\psi$ is properly discontinuous and cocompact; and in particular, $\mathcal{X}_\psi$ is a compact Hausdorff topological space.
\end{theorem}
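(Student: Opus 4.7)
The plan is to transport the action from $\limitset^{(2)} \times \R$ to $\Gboundary^{(2)} \times \R$ via the H\"older boundary maps, and then invoke the standard theory of H\"older reparametrizations of the Gromov geodesic flow associated to a hyperbolic group.

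First, using \cref{thm:BasicProperties}\labelcref{itm:BasicProperties1} together with the Anosov condition that $(\zeta_\Theta(x), \zeta_{\involution\Theta}(y)) \in \Fboundary^{(2)}$ for all $(x,y) \in \Gboundary^{(2)}$, I would define a $\Gamma$-equivariant H\"older homeomorphism
\begin{align*}
\Phi: \Gboundary^{(2)} \times \R \to \limitset^{(2)} \times \R, \qquad \Phi(x, y, s) = (\zeta_\Theta(x), \zeta_{\involution\Theta}(y), s),
\end{align*}
where the $\Gamma$-action on the source is given by the pulled-back cocycle
\begin{align*}
c_\psi(\gamma, x) := \psi\bigl(\beta^\Theta_{\zeta_\Theta(\gamma x)}(e, \gamma)\bigr),
\end{align*}
so that $\gamma \cdot (x, y, s) = (\gamma x, \gamma y, s + c_\psi(\gamma, x))$ matches \eqref{eqn:GammaActionOnFboundary^(2)XR}. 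It therefore suffices to prove proper discontinuity and cocompactness for the $\Gamma$-action on $\Gboundary^{(2)} \times \R$ via $c_\psi$.

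Next, I would verify the key structural properties of $c_\psi$ viewed as a cocycle over $\Gamma \curvearrowright \Gboundary$: it satisfies the cocycle identity inherited from $\beta^\Theta$; it is H\"older, since $\zeta_\Theta$ is H\"older by \cref{thm:BasicProperties}\labelcref{itm:BasicProperties1} and $\beta^\Theta$ is smooth in the boundary variable; and its \emph{periods} are strictly positive. The last point is the heart of the matter: for any nontorsion $\gamma \in \Gamma$, which is loxodromic by \cref{thm:BasicProperties}\labelcref{itm:BasicProperties3}, the attractor $\gamma^+ \in \Gboundary$ satisfies $\zeta_\Theta(\gamma^+) = \gamma \cdot \zeta_\Theta(\gamma^+)$ and the classical formula $\beta^\Theta_{\zeta_\Theta(\gamma^+)}(e, \gamma) = \lambda_\Theta(\gamma)$, so
\begin{align*}
c_\psi(\gamma, \gamma^+) = \psi(\lambda_\Theta(\gamma)) > 0
\end{align*}
because $\lambda_\Theta(\gamma) \in \limitcone - \{0\}$ and $\psi \in \interior\limitcone^*$ implies $\psi > 0$ on $\limitcone - \{0\}$ (using \cref{thm:BasicProperties}\labelcref{itm:BasicProperties2}).

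Finally, I would invoke the general principle (Ledrappier for surfaces, generalized by Sambarino and by Bridgeman--Canary--Labourie--Sambarino, as developed in \cite{Sam14a, BCLS15}) that a H\"older cocycle with strictly positive periods over the convergence action of a Gromov hyperbolic group on its boundary produces a properly discontinuous cocompact $\Gamma$-action on $\Gboundary^{(2)} \times \R$, whose quotient is a H\"older reparametrization of the Gromov geodesic flow. The main technical obstacle, absorbed in this last step, is the passage from positivity of periods to cocompactness: one must show that the cocycle is bounded away from the trivial reparametrization on each compact piece of $\Gboundary^{(2)}$, which relies on comparing $\psi(\beta^\Theta_\xi(e, \gamma))$ with the word length $|\gamma|$ via the linear comparison $\|\mu_\Theta(\gamma)\| \asymp |\gamma|$ afforded by the Anosov property and by using H\"older control of $\beta^\Theta$ together with strict positivity of $\psi$ on the closed cone $\limitcone \cap \{\|\cdot\| = 1\}$ to obtain a uniform lower bound away from zero.
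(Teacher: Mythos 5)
The paper does not give its own proof of this theorem: it is attributed to \cite[Theorem 9.2]{KOW23b}, with the remark that the proofs of \cite{Sam14b,BCLS15,Car23} (the Ledrappier--Sambarino H\"older-cocycle-with-positive-periods machinery for reparametrizations of the Gromov geodesic flow) generalize from the $\Theta=\Pi$ and projective-Anosov cases. Your proposal is exactly that second route --- transfer to $\Gboundary^{(2)}\times\R$ via the boundary maps, check the cocycle is H\"older with period $\psi(\lambda_\Theta(\gamma))>0$, then invoke the reparametrization theory --- so it matches the approach the paper endorses, and your sketch of the properness step (linear comparison of $\mu_\Theta$ with word length plus a shadow-type bound relating $\beta^\Theta$ to $\mu_\Theta$) is the standard way that machinery is completed.
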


We remark that \cref{thm:GammaAction} is a special case of \cite[Theorem 9.2]{KOW23b} which describes the linear forms for which the action of a so-called $\Theta$-transverse group on $\limitset^{(2)} \times \R$ is properly discontinuous. Moreover, it was shown that the action is cocompact if and only if the $\Theta$-transverse group is in fact $\Theta$-Anosov. 

Let $\psi \in \interior\limitcone^*$. We equip the space $\mathcal{X}_\psi$ with a Bowen--Margulis--Sullivan (BMS) measure $\BMSX^\psi$ as follows. Let $\delta_\psi > 0$ be as in \cref{thm:BasicProperties}\ref{itm:BasicProperties6}. By \cite[Theorem 1.11]{KOW23b}, there exists a unique $(\Gamma, \delta_\psi\psi)$-conformal measure $\nu_\psi$ (resp. $(\Gamma, \delta_\psi\psi \circ \involution)$-conformal measure $\nu_{\psi\circ\involution}$) on $\Fboundary$ (resp. $\iFboundary$) and moreover, $\nu_{\psi}$ (resp. $\nu_{\psi\circ\involution}$) is supported on $\limitset$ (resp. $\ilimitset$). We define a locally finite Borel measure $\tilde{m}^\psi_{\mathcal{X}}$ on $\limitset^{(2)} \times \R$ by
\begin{equation}
d\tilde{m}^\psi_{\mathcal{X}}(x, y, s) = e^{\delta_\psi\psi(\mathcal{G}^\Theta[x, y])} \, d\nu_{\psi}(x) \, d\nu_{\psi\circ\involution}(y) \, ds \qquad \text{ for all } (x, y, s) \in \limitset^{(2)} \times \R
\end{equation}
where $ds$ denotes the Lebesgue measure on $\R$. Observe that $\tilde{m}^\psi_{\mathcal{X}}$ is left $\Gamma$-invariant, so it descends to a probability measure $\BMSX^\psi$ on $\mathcal{X}_\psi$ (after normalization). By \cite[Proposition 3.3.2]{Sam24}, $\BMSX^\psi$ is the measure of maximal entropy for the translation flows on $\mathcal{X}_\psi$.

A consequence of \cref{thm:GammaAction} is that the restriction of the $\Gamma$-action on $\Fboundary^{(2)} \times \LieA_\Theta$ to $\limitset^{(2)} \times \LieA_\Theta$ is properly discontinuous. Moreover, the locally compact Hausdorff topological space $\Omega :=\Gamma \backslash (\limitset^{(2)} \times \LieA_\Theta)$ is a trivial $\ker\psi$-vector bundle over $\mathcal{X}_\psi$ by a standard argument.

\subsection{Translation flows}
\label{subsec:TranslationFlows}
Given a vector $\sfv \in \LieA_\Theta$, we have a flow $\{a^\psi_{t\sfv}\}_{t \in \R}$ on $\Omega$ which is given by translation by $t\sfv$ in the $\LieA_\Theta$-coordinate. The flow $\{a^\psi_{t\sfv}\}$ descends via $\pi_\psi$ to what we call a \emph{translation flow} $\{a^{\psi, \sfv}_t\}_{t \in \R}$ on $\mathcal{X}_\psi$ given explicitly by
\begin{align*}
a^{\psi, \sfv}_t \cdot \Gamma(x, y, s) := \Gamma(x, y, s + \psi({\sfv})t) \qquad \text{ for all } t \in \R, \, (x, y, s) \in \limitset^{(2)}\times\R.
\end{align*}
By \cite{Sam24}, the topological entropy of the flow $\bigl\{a^{\psi,\sfv}_t\bigr\}_{t \in \R}$ is $\delta_\psi\psi(\sfv)$ (see \cref{eqn:psi_TopologicalEntropy}). Note that only the value of $\psi(\sfv)$ is required to determine the translation flow on $\mathcal{X}_\psi$ and furthermore, rescaling $\psi$ simply rescales the $\R$-coordinate of $\mathcal{X}_\psi$. We introduce the following notation to fix a family of translation flows without the redundacies from scaling $\psi$ or $\sfv$. For $\sfv \in \interior\limitcone$, set 
\begin{align}
\label{eqn:GeneralLinearForm}
\psi_{\sfv} := \frac{\langle\nabla\growthindicator(\sfv), \cdot \rangle}{\growthindicator(\sfv)};&& \mathcal{X}_{\sfv} := \mathcal{X}_{\psi_{\sfv}}; && a^{\sfv}_t := a^{\psi_{\sfv}, \sfv}_t; && \BMSX^{\sfv} := \BMSX^{\psi_{\sfv}}.
\end{align}
Note that $\growthindicator(\sfv)\psi_{\sfv} = \langle\nabla\growthindicator(\sfv), \cdot \rangle$ is the unique linear form tangent to $\growthindicator$ at $\sfv$. Then, the aforementioned redundancies are removed if $\sfv$ is chosen so that $\psi_{\sfv}(\sfv) = 1$ and the topological entropy is then $\delta_\sfv := \delta_{\psi_{\sfv}} = \growthindicator(\sfv)$ by \cref{thm:BasicProperties}\labelcref{itm:BasicProperties6}.

We recall the relation between the translation flow and the (unique up to H\"older reparametrization (see \cref{def:Reparametrization})) Gromov geodesic flow $\bigl(\mathcal{G}, \bigl\{a^{\mathcal{G}}_t\bigr\}_{t \in \R}\bigr)$ associated to $\Gamma$.

\begin{theorem}
\label{thm:GromovGeodesicFlow}
For all $\sfv \in \interior \limitcone$, there exists a H\"{o}lder homemorphism $\mathcal{X}_{\sfv} \to \mathcal{G}$ conjugating $\{a^{\sfv}_t\}_{t \in \R}$ to a H\"{o}lder reparametrization of $\{a^{\mathcal{G}}_t\}_{t \in \R}$.
\end{theorem}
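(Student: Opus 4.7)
The plan is to realize both dynamical systems as quotients of $\Gboundary^{(2)} \times \R$ by $\Gamma$-actions determined by H\"{o}lder cocycles, and then to exploit the standard principle that two flows so described with strictly positive periods are H\"{o}lder conjugate up to a H\"{o}lder reparametrization of time.

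First, I would pull back the model of $\mathcal{X}_\sfv$ from $\limitset^{(2)}$ to $\Gboundary^{(2)}$ using the H\"{o}lder homeomorphisms $\zeta_\Theta$ and $\zeta_{\involution\Theta}$ provided by \cref{thm:BasicProperties}\labelcref{itm:BasicProperties1}. This identifies $\mathcal{X}_\sfv$ H\"{o}lder homeomorphically with $\Gamma \backslash \bigl(\Gboundary^{(2)} \times \R\bigr)$, where the $\Gamma$-action is
\begin{align*}
\gamma \cdot (x, y, s) = \bigl(\gamma x, \gamma y, s + c_\sfv(\gamma, x)\bigr), \qquad c_\sfv(\gamma, x) := \psi_\sfv\bigl(\beta^\Theta_{\zeta_\Theta(\gamma x)}(e, \gamma)\bigr),
\end{align*}
and the translation flow $\{a^\sfv_t\}_{t \in \R}$ descends to $\R$-translation in the last coordinate. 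Since $\zeta_\Theta$ is H\"{o}lder and $\beta^\Theta$ is smooth on $\Fboundary$, the function $c_\sfv$ is a H\"{o}lder cocycle on $\Gamma \times \Gboundary$. Its period along the conjugacy class of a loxodromic element $\gamma$, which by \cref{thm:BasicProperties}\labelcref{itm:BasicProperties3} accounts for every nontrivial conjugacy class, equals $\psi_\sfv(\lambda_\Theta(\gamma)) > 0$, where positivity comes from the fact that $\growthindicator(\sfv)\psi_\sfv$ is tangent to $\growthindicator$ at $\sfv$ and is therefore strictly positive on $\limitcone - \{0\}$ by \cref{thm:BasicProperties}\labelcref{itm:BasicProperties5}.

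Second, by the Gromov--Mineyev construction of the flow space for a word hyperbolic group, the Gromov geodesic flow $\mathcal{G}$ admits a model of the very same shape: a quotient $\Gamma \backslash \bigl(\Gboundary^{(2)} \times \R\bigr)$ with $\R$-translation as the flow and a H\"{o}lder cocycle $c_{\mathrm{Gr}}$ having strictly positive periods; this model is unique up to H\"{o}lder reparametrization, which is already built into the conclusion we must prove.

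Finally, a Livsic--Ledrappier-type correspondence (in the spirit of Sambarino's thermodynamic formalism for Anosov representations) identifies the two systems: any two H\"{o}lder cocycles on $\Gamma \times \Gboundary$ with strictly positive periods yield flows that are H\"{o}lder conjugate up to a H\"{o}lder reparametrization, realized via a Bowen-type Markov coding of $\Gamma \backslash \Gboundary^{(2)}$ over which both flows are suspensions by strictly positive H\"{o}lder roof functions; the conjugacy acts as the identity on the base and the reparametrization is the ratio of the two roof functions. The main obstacle is H\"{o}lder control of the reparametrization itself, which rests on H\"{o}lder regularity of both cocycles together with the Markov coding furnished by the metric Anosov property of $\{a^\sfv_t\}_{t \in \R}$ established in \cref{sec:AnosovProperty}.
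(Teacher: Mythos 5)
Your route is genuinely different from the one the paper (via its citations) takes. The paper follows the construction in \cite[Theorem 4.15]{CS23}, which is essentially reproduced in \cref{thm:TranslationFlowConjugateToRhoFlow}: pass to a projective Anosov representation via the Pl\"{u}cker embedding, use the results of \cite{BCLS15} on the $\rho$-geodesic flow $\mathcal{G}_\rho$ as a model for $\mathcal{G}$, and build the conjugacy directly by an averaging argument on a line bundle over $\tilde{\mathcal{G}}_\rho$. You instead argue via a Ledrappier-type correspondence between H\"{o}lder cocycles on $\Gamma \times \Gboundary$ with strictly positive periods, which is the approach of Sambarino \cite{Sam14b} and \cite[Proposition 4.2]{BCLS15}. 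The first two steps of your proposal are sound: the pullback along $\zeta_\Theta, \zeta_{\involution\Theta}$ is H\"{o}lder by \cref{thm:BasicProperties}\labelcref{itm:BasicProperties1}, the cocycle $c_\sfv$ is H\"{o}lder, and its periods $\psi_\sfv(\lambda_\Theta(\gamma))$ are strictly positive by \cref{thm:BasicProperties}\labelcref{itm:BasicProperties5} as you note. Reducing the theorem to a comparison of two positive H\"{o}lder cocycles on $\Gamma \times \Gboundary$ is a legitimate and arguably more intrinsic strategy that avoids the Pl\"{u}cker embedding.

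The issue is concentrated in your third step. Realizing the conjugacy ``via a Bowen-type Markov coding'' of both flows does not directly yield the desired H\"{o}lder homeomorphism between the flow spaces, because the coding map from the symbolic suspension to the flow space is a continuous surjection but fails to be injective on the boundaries of rectangles; a fiberwise scaling of roof functions gives a conjugacy of the two suspensions, but pushing it down to $\mathcal{X}_\sfv$ and $\mathcal{G}$ requires an additional argument that you have not supplied. Moreover, obtaining a Markov section at all requires the metric Anosov property of \cref{thm:TranslationFlowIsMetricAnosov}, which the paper proves \emph{via} the bi-Lipschitz conjugacy to $\mathcal{G}_\rho$ in \cref{thm:TranslationFlowConjugateToRhoFlow}; that latter theorem already contains (and strengthens to bi-Lipschitz) the statement you are trying to prove, so this detour uses a stronger result to establish a weaker one. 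The cleaner way to finish your step 3 is the one actually used in the cited literature: construct the conjugating map directly on $\Gboundary^{(2)} \times \R$ by the averaging argument of \cite[Lemma 4.3]{BCLS15} (as in \cite[Theorem 3.2]{Sam14b} and \cite[Theorem 4.15]{CS23}), which needs neither a Markov section nor the metric Anosov property and makes the H\"{o}lder regularity of the reparametrization explicit.
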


Henceforth, fix $\sfv_0 \in \interior\limitcone$, say $\sfv_0 := \mathsf{u}_\Theta$. The above theorem in particular shows that for any $\sfv \in \interior\limitcone$ the dynamical system $(\mathcal{X}_{\sfv},\{a^{\sfv}_t\}_{t\in\R})$ is conjugate to a reparametrization of $(\mathcal{X}_{\sfv_0},\{a^{\sfv_0}_t\}_{t\in\R})$. In fact, we will see that the conjugating homeomorphism and reparametrizations are bi-Lipschitz (see \cref{thm:TranslationFlowConjugateToRhoFlow}). In light of this fact, we take a unified viewpoint where we have a single compact metric space 
\begin{align*}
\mathcal{X} := \mathcal{X}_{\sfv_0}    
\end{align*}
equipped with a family of pairs of BMS measures and translation flows 
\begin{align*}
\{(\BMSX^{\sfv}, \{a^{\sfv}_t\}_{t \in \R})\}_{\sfv \in \interior\limitcone}.
\end{align*}
To distinguish the family for $\sfv = \sf{v_0}$, we set
\begin{align*}
\psi := \psi_{\sfv_0}, && \BMSX:=\BMSX^{\sfv_0} &&\{a_t := a^{\sfv_0}_t\}_{t \in\R}. 
\end{align*}
The translation flows are then homothety equivariant, i.e., we have
\begin{align}
\label{eqn:HomothetyEquivariance}
a^{c \sfv}_t = a^{\sfv}_{ct} \qquad \text{for all $t \in \R$ and $c > 0$}.
\end{align}

When $\Theta = \Pi$, \cref{thm:GammaAction,thm:GromovGeodesicFlow} follow from the authors' prequel work \cite[Theorem 4.15]{CS23}. Such theorems first appeared in the work of Sambarino \cite[Theorem 3.2]{Sam14b} for fundamental groups of compact negatively curved manifolds and in \cite[Proposition 4.2]{BCLS15} for projective Anosov subgroups whose generalization was outline in \cite[Theorem A.2]{Car23}. The proofs can be generalized for arbitrary $\Theta$ (cf. \cite{Sam24} and see \cite[Theorem 9.1]{KOW23b} for a complete alternative proof).

\section{Metric Anosov property of the translation flow}
\label{sec:AnosovProperty}

The purpose of this section is to show that the translation flow $\{a_t\}_{t\in\R}$ on $\mathcal{X}$ is metric Anosov (see \cref{def:MetricAnosovFlow}).

Before stating the precise theorem, we first need to make an appropriate choice of metric to realize $\mathcal{X}$ as a metric space. Recall the bi-invariant Riemannian metric on the maximal compact subgroup $K < G$ induced by the Killing form $B$ (see \cref{subsec:LieTheoreticPreliminaries}). Since $\Fboundary$ and $\iFboundary$ are right quotients of $K$, the metric on $K$ induces left $K$-invariant metrics $d_{\Fboundary}$ on $\Fboundary$ and $d_{\iFboundary}$ on $\iFboundary$. Together with the standard Euclidean metric $d_\R$ on $\R$ we obtain a product metric $d_{\Fboundary} \times d_{\iFboundary} \times d_\R$ on $\Fboundary\times\iFboundary\times\R$ which restricts to $\Fboundary^{(2)} \times \R$. These metrics also restrict to $\limitset$, $\Lambda_{\involution\Theta}$, and $\limitset^{(2)} \times \R$. We call any metric on $\limitset^{(2)} \times \R$ which is locally bi-Lipschitz equivalent to the product metric on $\limitset^{(2)} \times \R$ a \emph{locally product-like metric}. We equip $\limitset^{(2)} \times \R$ with any \emph{$\Gamma$-invariant} locally product-like metric $\tilde{d}$; it descends to a metric $d$ on $\mathcal{X}$ and gives the metric space $(\mathcal{X}, d)$.

A $\Gamma$-invariant locally product-like metric on $\limitset^{(2)} \times \R$ can be constructed as in \cite[Lemma 4.11]{Cha94} (cf. \cite[Lemma 5.2]{BCLS15}). Let us recount it here since the metric on $\mathcal{X}$ is very important. First, by compactness, we can take a finite open cover of $\mathcal{X}$ and lift it to obtain a locally finite open cover $\mathcal{U} = \{\gamma  \mathcal{U}_j\}_{\gamma \in \Gamma, 1 \le j \le j_0}$ of $\limitset^{(2)} \times \R$, for some $j_0 \in \N$, such that
\begin{align*}
\gamma  \mathcal{U}_j \cap \gamma' \mathcal{U}_j = \varnothing \qquad \text{for all distinct $\gamma, \gamma' \in \Gamma$ and $1 \leq j\leq j_0$}.
\end{align*}
We then put the restricted product metric and their pushforwards
\begin{align*}
\tilde{d}_{e, j} &:= (d_{\Fboundary} \times d_{\iFboundary} \times d_\R)|_{\mathcal{U}_j}, & \tilde{d}_{\gamma, j} &:= \gamma_*\tilde{d}_{e, j} \qquad \text{for all $\gamma \in \Gamma$ and $1 \le j \le j_0$},
\end{align*}
to obtain a $\Gamma$-invariant family of metrics $\{\tilde{d}_{\gamma, j}\}_{\gamma \in \Gamma, 1\le j\le j_0}$ on the open cover $\mathcal{U} = \{\gamma  \mathcal{U}_j\}_{\gamma \in \Gamma, 1 \le j \le j_0}$. We then define the metric $\tilde{d}$ on $\limitset^{(2)} \times \R$ by
\begin{align*}
	\tilde{d}(z_{\mathrm{i}}, z_{\mathrm{f}}) = \inf\left\{\sum_{k = 0}^{N - 1} \tilde{d}_{\gamma_k, j_k}(z_k, z_{k + 1}):
	\begin{array}{l}
		\text{$N \in \N$, $z_0 = z_{\mathrm{i}}$, $z_N = z_{\mathrm{f}}$,} \\
		\text{$\forall\ 0 \leq k \leq N - 1$, $\exists\ \gamma_k \in \Gamma$, $\exists\ 1 \le j_k \le j_0$,} \\
		\text{such that $z_k, z_{k + 1} \in \gamma_k \mathcal{U}_{j_k}$}
	\end{array}
	\!\!\!\right\}
\end{align*}
for all $z_{\mathrm{i}}, z_{\mathrm{f}} \in \limitset^{(2)} \times \R$. Then, one can check that $\tilde{d}$ is $\Gamma$-invariant and locally bi-Lipschitz equivalent to $\tilde{d}_{\gamma, j}$ for all $\gamma \in \Gamma$ and $1 \le j \le j_0$, as desired.

\begin{remark}
We obtain a metric space $(\Omega, d)$ by a similar construction using the Euclidean metric $d_\LieA$ on $\LieA$ induced by the Killing form $B$ (see \cref{subsec:LieTheoreticPreliminaries}).
\end{remark}

\begin{remark}
When $\Theta = \Pi$, we have the diffeomorphism $G/M \cong \mathcal{F}^{(2)} \times \LieA$ where $M := S_\Pi < K$. Then using a $\Gamma$-equivariant continuous section $\Lambda^{(2)} \times \R \to \Lambda^{(2)} \times \LieA$, any left $G$-invariant and right $K$-invariant Riemannian metric on $G$ descends to a metric on $G/M$ pulls back to a locally product-like metric on $\Lambda^{(2)} \times \R$.
\end{remark}

\begin{remark}
\label{rem:MetricsMain}
The construction of the metric space $(\mathcal{X}, d)$ is highly sensitive to the metric space used for the boundary coordinates, and hence to the choice of $\Theta \subset \Pi$. Suppose we view $\Gamma < G$ as a $\Theta'$-Anosov subgroup for some nonempty proper subset $\Theta' \subset \Theta$ and obtain corresponding metric spaces $(\mathcal{X}', d')$ and $(\Omega', d')$. Then, the canonical projection $\Fboundary \to \mathcal{F}_{\Theta'}$ induces a Lipschitz homeomorphism $\limitset \to \Lambda_{\Theta'}$ (see \cref{rem:limitset homeomorphism}) and hence also Lipschitz homeomorphisms
\begin{align*}
\mathcal{X} &\to \mathcal{X}', & \Omega &\to \Omega',
\end{align*}
but their inverses need not be Lipschitz or even H\"older.
\end{remark}

This section is devoted to proving the following theorem.

\begin{theorem}
\label{thm:TranslationFlowIsMetricAnosov}
The translation flow $\{a_t\}_{t \in \R}$ on $(\mathcal{X}, d)$ is a metric Anosov flow with respect to the pair of laminations $(W^{\mathrm{su}}, W^{\mathrm{ss}})$ induced by the $\Theta$-horospherical foliations (see \cref{subsec:StableandUnstable}).
\end{theorem}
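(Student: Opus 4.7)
The plan is to follow the strategy signposted in \cref{subsec:OnTheProofOfMainTheorem}: identify the foliations $(W^{\mathrm{su}}, W^{\mathrm{ss}})$ explicitly from the $\Theta$-Hopf parameterization, reduce the verification of the metric Anosov axioms (hyperbolicity of an adapted metric on each leaf, continuity and local product structure of the foliations, exponential contraction/expansion) to a known projective Anosov case via a Plücker-type embedding, and then invoke \cite{BCLS15}. First, via \cref{def:HopfParametrization}, I would identify the leaf of $W^{\mathrm{su}}$ (resp.\ $W^{\mathrm{ss}}$) through $\Gamma(\xi,\eta,s)$ as the projection of the set obtained by varying $\eta' \in \ilimitset$ (resp.\ $\xi' \in \limitset$) and correspondingly adjusting the $\R$-coordinate so that the resulting triple lifts to an $N_\Theta^+$-orbit (resp.\ $N_\Theta^-$-orbit) in $G/S_\Theta$. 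Because these horospherical orbits are transverse and together with the $A_\Theta$-direction span $G$, the leaves provide the required local product structure, and because $d$ is locally product-like, continuity of the foliations and the $a_t$-invariance $a_t W^{\mathrm{su}} = W^{\mathrm{su}}$, $a_t W^{\mathrm{ss}} = W^{\mathrm{ss}}$ are immediate from the definition of the translation flow.

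For the contraction/expansion estimates, I would exploit the existence of a Plücker-type representation $\rho: G \to \PGL(V)$ with the property that $\rho(\Gamma)$ is projective Anosov with boundary maps compatible with $\zeta_\Theta$ and $\zeta_{\involution\Theta}$. Choosing the linear form $\psi = \psi_{\sfv_0}$ and matching it with the corresponding linear form on the Plücker side, the translation flow on $\mathcal{X}$ becomes H\"older conjugate, via an equivariant map, to a H\"older reparametrization of the translation flow on the projective Anosov quotient. In the projective Anosov setting, \cite[Prop.~5.1 and \S 5]{BCLS15} construct an adapted metric on each leaf of the horospherical foliations along which the flow is uniformly exponentially contracting/expanding, i.e., the metric Anosov axioms are satisfied; the construction uses the existence of a dominated splitting and the Anosov continuous cocycle property. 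Conjugating back, and using that H\"older reparametrizations of metric Anosov flows are metric Anosov (with slightly worse constants), gives the metric Anosov property of $\{a_t\}_{t \in \R}$ on $\mathcal{X}$ for some metric on the leaves; a final comparison shows this metric is locally bi-Lipschitz equivalent to the restriction of $d$ to each leaf.

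The main obstacle, and the point alluded to in \cref{rem:MetricAnosovIssue}, is exactly this last step: when $\Theta \neq \Pi$ the $\Gamma$-action on $G/S_\Theta$ need not be properly discontinuous, so $d$ cannot be obtained by simply descending a Riemannian metric on $G$, and the natural Lie-theoretic metrics (along horospherical orbits in $G$) do not a priori descend to leaves of $W^{\mathrm{su}}, W^{\mathrm{ss}}$ in $\mathcal{X}$. One must therefore establish a local bi-Lipschitz comparison between the locally product-like metric $d$ on leaves and a metric transported from the projective Anosov/Plücker side, on which the BCLS exponential estimates are formulated. The strategy would be to work on a precompact fundamental domain for the $\Gamma$-action on $\limitset^{(2)} \times \R$ (which exists by \cref{thm:GammaAction}) and show that both metrics are locally bi-Lipschitz equivalent to the product of the Riemannian distances on $\Fboundary$, $\iFboundary$, and $\R$; the H\"older equivariance of the Plücker boundary maps \cref{thm:BasicProperties}\labelcref{itm:BasicProperties1} then upgrades this to the desired comparison up to a controlled H\"older distortion, which is absorbed into the metric Anosov constants.
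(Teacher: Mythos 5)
Your overall route — identify the horospherical foliations from the Hopf parameterization, reduce to the projective Anosov case via a Pl\"ucker-type representation $\rho$, invoke \cite{BCLS15} for the $\rho$-geodesic flow, and transfer the metric Anosov property back to $(\mathcal{X},\{a_t\},d)$ through a conjugacy — matches the paper's strategy. The problem is that the transfer step as you describe it would fail, and the fix requires more than H\"older regularity. You assert that the conjugating map is ``H\"older'' and that ``H\"older reparametrizations of metric Anosov flows are metric Anosov (with slightly worse constants),'' and later that H\"older distortion ``is absorbed into the metric Anosov constants.'' This is false: if the conjugacy $\mathcal{H}$ is only $\beta$-H\"older with $\beta'$-H\"older inverse, then $d_\rho(a_{\rho,t}z, a_{\rho,t}w) \le Ce^{-\eta t}d_\rho(z,w)$ transfers to a bound of the form $C'e^{-\eta\beta t}\,d(\mathcal{H}z,\mathcal{H}w)^{\beta\beta'}$, and since $\beta\beta'<1$ (unless $\mathcal{H}$ is bi-Lipschitz) this is \emph{not} linear in $d(\mathcal{H}z,\mathcal{H}w)$, so \cref{def:MetricAnosovFlow} is not satisfied. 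Likewise, to preserve the exponential rate the reparametrization cocycle must satisfy a \emph{linear} lower bound $\kappa(x,t)\ge c't$, not merely a H\"older one. This is exactly why the paper proves (\cref{thm:TranslationFlowConjugateToRhoFlow}) that $\mathcal{H}$ is \emph{bi-Lipschitz}, and formulates a Lipschitz criterion (\cref{pro:ReparametrizationIsMetricAnosov}) rather than a H\"older one. The ingredient you cite — the H\"older regularity of the abstract boundary maps in \cref{thm:BasicProperties}\labelcref{itm:BasicProperties1} — is the wrong one: the relevant maps $\Fboundary\to\bP(V)$, $\iFboundary\to\bP(V^*)$ induced by the Pl\"ucker representation are \emph{smooth} embeddings restricted to compact sets, hence Lipschitz, and this is where the bi-Lipschitz property of $\mathcal{H}$ actually comes from.

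A second point your proposal elides: conjugating by $\mathcal{H}$ only establishes the metric Anosov property with respect to the \emph{pullback} foliations $\bigl(\mathcal{H}(W^{\mathrm{su}}_\rho),\mathcal{H}(W^{\mathrm{ss}}_\rho)\bigr)$, which a priori are not the $\Theta$-horospherical foliations $(W^{\mathrm{su}},W^{\mathrm{ss}})$ that the theorem asserts. These two pairs differ but are leafwise contained in the same weak foliations, and one needs an additional transfer argument across compatible foliations. This is condition~\labelcref{itm:FoliationProperty} in \cref{pro:ReparametrizationIsMetricAnosov}, and verifying it requires bounding the flow-time offset between a point on a horospherical leaf and the corresponding point on the pullback leaf in terms of the $\Fboundary$-distance; the paper does this using smoothness of the $\Theta$-Gromov product on a compact fundamental domain (together with the Lipschitz properties of $\hat{u}$ and $\mathcal{H}$), not boundary-map regularity. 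Without this step, you have exponential contraction along leaves of the wrong foliation and cannot conclude the theorem as stated.
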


We obtain the following as a consequence of \cref{thm:TranslationFlowIsMetricAnosov,thm:MarkovSectionForMetricAnosovFlow}.

\begin{corollary}
\label{cor:TranslationFlowHasMarkovSection}
The translation flow $\{a_t\}_{t\in\R}$ on $(\mathcal{X}, d)$ has a Markov section with respect to $(W^{\mathrm{su}}, W^{\mathrm{ss}})$ (see \cref{def:MarkovSection}).
\end{corollary}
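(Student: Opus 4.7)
The plan is to derive the corollary directly by invoking the general existence theorem \cref{thm:MarkovSectionForMetricAnosovFlow} for Markov sections of metric Anosov flows. That theorem takes as input a metric Anosov flow on a compact metric space together with its pair of transverse local stable/unstable foliations, and produces a Markov section compatible with these foliations. Since \cref{thm:TranslationFlowIsMetricAnosov} delivers exactly the metric Anosov hypothesis for the particular pair $(W^{\mathrm{su}}, W^{\mathrm{ss}})$ on the compact metric space $(\mathcal{X}, d)$, the corollary should follow by a one-line invocation.

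To make this rigorous I would proceed as follows. First I would verify that $(\mathcal{X}, d)$ satisfies the compactness hypothesis of the abstract theorem: this was recorded at the start of \cref{subsec:TranslationFlows} and ultimately follows from the cocompactness statement of \cref{thm:GammaAction}. Next I would record that $\{a_t\}_{t\in\R}$ is a metric Anosov flow with respect to the explicit transverse pair of foliations $(W^{\mathrm{su}}, W^{\mathrm{ss}})$ induced by the $\Theta$-horospherical foliations, which is precisely the content of \cref{thm:TranslationFlowIsMetricAnosov}. With both inputs in hand, applying \cref{thm:MarkovSectionForMetricAnosovFlow} yields a Markov section for $\{a_t\}_{t\in\R}$ on $\mathcal{X}$ with respect to $(W^{\mathrm{su}}, W^{\mathrm{ss}})$, which is exactly the conclusion of \cref{cor:TranslationFlowHasMarkovSection}.

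The main obstacle for the corollary has therefore been displaced upstream to the proof of \cref{thm:TranslationFlowIsMetricAnosov}: the delicate point is establishing the Anosov property on a phase space $\mathcal{X}$ which is not a smooth manifold and whose metric $d$ is only required to be locally product-like, rather than induced from a $G$-invariant Riemannian metric as in the case $\Theta = \Pi$ treated in \cite{CS23}. Once that work is done, no further technical input is needed for the existence of a Markov section; in particular, the constructed Markov section will automatically inherit the compatibility with the $\Theta$-horospherical directions, which is what will later allow us to feed Lie-theoretic arguments into the verification of LNIC.
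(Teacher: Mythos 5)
Your proposal is correct and follows exactly the paper's own argument: the corollary is obtained by combining \cref{thm:TranslationFlowIsMetricAnosov} (that $\{a_t\}_{t\in\R}$ is metric Anosov with respect to $(W^{\mathrm{su}}, W^{\mathrm{ss}})$) with the general existence theorem \cref{thm:MarkovSectionForMetricAnosovFlow} of Pollicott. Your additional remarks about compactness and the displacement of difficulty upstream to the metric Anosov property are accurate but not part of the paper's (one-line) justification.
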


\begin{remark}
This gives another similar approach to obtain \cite[Corollary 4.17]{CS23}, where the metric Anosov property was not directly required. In contrast, the metric Anosov property is crucial in this work, not only to obtain a symbolic coding.
\end{remark}

\subsection{Metric Anosov flows, Markov sections, and reparametrizations}
\label{subsec:MetricAnosovFlows}

In this subsection, we briefly recall the definition of metric Anosov flows and the fact that they admit Markov sections due to Pollicott \cite{Pol87}. The reader can also refer to \cite[\S\,3.2]{BCLS15} and \cite[\S\,4.1]{CS23} for details omitted in this subsection. We also discuss reparametrizations and give a Lipschitz criterion for when a reparametrization of a metric Anosov flow is also metric Anosov.

For this subsection, let $\{\phi_t\}_{t \in \R}$ be an arbitrary continuous flow on an arbitrary metric space $(\mathcal{Y}, d)$. Given a lamination $W$ of $\mathcal{Y}$ and a point $x \in \mathcal{Y}$, let $W(x)$ be the leaf through $x$ and
\begin{align*}
W_\epsilon(x) = \{y \in W(x): d(x, y) < \epsilon\} \qquad \text{for all $\epsilon > 0$}.
\end{align*}

\begin{definition}[Local product structure]
\label{def:LocalProductStructure}
A pair of laminations $(W, W')$ of $\mathcal{Y}$ is said to have \emph{local product structure} if there exists $\epsilon > 0$ such that for all $x \in \mathcal{Y}$, there exists a homeomorphism $[\cdot, \cdot]: W_\epsilon(x) \times W'_\epsilon(x) \to O_x$ where $O_x \subset \mathcal{Y}$ is a neighborhood of $x$ such that $[\cdot, \cdot]^{-1}$ is a chart for both $W$ and $W'$.
\end{definition}

A lamination $W$ is \emph{equivariant} under $\phi$ if $\phi_t(W(x)) = W(\phi_t(x))$ for all $x \in \mathcal{Y}$ and $t \in \mathbb R$. We say that $W$ is \emph{transverse} to $\phi$ if it is equivariant under $\phi$ and there exists $\epsilon > 0$ such that for all $x \in \mathcal{Y}$, there exists a chart $\varphi_x: O_x \to V_x^1 \times V_x^2 = V_x^1 \times (-\epsilon, \epsilon) \times \tilde{V}_x^2$ for $W$, for some topological spaces $V_x^1$ and $\tilde{V}_x^2$, such that
\begin{align*}
\phi_t((\varphi_x)^{-1}(y, s, v)) = (\varphi_x)^{-1}(y, s + t, v)
\end{align*}
for all $(y, s, v) \in V_x^1 \times (-\epsilon, \epsilon) \times \tilde{V}_x^2$ and $t \in \mathbb R$ with $s + t \in (-\epsilon, \epsilon)$.

Given a lamination $W$ transverse to $\{\phi_t\}_{t \in \R}$, we define the corresponding \emph{central lamination} $W^{\mathrm{c}}$ such that for all $x, y \in \mathcal{Y}$, we have $y \in W^{\mathrm{c}}(x)$ if and only if $\phi_t(y) \in W(x)$ for some $t \in \R$. For convenience, we denote by $W^\phi$ the central lamination induced by the trivial lamination by points, i.e., $W^\phi(x) = \phi_\R(x)$ for all $x \in \mathcal{Y}$.

Let $(W^{\mathrm{su}}, W^{\mathrm{ss}})$ be a pair of laminations transverse to $\{\phi_t\}_{t\in\R}$. Denote $W^{\mathrm{wu}} = (W^{\mathrm{su}})^{\mathrm{c}}$ and $W^{\mathrm{ws}} = (W^{\mathrm{ss}})^{\mathrm{c}}$ and suppose $(W^{\mathrm{wu}}, W^{\mathrm{ss}})$ and $(W^{\mathrm{ws}}, W^{\mathrm{su}})$ have local product structures for some sufficiently small $\epsilon_0 \in (0,1)$. Denote by $[\cdot,\cdot]$ only the map provided by the local product structure of $(W^{\mathrm{wu}}, W^{\mathrm{ss}})$.
Subsets $U \subset W_{\epsilon_0}^{\mathrm{su}}(x)$ and $S \subset W_{\epsilon_0}^{\mathrm{ss}}(x)$ for some $x \in \mathcal{Y}$ are called \emph{proper} if $U = \overline{\interior(U)}$ and $S = \overline{\interior(S)}$, where the interiors and closures are taken in the respective plaque topologies.
For any proper subsets $U \subset W_{\epsilon_0}^{\mathrm{su}}(x)$ and $S \subset W_{\epsilon_0}^{\mathrm{ss}}(x)$, we call
\begin{align*}
R = [U, S] = \{[u, s] \in \mathcal{Y}: u \in U, s \in S\} \subset \mathcal{Y}
\end{align*}
a \emph{rectangle of size $\hat{\delta}$} if $\diam(R) \leq \hat{\delta}$ for some $\hat{\delta} \in (0, 2\epsilon_0)$, and the chosen $x$ the \emph{center} of $R$. For any rectangle $R = [U, S]$, we can extend the map $[\cdot, \cdot]$ to $[\cdot, \cdot]: R \to R$ defined by $[v_1, v_2] = [u_1, s_2]$ for all $v_1 = [u_1, s_1] \in [U, S]$ and $v_2 = [u_2, s_2] \in [U, S]$.

\begin{definition}[Complete set of rectangles]
\label{def:CompleteSetOfRectangles}
A set $\mathcal{R} = \{R_1, R_2, \dotsc, R_N\} = \{[U_1, S_1], [U_2, S_2], \dotsc, [U_N, S_N]\}$ for some $N \in \N$ consisting of rectangles of size $\hat{\delta}$ with respect to $(W^{\mathrm{su}}, W^{\mathrm{ss}})$ in $\mathcal{Y}$ is called a \emph{complete set of rectangles of size $\hat{\delta}$} if:
\begin{enumerate}
\item \label{itm:MarkovProperty1} $R_j \cap R_k = \varnothing$ for all $1 \leq j, k \leq N$ with $j \neq k$;
\item \label{itm:MarkovProperty2} $\mathcal{Y} = \bigcup_{j = 1}^N \bigcup_{t \in [0, \hat{\delta}]} \phi_t(R_j)$.
\end{enumerate}
\end{definition}

Let $\mathcal{R} = \{R_1, R_2, \dotsc, R_N\} = \{[U_1, S_1], [U_2, S_2], \dotsc, [U_N, S_N]\}$ be a complete set of rectangles of size $\hat{\delta} \in (0, 2\epsilon_0)$ in $\mathcal{Y}$. We introduce some notation related to $\mathcal{R}$. Let
\begin{align*}
U &= \bigsqcup_{j = 1}^N U_j, & R &= \bigsqcup_{j = 1}^N R_j.
\end{align*}
Define the first return time map $\tau: R \to \R$ by
\begin{align*}
\tau(u) = \inf\{t \in \R_{>0}: \phi_t(u) \in R\} \qquad \text{for all $u \in R$}.
\end{align*}
Define
\begin{align}
\label{eqn:sup_inf_of_tau}
\overline{\tau} &= \sup_{u \in R} \tau(u), & \underline{\tau} = \inf_{u \in R} \tau(u).
\end{align}
Define the Poincar\'{e} first return map $\mathcal{P}: R \to R$ by
\begin{align*}
\mathcal{P}(u) = \phi_{\tau(u)}(u) \qquad \text{for all $u \in R$}.
\end{align*}
Let $\sigma = (\proj_U \circ \mathcal{P})|_U: U \to U$ be its projection where $\proj_U: R \to U$ is the projection defined by $\proj_U([u, s]) = u$ for all $[u, s] \in R$. We define the \emph{cores}
\begin{align*}
\hat{U} &= \{u \in U: \sigma^k(u) \in \interior(U) \text{ for all } k \in \Z_{\geq 0}\}, \\
\hat{R} &= \{u \in R: \mathcal{P}^k(u) \in \interior(R) \text{ for all } k \in \Z\}.
\end{align*}

\begin{definition}[Markov section]
\label{def:MarkovSection}
We say that a complete set of rectangles $\mathcal{R}$ is a \emph{Markov section (with respect to $(W^{\mathrm{su}}, W^{\mathrm{ss}})$)} if is satisfies the \emph{Markov property}:
\begin{align*}
[\interior(U_k), \mathcal{P}(u)] &\subset \mathcal{P}([\interior(U_j), u]), & \mathcal{P}([u, \interior(S_j)]) &\subset [\mathcal{P}(u), \interior(S_k)],
\end{align*}
for all $u \in R$ such that $u \in \interior(R_j) \cap \mathcal{P}^{-1}(\interior(R_k)) \neq \varnothing$, for all $1 \leq j, k \leq N$.
\end{definition}

Observe that if $\mathcal{R}$ is a Markov section, then $\tau$ is constant on $[u, S_j]$ for all $u \in U_j$ and $1 \leq j \leq N$.

\begin{definition}[Metric Anosov flow]
\label{def:MetricAnosovFlow}
The flow $\{\phi_t\}_{t\in\R}$ is said to be \emph{metric Anosov (with respect to $(W^{\mathrm{su}}, W^{\mathrm{ss}})$)} if there exist $\epsilon > 0$, $\eta > 0$, and $C > 0$ such that
\begin{enumerate}
\item $(W^{\mathrm{wu}}, W^{\mathrm{ss}})$ and $(W^{\mathrm{ws}}, W^{\mathrm{su}})$ have local product structures with constant $\epsilon$;
\item it satisfies the \emph{metric Anosov property}: for all $x \in \mathcal{Y}$, $y \in W_\epsilon^{\mathrm{su}}(x)$, and $z \in W_\epsilon^{\mathrm{ss}}(x)$, we have
\begin{align*}
d(\phi_{-t}(x), \phi_{-t}(y)) &\leq Ce^{-\eta t}d(x, y), & d(\phi_t(x), \phi_t(z)) &\leq Ce^{-\eta t}d(x, z),
\end{align*}
for all $t \geq 0$.
\end{enumerate}
\end{definition}

The existence of Markov sections for metric Anosov flows is due to Pollicott \cite{Pol87} and generalizes results of Bowen and Ratner \cite{Bow70,Rat73}.

\begin{theorem}
\label{thm:MarkovSectionForMetricAnosovFlow}
If $\{\phi_t\}_{t\in\R}$ is a metric Anosov flow with respect to $(W^{\mathrm{su}}, W^{\mathrm{ss}})$, then there exists a Markov section with respect to $(W^{\mathrm{su}}, W^{\mathrm{ss}})$ (see \cref{def:MarkovSection}).
\end{theorem}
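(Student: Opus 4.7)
The plan is to follow the classical construction of Markov partitions due to Bowen~\cite{Bow70} for Axiom~A flows, adapted by Ratner~\cite{Rat73} and extended by Pollicott~\cite{Pol87} to the metric Anosov setting. The strategy is first to produce a finite \emph{proto-Markov} cover of $\mathcal{Y}$ by rectangles, and then to refine it along $W^{\mathrm{su}}$ and $W^{\mathrm{ss}}$ until the Markov property holds. The two local product structures provide the geometric input for the first step, while the Anosov property (exponential contraction/expansion) provides the dynamical input needed to control the second step and to guarantee termination.

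First, using compactness of $\mathcal{Y}$ together with the two local product structures, I would cover $\mathcal{Y}$ with finitely many rectangles $\{R'_j = [U'_j, S'_j]\}_{j=1}^{N'}$ of some small size $\hat\delta < \epsilon_0$ so that the flow-saturation $\bigcup_j \bigcup_{t \in [0,\hat\delta]} \phi_t(R'_j)$ equals $\mathcal{Y}$; this is possible because each flow box of size $\hat\delta$ is an open neighborhood in $\mathcal{Y}$. A standard pruning procedure (truncating overlaps on boundaries and then taking closures of interiors along the two foliations) then yields a pairwise disjoint family $\{R_j\}_{j=1}^{N}$ satisfying \cref{def:CompleteSetOfRectangles}. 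At this stage one has a complete set of rectangles, but the Poincar\'e first return map $\mathcal{P}$ may still send a slice of $R_j$ only partway into $R_k$, so the Markov property of \cref{def:MarkovSection} need not hold yet.

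Next, I would enforce the Markov property by iteratively refining along $W^{\mathrm{su}}$ and $W^{\mathrm{ss}}$. Given any pair $(j,k)$ with $R_j \cap \mathcal{P}^{-1}(\interior(R_k)) \neq \varnothing$, cut $R_j$ along the unstable and stable fibers through $\mathcal{P}^{-1}(\partial R_k) \cap R_j$; this turns the offending subsets into unions of full sub-rectangles of the form $[\widetilde U, \widetilde S]$. Iterating this operation over all pairs and over all iterates $\mathcal{P}^{\pm n}$ is expected to produce a refined complete set of rectangles satisfying \cref{def:MarkovSection}. The key quantitative input, which comes directly from the Anosov property, is that new cuts introduced at iterate $n$ lie within $C e^{-\eta n}$ of cuts already present at earlier levels.

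The main obstacle will be to show that this refinement process terminates after finitely many steps and still produces a Markov section whose rectangles retain positive size in both foliated directions. Here the Anosov property supplies a form of expansivity: past some iterate $n_0$, the preimages $\mathcal{P}^{-n}(\partial R_k)$ shadow cuts introduced at earlier levels and contribute no genuinely new sub-rectangles, so the process stabilizes. Combined with closing-orbit and shadowing arguments that are standard consequences of the Anosov property, this yields a finite Markov section of size $\hat\delta$; the detailed execution is carried out in~\cite{Bow70,Rat73,Pol87}. In the present setup, no essential modification of the classical argument is required beyond verifying that the abstract local product maps $[\cdot,\cdot]$ of \cref{def:LocalProductStructure} play the role of classical product charts, and that the locally product-like metric on $\mathcal{Y}$ is compatible with $(W^{\mathrm{su}}, W^{\mathrm{ss}})$ in the sense needed to make the shadowing estimates go through.
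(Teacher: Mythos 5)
The paper does not prove this statement at all; the sentence immediately preceding the theorem simply attributes it to Pollicott \cite{Pol87} as a generalization of Bowen \cite{Bow70} and Ratner \cite{Rat73}, and that citation is the paper's entire ``proof.'' Since you invoke the same three references, at the level of attribution you are consistent with the paper. However, the sketch you give of the cited construction has a genuine flaw that you would have to repair before it could serve as an actual proof.

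The problem is in your third paragraph. You describe enforcing the Markov property by iteratively cutting the rectangles $R_j$ along the fibers through $\mathcal{P}^{-n}(\partial R_k)$ and claim that this process ``terminates after finitely many steps'' because for $n \geq n_0$ the new cuts shadow earlier ones and ``contribute no genuinely new sub-rectangles.'' That termination claim is false in general. The union $\bigcup_{n \geq 0}\mathcal{P}^{-n}(\partial R_k)$ is typically dense in the transversal; cutting along each new preimage produces strictly more sub-rectangles at every stage, and the fact that a new cut lies within $Ce^{-\eta n}$ of an old cut does not make it redundant, since the Markov property requires \emph{exact} set inclusions $\mathcal{P}([\interior(U_j), u]) \supset [\interior(U_k), \mathcal{P}(u)]$, not approximate ones. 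An iterative cutting recursion of this kind converges, at best, only in the limit (this is the Sinai picture for Anosov diffeomorphisms), and turning a limiting refinement into a \emph{finite} complete set of rectangles requires a different idea. The actual Bowen--Ratner--Pollicott argument does not iterate a cutting operation to stabilization. It first proves a shadowing (pseudo-orbit tracing) lemma from the Anosov estimates and the local product structure; it then fixes a single proto-rectangle cover $\{T_i\}$, uses shadowing to define a semiconjugacy $\pi$ from a subshift of finite type onto $\mathcal{Y}$, sets $Z_s := \pi(\{\omega : \omega_0 = s\})$, and finally performs \emph{one} finite combinatorial refinement of $\{T_i\}$ against $\{Z_s\}$ along $W^{\mathrm{su}}$ and $W^{\mathrm{ss}}$ to obtain proper disjoint rectangles satisfying the Markov property. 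The Anosov property enters through the shadowing lemma and the expansivity needed for $\pi$ to be well defined, not through termination of a refinement loop. Your outline should be reorganized so that shadowing is the central lemma, with the refinement appearing only once at the end.
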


We now recall the definition of a reparametrization of a flow and give a Lipschitz criterion to verify if a given reparametrization of a metric Anosov flow is also metric Anosov with respect to a given pair of laminations.

\begin{definition}[Reparametrization]
\label{def:Reparametrization}
A flow $\{\hat{\phi}_t\}_{t\in\R}$ on $\mathcal{Y}$ is called a \emph{reparametrization of $\{\phi_t\}_{t\in\R}$} if it is of the form $\hat{\phi}_t(x) = \phi_{\kappa(x, t)}(x)$ for all $x \in \mathcal{Y}$ and $t \in \R$, where $\kappa: \mathcal{Y} \times \R \to \R$ is a continuous map satisfying
\begin{enumerate}
\item positivity: $\kappa(x, t) > 0$ for all $x \in \mathcal{Y}$ and $t > 0$;
\item cocycle condition: $\kappa(x, s + t) = \kappa(\hat{\phi}_s(x), t) + \kappa(x, s)$ for all $x \in \mathcal{Y}$ and $s, t \in \R$.
\end{enumerate}
In that case, $\{\phi_t\}_{t \in \R}$ is itself a reparametrization of $\{\hat{\phi}_t\}_{t \in \R}$ for some continuous map $\kappa^*: \mathcal{Y} \times \R \to \R$ which we call the \emph{inverse of $\kappa$} and satisfies
\begin{enumerate}
\item $(\kappa^*)^* = \kappa$;
\item $\kappa(x, \kappa^*(x, t)) = \kappa^*(x, \kappa(x, t)) = t$ for all $(x, t) \in \mathcal{Y} \times \R$.
\end{enumerate}
We say that a reparametrization is Lipschitz (resp. H\"{o}lder) if $\kappa(\cdot, t)$ is Lipschitz (resp. H\"{o}lder) continuous for all $t \in \R$.
\end{definition}

\begin{remark}
We observe the following:
\begin{enumerate}
\item For any reparametrization $\{\hat{\phi}_t\}_{t\in\R}$ of $\{\phi_t\}_{t\in\R}$, we have $W^{\hat{\phi}} = W^{\phi}$.
\item If a reparametrization is Lipschitz/H\"{o}lder, then so is its inverse; see \cite[Remark 4.11]{CS23}.
\end{enumerate}
\end{remark}

Recall that according to our definitions, laminations which are transverse to a flow are assumed to be equivariant under the same flow. This equivariance property will be used in the proof of the following lemma.

\begin{proposition}
\label{pro:ReparametrizationIsMetricAnosov}
Let $\{\phi_t\}_{t\in\R}$ be a metric Anosov flow with respect to $(W^{\mathrm{su}}, W^{\mathrm{ss}})$ and with constants $(\epsilon, \eta, C)$. Suppose that $\bigl\{\hat{\phi}_t := \phi_{\kappa(\cdot, t)}\bigr\}_{t \in \R}$ is a reparametrization of $\{\phi_t\}_{t \in \R}$ and $\bigl(\widehat{W}^{\mathrm{su}}, \widehat{W}^{\mathrm{ss}}\bigr)$ is a pair of laminations transverse to $\{\hat{\phi}_t\}_{t \in \R}$ such that $\widehat{W}^{\mathrm{su}} \subset W^{\mathrm{wu}}$ and $\widehat{W}^{\mathrm{ss}} \subset W^{\mathrm{ws}}$ leafwise, and $\bigl(\widehat{W}^{\mathrm{wu}}, \widehat{W}^{\mathrm{ss}}\bigr)$ and $\bigl(\widehat{W}^{\mathrm{ws}}, \widehat{W}^{\mathrm{su}}\bigr)$ have local product structures with constant $\epsilon$. Suppose also that there exist $c \geq 1$ and $c' > 0$ such that 
\begin{enumerate}
\item \label{itm:FoliationProperty} 
for all $x \in \mathcal{Y}$, $y \in \widehat{W}_\epsilon^{\mathrm{su}}(x)$, $z \in \widehat{W}_\epsilon^{\mathrm{ss}}(x)$, and the unique intersections $y' \in W_{c^{-1}\epsilon}^{\mathrm{su}}(x) \cap W_{2\epsilon}^{\phi}(y)$ and $z' \in W_{c^{-1}\epsilon}^{\mathrm{ss}}(x) \cap W_{2\epsilon}^\phi(z)$, we have
\begin{align*}
c^{-1}d(x, y') \le d(x, y) \le cd(x, y'), & & c^{-1}d(x, z') \le d(x,z) \le cd(x, z');
\end{align*}
\item \label{itm:KappaProperty} 
$\kappa(x, t) \ge c't$ for all $t > 0$ and $x \in \mathcal{Y}$.
\end{enumerate}
Then, $\{\hat{\phi}_t\}_{t \in \R}$ is a metric Anosov flow with respect to $\bigl(\widehat{W}^{\mathrm{su}}, \widehat{W}^{\mathrm{ss}}\bigr)$ and with constants $(\epsilon, c'\eta, c^2C)$.
\end{proposition}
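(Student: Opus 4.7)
The plan is to verify directly the two defining conditions of \Cref{def:MetricAnosovFlow} for $\hat{\phi}$ with respect to the pair $(\hat{W}^{\mathrm{su}}, \hat{W}^{\mathrm{ss}})$. The core work is the contraction estimate; the local product structures will transfer from those of $(W^{\mathrm{wu}}, W^{\mathrm{ss}})$ and $(W^{\mathrm{ws}}, W^{\mathrm{su}})$ with essentially a change of transversal.

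For the contraction on $\hat{W}^{\mathrm{su}}$-leaves (the $\hat{W}^{\mathrm{ss}}$-estimate is entirely symmetric), given $y \in \hat{W}^{\mathrm{su}}_\epsilon(x)$ I would first pick $y' \in \phi_\R(y) \cap W^{\mathrm{su}}(x)$, whose existence is granted by $\hat{W}^{\mathrm{su}} \subset W^{\mathrm{wu}}$ leafwise, and use condition \labelcref{itm:FoliationProperty} to obtain $d(x, y') \leq c\, d(x, y)$. Next, the cocycle identity $\kappa(x, -t) + \kappa(\hat{\phi}_{-t}(x), t) = 0$ together with \labelcref{itm:KappaProperty} gives $T := -\kappa(x, -t) \geq c' t$, so that $\hat{\phi}_{-t}(x) = \phi_{-T}(x)$ and the point $y'' := \phi_{-T}(y')$ lies in $W^{\mathrm{su}}(\hat{\phi}_{-t}(x))$ while remaining on the common $\phi$-orbit of $y$ and of $\hat{\phi}_{-t}(y)$. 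The original Anosov property applied to $\phi$ then gives
\begin{align*}
d\bigl(\hat{\phi}_{-t}(x),\, y''\bigr) = d\bigl(\phi_{-T}(x),\, \phi_{-T}(y')\bigr) \leq C e^{-\eta T} d(x, y') \leq C e^{-c' \eta t}\, d(x, y').
\end{align*}
Applying \labelcref{itm:FoliationProperty} once more at $\hat{\phi}_{-t}(x)$ with the pair $\bigl(\hat{\phi}_{-t}(y),\, y''\bigr)$ converts this into $d\bigl(\hat{\phi}_{-t}(x), \hat{\phi}_{-t}(y)\bigr) \leq c^2 C e^{-c' \eta t}\, d(x, y)$, which is exactly the asserted estimate with constants $(c'\eta, c^2 C)$.

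For the local product structures, the point is that $\hat{\phi}$ and $\phi$ share the same unparametrized orbits, so the $\hat{\phi}$-central foliations are the $\phi$-flow saturations of $\hat{W}^{\mathrm{su}}$ and $\hat{W}^{\mathrm{ss}}$; combined with $\hat{W}^{\mathrm{su}} \subset W^{\mathrm{wu}}$ and $\hat{W}^{\mathrm{ss}} \subset W^{\mathrm{ws}}$ leafwise and matching leaf dimensions, this gives $\hat{W}^{\mathrm{wu}} = W^{\mathrm{wu}}$ and $\hat{W}^{\mathrm{ws}} = W^{\mathrm{ws}}$ locally. Thus the pairs requiring product structure are $(W^{\mathrm{wu}}, \hat{W}^{\mathrm{ss}})$ and $(W^{\mathrm{ws}}, \hat{W}^{\mathrm{su}})$. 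Using \labelcref{itm:FoliationProperty} to flow each hat transversal factor along $\phi$-orbits onto the corresponding strong foliation of $\phi$, and then applying the given bracket for $(W^{\mathrm{wu}}, W^{\mathrm{ss}})$ or $(W^{\mathrm{ws}}, W^{\mathrm{su}})$, produces the new bracket maps; shrinking $\epsilon$ by a factor of $c$ keeps the output inside the domain of the original local product structure, and the resulting bracket is a homeomorphism whose inverse is a chart for both hat foliations.

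The main obstacle I anticipate is the legitimacy of applying condition \labelcref{itm:FoliationProperty} the second time at $\hat{\phi}_{-t}(x)$: this requires $\hat{\phi}_{-t}(y)$ to lie in $\hat{W}^{\mathrm{su}}_\epsilon(\hat{\phi}_{-t}(x))$, which is not a priori guaranteed for all $t > 0$ even though $\hat{W}^{\mathrm{su}}$ is $\hat{\phi}$-invariant. I would handle this by a bootstrap: the inequality holds trivially at $t = 0$; continuity of $\hat{\phi}$ makes the set of $t \ge 0$ for which $\hat{\phi}_{-t}(y)$ stays within the required neighborhood open, and the contraction estimate, once valid there, makes it closed; hence it extends to all $t > 0$, possibly after a harmless initial shrinkage of $\epsilon$ depending only on $c$ and $c'$.
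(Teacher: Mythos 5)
Your proof is correct and follows essentially the same route as the paper's: slide to the strong $\phi$-foliation via condition~\labelcref{itm:FoliationProperty}, apply the Anosov property of $\phi$ together with the time bound from~\labelcref{itm:KappaProperty}, and slide back via~\labelcref{itm:FoliationProperty}. (You treat the unstable side flowing backwards; the paper treats the stable side flowing forwards; these are symmetric.) The chain $d(\hat{\phi}_{-t}(x), \hat{\phi}_{-t}(y)) \le c\, d(\hat{\phi}_{-t}(x), y'') \le cC e^{-\eta T} d(x,y') \le c^2 C e^{-c'\eta t} d(x,y)$ is exactly the paper's, just organized through the intermediate points $y'$ and $y''$ rather than writing everything in terms of $\phi_{\kappa(\cdot,\cdot)}$.

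Where you diverge is in being \emph{more} thorough on two points the paper leaves implicit. First, the paper's proof begins with ``it suffices to check the Anosov property'' and does not discuss transferring the local product structure; your observation that $\hat{W}^{\mathrm{wu}}$ and $W^{\mathrm{wu}}$ coincide locally (both are $\phi$-orbit saturations of transversals inside the same weak leaf, with matching dimensions) and that the new bracket is obtained from the old one by sliding hat transversals onto strong $\phi$-leaves along $\phi$-orbits is a correct way to fill that in. Second, and more substantively, you explicitly flag the need for $\hat{\phi}_{-t}(y)$ to remain within $\hat{W}^{\mathrm{su}}_\epsilon(\hat{\phi}_{-t}(x))$ before condition~\labelcref{itm:FoliationProperty} may be reapplied at the flowed base point; the paper's chain of inequalities silently assumes this. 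Your open-closed bootstrap with an initial shrinkage of $\epsilon$ is the right way to close the gap. One small correction: the shrinkage factor needed to guarantee that $c^2 C e^{-c'\eta t} d(x,y) < \epsilon$ for all $t \ge 0$ is roughly $1/(c^2 C)$, so it depends on $c$ and $C$, not on $c'$ as you wrote; $c'$ only improves matters in the exponent and plays no role in the worst case near $t = 0$.
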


\begin{proof}
Let $\{\phi_t\}_{t\in\R}$, $\{\hat{\phi}_t\}_{t\in\R}$, $(\epsilon, \eta, C)$, $c$, and $c'$ be as in the proposition. Suppose the hypotheses of the proposition hold. It suffices to check the metric Anosov property for $\{\hat{\phi}_t\}_{t \in \R}$. We check it for the stable lamination and the unstable lamination case is similar. By the metric Anosov property of $\{\phi_t\}_{t \in \R}$, for all $x \in \mathcal{Y}$, $z' \in W_\epsilon^{\mathrm{ss}}(x)$, and $t>0$ we have
\begin{align*}
d(\phi_t(x), \phi_t(z')) \leq Ce^{-\eta t}d(x, z').
\end{align*}
Let $x \in \mathcal{Y}$, $z \in \widehat{W}_{c^{-2}C^{-1}\epsilon}^{\mathrm{ss}}(x)$, and $t > 0$. By equivariance of $\widehat{W}^{\mathrm{ss}}$ under $\{\hat{\phi}_t\}_{t \in \R}$, the points $\hat{\phi}_t(x)$ and $\hat{\phi}_t(z)$ are on the same leaf of the lamination $\widehat{W}^{\mathrm{ss}}$. Take the unique intersection $z' \in W_{c^{-1}\epsilon}^{\mathrm{ss}}(x) \cap W_{2\epsilon}^\phi(z)$ so that the points $x$ and $z'$ are on the same leaf of the lamination $W^{\mathrm{ss}}$. Again, by equivariance of $W^{\mathrm{ss}}$ under $\{\phi_t\}_{t \in \R}$, the points $\phi_{\kappa(x, t)}(x)$ and $\phi_{\kappa(x, t)}(z')$ are on the same leaf of the lamination $W^{\mathrm{ss}}$. We now calculate that
\begin{align*}
d(\hat{\phi}_t(x), \hat{\phi}_t(z)) & = d(\phi_{\kappa(x, t)}(x), \phi_{\kappa(z, t)}(z))
\\
& \le cd(\phi_{\kappa(x, t)}(x), \phi_{\kappa(x, t)}(z')) \qquad \text{by \cref{itm:FoliationProperty}}
\\
& \le cCe^{-\eta\kappa(x, t)}d(x, z') \qquad \text{by the metric Anosov property}
\\
& \le c^2Ce^{-c'\eta t}d(x, z) \qquad \text{by \cref{itm:FoliationProperty,itm:KappaProperty}}.
\end{align*}
\end{proof}

\subsection{Stable and unstable laminations for the translation flow}
\label{subsec:StableandUnstable}
In this subsection, we introduce a pair of natural laminations on $\mathcal{X}$ transverse to the translation flow. The right $N_\Theta^\pm$-orbits in $G$ give the \emph{$\Theta$-horospherical} foliations. Recall that $P_\Theta$ is the normalizer of $N_\Theta^-$ in $G$. In particular, $S_\Theta \subset L_\Theta = P_\Theta \cap w_0P_{\involution\Theta}w_0^{-1}$ normalizes both $N_\Theta^-$ and $N_\Theta^+ = w_0N_{\involution\Theta}^-w_0^{-1}$. Then the $\Theta$-horospherical foliations descend to foliations on $G/S_\Theta \cong \Fboundary^{(2)} \times \LieA_\Theta$ which we denote by $H^{\Fboundary, \pm}$, and restrict to laminations on $\limitset^{(2)} \times \LieA_\Theta$ which we denote by $H^\pm$. More explicitly,
\begin{align*}
H^{\Fboundary, \pm}(gS_\Theta) := \{ghS_\Theta: h \in N_\Theta^\pm\} \qquad \text{for all $gS_\Theta \in G/S_\Theta$}.
\end{align*}
Using the Hopf parametrization, they are expressed as
\begin{align*}
H^{\Fboundary, +}(gS_\Theta) = &\, \bigl\{\bigl((gh)^+, (gh)^-, \beta^\Theta_{(gh)^+}(e, gh)\bigr) \, : \, h \in N_\Theta^+\bigr\}
\\
= &\, \bigl\{\bigl((gh)^+, g^-, \beta^\Theta_{g^+}(e, g) + \mathcal{G}^\Theta[(gh)^+, g^-] - \mathcal{G}^\Theta[g^+, g^-]\bigr) : h \in N_\Theta^+\bigr\},
\\
H^{\Fboundary, -}(gS_\Theta) = &\, \bigl\{\bigl((gn)^+, (gn)^-, \beta^\Theta_{(gn)^+}(e, gn)\bigr) \, : \, n \in N_\Theta^-\bigr\}
\\
= &\, \bigl\{\bigl(g^+, (gn)^-, \beta^\Theta_{g^+}(e, g)\bigr) \, : \, n \in N_\Theta^-\bigr\},
\end{align*} 
(see \cite[Lemma 7.4]{KOW23a} for the equalities) for all $gS_\Theta \in G/S_\Theta$. We denote the image foliations/laminations induced by $H^{\Fboundary, \pm}$ and $H^\pm$ under the projection $\pi_\psi$ (see \cref{eqn:ProjectionMap}) by $W^{\Fboundary, \mathrm{su}/\mathrm{ss}}$ and $W^{\mathrm{su}/\mathrm{ss}}$ and call them the \emph{strong unstable} and \emph{strong stable} foliations/laminations, respectively. Finally, we use the same notation $W^{\mathrm{su}/\mathrm{ss}}$ for the image laminations under the quotient map $\limitset^{(2)} \times \R \to \mathcal{X}$. The following lemma, which is analogous to \cite[Lemma 4.16]{CS23}, ensures that the above procedure produces well-defined foliations/laminations on $\Fboundary^{(2)} \times \R$, $\limitset^{(2)} \times \R$, and $\mathcal{X}$.

\begin{lemma}
\label{lem:FoliationsWellDefined}
Let $g_1, g_2 \in G$ such that
\begin{align*}
\bigl(g_1^+, g_1^-, \psi\bigl(\beta^\Theta_{g_1^+}(e, g_1)\bigr)\bigr) = \bigl(g_2^+, g_2^-, \psi\bigl(\beta^\Theta_{g_2^+}(e, g_2)\bigr)\bigr).
\end{align*}
Then,
\begin{enumerate}
\item for all $h_1, h_2 \in N_\Theta^+$ with $(g_1h_1)^+ = (g_2h_2)^+$, we have 
\begin{align*}
	\psi\bigl(\beta^\Theta_{(g_1h_1)^+}(e, g_1h_1)\bigr) = \psi\bigl(\beta^\Theta_{(g_2h_2)^+}(e, g_2h_2)\bigr);
\end{align*}
\item $\psi\bigl(\beta^\Theta_{(g_1n)^+}(e, g_1n)\bigr) = t$ for all $n \in N_\Theta^-$.
\end{enumerate}
\end{lemma}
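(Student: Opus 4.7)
The plan is to reduce both claims to computing $\beta^\Theta_{e^+}(e, \cdot)$ on specific subgroups of $G$, using only the cocycle identity and the $G$-equivariance $\beta^\Theta_{h\xi}(ha,hb)=\beta^\Theta_\xi(a,b)$ of the $\Theta$-Busemann function. First, I would note that the stabilizer in $G$ of $(e^+,e^-)\in\Fboundary\times\iFboundary$ is exactly $L_\Theta=P_\Theta\cap w_0 P_{\involution\Theta}w_0^{-1}$, so the hypothesis $(g_1^+,g_1^-)=(g_2^+,g_2^-)$ forces $g_2=g_1\ell$ with $\ell=sa\in S_\Theta A_\Theta$ (the decomposition $L_\Theta=S_\Theta A_\Theta$ from \cref{subsec:LieTheoreticPreliminaries}). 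The $\psi$-value of the third coordinate will then extract a single scalar constraint on $\ell$.

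Next I would compute $\beta^\Theta_{e^+}(e,sa)$ via the cocycle identity as $\beta^\Theta_{e^+}(e,s)+\beta^\Theta_{s^{-1}e^+}(e,a)=\beta^\Theta_{e^+}(e,s)+\beta^\Theta_{e^+}(e,a)$ (using that $s\in S_\Theta\subset L_\Theta$ fixes $e^+$). The first term vanishes because $S_\Theta$ is precisely the stabilizer of $(e^+,e^-,0)$ under the left $G$-action on $\Fboundary^{(2)}\times\LieA_\Theta$; the second equals $\log a\in\LieA_\Theta$ by the trivial Iwasawa decomposition $a^{-1}=\exp(-\log a)\in K\exp(-\log a)N^-$, which gives $\sigma(a^{-1},[P])=-\log a$ and hence $\beta^\Theta_{e^+}(e,a)=\pi_{\LieA_\Theta}(\log a)=\log a$. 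Combining with equivariance $\beta^\Theta_{g_1^+}(g_1,g_1 sa)=\beta^\Theta_{e^+}(e,sa)$ and the third-coordinate hypothesis yields the key identity
\begin{equation*}
\psi(\log a)=0.
\end{equation*}

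For part (2), since $N_\Theta^-\subset P_\Theta$ (recall $P_\Theta$ is the normalizer of $N_\Theta^-$), we have $(g_1 n)^+=g_1^+$ for any $n\in N_\Theta^-$. The cocycle and equivariance give $\beta^\Theta_{(g_1 n)^+}(e,g_1 n)=\beta^\Theta_{g_1^+}(e,g_1)+\beta^\Theta_{e^+}(e,n)$, and the remaining term $\beta^\Theta_{e^+}(e,n)$ vanishes because $N_\Theta^-\subset N^-$ sits inside the $N^-$-factor of the Iwasawa decomposition, so $\sigma(n^{-1},[P])=0$. Applying $\psi$ gives the desired equality with the common value $t:=\psi(\beta^\Theta_{g_1^+}(e,g_1))$.

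For part (1), I would use that $L_\Theta$ normalizes $N_\Theta^+$ to write $g_2 h_2=g_1 sa h_2=g_1\tilde{h}_2 sa$, where $\tilde{h}_2:=(sa)h_2(sa)^{-1}\in N_\Theta^+$. Then $(g_2 h_2)^+=(g_1\tilde{h}_2)^+$, and combined with $(g_1 h_1)^+=(g_2 h_2)^+$ and the injectivity of the big-cell map $N_\Theta^+\hookrightarrow G/P_\Theta$ (i.e.\ $N_\Theta^+\cap P_\Theta=\{e\}$), we conclude $h_1=\tilde{h}_2$. A final application of the cocycle and equivariance gives
\begin{equation*}
\beta^\Theta_{(g_2 h_2)^+}(e,g_2 h_2)=\beta^\Theta_{(g_1 h_1)^+}(e,g_1 h_1)+\beta^\Theta_{e^+}(e,sa)=\beta^\Theta_{(g_1 h_1)^+}(e,g_1 h_1)+\log a,
\end{equation*}
and the key identity $\psi(\log a)=0$ from the second paragraph closes the argument. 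The only real obstacle is bookkeeping with the Iwasawa-cocycle conventions; the structural inputs (the stabilizer of $(e^+,e^-)$ is $L_\Theta$, that $L_\Theta$ normalizes $N_\Theta^\pm$, and the big-cell intersection $N_\Theta^+\cap P_\Theta=\{e\}$) are standard.
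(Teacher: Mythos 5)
Your proof is correct and follows essentially the same route as the paper's: both reduce the hypothesis to $g_1, g_2$ differing by an element of $L_\Theta = A_\Theta S_\Theta$, extract the key scalar constraint $\psi(\log a) = 0$ from the third-coordinate hypothesis, and then for part (1) use that $L_\Theta$ normalizes $N_\Theta^+$ together with the uniqueness of the $L_\Theta N_\Theta^+$ decomposition (your big-cell intersection $N_\Theta^+ \cap P_\Theta = \{e\}$ is equivalent) to match up the $A_\Theta$-components. Your write-up is slightly more explicit in computing $\beta^\Theta_{e^+}(e, sa) = \log a$ directly from the Iwasawa cocycle, and you also spell out part (2), which the paper leaves to the reader; substantively the arguments coincide.
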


\begin{proof}
Since the calculation for property~(2) is similar (and simpler), we only prove property~(1). Let $g_1, g_2 \in G$ as in the lemma and suppose $h_1, h_2 \in N_\Theta^+$ with $(g_1h_1)^+ = (g_2h_2)^+$. Since $(g_1^+, g_1^-) = (g_2^+, g_2^-)$, using $P_\Theta \cap w_0P_{\involution\Theta}w_0^{-1} = L_\Theta$ there exists $as \in A_\Theta S_\Theta = L_\Theta$ such that $g_1 = g_2as$. Then, by an elementary computation, we have
\begin{align*}
\beta^\Theta_{g_1^+}(e, g_1) = \beta^\Theta_{g_2^+}(e, g_2) + \log(a).
\end{align*}
Since $\psi\bigl(\beta^\Theta_{g_1^+}(e, g_1)\bigr) = \psi\bigl(\beta^\Theta_{g_2^+}(e, g_2)\bigr)$ by hypothesis, we obtain $\psi(\log(a)) = 0$.

Now, since
\begin{align*}
((g_1h_1)^+, (g_1h_1)^-) = ((g_1h_1)^+, g_1^-) = ((g_2h_2)^+, g_2^-) = ((g_2h_2)^+, (g_2h_2)^-),
\end{align*}
we also have $g_1h_1 = g_2h_2a's'$ for some $a's' \in A_\Theta S_\Theta$. Combining with $g_1 = g_2as$, we obtain 
$ash_1 = h_2a's' = a's'((a's')^{-1}h_2(a's'))$. Since $L_\Theta$ normalizes $N_\Theta^+$ and $A_\Theta S_\Theta N_\Theta^+$ is a direct product, it follows that 
\begin{align*}
a' = a.
\end{align*} 
Similar to before, we have 
\begin{align*}
\beta^\Theta_{(g_1h_1)^+}(e, g_1h_1) = \beta^\Theta_{(g_2h_2)^+}(e, g_2h_2) + \log(a') = \beta^\Theta_{(g_2h_2)^+}(e, g_2h_2) + \log(a).
\end{align*} 
Since $\psi(\log(a)) = 0$, applying $\psi$ to the above equation gives $\psi\bigl(\beta^\Theta_{(g_1h_1)^+}(e, g_1h_1)\bigr) = \psi\bigl(\beta^\Theta_{(g_2h_2)^+}(e, g_2h_2)\bigr)$ as desired.
\end{proof}

Hence, we obtain \emph{(weak/strong) (stable/unstable)} laminations on $\mathcal{X}$ as follows: for all $z =\Gamma(x, y, s) \in \mathcal{X}$, we have
\begin{align}
\label{eqn:StrongUnstableAndStrongStableFoliations}
\begin{aligned}
W^{\mathrm{su}}(z) ={}&\bigl\{\Gamma\bigl((gh)^+, g^-, s + \psi\bigl(\mathcal{G}^\Theta[(gh)^+, g^-] - \mathcal{G}^\Theta[g^+, g^-]\bigr)\bigr): \\
{}&h \in N_\Theta^+, (gh)^+ \in \limitset\bigr\}, 
\\
W^{\mathrm{ss}}(z) ={}&\bigl\{\Gamma\bigl(g^+, (gn)^-, s\bigr): n \in N_\Theta^-, (gn)^- \in \ilimitset\bigr\},
\\
W^{\mathrm{wu}}(z) ={}&\bigcup_{t \in \R} a_t W^{\mathrm{ss}},
\\
W^{\mathrm{ws}}(z) ={}&\bigcup_{t \in \R} a_t W^{\mathrm{ss}}
\end{aligned}
\end{align}
for any choice of $g \in G$ such that $z = \bigl(g^+, g^-, \psi\bigl(\beta^\Theta_{g^+}(e, g)\bigr)\bigr)$. It is easy to see that $W^{\mathrm{ss}}$ and $W^{\mathrm{su}}$ are transverse to the translation flow, and $(W^{\mathrm{ss}}, W^{\mathrm{wu}})$ and $(W^{\mathrm{su}}, W^{\mathrm{ws}})$ have local product structures.

\subsection{Projective Anosov representations}
In this subsection, we recall from \cite{BCLS15} facts about projective Anosov representations that we will need for the proof of \cref{thm:TranslationFlowIsMetricAnosov}. Let $\mathfrak{p}_\Theta$ denote the Lie algebra of $P_\Theta$. Then the adjoint representation $\Ad: G \to \Aut(\LieG)$ induces the representation $\rho_0:G \to \PSL\bigl(\bigwedge^{\dim\mathfrak{p}_\Theta}\LieG\bigr)$. Define the vector space $V := \Span\bigl(\rho_0(G)\bigl(\bigwedge^{\dim\mathfrak{p}_\Theta}\mathfrak{p}_\Theta\bigr)\bigr)$. Then, we have the induced irreducible representation $\rho: G \to \PSL(V)$, called the \emph{Pl\"ucker representation}, and it induces smooth embeddings $\Fboundary \to \bP(V)$ and $\iFboundary \to \bP(V^*)$ \cite[Theorem 7.25]{Lee13}. By \cite[Proposition 4.3]{GW12}), the restriction $\rho|_\Gamma: \Gamma \to \PSL(V)$ is a \emph{projective Anosov representation} in the sense of \cite[Definition 2.2]{BCLS15} and in particular, the previous smooth maps restrict to $\Gamma$-equivariant bi-Lipschitz maps $\zeta_\rho : \limitset \to \bP(V)$ and $\zeta_\rho^* : \ilimitset \to \bP(V^*)$. 

The $\R$-bundle
\begin{align*}
\tilde{\mathcal{G}}_\rho := \bigl\{(x, y, (v, \Psi)) : (x, y) \in \limitset^{(2)}, (v, \Psi) \in \zeta_\rho(x) \times \zeta_\rho^*(y), \Psi(v) = 1\bigr\}/{\sim}
\end{align*}
over $\limitset^{(2)}$, where $(v, \Psi) \sim (-v, -\Psi)$ is equipped with a $\Gamma$-action and a flow $\{\tilde{a}_{\rho, t}\}_{t \in \R}$ that commute with each other which are given as follows: for all $(x, y, (v, \Psi)) \in \tilde{\mathcal{G}}_\rho$, $\gamma \in \Gamma$, and $t \in \R$, let
\begin{align*}
\gamma (x, y, (v, \Psi)) & := (\gamma x, \gamma y, (\rho(\gamma)v, \Psi \circ \rho(\gamma)^{-1}));
\\
\tilde{a}_{\rho, t} (x, y, (v, \Psi)) & := (x, y, (e^tv, e^{-t}\Psi)).
\end{align*}
Let $\mathcal{G}_\rho := \Gamma \backslash \tilde{\mathcal{G}}_\rho$. Then $\{\tilde{a}_{\rho, t}\}_{t \in \R}$ descends to a flow $\{a_{\rho, t}\}_{t \in \R}$ on $\mathcal{G}_\rho$ called the \emph{$\rho$-geodesic flow}.

Any Euclidean metric on $V$ induces a metric on $\bP(V) \times \bP(V^*) \times (V \times V^*)$ and this further induces a metric on $\tilde{\mathcal{G}}_\rho$ in a natural way. Any metric on $\tilde{\mathcal{G}}_\rho$ obtained in this fashion is called a \emph{linear} metric. 

\begin{theorem}[{\citealp[Proposition 5.7]{BCLS15}}]
\label{thm:RhoGeodesicFlowIsMetricAnosov}
There exists a $\Gamma$-invariant metric $\tilde{d}_\rho$ on $\tilde{\mathcal{G}}_\rho$, which is bi-Lipschitz equivalent to any linear metric, such that it descends to a metric $d_\rho$ on $\mathcal{G}_\rho$ for which the flow $\{a_{\rho, t}\}_{t \in \R}$ is metric Anosov with respect to the pair of laminations $\bigl(W^{\mathrm{su}}_\rho, W^{\mathrm{ss}}_\rho\bigr)$ which are defined as follows: for all $z = \Gamma(x, y, (v, \Psi)) \in \mathcal{G}_\rho$, 
\begin{align*}
W^{\mathrm{su}}_\rho(z) &:= \bigl\{\Gamma(x', y, (v', \Psi)) : (x', y) \in \limitset^{(2)}, (v', \Psi) \in \zeta_\rho(x') \times \zeta_\rho^*(y), \Psi(v') = 1\bigr\};
\\
W^{\mathrm{ss}}_\rho(z) &:= \bigl\{\Gamma(x, y', (v, \Psi')) : (x, y') \in \limitset^{(2)}, (v, \Psi') \in \zeta_\rho(x) \times \zeta_\rho^*(y'), \Psi'(v) = 1\bigr\}.
\end{align*}
\end{theorem}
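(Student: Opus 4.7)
The plan is to construct $\tilde d_\rho$ by suitably adapting a linear metric $\tilde d_0$ using the dynamics of the $\rho$-geodesic flow so as to obtain genuine $\Gamma$-invariance, and then to verify the Anosov contraction directly from the exponential decay built into the definition of a projective Anosov representation.

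First, fix a Euclidean inner product on $V$, with dual inner product on $V^*$, inducing a linear metric $\tilde d_0$ on $\tilde{\mathcal G}_\rho$ via the product structure on $\bP(V) \times \bP(V^*) \times (V\times V^*)$. A direct computation at the linear level shows that $\tilde a_{\rho,t}$ scales the $V$-component of a displacement by $e^t$ and the $V^*$-component by $e^{-t}$. Combined with the bi-Lipschitz property of the boundary maps $\zeta_\rho,\zeta_\rho^*$ (valid because $\rho|_\Gamma$ is projective Anosov), this transfers to distances on the base $\limitset^{(2)}$, so that $\tilde d_0$ already exhibits the correct exponential contraction/expansion behavior along the foliations $W_\rho^{\mathrm{ss}}$ and $W_\rho^{\mathrm{su}}$. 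The drawback is that $\tilde d_0$ is only $\Gamma$-quasi-invariant, since $\rho(\gamma)$ does not preserve the Euclidean norm.

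To upgrade to a genuinely $\Gamma$-invariant metric, I would work in the spirit of \cite{BCLS15}: use that the normalization $\Psi(v)=1$ is preserved by both the flow and the $\Gamma$-action, and that the equivalence $(v,\Psi)\sim(-v,-\Psi)$ kills the remaining ambiguity, leaving only the conformal cocycle $(v,\Psi)\mapsto(\rho(\gamma)v,\Psi\circ\rho(\gamma)^{-1})$ to cancel. Define the new metric either (a) as a Finsler metric whose infinitesimal norms are obtained from the linear metric by integrating against a flow-invariant exponential weight, or (b) by a partition-of-unity averaging using a compact fundamental domain for the cocompact $\Gamma$-action on $\limitset^{(2)}\times\R$ guaranteed by \cref{thm:GammaAction}. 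On the compact fundamental domain $\tilde d_\rho$ is bi-Lipschitz to $\tilde d_0$ by construction, and cocompactness combined with continuity of $\zeta_\rho$ and $\zeta_\rho^*$ propagates this comparison uniformly to all of $\tilde{\mathcal G}_\rho$, giving the asserted bi-Lipschitz equivalence with any linear metric.

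Finally, I would verify the Anosov property by invoking the characterizing inequality for projective Anosov representations: there exist $C,\eta>0$ such that for all $\gamma\in\Gamma$, $(x,y)\in\limitset^{(2)}$, $v\in\zeta_\rho(x)$, and $\Psi\in\zeta_\rho^*(y)$,
\[
\frac{\|\rho(\gamma)\Psi\|\,\|v\|}{\|\Psi\|\,\|\rho(\gamma)v\|} \leq C e^{-\eta|\gamma|_\Gamma}.
\]
Translating flow time via $t(\gamma) = \log(\|\rho(\gamma)v\|/\|v\|)$ — which is comparable to $|\gamma|_\Gamma$ up to uniform constants by the projective Anosov property — converts this into the required $e^{-\eta t}$ contraction of $\tilde d_\rho$ on $W^{\mathrm{ss}}_\rho$-leaves, and the symmetric argument gives the exponential expansion (hence contraction under backward flow) on $W^{\mathrm{su}}_\rho$-leaves. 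Local product structure of $(W^{\mathrm{wu}}_\rho,W^{\mathrm{ss}}_\rho)$ and $(W^{\mathrm{ws}}_\rho,W^{\mathrm{su}}_\rho)$ is inherited from the product structure of $\bP(V)\times\bP(V^*)\times\R$. The main obstacle I anticipate is the bi-Lipschitz comparison: simultaneously achieving strict $\Gamma$-invariance while keeping $\tilde d_\rho$ comparable to the rigid Euclidean $\tilde d_0$ requires uniform boundedness of the $\rho$-cocycle on a fundamental domain, which ultimately rests on the cocompactness in \cref{thm:GammaAction} together with continuity of the boundary maps.
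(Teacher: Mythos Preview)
The paper does not prove this theorem at all: it is stated as a citation of \cite[Proposition 5.7]{BCLS15} and used as a black box. So there is no ``paper's own proof'' to compare against; your sketch is an attempt to reconstruct the argument from \cite{BCLS15}.

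That said, your outline is broadly in the right spirit but misses the specific mechanism that \cite{BCLS15} actually uses, and which this paper recapitulates in the proof of \cref{thm:TranslationFlowConjugateToRhoFlow}. The construction there is not a direct averaging of the metric over a fundamental domain. Instead one builds a $\Gamma$-equivariant line bundle $\tilde{\mathcal L}$ over $\tilde{\mathcal G}_\rho$, constructs a $\Gamma$-equivariant section via partition of unity, and shows the induced flow on $\mathcal L$ is \emph{discretely} contracting; the key step is then the integration trick $\|\cdot\|^{\tilde{\mathcal L}} = \int_0^T e^{s/T}\|\phi_s^{\mathcal L}(\cdot)\|_0^{\tilde{\mathcal L}}\,ds$ from \cite[Lemma 4.3]{BCLS15}, which upgrades discrete contraction to continuous contraction. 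Your option (b) is close to this in spirit, but the passage from ``$\Gamma$-invariant and bi-Lipschitz to linear on a fundamental domain'' to ``Anosov with genuine exponential rate for all $t\geq 0$'' is exactly where the integration trick is needed, and your sketch glosses over it. Your direct appeal to the singular-value gap inequality and the comparison $t(\gamma)\asymp|\gamma|_\Gamma$ would give the contraction only at a discrete set of times (those realized by group elements), not for the continuous flow.
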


\subsection{Proof of \texorpdfstring{\cref{thm:TranslationFlowIsMetricAnosov}}{\autoref{thm:TranslationFlowIsMetricAnosov}}}
\label{subsec:TranslationFlowIsMetricAnosovProof}
We will prove \cref{thm:TranslationFlowIsMetricAnosov} using techniques much of which are established and appear, for instance, in \cite{BCLS15}, though it will be convenient to refer to \cite{CS23} due to similarity in the setting and notation.

\begin{theorem}
\label{thm:TranslationFlowConjugateToRhoFlow}
There exists a bi-Lipschitz homeomorphism $\mathcal{H}:(\mathcal{G}_\rho, d_\rho) \to (\mathcal{X}, d)$ which conjugates the translation flow $\{a_t\}_{t \in \R}$ to a Lipschitz reparametrization\linebreak $\{a_{\rho, \kappa(\cdot, t)}\}_{t \in \R}$ of the $\rho$-geodesic flow $\{a_{\rho, t}\}_{t \in \R}$.
\end{theorem}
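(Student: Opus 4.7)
The plan is to construct an explicit $\Gamma$-equivariant homeomorphism $\tilde{\mathcal{H}}: \tilde{\mathcal{G}}_\rho \to \limitset^{(2)} \times \R$ that preserves fibers over $\limitset^{(2)}$ and intertwines the $\rho$-geodesic flow with a Lipschitz reparametrization of the translation flow, then to descend it to the $\Gamma$-quotients. Both $\tilde{\mathcal{G}}_\rho$ and $\limitset^{(2)} \times \R$ are $\R$-bundles over $\limitset^{(2)}$ on which $\Gamma$ acts by cocycle-modified translations in the fiber direction. The cocycle for $\tilde{\mathcal{G}}_\rho$ is $(\gamma, x) \mapsto \log\bigl(|\rho(\gamma) v_x|/|v_x|\bigr)$ for any choice of nonzero representative $v_x \in \zeta_\rho(x)$; via the Pl\"ucker construction this equals $\chi(\beta^\Theta_{\gamma x}(e, \gamma))$, where $\chi \in \LieA_\Theta^*$ is the restriction to $\LieA_\Theta$ of the highest weight of $\rho$. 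The cocycle for $\limitset^{(2)} \times \R$ is $(\gamma, x) \mapsto \psi(\beta^\Theta_{\gamma x}(e, \gamma))$ by definition of the $\Gamma$-action induced by $\pi_\psi$.

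I will define $\tilde{\mathcal{H}}$ to act on each fiber over $(x, y) \in \limitset^{(2)}$ by an affine map $t \mapsto \alpha(x, y) t + \beta(x, y)$ for continuous functions $\alpha, \beta: \limitset^{(2)} \to \R$. Equivariance forces $\alpha$ to be $\Gamma$-invariant and forces $\beta$ to witness a cohomology between the cocycles $\alpha \cdot (\chi \circ \beta^\Theta)$ and $\psi \circ \beta^\Theta$. A Livsic-type argument determines $\alpha$ uniquely on the dense subset of loxodromic fixed-point pairs in $\limitset^{(2)}$ through the period matching $\alpha(x_\gamma^+, x_\gamma^-) = \psi(\lambda_\Theta(\gamma))/\chi(\lambda_\Theta(\gamma))$. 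Since $\chi$ and $\psi$ are both strictly positive on $\limitcone - \{0\}$ by \cref{thm:BasicProperties}\labelcref{itm:BasicProperties5}, this ratio is bounded above and bounded away from zero uniformly in $\gamma$, so $\alpha$ extends to a positive H\"older function on $\limitset^{(2)}$; a further standard Livsic-type argument in the metric Anosov setting then produces a H\"older function $\beta$ completing the cohomology. The resulting $\tilde{\mathcal{H}}$ is continuous, $\Gamma$-equivariant, and conjugates the flow $\{\tilde{a}_{\rho, s}\}_{s \in \R}$ to a reparametrization of the translation flow with cocycle $\kappa$ satisfying $\kappa(z, s) = \alpha(x, y) s$ where $(x, y)$ is the base point of $z$, and descends to the desired $\mathcal{H}: \mathcal{G}_\rho \to \mathcal{X}$.

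The main obstacle is upgrading the H\"older regularity of $\alpha$ and $\beta$ to Lipschitz regularity with respect to the locally product-like metric. Since the ambient cocycles $\chi \circ \beta^\Theta$ and $\psi \circ \beta^\Theta$ are real-analytic on the smooth structure of $G$ and since the projective Anosov boundary maps $\zeta_\rho|_{\limitset}$ and $\zeta_\rho^*|_{\ilimitset}$ are bi-Lipschitz onto their images (following from the construction of the linear metric and \cref{thm:RhoGeodesicFlowIsMetricAnosov}), the Lipschitz regularity of $\alpha$ and $\beta$ follows from a strengthened Livsic-type lemma in the metric Anosov setting. This is the analog for general $\Theta$ of \cite[Theorem 4.15]{CS23}, where the same step was carried out for $\Theta = \Pi$ using the smooth Riemannian structure on $G/M$; in our setting, the Riemannian structure on $G/M$ is replaced by the locally product-like metric $\tilde{d}$ on $\limitset^{(2)} \times \R$. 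Once Lipschitz regularity is established, the fiberwise affine form of $\tilde{\mathcal{H}}$ together with the bi-Lipschitz boundary identifications yields a bi-Lipschitz $\mathcal{H}$, and the cocycle $\kappa(\cdot, s)$ is Lipschitz since $\alpha$ is.
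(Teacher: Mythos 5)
Your approach has a fundamental structural flaw: a $\Gamma$-equivariant fiber-preserving homeomorphism $\tilde{\mathcal{H}}: \tilde{\mathcal{G}}_\rho \to \limitset^{(2)} \times \R$ that is \emph{affine} in the fiber coordinate cannot exist in general. Equivariance of $t \mapsto \alpha(x, y)t + \beta(x, y)$, as you note, forces $\alpha$ to be a continuous $\Gamma$-invariant function on $\limitset^{(2)}$. But the $\Gamma$-action on $\limitset^{(2)} \cong \Gboundary^{(2)}$ is topologically transitive on a second-countable Polish space, hence admits a dense orbit, so any continuous $\Gamma$-invariant function on $\limitset^{(2)}$ is \emph{constant}. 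On the other hand, the period-matching you write down would force
\begin{align*}
\alpha(x_\gamma^+, x_\gamma^-) = \psi(\lambda_\Theta(\gamma))/\chi(\lambda(\gamma)),
\end{align*}
and this ratio is genuinely non-constant in $\gamma$ (it is constant only if $\psi \circ \pi_{\LieA_\Theta}$ and the highest-weight functional $\chi$ of the Pl\"ucker representation are proportional on the cone spanned by $\lambda(\Gamma)$, which fails except in degenerate cases). So constancy and the period constraint are incompatible, and no affine conjugacy exists. The apparent Livsic-type reconstruction of $\alpha$ on the dense set of periodic pairs is precisely where the contradiction surfaces.

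The correct construction, which the paper carries out, must allow the fiber map to be genuinely \emph{nonlinear} in the fiber coordinate: the conjugacy has the form $\tilde{\mathcal{H}}(x,y,(v,\Psi)) = (x, y, \log\hat{u}(z))$ where $\hat{u}$ is built from a $\Gamma$-equivariant smoothing procedure (a partition of unity on $\tilde{\mathcal{G}}_\rho$ together with the contraction argument from \cite{BCLS15}). The corresponding reparametrization cocycle is of the form $\kappa(\cdot, t) = \int_0^t f(a_{\rho, s}(\cdot)) \, ds$ for a \emph{non-$\Gamma$-invariant} positive function $f$ on the flow space $\mathcal{G}_\rho$, which does depend on the fiber coordinate. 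Relatedly, your appeal to a ``strengthened Livsic-type lemma'' giving Lipschitz (not merely H\"older) regularity of $\alpha$ and $\beta$ over a metric Anosov system is not standard and would require substantial justification; the paper sidesteps this by exhibiting an explicit Lipschitz conjugacy and verifying bi-Lipschitzness by a direct computation using the discrete/continuous contraction of the line bundle $\mathcal{L}$.
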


\begin{proof}
We proceed in the following two steps.

\medskip
\noindent
\textit{Step 1: Construction of $\mathcal{H}$.}
We only give an overview (omitting details) of the construction of $\mathcal{H}$ as it is similar to the full construction in \cite[Theorem 4.15]{CS23} which deals with the case $\Theta = \Pi$. Consider the trivial $\R$-bundle 
\begin{align*}
\tilde{\mathcal{L}} := \tilde{\mathcal{G}}_\rho \times \R_{>0}
\end{align*}
equipped with a left $\Gamma$-action defined by
\begin{align*}
\gamma  (z, r) := \bigl(\gamma z, re^{\psi(\beta^\Theta_{x}(\gamma^{-1}, e))}\bigr) \qquad \text{ for all } \gamma \in \Gamma \text{ and } (z,r) \in \tilde{\mathcal{L}}
\end{align*} 
and a flow $\{\tilde{\phi}^{\mathcal{L}}_t\}_{t \in \R}$ defined by
\begin{align*}
\tilde{\phi}^{\mathcal{L}}_t(z, r) := (\tilde{a}_{\rho, t}z, r) \qquad \text{ for all } (z,r) \in \tilde{\mathcal{L}}, \text{ and } t \in \R
\end{align*}
which commutes with the $\Gamma$-action.

Let $\mathcal{U} = \{\gamma \mathcal{U}_j\}_{\gamma \in \Gamma, \, 1 \le j \le j_0}$ for some $j_0 \in \N$ be a locally finite open cover of $\tilde{\mathcal{G}}_\rho$ such that $\gamma \mathcal{U}_j \cap \gamma' \mathcal{U}_j = \varnothing$ for all distinct $\gamma, \gamma' \in \Gamma$ and $1 \le j \le j_0$. Using a partition of unity $\{\varphi_i\}_{i \in I}$, for some index set $I$, subordinate to $\mathcal{U}$ such that $\varphi_i: \limitset^{(2)} \times \R \to [0, +\infty)$ is smooth along the $\rho$-geodesic flow for all $i \in I$, we can construct a $\Gamma$-equivariant section $u_0:\tilde{\mathcal{G}}_\rho \to \tilde{\mathcal{L}}$ of the form $u_0(z) := (z, \hat{u}_0(z))$. Explicitly, we can take
\begin{align*}
\log \hat{u}_0(z) = \sum_{i \in I, \, \supp\varphi_i \subset \gamma\mathcal{U}_j}\psi\bigl(\beta^\Theta_x(e,\gamma)\bigr) \cdot \varphi_i(z) \qquad \text{ for all } z = (x,y, (v, \Psi)) \in \tilde{\mathcal{G}}_\rho.
\end{align*}

Equip $\tilde{\mathcal{L}}$ with the unique $\Gamma$-invariant bundle norm $\|\cdot\|^{\tilde{\mathcal{L}}}_0$ satisfying $\|u_0(z)\|_0^{\tilde{\mathcal{L}}}=1$ for all $z \in \tilde{\mathcal{G}}_\rho$. Then, $\mathcal{L} := \Gamma \backslash \tilde{\mathcal{L}}$ is an $\R_{>0}$-bundle over $\mathcal{G}_\rho$, and $\{\tilde{\phi}^{\mathcal{L}}_t\}_{t \in \R}$ descends to a flow $\{\phi^{\mathcal{L}}_t\}_{t \in \R}$ on $\mathcal{L}$, and $\|\cdot\|_0^{\tilde{\mathcal{L}}}$ descends to a norm $\|\cdot\|_0^{\mathcal{L}}$ on $\mathcal{L}$. Using the Morse property \cite[Theorem 4.13]{DKO24} of Kapovich--Leeb--Porti \cite[Proposition 5.16]{KLP17} and an analogue of Sullivan's Shadow Lemma \cite[Lemma 3.1]{KOW23a}, one can show that $\{\phi^{\mathcal{L}}_t\}_{t \in \R}$ is \emph{discretely contracting} with respect to $\|\cdot\|_0^{\mathcal{L}}$, i.e., there exists $T > 0$ and $\eta > 0$ such that $\|\phi_T^{\mathcal{L}}(\ell)\|_0^{\mathcal{L}} \le e^{-\eta}\|\ell\|_0^{\mathcal{L}}$ for all $\ell \in \mathcal{L}$. By \cite[Lemma 4.3]{BCLS15}, the norm $\|\cdot\|^{\tilde{\mathcal{L}}} = \int_0^T e^{ s/T}\|\phi_s^{\mathcal{L}}(\cdot)\|_0^{\tilde{\mathcal{L}}} \, ds$ on $\tilde{\mathcal{L}}$ descends to a norm $\|\cdot\|^{\mathcal{L}}$ on $\mathcal{L}$ such that $\{\phi_t^{\mathcal{L}}\}_{t\in\R}$ is \emph{continuously contracting} with respect to $\|\cdot\|^{\mathcal{L}}$. Note that $\|(z, \hat{u}(z))\|^{\tilde{\mathcal{L}}} = 1$ where
\begin{align*}
\frac{1}{\hat{u}(z)} = \int_0^T \frac{e^{ s/T}}{\hat{u}_0(\tilde{a}_{\rho,s}(z))} \, ds \qquad \text{ for all } z \in \tilde{\mathcal{G}}_\rho.
\end{align*}

Let $\tilde{\mathcal{H}} : \tilde{\mathcal{G}}_\rho \to \limitset^{(2)} \times \R$ be the $\Gamma$-equivariant map defined by
\begin{align*}
\tilde{\mathcal{H}}(z) = (x, y, \log\hat{u}(z)) \qquad \text{ for all } z = (x,y, (v, \Psi)) \in \tilde{\mathcal{G}}_\rho.
\end{align*}
Then, it can be shown as in \cite[Theorem 4.15]{CS23} that $\hat{u}$ is locally Lipschitz and $\tilde{\mathcal{H}}$ descends to a Lipschitz homeomorphism $\mathcal{H} : \mathcal{G}_\rho \to \mathcal{X}$ and the reparametrization is Lipschitz. Here, $\mathcal{H}$ is Lipschitz instead of H\"{o}lder as in \cite{CS23} since the induced maps $\Fboundary \to \bP(V)$ and $\iFboundary \to \bP(V^*)$ are Lipschitz.

\medskip
\noindent
\textit{Step 2: $\mathcal{H}$ is bi-Lipschitz.}
By the above, it suffices to show that $\tilde{\mathcal{H}}^{-1}$ is locally Lipschitz with respect to $\tilde{d}$ and $\tilde{d}_\rho$.
Recall that $\tilde{d}$ is a locally bi-Lipschitz equivalent to the product metric on $\limitset^{(2)} \times \R$ and $\tilde{d}_\rho$ is locally bi-Lipschitz equivalent to any linear metric on $\tilde{\mathcal{G}}_\rho$. Thus, it suffices to show that $\tilde{\mathcal{H}}^{-1}$ is locally Lipschitz with respect to the product metric on $\limitset^{(2)} \times \R$ and any linear metric on $\tilde{\mathcal{G}}_\rho$. Fix a Euclidean metric $d_{\mathrm{E}}$ on $V$. We also use $d_{\mathrm{E}}$ to denote the induced metrics on $V^*$ and $V \times V^*$. Fix a compact subset $\mathcal{K} \subset \tilde{\mathcal{G}}_\rho$ that is ``radially symmetric'' in the sense that if $z_i = (x_i,y_i, (v_i, \Psi_i)) \in \mathcal{K}$ for $i\in\{1,2\}$, then $p(z_1,z_2) := \Bigl(x_1,y_1, \Bigl(\tfrac{\|v_2\|}{\|v_1\|}v_1, \tfrac{\|v_1\|}{\|v_2\|}\Psi_1\Bigr)\Bigr) \in \mathcal{K}$ as well. We need to show that there exists $c_1 > 0$ such that for any $z_i = (x_i,y_i, (v_i, \Psi_i)) \in \mathcal{K}$ for $i \in \{1, 2\}$, we have
\begin{align*}
d_{\mathrm{E}}((v_1, \Psi_1), (v_2, \Psi_2)) \le c_1((d_{\Fboundary} \times d_{\iFboundary})((x_1,y_1), (x_2,y_2)) + |\log\hat{u}(z_1) - \log\hat{u}(z_2)|).
\end{align*} 

For convenience, let $t(z_1,z_2) := \log (\|v_2\|/\|v_1\|)$ so that 
\begin{align*}
p(z_1,z_2) = \bigl(x_1,y_1, \bigl(e^{t(z_1,z_2)}v_1,e^{-t(z_1,z_2)}\Psi_1\bigr)\bigr) = \tilde{a}_{\rho, t(z_1,z_2)}(z_1).
\end{align*}
Observe that since $\mathcal{K}$ is compact, there exists constants $c_3 > c_2 > 0$ such that
\begin{align*}
d_{\mathrm{E}}\bigl((v_1,\Psi_1), \bigl(e^{t(z_1,z_2)}v_1,e^{-t(z_1,z_2)}\Psi_1\bigr)\bigr) \le c_2t(z_1,z_2)
\end{align*}
and 
\begin{align*}
d_{\mathrm{E}}\Bigl(\Bigl(\tfrac{\|v_2\|}{\|v_1\|}v_1, \tfrac{\|v_1\|}{\|v_2\|}\Psi_1\Bigr), (v_2,\Psi_2)\Bigr) & \le c_2d_{\Fboundary}(x_1,x_2) + \tfrac{1}{\|v_2\|}d_{\mathrm{E}}\bigr(\|v_1\|\Psi_1,\|v_2\|\Psi_2\bigl)
\\
& \le c_3(d_{\Fboundary}(x_1,x_2) + d_{\iFboundary}(y_1,y_2)),
\end{align*}
where the last inequality uses the observation that for $i \in \{1, 2\}$, the linear form $\|v_i\|\Psi_i \in \zeta_\rho^*(y_i)$ which satisfies $\|v_i\|\Psi_i\bigl(\tfrac{v_i}{\|v_i\|}\bigr) = 1$ is smoothly determined by $y_i$ and the unit vector $\tfrac{v_i}{\|v_i\|}$ (or equivalently, $x_i$). The triangle inequality gives
\begin{align*}
d_{\mathrm{E}}((v_1,\Psi_1), (v_2,\Psi_2)) \le c_3((d_{\Fboundary} \times d_{\iFboundary})((x_1,y_1), (x_2,y_2)) + |t(z_1,z_2)|). 
\end{align*} 
Hence, it suffices to show that there exists a constant $c_4 > 0$ such that for all $z_1, z_2 \in \mathcal{K}$, we have
\begin{align*}
|t(z_1,z_2)| \le c_4((d_{\Fboundary} \times d_{\iFboundary})((x_1,y_1), (x_2,y_2)) + |\log\hat{u}(z_1) - \log\hat{u}(z_2)|). 
\end{align*}
Since $\{\phi_t^{\mathcal{L}}\}_{t\in\R}$ is uniformly contracting with respect to $\|\cdot\|^{\mathcal{L}}$, there exists $\eta> 0$ such that
\begin{align*}
\frac{1}{\hat{u}(\tilde{a}_{\rho,t}(z))} \le \frac{e^{-\eta t}}{\hat{u}(z)} \qquad \text{ for all } t > 0 \text{ and } z \in \tilde{\mathcal{G}}_\rho,
\end{align*}
i.e., $\eta t \le \log\hat{u}(\tilde{a}_{\rho,t}(z_1)) - \log\hat{u}(z_1)$. 
Then, for all $z_1, z_2 \in \mathcal{K}$, we have
\begin{align*}
& |\log\hat{u}(z_1) - \log\hat{u}(z_2)| 
\\
\ge & |\log\hat{u}(z_1) - \log\hat{u}(\tilde{a}_{\rho,t(z_1,z_2)}(z_1))| -  |\log\hat{u}(p(z_1,z_2)) - \log\hat{u}(z_2)|
\\
\ge & \eta |t(z_1,z_2)| - |\log\hat{u}(p(z_1,z_2)) - \log\hat{u}(z_2)|.
\end{align*}
Lastly, since $\hat{u}$ is locally Lipschitz with respect to any linear metric, there exists $c_5 > 0$ such that $|\log\hat{u}(p(z_1,z_2)) - \log\hat{u}(z_2)| \le c_5 (d_{\Fboundary} \times d_{\iFboundary})((x_1,y_1), (x_2,y_2))$ for all $z_1, z_2 \in \mathcal{K}$ and this completes the proof.
\end{proof}

The following is an immediate consequence of \cref{thm:RhoGeodesicFlowIsMetricAnosov,thm:TranslationFlowConjugateToRhoFlow}.

\begin{proposition}		\label{pro:ReparametrizationOfTranslationFlowIsMetricAnosov}
The reparametrization $\{a_{\kappa^*(\mathcal{H}^{-1}(\cdot), t)}\}_{t \in \R}$ of $\{a_t\}_{t \in \R}$ is a metric Anosov flow with respect to $\bigl(\mathcal{H}(W^{\mathrm{su}}_\rho), \mathcal{H}(W^{\mathrm{ss}}_\rho)\bigr)$. 		
\end{proposition}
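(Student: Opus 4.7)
The plan is to combine the two cited theorems by transporting the metric Anosov property through the bi-Lipschitz conjugacy $\mathcal{H}$. First, I would verify that $\mathcal{H}$ conjugates the specified reparametrization of $\{a_t\}_{t\in\R}$ back to the original $\rho$-geodesic flow $\{a_{\rho, t}\}_{t\in\R}$. Indeed, Theorem \ref{thm:TranslationFlowConjugateToRhoFlow} gives $a_t \circ \mathcal{H} = \mathcal{H} \circ a_{\rho, \kappa(\cdot, t)}$ for all $t \in \R$; substituting $t = \kappa^*(\mathcal{H}^{-1}(x), s)$ and using the defining identity $\kappa(z, \kappa^*(z, s)) = s$ from Definition \ref{def:Reparametrization} yields
\begin{align*}
a_{\kappa^*(\mathcal{H}^{-1}(x), s)}(x) = \mathcal{H}\bigl(a_{\rho, s}(\mathcal{H}^{-1}(x))\bigr) \qquad \text{for all } x \in \mathcal{X}, \, s \in \R,
\end{align*}
so that the reparametrization in question is conjugate to $\{a_{\rho, s}\}_{s \in \R}$ via $\mathcal{H}$.

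Next, I would observe that the metric Anosov property is preserved under bi-Lipschitz conjugacy in a completely routine way. Since $\mathcal{H}$ sends the foliations $\bigl(W^{\mathrm{su}}_\rho, W^{\mathrm{ss}}_\rho\bigr)$, together with their weak counterparts, to $\bigl(\mathcal{H}(W^{\mathrm{su}}_\rho), \mathcal{H}(W^{\mathrm{ss}}_\rho)\bigr)$ and their weak counterparts, the local product structures from Theorem \ref{thm:RhoGeodesicFlowIsMetricAnosov} transfer immediately: the conjugated maps $\mathcal{H} \circ [\cdot, \cdot] \circ (\mathcal{H}^{-1} \times \mathcal{H}^{-1})$ are local product charts on $(\mathcal{X}, d)$. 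For the Anosov estimate, if $L \ge 1$ denotes the bi-Lipschitz constant of $\mathcal{H}$ and $(\eta, C)$ are the constants from Theorem \ref{thm:RhoGeodesicFlowIsMetricAnosov}, then for $x \in \mathcal{X}$ and $z \in \mathcal{H}(W^{\mathrm{ss}}_\rho)_{\epsilon/L}(x)$, writing $\hat{a}_s := a_{\kappa^*(\mathcal{H}^{-1}(\cdot), s)}$, one estimates
\begin{align*}
d(\hat{a}_s(x), \hat{a}_s(z)) \le L \, d_\rho\bigl(a_{\rho, s}(\mathcal{H}^{-1}(x)), a_{\rho, s}(\mathcal{H}^{-1}(z))\bigr) \le LC e^{-\eta s} d_\rho(\mathcal{H}^{-1}(x), \mathcal{H}^{-1}(z)) \le L^2 C e^{-\eta s} d(x, z),
\end{align*}
and symmetrically for the strong unstable leaves under $\hat{a}_{-s}$.

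There is no genuine obstacle. The only item requiring any care is the cocycle manipulation in the first step, which identifies the reparametrization $\bigl\{a_{\kappa^*(\mathcal{H}^{-1}(\cdot), t)}\bigr\}_{t \in \R}$ as precisely the pullback of $\{a_{\rho, t}\}_{t \in \R}$ under $\mathcal{H}$; after that, the bi-Lipschitz property of $\mathcal{H}$ together with Theorem \ref{thm:RhoGeodesicFlowIsMetricAnosov} delivers the conclusion.
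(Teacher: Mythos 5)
Your proof is correct and takes the same approach as the paper, which simply asserts the proposition is an immediate consequence of Theorems \ref{thm:RhoGeodesicFlowIsMetricAnosov} and \ref{thm:TranslationFlowConjugateToRhoFlow} without spelling out details. You have correctly identified and verified the two routine ingredients: the cocycle computation showing $\mathcal{H}$ conjugates $\{a_{\rho,s}\}$ to $\{a_{\kappa^*(\mathcal{H}^{-1}(\cdot),s)}\}$, and the transport of local product structure and the Anosov estimate through a bi-Lipschitz conjugacy.
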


\begin{proof}[Proof of \cref{thm:TranslationFlowIsMetricAnosov}]
In view of \cref{pro:ReparametrizationOfTranslationFlowIsMetricAnosov}, by considering $\{a_t\}_{t \in \R}$ as a reparametrization of $\{a_{\kappa^*(\mathcal{H}^{-1}(\cdot), t)}\}_{t \in \R}$, it suffices to check \cref{itm:FoliationProperty,itm:KappaProperty} in \cref{pro:ReparametrizationIsMetricAnosov}. 

\medskip
\noindent
\textit{Proof of \cref{itm:FoliationProperty}.}
Fix a compact fundamental domain $D \subset \limitset^{(2)} \times \R$ for the $\Gamma$-action. Fix $r>0$ sufficiently small so that the closed $r$-balls in $\limitset^{(2)} \times \R$ injectively project into $\mathcal{X}$ and that the projection $D' \subset \limitset^{(2)}$ of the closed $r$-neighborhood of $D$ is compact. Since $D$ is compact and $\tilde{d}$ is locally bi-Lipschitz equivalent to the product metric, there exists $c_1 > 1$ such that for all $z \in D$ and $z' \in \limitset^{(2)} \times \R$ with $\tilde{d}(z, z') < r$, we have 
\begin{equation}
\label{eqn:BiLipschitzEquivalence}
c_1^{-1}(d_{\Fboundary} \times d_{\iFboundary} \times d_\R)(z, z') \le d(\Gamma z, \Gamma z') = \tilde{d}(z, z') \le c_1(d_{\Fboundary} \times d_{\iFboundary} \times d_\R)(z, z').
\end{equation}
Let $\epsilon \in (0,r)$ which will be specified later. Fix $z = (x, y, t) \in D$, $z_1 = (x', y, t_1) \in \tilde{W}_\epsilon^{\mathrm{su}}(z)$, and $z_2 = (x', y, t_2) \in \limitset^{(2)} \times \R$ such that $\Gamma z_2 \in a_\R(\Gamma z_1) \cap \mathcal{H}(W^{\mathrm{su}}_\rho(\mathcal{H}^{-1}(\Gamma z))$. By compactness of $D$, we can uniformly choose $\epsilon$ sufficiently small so that $\tilde{d}(z, z_2) < r$. We want to show that there exists a uniform constant $c > 1$ such that
\begin{equation}
\label{eqn:FoliationsAreCompatible}
c^{-1}d(\Gamma z, \Gamma z_2) \le d(\Gamma z, \Gamma z_1) \le cd(\Gamma z, \Gamma z_2).
\end{equation}
By the inequality of \eqref{eqn:BiLipschitzEquivalence}, for each $j \in \{1,2\}$, we have
\begin{align*}
c_1^{-1}(d_{\Fboundary}(x, x') + |t - t_i|) \le \tilde{d}(z, z_i) \le c_1(d_{\Fboundary}(x, x') + |t - t_i|).
\end{align*}
By \cref{eqn:StrongUnstableAndStrongStableFoliations}, we have
\begin{align*}
|t - t_1| = \bigl|\psi\bigl(\mathcal{G}^\Theta[x, y] - \mathcal{G}^\Theta[x', y]\bigr)\bigr| \le c_2d_{\Fboundary}(x, x')
\end{align*}
for a uniform constant $c_2 > 0$ by smoothness of the $\Theta$-Gromov product restricted to $D'$. Then
\begin{align*}
d(\Gamma z, \Gamma z_1) \le c_1(d_{\Fboundary}(x, x') + |t - t_1|) \le c_1(1+c_2)d_{\Fboundary}(x, x') \le c_1^2(1+c_2)d(\Gamma z, \Gamma z_2).
\end{align*}
This establishes the right hand side inequality of \eqref{eqn:FoliationsAreCompatible}. 

For the left hand side inequality of \eqref{eqn:FoliationsAreCompatible}, we observe that compactness of $D$ and the Lipschitz properties of the maps $\hat{u}$ and $\mathcal{H}$ in the proof of \cref{thm:TranslationFlowConjugateToRhoFlow} implies that there exists a uniform constant $c_3 > 0$ such that
\begin{align*}
|t - t_2| = |\log \hat{u}(\mathcal{H}^{-1})(z) - \log \hat{u}(\mathcal{H}^{-1})(z_2)| \le c_3\tilde{d}(z, z_2).
\end{align*}
This establishes \cref{itm:FoliationProperty} for the strong unstable lamination. The argument for the strong stable lamination is similar. 

\medskip
\noindent
\textit{Proof of \cref{itm:KappaProperty}.}
\Cref{itm:KappaProperty} is immediate from the fact that $\kappa(\cdot,t) = \int_0^t f(a_{\rho, s}(\cdot)) \, ds$ for all $t \in \R$ for some positive continuous function $f:\mathcal{G}_\rho \to \R$ (see \cite[Remark 4.11]{CS23}).
\end{proof}

\begin{remark}
\label{rem:ConjugatingTranslationFlows}
Although we have only shown that the distinguished translation flow $\{a_t\}_{t\in\R}$ on $\mathcal{X}$ is metric Anosov, it is not difficult to see that the above work can be adapted to show that any translation flow $\{a^{\sfv}_t\}_{t \in \R}$ with $\sfv \in \interior\limitcone$ on $\mathcal{X}$ is a Lipschitz reparametrization of $\{a_t\}_{t\in\R}$ and also metric Anosov. Indeed, the proof of \cref{thm:TranslationFlowConjugateToRhoFlow,pro:ReparametrizationOfTranslationFlowIsMetricAnosov} is easily modified to show that for all $\sfv \in \interior\limitcone$ there exists a bi-Lipschitz homeomorphism $\mathcal{H}^{\sfv_0,\sfv}: \mathcal{X} \to \mathcal{X}_{\sfv}$ conjugating $\{a^{\sfv}_t\}_{t\in\R}$ to a Lipschitz reparametrization of $\{a_t\}_{t\in\R}$ and that $\{a^{\sfv}_t\}_{t\in\R}$ is metric Anosov. In fact, such a homeomorphism $\mathcal{H}^{\sfv_0, \sfv}$ can be explicitly described as follows. Let $\pi_{\ker\psi}: \LieA_\Theta \to \ker\psi$ be the projection map determined by the decomposition $\LieA_\Theta = \R\sfv_0\oplus\ker\psi$, $\mathcal{U} = \{\gamma \mathcal{U}_j\}_{\gamma \in \Gamma, \, 1 \le j \le j_0}$ be a locally finite open cover of $\limitset^{(2)} \times \R$ such that $\gamma \mathcal{U}_j \cap \gamma' \mathcal{U}_j = \varnothing$ for all distinct $\gamma, \gamma' \in \Gamma$ and $1 \le j \le j_0$ and $\{\varphi_i\}_{i \in I}$ for some index set $I$ be a partition of unity subordinate to $\mathcal{U}$ such that $\varphi_i: \limitset^{(2)} \times \R \to [0, +\infty)$ is smooth along the $\rho$-geodesic flow for all $i \in I$ and $\{\varphi_i\}_{i \in I}$ be a partition of unity subordinate to the cover $\{\gamma U_j\}_{\gamma \in \Gamma, 1 \le i \le j_0}$. Define the map $\tilde{\mathcal{H}}^{\sfv_0,\sfv} : \limitset^{(2)} \times \R \to \limitset^{(2)} \times \R$ by $\tilde{\mathcal{H}}^{\sfv_0, \sfv}(x,y,r) = (x,y,r\psi_{\sfv}({\sfv}_0)+\log\hat{u}(x,y,r))$ where $\hat{u}$ is given by
\begin{align*}
\frac{1}{\hat{u}(z)} = \int_0^T \frac{e^{s/T}}{\exp\left(\sum_{i \in I, \, \supp \varphi_i \subset \gamma U_j}(\psi_{\sfv}\circ\pi_{\ker\psi})(\beta^\Theta_x(e,\gamma)) \cdot \varphi_i(x,y,r+t)\right)} \, ds
\end{align*}
for all $z = (x,y,r) \in \limitset^{(2)} \times \R$, for some $T >0$. Then, it descends to the desired map $\mathcal{H}^{\sfv_0, \sfv}:\mathcal{X} \to \mathcal{X}_{\sf{v}}$.
\end{remark}

\section{Transfer operators}
\label{sec:TransferOperators}
In this section, we cover the necessary background for transfer operators in order to state \cref{thm:SpectralBoundOnTransferOperator} which is the main technical theorem regarding spectral bounds. We also outline how to derive the main theorem stated in \cref{thm:ExponentialMixingOnXWithPR-Resonances} from \cref{thm:SpectralBoundOnTransferOperator}. A key point in this section is that all the theorems include the precise dependence on a parameter associated to $\sfv \in \interior\limitcone$.

\subsection{Reduction of \texorpdfstring{\cref{thm:ExponentialMixingOnXWithPR-Resonances}}{\autoref{thm:ExponentialMixingOnXWithPR-Resonances}} by rescaling}
Due to the homothety equivariance property for the family of translation flows (see \cref{eqn:HomothetyEquivariance}), it is convenient to fix a scaling for each direction in $\interior\limitcone$ for the purpose of proving \cref{thm:ExponentialMixingOnXWithPR-Resonances}. It turns out that there is a particular scaling so that we have additional properties. Using the isomorphism $\LieA_\Theta \cong \LieA_\Theta^*$ induced by the inner product $\langle \cdot, \cdot\rangle$ on $\LieA_\Theta$, we abuse notation and identify the dual limit cone $\limitcone^* \subset \LieA_\Theta^*$ with a dual limit cone $\limitcone^* \subset \LieA_\Theta$ (see \cref{eqn:DualLimitCone}). For convenience, we write $\ker\Theta := \bigcup_{\alpha \in \Theta} \ker\alpha$ which contains $\partial\LieA_\Theta^+$. Note that $\limitcone \subset \interior\LieA_\Theta^+ \cup \{0\}$ (see \cref{thm:BasicProperties}\labelcref{itm:BasicProperties2}) and non-obtuseness of $\LieA_\Theta^+$ (see \cite[Chapter II, \S\,5, Proposition 2.48(e)]{Kna96}) implies that $\partial\LieA_\Theta^+ \subset \interior\limitcone^* \cup \{0\}$. For all $\sfw \in \interior\limitcone^*$ with $\|\sfw\| = 1$, we define $\sfv(\sfw) \in \interior\limitcone$ to be the unique vector such that $\nabla\psi_{\sfv(\sfw)} = \sfw$. Then, \cref{eqn:GeneralLinearForm} gives
\begin{align}
\label{eqn:GradientofGrowthindicatorEqualsGrowthindicator}
\frac{\|\nabla\growthindicator(\sfv(\sfw))\|}{\growthindicator(\sfv(\sfw))} = 1.
\end{align}
Note also that for an arbitrary vector  $\sfv \in \interior\limitcone$, if it is scaled to a vector $\hat{\sfv} \in \interior\limitcone$ of the above form so that $\sfv = \kappa \hat{\sfv}$, then $\kappa = \frac{\growthindicator(\sfv)}{\|\nabla\growthindicator(\sfv)\|}$. Therefore, \cref{thm:ExponentialMixingOnXWithPR-Resonances} follows from the following exponential mixing theorem. \Cref{thm:UniformExponentialMixingOnX} also follows either directly from the following or via \cref{thm:ExponentialMixingOnXWithPR-Resonances}. Here, $L(\mathcal{X})$ denotes the space of real-valued Lipschitz continuous functions on $\mathcal{X}$ equipped with the Lipschitz norm $\|\cdot\|_{\Lip}$; we use similar notations for other domains and indicate the codomain for complex-valued functions.

\begin{theorem}
\label{thm:RescaledExponentialMixingOnXWithPR-Resonances}
For all open neighborhoods $\mathcal{N} \supset \ker\Theta$, the following holds. Let $\sfv \in \interior\limitcone$ with $\|\nabla\psi_{\sfv}\| = 1$ and $\nabla\psi_{\sfv} \notin \mathcal{N}$. There exist $\eta_\sfv > 0$ which is continuous in $\sfv$ and decays exponentially in $\delta_\sfv$ to $0$, $C_\sfv > 0$ which is continuous in $\sfv$, $k_\sfv \in \N$, a finite set $\{\mu_{k, \sfv}\}_{k = 1}^{k_\sfv} \subset (-\eta_\sfv, 0) + i[-1, 1]$ which come in conjugate pairs, and a finite set of finite-rank positive semi-definite bilinear forms $\{\mathcal{B}_{k, \sfv}\}_{k = 1}^{k_\sfv}$, such that for all $\phi_1,\phi_2 \in L(\mathcal{X})$ and $t > 0$, we have
\begin{multline*}
\left|\int_{\mathcal{X}} \phi_1(a^{\sfv}_tx)\phi_2(x) \, d\BMSX^{\sfv}(x) - \left(\BMSX^{\sfv}(\phi_1)  \BMSX^{\sfv}(\phi_2) + \sum_{k = 1}^{k_\sfv} e^{\mu_{k, \sfv} t}\mathcal{B}_{k, \sfv}(\phi_1, \phi_2)\right)\right| \\
\leq C_\sfv e^{-\eta_\sfv t} \|\phi_1\|_{\Lip} \|\phi_2\|_{\Lip}.
\end{multline*}
\end{theorem}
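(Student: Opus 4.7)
The plan is to derive \cref{thm:RescaledExponentialMixingOnXWithPR-Resonances} from the spectral bound on transfer operators in \cref{thm:SpectralBoundOnTransferOperator} via the standard Paley--Wiener contour-shift argument applied to the Laplace transform of the correlation function. For every $\sfv \in \interior\limitcone$ with $\|\nabla\psi_\sfv\| = 1$, the translation flow $\{a^\sfv_t\}_{t \in \R}$ on $\mathcal{X}$ is metric Anosov by \cref{thm:TranslationFlowIsMetricAnosov} together with \cref{rem:ConjugatingTranslationFlows}, so \cref{cor:TranslationFlowHasMarkovSection} supplies a Markov section. Following \cref{subsec:OnTheProofOfMainTheorem}, I would fix a \emph{compatible} family of Markov sections so that the base $U$ and the symbolic shift $\sigma$ are common to every $\sfv$, with only the first-return time $\tau^\sfv: U \to \R$ varying smoothly in $\sfv$. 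Under this normalization, identity \cref{eqn:GradientofGrowthindicatorEqualsGrowthindicator} gives $\delta_\sfv = \|\nabla\growthindicator(\sfv)\|$, matching the argument of $E_\mathcal{N}$ in the statement.

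Using the product structure of $\BMSX^\sfv$ along the flow direction and the suspension picture over $(U,\sigma,\tau^\sfv)$, the correlation
\[
\rho_\sfv(t) := \int_\mathcal{X} \phi_1(a^\sfv_tx)\phi_2(x)\,d\BMSX^\sfv(x) - \BMSX^\sfv(\phi_1)\BMSX^\sfv(\phi_2)
\]
is rewritten as a sum over Birkhoff returns, and its one-sided Laplace transform $\hat\rho_\sfv(\xi) = \int_0^\infty e^{-\xi t}\rho_\sfv(t)\,dt$, holomorphic for $\Re(\xi) > 0$, admits a Ruelle-type representation of the form
\[
\hat\rho_\sfv(\xi) = F_\sfv(\xi;\phi_1,\phi_2) + \bigl\langle (\I - \mathcal{L}_{-(\delta_\sfv+\xi)\tau^\sfv})^{-1} H^{(2)}_{\sfv,\xi},\, H^{(1)}_{\sfv,\xi}\bigr\rangle,
\]
with $F_\sfv$ entire and the twisted observables $H^{(i)}_{\sfv,\xi}$ depending holomorphically on $\xi$ with Lipschitz seminorms linear in $\|\phi_i\|_{\Lip}$ uniformly on bounded vertical strips. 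The coincidence of the topological entropy of $\{a^\sfv_t\}_{t\in\R}$ with $\delta_\sfv$ (\cref{subsec:TranslationFlows}) forces the pressure of $-\delta_\sfv \tau^\sfv$ to vanish, so $\mathcal{L}_{-\delta_\sfv \tau^\sfv}$ has simple isolated top eigenvalue $1$, and the residue at $\xi = 0$ reproduces exactly the product term $\BMSX^\sfv(\phi_1)\BMSX^\sfv(\phi_2)$.

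Next, I would invoke \cref{thm:SpectralBoundOnTransferOperator} for the prescribed neighborhood $\mathcal{N} \supset \ker\Theta$: uniformly over $\sfv$ with $\nabla\psi_\sfv \notin \mathcal{N}$, it produces a spectral-radius bound for $\mathcal{L}_{-(\delta_\sfv + i\omega)\tau^\sfv}$ on Lipschitz functions decaying like $e^{-E_\mathcal{N}(\|\nabla\growthindicator(\sfv)\|)}$ for $|\omega|$ large, together with polynomial-in-$|\omega|$ resolvent bounds on vertical lines strictly inside the resulting essential spectral gap. A compact-perturbation argument then isolates only finitely many eigenvalues $\{\lambda_k(\sfv)\}_{k=1}^{k_\sfv}$ of the holomorphic family $\xi \mapsto \mathcal{L}_{-(\delta_\sfv + \xi)\tau^\sfv}$ in the strip $-E_\mathcal{N}(\|\nabla\growthindicator(\sfv)\|) < \Re(\xi) < 0$; these are the Pollicott--Ruelle resonances, they come in conjugate pairs because $\tau^\sfv$ is real-valued, and after the fixed normalization $\|\nabla\psi_\sfv\| = 1$ together with standard Ruelle-operator eigenvalue estimates they lie in $(-1,0)\times i(-1,1)$.

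Mellin inversion then recovers $\rho_\sfv(t)$ as a vertical-line integral of $e^{\xi t}\hat\rho_\sfv(\xi)$; shifting the contour from $\Re(\xi) = c > 0$ to $\Re(\xi) = -E_\mathcal{N}(\|\nabla\growthindicator(\sfv)\|) + \epsilon$ picks up residues at $\xi = 0$ and at each $\lambda_k(\sfv)$, which produce the finite-rank positive bilinear forms $\mathcal{B}_{\sfv,k}(\phi_1,\phi_2)$ from the associated spectral projectors, while the remaining contour integral is controlled by $C_\sfv e^{-E_\mathcal{N}(\|\nabla\growthindicator(\sfv)\|) t}\|\phi_1\|_{\Lip}\|\phi_2\|_{\Lip}$. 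The main obstacle is that \cref{thm:SpectralBoundOnTransferOperator} must deliver \emph{resolvent} bounds (not merely spectral radius bounds) integrable enough in $|\omega|$ to justify the contour shift, and the essential spectral gap must genuinely degenerate as an elementary function of $\|\nabla\growthindicator(\sfv)\|$ as $\nabla\psi_\sfv$ approaches $\ker\Theta$; both points hinge on tracking the uniform-in-$\sfv$ dependence of the LNIC constants from \cref{pro:LNIC}, which is precisely the technical heart of \cref{sec:LNIC,sec:ProofOfTheorem}.
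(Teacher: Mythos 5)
Your proposal follows the same route the paper takes: the derivation is the Pollicott/Paley--Wiener contour-shift argument for the Laplace transform of the correlation function, built on the compatible family of Markov sections of \cref{subsec:CompatibleFamilyOfMarkovSections}, using the spectral bounds of \cref{thm:SpectralBoundOnTransferOperator} to shift the contour and collecting Pollicott--Ruelle resonances as residues of the resolvent $(\I - \mathcal{L}_{-(\delta_\sfv+\xi)\tau^\sfv})^{-1}$, exactly as the authors sketch in \cref{subsec:DolgopyatsMethod} by reference to \cite[\S 10]{SW21} and \cite{Pol85}. Your caution about needing genuine resolvent control and the metric Anosov property (not just the crude version used inside Dolgopyat's method) is precisely the point the paper flags in \cref{rem:MetricAnosovIssue}, and your observation that the vanishing of $\Pr_\sigma(-\delta_\sfv\tau^\sfv)$ yields the main term as the residue at $\xi=0$, while uniformity in $\sfv$ traces back to the LNIC constants of \cref{pro:LNIC}, matches the paper's discussion.
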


The rest of the paper is devoted to the proof of the above theorem. To this end, we fix an open neighborhood $\mathcal{N} \supset \ker\Theta$ henceforth and define the subset of unit vectors
\begin{align*}
\ww = \{\sfw \in \interior\limitcone^*: \|\sfw\| = 1,  \sfw \notin \mathcal{N}\}.
\end{align*}
Note that we have a corresponding compact subset of unit vectors
\begin{align*}
\overline{\ww} = \{\sfw \in \limitcone^*: \|\sfw\| = 1,  \sfw \notin \mathcal{N}\}.
\end{align*}

\subsection{Compatible family of Markov sections}
\label{subsec:CompatibleFamilyOfMarkovSections}
\Cref{cor:TranslationFlowHasMarkovSection} gives a Markov section for any fixed translation flow. Recalling that we have a family of translation flows $\bigl\{\bigl\{a^{\sfv(\sfw)}_t\bigr\}_{t \in \R}\bigr\}_{\sfw \in \ww}$, we fix a corresponding family of Markov sections $\{\mathcal{R}^{\sfw}\}_{\sfw \in \ww}$ on $\mathcal{X}$. They have several other corresponding objects (see \cref{subsec:MetricAnosovFlows}) all of which we denote using a superscript `$\sfw$' on the same notations. By the same observation as in \cite[\S\,4.2]{CS23} regarding Markov sections of reparametrized flows, we deduce the remarkable property that the family of Markov sections can be chosen such that for different unit vectors $\sfw, \sfw' \in \ww$, the Markov sections $\mathcal{R}^{\sfw}$ and $\mathcal{R}^{\sfw'}$ consist of corresponding rectangles which project to each other along the translation flow lamination while fixing their centers. Consequently, for some fixed $N \in \N$, we denote
\begin{align*}
\mathcal{R}^{\sfw} = \{R^{\sfw}_1, R^{\sfw}_2, \dotsc, R^{\sfw}_N\} = \{[U^{\sfw}_1, S^{\sfw}_1], [U^{\sfw}_2, S^{\sfw}_2], \dotsc, [U^{\sfw}_N, S^{\sfw}_N]\} \qquad \text{for all $\sfw \in \ww$}
\end{align*}
and the family $\{\mathcal{R}^{\sfw}\}_{\sfw \in \ww}$ comes equipped with a family
\begin{align*}
\{\Psi^{\sfw, \sfw'}: R^{\sfw} \to R^{\sfw'}\}_{\sfw, \sfw' \in \ww}
\end{align*}
of compatibility maps which are Lipschitz homeomorphisms and vary smoothly in $\sfw, \sfw' \in \ww$. More precisely, there exists a family $\{t^{\sfw, \sfw'}: R^\sfw \to \R\}_{\sfw, \sfw' \in \ww}$ of Lipschitz continuous functions which vary smoothly in $\sfw, \sfw' \in \ww$ such that $t^{\sfw, \sfw'}(w_j) = 0$ at the centers $w_j \in R^{\sfw}_j$ for all $1 \leq j \leq N$ and
\begin{align*}
\Psi^{\sfw, \sfw'}(u) = a^{\sfv(\sfw)}_{t^{\sfw, \sfw'}(u)} u \in R^{\sfw'} \qquad \text{for all $u \in R^{\sfw}$ and $\sfw, \sfw' \in \ww$}.
\end{align*}
All the properties are clear save injectivity which we now justify. For all $\sfw, \sfw' \in \ww$, the map $\Psi^{\sfw, \sfw'}$ moves the points in $R^\sfw$ along the translation flow lamination to which the strong stable and unstable laminations corresponding to both $\sfw, \sfw' \in \ww$ are transverse and hence preserves the time ordering along the translation flow lamination. Now, if $\Psi^{\sfw, \sfw'}(u_1) = \Psi^{\sfw, \sfw'}(u_2)$ for some distinct $u_1, u_2 \in R^\sfw$, then the time ordering would be violated since $u_1$ and $u_2$ must lie on the same translation flow orbit; implying injectivity.

We now use the compatibility maps $\Psi^{\sfw_0, \sfw}$ to identify the Markov sections $\mathcal{R}^\sfw$ for all $\sfw \in \ww$ with a fixed Markov section $\mathcal{R} := \mathcal{R}^{\sfw_0}$ of some size $\hat{\delta} \in (0, \epsilon_0)$, where $\sfw_0$ corresponds to $\sfv_0$, i.e., $\sfv(\sfw_0) \in \R_{>0}\sfv_0$. We need to address a subtle point. For any $\sfw \in \ww$ outside of some open neighborhood of $\sfw_0 \in \ww$, the corresponding Markov sections need not be of size $\hat{\delta}$. However, apart from that, they are legitimate Markov sections and come from the local product structures on the image $\Psi^{\sfw_0, \sfw}\bigl(\bigsqcup_{j \in \mathcal{A}}[W^{\mathrm{su}}_{\epsilon_0}(w_j), W^{\mathrm{ss}}_{\epsilon_0}(w_j)]\bigr)$, where $w_j \in R_j^{\sfw_0}$ are the centers for all $j \in \mathcal{A}$. Thus, we can drop all superscripts henceforth, except for $\{\tau^{\sfw}\}_{\sfw \in \ww}$ which we view as a family of first return time maps on the same set $U$; since they still vary in $\sfw$, they need to be distinguished. A useful property is that they remain positive, again due to preservation of time ordering. We record this and other useful properties in the following lemma (recall \cref{eqn:sup_inf_of_tau}).

\begin{lemma}
\label{lem:tau_uniformLipschitzNormBound}
The family $\{\tau^{\sfw}\}_{\sfw \in \ww}$ is smooth in $\sfw \in \ww$ and there exists $C_\tau > 0$ such that
\begin{enumerate}
\item $0 < \tau^{\sfw}(u) \leq \frac{C_\tau}{2\overline{\tau^{\sfw_0}}}\tau^{\sfw_0}(u) \leq \frac{C_\tau}{2}$ for all $u \in U$ and $\sfw \in \ww$,
\item $\max_{(j, k)}\Lip\bigl(\tau^{\sfw}|_{\mathtt{C}[j, k]}\bigr) \leq \frac{C_\tau}{2}$ for all $\sfw \in \ww$.
\end{enumerate}
Consequently, $\bigl\{\max_{(j, k)}\|\tau^{\sfw}|_{\mathtt{C}[j, k]}\|_{\Lip}\bigr\}_{\sfw \in \ww}$ is bounded above by $C_\tau$.
\end{lemma}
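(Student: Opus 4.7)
The plan is to derive the three conclusions from properties of the distinguished return time $\tau^{\sfw_0}$ by expressing $\tau^{\sfw}$ as a smooth $\sfw$-family of compositions involving $\tau^{\sfw_0}$, the shifts $t^{\sfw_0, \sfw}$, and the flow reparametrizations, and then running a compactness argument in the parameter $\sfw$.

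First I would derive a closed form for $\tau^{\sfw}$. Let $\kappa^{\sfw}: \mathcal{X} \times \R \to \R$ denote the Lipschitz reparametrization cocycle satisfying $a_t^{\sfv(\sfw)} = a_{\kappa^{\sfw}(\cdot, t)}$, whose existence and smooth dependence on $\sfw$ are provided by \cref{rem:ConjugatingTranslationFlows}. Comparing $\{a_t\}$-times and $\{a_t^{\sfv(\sfw)}\}$-times along orbits (which coincide), and using $\Psi^{\sfw_0, \sfw}(u) = a_{t^{\sfw_0, \sfw}(u)}(u)$ together with $\mathcal{P}(u) = a_{\tau^{\sfw_0}(u)}(u)$, one obtains
\begin{align*}
\tau^{\sfw}(u) = (\kappa^{\sfw})^*\bigl(\Psi^{\sfw_0, \sfw}(u), \, \tau^{\sfw_0}(u) + t^{\sfw_0, \sfw}(\mathcal{P}(u)) - t^{\sfw_0, \sfw}(u)\bigr).
\end{align*}
Smoothness of $\{\tau^{\sfw}\}_{\sfw \in \ww}$ in $\sfw$ follows from smoothness of each ingredient; the $\sfw$-dependence of $\kappa^{\sfw}$ is inherited from the smoothness of $\psi_{\sfv(\sfw)}$ in $\sfw$ and the explicit construction of $\mathcal{H}^{\sfv_0, \sfv(\sfw)}$.

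Second, for the upper bound in (1), positivity of $\tau^{\sfw}$ follows from preservation of time ordering along the flow by the compatibility maps $\Psi^{\sfw_0, \sfw}$, as noted in the setup. The continuous positive function $(\sfw, u) \mapsto \tau^{\sfw}(u)/\tau^{\sfw_0}(u)$ on $\ww \times U$ extends continuously to the compact parameter-space closure in $\overline{\ww} \times U$, since the normalization $\|\nabla\psi_{\sfv(\sfw)}\| = 1$ and the condition $\sfw \notin \mathcal{N}$ keep $\sfv(\sfw)$ in a controlled region on which $\kappa^{\sfw}$ and $t^{\sfw_0, \sfw}$ vary continuously. Setting $C_\tau \geq 2\overline{\tau^{\sfw_0}} \sup_{(\sfw, u)} \tau^{\sfw}(u)/\tau^{\sfw_0}(u)$ yields the first inequality, and the second is immediate from $\tau^{\sfw_0}(u) \leq \overline{\tau^{\sfw_0}}$.

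Third, for (2), $\tau^{\sfw_0}|_{\mathtt{C}[j, k]}$ is Lipschitz by the Markov property together with smoothness of the Poincar\'{e} section on each cylinder. The closed form from Step 1, combined with uniform Lipschitz bounds on $(\kappa^{\sfw})^*$, $t^{\sfw_0, \sfw}$, and $\Psi^{\sfw_0, \sfw}$ over the compact parameter space, provides a uniform Lipschitz bound for $\tau^{\sfw}|_{\mathtt{C}[j, k]}$ over $\sfw \in \ww$ and $(j, k)$. Enlarging $C_\tau$ if necessary completes (2), and the final \emph{consequently} clause follows by adding the sup-norm bound from (1) to the Lipschitz bound from (2). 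The main obstacle I anticipate is obtaining uniform Lipschitz control on $(\kappa^{\sfw})^*$ and $t^{\sfw_0, \sfw}$ as $\sfw$ ranges over the closure of $\ww$; this entails revisiting the integral defining $\hat{u}$ in \cref{rem:ConjugatingTranslationFlows} and the associated partition-of-unity sums to verify that the Lipschitz constants depend continuously on $\sfw$. The condition $\sfw \notin \mathcal{N}$ is crucial here, preventing $\sfv(\sfw)$ from degenerating toward the walls of the Weyl chamber where $\growthindicator$ is vertically tangent (cf. \cref{thm:BasicProperties}\labelcref{itm:BasicProperties4}).
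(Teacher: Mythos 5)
Your route is genuinely different from the paper's, and it has a real gap that the paper's argument is designed to avoid. The paper's proof rests on a single algebraic identity: the (vector-valued) first return vector map $\mathsf{K}: U \to \LieA_\Theta$ is independent of $\sfw$ and Lipschitz on cylinders of length $1$, and one has $\tau^\sfw = \psi_{\sfv(\sfw)} \circ \mathsf{K}$ for all $\sfw \in \ww$. Since $\psi_{\sfv(\sfw)}$ is linear with $\|\nabla\psi_{\sfv(\sfw)}\| = 1$, both the smoothness in $\sfw$, the upper bound $\tau^\sfw \le C \tau^{\sfw_0}$, and the uniform cylinderwise Lipschitz bound are immediate, with the \emph{same} constant governing every $\sfw$. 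The boundary extension to $\overline{\ww}$ mentioned after the lemma is also trivial in this formulation, since $\psi_{\sfv(\sfw)}$ extends to unit-norm linear forms for $\sfw \in \partial\overline{\ww}$ even though the corresponding translation flow does not exist.

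Your proposal instead writes $\tau^\sfw$ through the chain $\tau^\sfw = (\kappa^{\sfw})^*\bigl(\Psi^{\sfw_0,\sfw}(\cdot), \tau^{\sfw_0}(\cdot) + t^{\sfw_0,\sfw}(\mathcal{P}(\cdot)) - t^{\sfw_0,\sfw}(\cdot)\bigr)$ and tries to deduce uniform Lipschitz bounds by compactness of $\overline{\ww}$. The missing piece is exactly the one you name yourself: uniform Lipschitz control of $(\kappa^{\sfw})^*$ and $t^{\sfw_0,\sfw}$ over $\ww$, including near $\partial\overline{\ww}$. This is not a routine compactness step. The cocycle $\kappa^\sfw$ comes from the bi-Lipschitz conjugacy $\mathcal{H}^{\sfv_0,\sfv(\sfw)}$ of \cref{rem:ConjugatingTranslationFlows}, whose construction involves an integral over a partition of unity and a Morse/contraction estimate whose constants were not shown to be uniform in $\sfw$; moreover $\kappa^\sfw$ and $t^{\sfw_0,\sfw}$ are defined only for $\sfw$ in the \emph{interior} cone $\ww$ (the flow $a^{\sfv(\sfw)}_t$ does not exist on $\partial\overline{\ww}$), so a compactness argument on $\overline{\ww} \times U$ requires establishing a continuous extension rather than invoking one. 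Your proposal would require genuinely new estimates for those auxiliary objects. The paper's observation that $\tau^\sfw$ factors linearly through a single $\sfw$-independent Lipschitz map $\mathsf{K}$ sidesteps all of this, so you should adopt that identity instead of the cocycle decomposition.
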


\begin{proof}
Positivity of $\tau^\sfw$ for all $\sfw \in \ww$, as mentioned above, is clear and so we prove the other properties. We use notations from \cref{subsec:SmoothExtensions} and terminology from \cref{subsec:SymbolicDynamics} to shorten the proof. Namely, we use the first return vector map $\mathsf{K}: U \to \LieA_\Theta$ descended from the one in \cref{subsec:SmoothExtensions} which is Lipschitz continuous on cylinders of length $1$. Then, we have the identity $\tau^\sfw = \psi_{\sfv(\sfw)} \circ \mathsf{K}$ for all $\sfw \in \ww$. Thus, the lemma follows from this and the property that $\|\nabla \psi_{\sfv(\sfw)}\| = 1$.
\end{proof}

Due to the above lemma, we conclude that the smooth family $\{\tau^{\sfw}\}_{\sfw \in \ww}$ can be extended to a smooth family of uniformly essentially Lipschitz (i.e., with a uniform Lipschitz constant on cylinders of length $1$) nonnegative first return time maps $\{\tau^{\sfw}\}_{\sfw \in \overline{\ww}}$.

Using a compatible family of Markov sections as constructed in this subsection is necessary to execute Dolgopyat's method in a uniform fashion for the family of translation flows. Although we cannot make sense of a translation flow on $\mathcal{X}$ or even transfer operators associated to any $\sfv \in \partial\limitcone$, the above uniform bounds and the extended family of first return time maps is used to obtain other Lipschitz bounds and a uniform version of LNIC to derive the precise form of the exponential decay rate in \cref{thm:SpectralBoundOnTransferOperator} for the family of transfer operators in $\sfw \in \ww$.

\subsection{Symbolic dynamics}
\label{subsec:SymbolicDynamics}
Let $\mathcal A = \{1, 2, \dotsc, N\}$ be the \emph{alphabet} of $\mathcal{R}$. Define the $N \times N$ \emph{transition matrix} $T$ by
\begin{align*}
T_{j, k} =
\begin{cases}
1, & \interior(R_j) \cap \mathcal{P}^{-1}(\interior(R_k)) \neq \varnothing \\
0, & \text{otherwise}
\end{cases}
\qquad
\text{for all $1 \leq j, k \leq N$}.
\end{align*}
The transition matrix $T$ is \emph{topologically mixing} as a consequence of \cite[Theorem 8.1]{CS23}, i.e., there exists $N_T \in \N$ such that $T^{N_T}$ consists only of positive entries.

\begin{remark}
In \cite[\S\,4.3]{CS23}, there is a minor inaccuracy---the transition matrix $T$ is not topologically mixing \emph{a priori}. However, it is topologically transitive by \cite[Proposition 6.3]{CS23}. We may assume that $T$ is aperiodic by inserting extra rectangles in the Markov section if necessary so that coprime primitive periods exist for $\mathcal{P}$. Then, near the end of the proof of \cite[Theorem 6.10]{CS23}, ``for any $n \geq N_T$'' should be replaced with ``for some coprime $n_1, n_2 \geq N_T$'' and still conclude $\theta \in \{0, 1\}$. With this correction, we can use the topological mixing result \cite[Theorem 8.1]{CS23}. As a consequence, $T$ is indeed topologically mixing.
\end{remark}

Define the spaces of bi-infinite and infinite \emph{admissible sequences}
\begin{align*}
\Sigma &= \{(\dotsc, x_{-1}, x_0, x_1, \dotsc) \in \mathcal A^{\Z}: T_{x_j, x_{j + 1}} = 1 \text{ for all } j \in \Z\}; \\
\Sigma^+ &= \{(x_0, x_1, \dotsc) \in \mathcal A^{\Z_{\geq 0}}: T_{x_j, x_{j + 1}} = 1 \text{ for all } j \in \Z_{\geq 0}\}
\end{align*}
respectively. We will use the term \emph{admissible sequences} for finite sequences as well in the natural way. For any fixed $\beta_0 \in (0, 1)$, we endow $\Sigma$ with the metric $d$ defined by $d(x, y) = \beta_0^{\inf\{|j| \in \Z_{\geq 0}: x_j \neq y_j\}}$ for all $x, y \in \Sigma$. We similarly endow $\Sigma^+$ with a metric which we also denote by $d$.

\begin{definition}[Cylinder]
For all admissible sequences $x = (x_0, x_1, \dotsc, x_k)$ of \emph{length} $\len(x) = k \in \Z_{\geq 0}$, we define its \emph{cylinder} of \emph{length} $\len(\mathtt{C}[x]) = k$ to be
\begin{align*}
\mathtt{C}[x] &:= \{u \in U: \sigma^j(u) \in \interior(U_{x_j}) \text{ for all } 0 \leq j \leq k\}.
\end{align*}
We denote cylinders simply by $\mathtt{C}$ (or other typewriter style letters) when we do not need to specify the admissible sequence. We also similarly define cylinders in $S$ and $R$.
\end{definition}

By a slight abuse of notation, let $\sigma$ also denote the shift map on $\Sigma$ or $\Sigma^+$. There exist natural continuous surjections $\eta: \Sigma \to R$ and $\eta^+: \Sigma^+ \to U$ defined by $\eta(x) = \bigcap_{j = -\infty}^\infty \overline{\mathcal{P}^{-j}(\interior(R_{x_j}))}$ for all $x \in \Sigma$ and $\eta^+(x) = \bigcap_{j = 0}^\infty \overline{\sigma^{-j}(\interior(U_{x_j}))}$ for all $x \in \Sigma^+$. Define $\hat{\Sigma} = \eta^{-1}(\hat{R})$ and $\hat{\Sigma}^+ = (\eta^+)^{-1}(\hat{U})$. Then the restrictions $\eta|_{\hat{\Sigma}}: \hat{\Sigma} \to \hat{R}$ and $\eta^+|_{\hat{\Sigma}^+}: \hat{\Sigma}^+ \to \hat{U}$ are bijective and satisfy $\eta|_{\hat{\Sigma}} \circ \sigma|_{\hat{\Sigma}} = \mathcal{P}|_{\hat{R}} \circ \eta|_{\hat{\Sigma}}$ and $\eta^+|_{\hat{\Sigma}^+} \circ \sigma|_{\hat{\Sigma}^+} = \sigma|_{\hat{U}} \circ \eta^+|_{\hat{\Sigma}^+}$. We can take $\beta_0 \in (0, 1)$ sufficiently close to $1$ so that $\eta$ and $\eta^+$ are Lipschitz continuous \cite[Lemma 2.2]{Bow73}.

Let $\sfw \in \overline{\ww}$. Since the horospherical foliations on $G$ are smooth, we conclude that $\tau^\sfw$ is Lipschitz continuous on cylinders of length $1$. Then the maps $(\tau^\sfw \circ \eta)|_{\hat{\Sigma}}$ and $(\tau^\sfw \circ \eta^+)|_{\hat{\Sigma}^+}$ are Lipschitz continuous and hence there exist unique Lipschitz extensions which, by abuse of notation, we also denote by $\tau^\sfw: \Sigma \to \R$ and $\tau^\sfw: \Sigma^+ \to \R$, respectively.

\subsection{Thermodynamics}
\label{subsec:Thermodynamics}
\begin{definition}[Pressure]
\label{def:Pressure}
For all $f \in L(\Sigma)$, called the \emph{potential}, the \emph{pressure} is defined by
\begin{align*}
\Pr_\sigma(f) = \sup_{\nu \in \mathcal{M}^1_\sigma(\Sigma)}\left\{\int_\Sigma f \, d\nu + h_\nu(\sigma)\right\}
\end{align*}
where $\mathcal{M}^1_\sigma(\Sigma)$ is the set of $\sigma$-invariant Borel probability measures on $\Sigma$ and $h_\nu(\sigma)$ is the measure theoretic entropy of $\sigma$ with respect to $\nu$.
\end{definition}

For all $f \in L(\Sigma)$, there exists a unique $\sigma$-invariant Borel probability measure $\nu_f$ on $\Sigma$ which attains the supremum in \cref{def:Pressure}, called the \emph{$f$-equilibrium state} \cite[Theorems 2.17 and 2.20]{Bow08}. It satisfies $\nu_f(\hat{\Sigma}) = 1$ \cite[Corollary 3.2]{Che02}.

Let $\sfw \in \ww$. Associated to $\psi_{\sfv(\sfw)}$, we relabel $\delta_\sfw := \delta_{\sfv(\sfw)} = \delta_{\psi_{\sfv(\sfw)}}$ (see \cref{eqn:psi_TopologicalEntropy}) and recall from \cref{subsec:TranslationFlows} that $\delta_\sfw = \growthindicator(\sfv(\sfw)) = \|\nabla\growthindicator(\sfv(\sfw))\|$ where we have used \cref{eqn:GradientofGrowthindicatorEqualsGrowthindicator}. Thanks to \cref{cor:TranslationFlowHasMarkovSection}, we can use \cite[Theorem A.2]{Car23} which states that $\BMSX^{\sfv(\sfw)}$ is a measure of maximal entropy for the translation flow which attains the maximal entropy of $\delta_\sfw$. We will consider in particular the probability measure $\nu_{-\delta_\sfw\tau^\sfw}$ on $\Sigma$ with corresponding pressure $\Pr_\sigma(-\delta_\sfw\tau^\sfw) = 0$ \cite[Proposition 3.1]{BR75} (cf. \cite[Theorem 4.4]{Che02}), which we will denote simply by $\nu_\Sigma^{\sfw}$. Define $\nu_R^{\sfw} = \eta_*(\nu_\Sigma^{\sfw})$ and $\nu_U^{\sfw} = (\proj_U)_*(\nu_R^{\sfw})$. Note that $\nu_U^{\sfw}(\tau^\sfw) = \nu_R^{\sfw}(\tau^\sfw)$. See \cref{subsec:ProofOutlineSpectralBoundsToExponentialMixing} for the relation between $\BMSX^{\sfv(\sfw)}$ and $\nu_R^{\sfw}$ (cf. \cite[\S\,4.4]{CS23}).

\subsection{Transfer operators}
\label{subsec:TransferOperators}
Throughout the rest of the paper, we will use the notation
\begin{align*}
\xi = a + ib \in \C
\end{align*}
for the complex parameter for the transfer operators.

\begin{definition}[Transfer operators]
For all $\xi \in \C$ and $\sfw \in \overline{\ww}$, the \emph{transfer operator} $\mathcal{L}_{\xi\tau^\sfw}: C\bigl(U, \C\bigr) \to C\bigl(U, \C\bigr)$ is defined by
\begin{align*}
\mathcal{L}_{\xi\tau^\sfw}(H)(u) = \sum_{u' \in \sigma^{-1}(u)} e^{\xi\tau^\sfw(u')}H(u')
\end{align*}
for all $u \in U$ and $H \in C\bigl(U, \C\bigr)$.
\end{definition}

We recall the Ruelle--Perron--Frobenius (RPF) theorem along with the theory of Gibbs measures in this setting \cite{Bow08,PP90}. For all $a \in \R$ and $\sfw \in \ww$, there exist a unique positive function $h_{a, \sfw} \in L(U)$ and a unique Borel probability measure $\nu_{a, \sfw}$ on $U$ such that $\nu_{a, \sfw}(h_{a, \sfw}) = 1$ and
\begin{align*}
\mathcal{L}_{-(\delta_\sfw + a)\tau^\sfw}(h_{a, \sfw}) &= \lambda_{a, \sfw} h_{a, \sfw}, & \mathcal{L}_{-(\delta_\sfw + a)\tau^\sfw}^*(\nu_{a, \sfw}) &= \lambda_{a, \sfw} \nu_{a, \sfw}
\end{align*}
where $\lambda_{a, \sfw} := e^{\Pr_\sigma(-(\delta_\sfw + a)\tau^\sfw)}$ is the maximal simple eigenvalue of $\mathcal{L}_{-(\delta_\sfw + a)\tau^\sfw}$ and the rest of the spectrum of $\mathcal{L}_{-(\delta_\sfw + a)\tau^\sfw}|_{L(U, \C)}$ is contained in a disk of radius strictly less than $\lambda_{a, \sfw}$. Moreover, $d\nu_U^{\sfw} = h_{0, \sfw} \, d\nu_{0, \sfw}$ and $\lambda_{0, \sfw} = 1$ (see \cref{subsec:Thermodynamics}).

Let $\sfw \in \ww$. As usual, it is convenient to normalize the transfer operators. For all $a \in \R$ define the map
\begin{align*}
\tau^{\sfw, (a)} = (\delta_\sfw + a)\tau^\sfw + \log \circ h_{0, \sfw} \circ \sigma - \log \circ h_{0, \sfw} \in L(U).
\end{align*}
For all $k \in \N$, we define the Lipschitz continuous maps $\tau_k^{\sfw}: U \to \R$ and $\tau_k^{\sfw, (a)}: U \to \R$ by
\begin{align*}
\tau_k^{\sfw}(u) &= \sum_{j = 0}^{k - 1} \tau^\sfw(\sigma^j(u)), & \tau_k^{\sfw, (a)}(u) &= \sum_{j = 0}^{k - 1} \tau^{\sfw, (a)}(\sigma^j(u))
\end{align*}
and $\tau_0^{\sfw}(u) = \tau_0^{(a), \sfw}(u) = 0$ for all $u \in U$. For all $\xi \in \C$, we define $\mathcal{L}_{\xi, \sfw}: C(U, \C) \to C(U, \C)$ and its $k$\textsuperscript{th} iterates for all $k \in \Z_{\geq 0}$ by
\begin{align*}
\mathcal{L}_{\xi, \sfw}^k(H)(u) = \sum_{u' \in \sigma^{-k}(u)} e^{-(\tau_k^{\sfw, (a)} + ib\tau_k^{\sfw})(u')} H(u')
\end{align*}
for all $u \in U$ and $H \in C(U, \C)$. With this normalization, for all $a \in \R$, the maximal simple eigenvalue of $\mathcal{L}_{a, \sfw}$ is $\lambda_{a, \sfw}$ with eigenvector $\frac{h_{a, \sfw}}{h_{0, \sfw}}$. Moreover, we have $\mathcal{L}_{0, \sfw}^*(\nu_U^{\sfw}) = \nu_U^{\sfw}$.

We fix some related constants. Let $\sfw \in \ww$.
By perturbation theory of operators as in \cite[Chapter 7]{Kat95} and \cite[Proposition 4.6]{PP90}, we can fix $a_0' > 0$ such that the map $[-a_0', a_0'] \to \R$ defined by $a \mapsto \lambda_{a, \sfw}$ and the map $[-a_0', a_0'] \to C(U)$ defined by $a \mapsto h_{a, \sfw}$ are Lipschitz uniformly in $\sfw \in \ww$. To see uniformity in $\sfw \in \ww$, a standard calculation using the eigenvalue equation and \cref{lem:tau_uniformLipschitzNormBound} gives $\bigl|\left.\frac{d}{da'}\right|_{a' = a}\lambda_{a', \sfw}\bigr| = |\nu_{a, \sfw}(\tau^\sfw h_{a, \sfw})| \leq C_\tau$. Similar estimates in the construction of the family of eigenvectors $\{h_{a, \sfw}\}_{a \in [-a_0', a_0']}$ (see \cite[Theorem 2.2]{PP90}) gives the latter. Fix $A_\tau > 0$ such that it is greater that the aforementioned Lipschitz constants and so that $|\tau^{\sfw, (a)}(u) - \tau^{\sfw, (0)}(u)| \leq A_\tau|a|$ for all $u \in U$ and $|a| \leq a_0'$.
Again by similar estimates, we also have $\|h_{a, \sfw}\|_{\Lip} \ll \delta_\sfw = \|\nabla\growthindicator(\sfv(\sfw))\|$ which tends to $+\infty$ as $\sfw$ tends to the boundary of $\interior\limitcone^*$. Fix
\begin{align*}
T_\sfw >{}&\max\bigg(\max_{\sfw \in \overline{\ww}} \max_{(j, k)} \|\tau^\sfw|_{\mathtt{C}[j, k]}\|_{\Lip}, \sup_{|a| \leq a_0'} \max_{(j, k)} \bigl\|\tau^{\sfw, (a)}|_{\mathtt{C}[j, k]}\bigr\|_{\Lip}\bigg)
\end{align*}
which can be chosen as some elementary function evaluated at $\delta_\sfw = \|\nabla\growthindicator(\sfv(\sfw))\|$, for all $\sfw \in \ww$ (see also \cite[Lemma 4.1]{PS16} for taking the supremum over $[-a_0', a_0']$).

\section{Dolgopyat's Method and the proofs of \texorpdfstring{\cref{thm:RescaledExponentialMixingOnXWithPR-Resonances}}{\autoref{thm:RescaledExponentialMixingOnXWithPR-Resonances}} and applications}
\label{sec:DolgopyatsMethodAndProofs}
In this section, we state \cref{thm:SpectralBoundOnTransferOperator} on spectral bounds of transfer operators which is deduced from the more elaborate \cref{thm:DolgopyatsMethod}. We then outline the proofs that \cref{thm:SpectralBoundOnTransferOperator} implies our main theorems.

\subsection{Dolgopyat's Method}
\label{subsec:DolgopyatsMethod}
Recall that $L(U,\C)$ denotes the space of complex Lipschitz continuous functions on $U$. It is a Banach space with the Lipschitz seminorm and norm
\begin{align*}
\Lip(H) &= \sup_{u,u' \in U, u \ne u'} \frac{|H(u)-H(u')|}{d(u,u')}, & \|H\|_{\Lip} &= \|H\|_\infty +\Lip(H)
\end{align*}
for all $H \in L(U,\C)$, where $\|\cdot\|_\infty$ denotes the usual $L^\infty$-norm. For all $b \in \R$, we also generalize the above norm to
\begin{align*}
\|H\|_{1,b} = \|H\|_\infty + \frac{1}{\max(1,|b|)}\Lip(H)
\end{align*}
for all $H \in L(U,\C)$, which will be required later.

Following Stoyanov \cite[\S\,5]{Sto11}, we define the convenient new metric $D$ on $U$ by
\begin{align*}
D(u, u') =
\begin{cases}
0, & u = u' \\
\displaystyle\min_{\substack{u, u' \in \mathtt{C}\\ \text{cylinder } \mathtt{C}}} \diam(\overline{\mathtt{C}}), & u \neq u' \text{ and } u, u' \in U_j \text{ for some } j \in \mathcal{A} \\
1, & \text{otherwise}.
\end{cases}
\end{align*}
We denote by $L_D(U)$ the space of real-valued $D$-Lipschitz continuous functions and $\Lip_D(h)$ to be the $D$-Lipschitz constant for all $h \in L_D(U)$. We define the cones
\begin{align*}
\mathcal{C}_B(U) &= \{h \in L_D(U): h > 0, |h(u) - h(u')| \leq Bh(u)D(u, u') \text{ for all } u, u' \in U\} \\
\tilde{\mathcal{C}}_B(U) &= \Big\{h \in L_D(U): h > 0, e^{-B D(u, u')} \leq \frac{h(u)}{h(u')} \leq e^{B D(u, u')} \text{ for all } u, u' \in U\Big\}.
\end{align*}

\begin{remark}
If $h \in \mathcal{C}_B(U)$, then using the convexity of $-\log$, we can derive that $h$ is $\log$-$D$-Lipschitz, i.e., $|(\log \circ h)(u) - (\log \circ h)(u')| \leq B D(u, u')$ for all $u, u' \in U$. It follows that $\mathcal{C}_B(U) \subset \tilde{\mathcal{C}}_B(U)$, however the reverse containment is not true.
\end{remark}

We now state \cref{thm:SpectralBoundOnTransferOperator} regarding spectral bounds of transfer operators. Such bounds are the key for obtaining exponential mixing results.

\begin{theorem}
\label{thm:SpectralBoundOnTransferOperator}
Let $\sfw \in \ww$. There exist $\eta_\sfw > 0$ and $a_\sfw > 0$ both of which are continuous in $\sfw$ and decay exponentially in $\delta_\sfw$ to $0$, and $b_0 > 0$ such that for all $\xi \in \C$ with $|a| < a_\sfw$ and $|b| > b_0$, $k \in \N$, and $H \in L(U,\C)$, we have
\begin{align*}
\big\|\mathcal{L}_{\xi, \sfw}^k(H)\big\|_2 \ll_\sfw e^{-\eta(\delta_\sfw) k} \|H\|_{1, b}
\end{align*}
where the implicit constant is continuous in $\sfw$.
\end{theorem}

By a standard inductive argument (see the proof after \cite[Theorem 5.4]{SW21}) which goes back to Dolgopyat \cite{Dol98}, \cref{thm:SpectralBoundOnTransferOperator} is a consequence of the following theorem which records the mechanism of Dolgopyat's method.

\begin{theorem}
\label{thm:DolgopyatsMethod}
There exist $m \in \N$, a continuous function $\eta: \R_{>0} \to (0, 1)$ which is increasing to $1$ at an exponential rate, a continuous function $a_0: \R_{>0} \to \R_{>0}$ which decays exponentially to $0$, $b_0 > 0$, $E > \max\left(1, \frac{1}{b_0}\right)$, and a set of operators
\begin{align*}
\{\mathcal{N}_{a, J}^{\sfw}: L_D(U) \to L_D(U): \sfw \in \ww, |a| < a_0(\delta_\sfw), J \in \mathcal{J}(b) \text{ for some } |b| > b_0\},
\end{align*}
where $\mathcal{J}(b)$ is some finite set for all $|b| > b_0$, such that
\begin{enumerate}
\item\label{itm:DolgopyatProperty1}	$\mathcal{N}_{a, J}^{\sfw}(\mathcal{C}_{E|b|}(U)) \subset \mathcal{C}_{E|b|}(U)$ for all $\sfw \in \ww$, $|a| < a_0(\delta_\sfw)$, $J \in \mathcal{J}(b)$, and $|b| > b_0$;
\item\label{itm:DolgopyatProperty2}	$\left\|\mathcal{N}_{a, J}^{\sfw}(h)\right\|_2 \leq \eta(\delta_\sfw) \|h\|_2$ for all $h \in \mathcal{C}_{E|b|}(U)$, $\sfw \in \ww$, $|a| < a_0(\delta_\sfw)$, $J \in \mathcal{J}(b)$, and $|b| > b_0$;
\item\label{itm:DolgopyatProperty3}	for all $|a| < a_0(\delta_\sfw)$, $|b| > b_0$, and $\sfw \in \ww$, if $H \in L(U,\C)$ and $h \in \mathcal{C}_{E|b|}(U)$ satisfy
\begin{enumerate}[label=(1\alph*), ref=\theenumi(1\alph*)]
\item\label{itm:DominatedByh}	$|H(u)| \leq h(u)$ for all $u \in U$;
\item\label{itm:LogLipschitzh}	$|H(u) - H(u')| \leq E|b|h(u)D(u, u')$ for all $u, u' \in U$;
\end{enumerate}
then there exists $J \in \mathcal{J}(b)$ such that
\begin{enumerate}[label=(2\alph*), ref=\theenumi(2\alph*)]
\item\label{itm:DominatedByDolgopyat}	$\bigl|\mathcal{L}_{\xi, \sfw}^m(H)(u)\bigr| \leq \mathcal{N}_{a, J}^{\sfw}(h)(u)$ for all $u \in U$;
\item\label{itm:LogLipschitzDolgopyat}	$\bigl|\mathcal{L}_{\xi, \sfw}^m(H)(u) - \mathcal{L}_{\xi, \sfw}^m(H)(u')\bigr| \leq E|b|\mathcal{N}_{a, J}^{\sfw}(h)(u)D(u, u')$ for all $u, u' \in U$.
\end{enumerate}
\end{enumerate}
\end{theorem}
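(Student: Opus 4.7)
The plan is to adapt Dolgopyat's method \cite{Dol98}, following the framework of Stoyanov \cite{Sto11} as refined in \cite{SW21,CS22,CS23}, with particular care to extract uniform quantitative dependence on $\sfw \in \ww$ through the parameter $\delta_\sfw$. The starting point is \cref{pro:LNIC}: for $|b| > b_0$ and a suitable cylinder $\mathtt{C}$ of scale $\sim 1/|b|$, LNIC supplies two preimage branches $v_1, v_2: \mathtt{C} \to \sigma^{-m}(\mathtt{C})$ along which the phase $b(\tau_m^\sfw \circ v_1 - \tau_m^\sfw \circ v_2)$ oscillates by an amount $\asymp |b| \cdot \diam(\mathtt{C})$, producing genuine cancellation in $\mathcal{L}_{\xi, \sfw}^m$ when applied to any Dolgopyat pair $(H, h)$.

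From this input I would construct, for each $|b| > b_0$, an index set $\mathcal{J}(b)$ parameterizing, for each such cylinder, a choice of which of the two LNIC branches to dampen. To each $J \in \mathcal{J}(b)$ associate a Dolgopyat cutoff $\chi_J: U \to [0, \mu]$ supported on the union of selected branch sub-cylinders, constant equal to $\mu$ there, and define
\begin{equation*}
\mathcal{N}_{a, J}^{\sfw}(h) = \mathcal{L}_{a, \sfw}^m\bigl((1 - \chi_J) h\bigr).
\end{equation*}
The integer $m$ is chosen once and for all from topological mixing of $T$ and the LNIC scale, $E \gg 1$ is chosen to absorb Lipschitz distortion from $\mathcal{L}^m$ together with the multiplier $1 - \chi_J$, and $\mu \ll E^{-1}$. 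Cone invariance then follows from standard log-Lipschitz estimates for $\mathcal{L}_{a, \sfw}^m$ (uniform in $|a| < a_0$ by perturbation theory) combined with the controlled $D$-Lipschitz norm of $\chi_J$. For the domination property, the complex-numbers inequality $|z_1 + z_2| \leq (1 - \theta)(|z_1| + |z_2|)$, valid when $z_1, z_2$ have phases separated by a definite angle, yields pointwise cancellation at branch points: the first Dolgopyat hypothesis gives comparable moduli, LNIC gives phase separation, and the log-Lipschitz hypothesis on $H$ extends this cancellation to the whole sub-cylinder, with the $(1 - \chi_J)$ factor absorbing a local contraction strictly less than $1$. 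The $L^2$-contraction reduces to the identity $\|(1 - \chi_J) h\|_2^2 = \|h\|_2^2 - (2\mu - \mu^2)\|\mathbf{1}_{\supp \chi_J} h\|_2^2$ combined with a Gibbs lower bound of the form $\nu_U^\sfw(\mathbf{1}_{\supp \chi_J} h^2) \geq c_\sfw \|h\|_2^2$; the latter follows from uniform Gibbs doubling on cylinders and the fact that selected sub-cylinders cover a definite fraction of $U$ in $\nu_U^\sfw$-measure.

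The hardest part will be tracking the quantitative dependence on $\sfw \in \ww$ throughout. Since $T_\sfw$ grows elementarily with $\delta_\sfw$ (see \cref{lem:tau_uniformLipschitzNormBound} and the constants fixed in \cref{subsec:TransferOperators}), the phase oscillation on each fixed-scale cylinder, the local distortion of $\mathcal{L}_{a, \sfw}^m$, and the Gibbs measure of selected sub-cylinders all degrade as $\delta_\sfw \to \infty$, forcing the per-iteration contraction $\eta(\delta_\sfw)$ to approach $1$. The delicate bookkeeping consists in verifying that LNIC holds with constants depending on $\sfw$ only through an elementary function of $\delta_\sfw$---exploiting the compatible family of Markov sections of \cref{subsec:CompatibleFamilyOfMarkovSections}---and that $\{\nu_U^\sfw\}_{\sfw \in \ww}$ satisfies doubling and Jacobian bounds on cylinders with constants elementary in $\delta_\sfw$. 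Once these quantitative ingredients are established, the remainder is a careful adaptation of the Stoyanov--Winter template, with the stated $\eta(\delta_\sfw)$ emerging explicitly from the LNIC and Gibbs constants together with $T_\sfw$.
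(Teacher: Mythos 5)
Your outline follows essentially the same Dolgopyat--Stoyanov template as the paper's proof: Dolgopyat operators $\mathcal{N}_{a,J}^{\sfw}(h) = \mathcal{L}_{a,\sfw}^m(\beta_J h)$ with $\beta_J = 1 - \chi_J$, cancellation from LNIC yielding the domination property, cone invariance from Lasota--Yorke estimates, $L^2$-contraction from lower bounds on the measure of the selected sub-cylinders, and $\delta_\sfw$-dependence tracked through $T_\sfw$ and the constants $m, \mu$. Two points deserve sharpening, though neither is a fatal gap. First, \cref{pro:NIC} does not supply two fixed branches with uniform oscillation; it supplies a family of $\jj+1$ sections $\{v_j\}_{j=0}^{\jj}$ together with the quantifier ``for all $u,u' \in \interior(U)$ there exists $1 \leq j \leq \jj$''---the choice of $j$ genuinely depends on the pair $(u,u')$---and this is precisely why the index set $\Xi(b) = \{0,\dots,\jj\}\times\{1,\dots,d_b\}$ carries a branch index and why $\mathcal{J}(b)$ must consist of \emph{dense} subsets rather than single selections. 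Second, your appeal to ``uniform Gibbs doubling'' should instead be the Gibbs property of $\nu_U^{\sfw}$ on cylinders (automatic from the equilibrium-state construction), not a Federer/doubling property: the paper goes out of its way to observe that doubling is unknown for these measures in higher rank and that Stoyanov's cylinder metric $D$ is introduced exactly so that the $L^2$-contraction can be run on cylinders without invoking doubling. Finally, a small clarification on the parameter-dependence: the LNIC constants $\varepsilon, m_0, \jj$ of \cref{pro:NIC} are actually \emph{uniform} over $\overline{\ww}$ by compactness (via \cref{cor:ProjectionOfAdjointNonzero}); the elementary $\delta_\sfw$-dependence in $\eta(\delta_\sfw)$ enters only through $T_\sfw$ (hence $A_0, m, \mu$ and the distortion of the normalized potential $\tau^{\sfw,(a)}$), not through the LNIC itself.
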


Proofs for the fact that \cref{thm:SpectralBoundOnTransferOperator} implies \cref{thm:RescaledExponentialMixingOnXWithPR-Resonances} (and hence \cref{thm:ExponentialMixingOnXWithPR-Resonances}), \cref{thm:EssentialSpectralGap}, and \cref{thm:ExponentialPrimeOrbitTheorem}, essentially exist throughout the literature. In the rest of this section, we collect the references and give proof outlines. Consequently, in subsequent sections of the paper, we focus only on the proof of \cref{thm:DolgopyatsMethod}.

\subsection{Proof outline of \cref{thm:SpectralBoundOnTransferOperator} implies \cref{thm:RescaledExponentialMixingOnXWithPR-Resonances}}
\label{subsec:ProofOutlineSpectralBoundsToExponentialMixing}
\Cref{thm:RescaledExponentialMixingOnXWithPR-Resonances} for Lipschitz continuous functions is derived from the spectral bounds of transfer operators, \cref{thm:SpectralBoundOnTransferOperator}, using arguments which go back to Pollicott \cite{Pol85} together with a Paley--Wiener type theorem. We refer the reader to \cite[\S\,7 and 8]{AGY06} for a clean framework with all the details; see also \cite[\S\,10]{SW21} and \cite[Chapter 6]{Sar22b} (cf. \cite[\S\,9]{LPS23} for a Lipschitz version). Inspired by \cite{Pol85}, one extra feature in the current paper in contrast with the previous papers is that we include the Pollicott--Ruelle resonances since we cannot control the simple eigenvalues of the transfer operators $\mathcal{L}_{\xi, \sfw}$ for the complex parameter $\xi$ in a fixed neighborhood of $0 \in \C$ \emph{uniformly} in $\sfw \in \ww$. We now outline the derivation and refer the reader to the above references for the omitted proofs. We set $\sfw \in \ww$ and decorations by $\sfv(\sfw)$ will simplified to decorations by $\sfw$.

\subsubsection{Approximating the test functions and their correlation function}
First we define the suspension spaces
\begin{align*}
U^{\tau^\sfw} &= (U \times \R_{\geq 0})/\sim, & R^{\tau^\sfw} &= (R \times \R_{\geq 0})/\sim,
\end{align*}
where $\sim$ is an equivalence relation defined by $(u, r + \tau^\sfw(u)) \sim (\sigma(u), r)$ for all $(u, r) \in U \times \R_{\geq 0}$ and similarly $(u, r + \tau^\sfw(u)) \sim (\mathcal{P}(u), r)$ for all $(u, r) \in R \times \R_{\geq 0}$. Let $\nu^{\tau^\sfw}$ be the product measure $\nu_R^\sfw \times \mathrm{Leb}_\R$ on $R^{\tau^\sfw}$. Then, $\BMSX^\sfw = \frac{\nu^{\tau^\sfw}}{\nu_U^\sfw(\tau^\sfw)}$; see \cite[\S\,4.4]{CS23}.

By Rokhlin's disintegration theorem with respect to the projection $\proj_U: R \to U$, the probability measure $\nu_R^\sfw$ disintegrates into a set of probability measures $\{\nu_u^\sfw\}_{u \in U}$. For all $j \in \mathcal{A}$ and $u \in U_j$, we may view $\nu_u^\sfw$ as a measure on $S_j$ by pushing forward by the map $[u, S_j] \to S_j$ given by $[u, s] \mapsto s$.

Let $\varsigma > 0$. We introduce the integration linear map $\mathcal{I}_\varsigma: L(\mathcal{X}) \to L^\infty\bigl(R^{\tau^\sfw}\bigr)$ defined as follows: for all test functions $\phi \in L(\mathcal{X})$, $j \in \mathcal{A}$, $[u, s] \in \interior(R_j)$, and $r \in [0, \tau^\sfw(u)]$, we put
\begin{align*}
\mathcal{I}_\varsigma(\phi)([u, s], r + \varsigma) = \int_{S_j} \phi\bigl(a^\sfw_{r + \varsigma}[u, s']\bigr) \, d\nu_u^\sfw(s')
\end{align*}
and extend it arbitrarily to $R^{\tau^\sfw}$. Then, in the open flow boxes $\{a^\sfw_{r + \varsigma}(u): u \in \interior(R_j), r \in (0, \tau^\sfw(u))\}$ for all $j \in \mathcal{A}$, the function $\mathcal{I}_\varsigma(\phi)$ is (by construction) constant along the $S$-coordinate.
Note also that $\mathcal{I}_\varsigma(\phi)$ approximates $\phi$; more precisely, there exists $\eta > 0$ such that
\begin{align}
\label{eqn: Iphi approximates phi}
\|\mathcal{I}_\varsigma(\phi) - \phi\|_\infty \ll e^{-(\eta/\overline{\tau^\sfw})\varsigma} \|\phi\|_{\Lip}.
\end{align}

It follows from definitions that for any $t \geq \varsigma$,
we may define a map $\mathcal{T}_{\varsigma, t}: L(\mathcal{X}) \to L^\infty\bigl(U^{\tau^\sfw}\bigr)$ by
\begin{align*}
\mathcal{T}_{\varsigma, t}(\phi) = \mathcal{I}_\varsigma(\phi) \circ a^\sfw_t
\end{align*}
where we abuse notation and view the right hand side as a function in $L^\infty\bigl(U^{\tau^\sfw}\bigr)$ since it is constant along $\interior(S_j)$ for all $j \in \mathcal{A}$. Moreover, for all $\varsigma \leq t < \varsigma + \underline{\tau^\sfw}$, the right hand side is in fact Lipschitz continuous along the $U$-coordinate in $\interior(U)$.

For all $\phi_1, \phi_2 \in L^\infty\bigl(U^{\tau^\sfw}\bigr)$, define the correlation function $\Upsilon_{\phi_1, \phi_2} \in L^\infty(\R_{\geq 0}, \R)$ by
\begin{align}
\label{eqn:CorrelationFunctinoOnSuspensionSpace}
\Upsilon_{\phi_1, \phi_2}(t) = \int_U \int_0^{\tau^\sfw(u)} \phi_1(u, r + t) \phi_2(u, r) \, dr \, d\nu_U^\sfw(u).
\end{align}
We have the following lemma by starting with the correlation function for the translation flow on $R^{\tau^\sfw}$ for test functions $\mathcal{I}_\varsigma(\phi_1)$ and $\mathcal{I}_\varsigma(\phi_2)$, and then using invariance of $\nu^{\tau^\sfw}$ under the translation flow.

\begin{lemma}
\label{lem: correlation functions equal}
For all $\varsigma > 0$ and $t'' \geq t' \geq \varsigma$ and $\phi_1, \phi_2 \in L(\mathcal{X})$, we have
\begin{align*}
\Upsilon_{\mathcal{T}_{\varsigma, t'}(\phi_1), \mathcal{T}_{\varsigma, t'}(\phi_2)}(t) = \Upsilon_{\mathcal{T}_{\varsigma, t''}(\phi_1), \mathcal{T}_{\varsigma, t''}(\phi_2)}(t) \qquad \text{for all $t \geq 0$}.
\end{align*}
\end{lemma}

Using the metric Anosov property from \cref{thm:TranslationFlowIsMetricAnosov}, we can derive the following lemma. 

\begin{lemma}
\label{lem:ApproximatingTheCorrelationFunction}
There exists $\eta > 0$ such that for all $\varsigma > 0$, $t' \geq \varsigma$, and $\phi_1, \phi_2 \in L(\mathcal{X})$, we have
\begin{multline*}
\left|\int_{\mathcal{X}} \phi_1(a^\sfw_t x) \phi_2(x) \, d\BMSX(x) - \frac{1}{\nu_U^\sfw(\tau^\sfw)}\Upsilon_{\mathcal{T}_{\varsigma, t'}(\phi_1), \mathcal{T}_{\varsigma, t'}(\phi_2)}(t)\right| \\
\ll e^{-(\eta/\overline{\tau^\sfw})\varsigma} \|\phi_1\|_{\Lip} \|\phi_2\|_{\Lip} \qquad \text{for all $t \geq 0$}.
\end{multline*}
\end{lemma}

\subsubsection{Laplace transform of the correlation function}
For all $\phi_1, \phi_2 \in L^\infty\bigl(U^{\tau^\sfw}\bigr)$, it is convenient to work with the the modified correlation function $\Upsilon^0_{\phi_1, \phi_2}$ defined as in \cref{eqn:CorrelationFunctinoOnSuspensionSpace} with the integration $\int_0^{\tau^\sfw(u)}$ replaced by $\int_{\max\{0, \tau^\sfw(u) - t\}}^{\tau^\sfw(u)}$ (which coincide as soon as $t \geq \overline{\tau^\sfw}$). Its Laplace transform $\Upsilon^0_{\phi_1, \phi_2}: (0, +\infty) + i\R \to \C$ is a holomorphic function defined by
\begin{align*}
\hat{\Upsilon}_{\phi_1, \phi_2}^0(\xi) = \int_0^{+\infty} e^{-\xi t} \Upsilon^0_{\phi_1, \phi_2}(t) \, dt \qquad \text{for all $\xi \in \C$ with $a > 0$}.
\end{align*}
For all $\phi \in L^\infty\bigl(U^{\tau^\sfw}\bigr)$ and $\xi \in \C$, we also define $\hat{\phi}_\xi \in L^\infty(U, \C)$ by
\begin{align*}
\hat{\phi}_\xi(u) = \int_0^{\tau^\sfw(u)} e^{-\xi t} \phi(u, t) \, dt \qquad \text{for all $u \in U$}.
\end{align*}
Suppose $\phi$ is Lipschitz continuous along the $U$-coordinate in $\interior(U)$. Even then, $\hat{\phi}_\xi$ would not be continuous on $\interior(U)$ because of using $\tau^\sfw$ in the integration; however, $\mathcal{L}_{\xi, \sfw}\hat{\phi}_\xi$ would be. Consequently, in this case, we may view $\mathcal{L}_{\xi, \sfw}\hat{\phi}_\xi \in L(U, \C)$ insofar as being a measurable function is concerned, which allows us later on to use the spectral bounds from \cref{thm:SpectralBoundOnTransferOperator}.

Let $\mathcal{B}(L(U, \C))$ be the Banach algebra of bounded operators on $L(U, \C)$. Define the holomorphic map $\mathsf{L}_{\bullet, \sfw}: (0, +\infty) + i\R \to \mathcal{B}(L(U, \C))$ by the Neumann series
\begin{align*}
\mathsf{L}_{\xi, \sfw} = \sum_{k = 1}^\infty \mathcal{L}_{\xi, \sfw}^k \qquad \text{for all $\xi \in \C$ with $a > 0$}.
\end{align*}

\begin{lemma}
For all $\phi_1, \phi_2 \in L^\infty\bigl(U^{\tau^\sfw}\bigr)$, we have
\begin{align*}
\hat{\Upsilon}^0_{\phi_1, \phi_2}(\xi) = \left\langle [\hat{\phi}_1]_\xi, \mathsf{L}_{\xi, \sfw} [\hat{\phi}_2]_{-\overline{\xi}}\right\rangle \qquad \text{for all $\xi \in \C$ with $a > 0$}.
\end{align*}
\end{lemma}

\subsubsection{Meromorphic extension and Paley--Wiener}
We have the following a priori bounds. This part of the argument requires integration by parts. Recall that we may do so for pairs of Lipschitz functions, and more generally for pairs of functions of bounded variation where one of them is continuous (cf. \cite{AGY06}).\footnote{Alternatively, we may initially work with functions which are smooth along the translation flow and then use a convolution argument as mentioned in the beginning of \cref{sec:TransferOperators}.}

\begin{lemma}
\label{lem: a priori bounds}
For all $\phi_1, \phi_2 \in L\bigl(U^{\tau^\sfw}\bigr)$ and $\xi \in \C$ with $|a| < a_0'$, we have
\begin{align*}
\|[\hat{\phi}_1]_\xi\|_2 &\leq \frac{\|\phi_1\|_{\Lip}}{\max(1, |b|)}, & \bigl\|\mathcal{L}_{\xi, \sfw} [\hat{\phi}_2]_{-\overline{\xi}}\bigr\|_{1, b} &\leq \frac{\|\phi_2\|_{\Lip}}{\max(1, |b|)}.
\end{align*}
\end{lemma}

We have the following result on meromorphic extension of the Neumann series of the transfer operator. It is proved using the characterization of the spectrum of transfer operators and perturbation theory of operators \cite{Pol86,PP90,Kat95}.

\begin{lemma}
\label{lem: meromorphic extension of transfer operators}
There exists $\eta_\sfw \in (0, 1)$ which is continuous in $\sfw$ such that $\mathsf{L}_{\bullet, \sfw}$ has a meromorphic extension
\begin{align*}
\mathsf{L}_{\bullet, \sfw}: (-\eta_\sfw, +\infty) + i\R \to \mathcal{B}(L(U, \C))
\end{align*}
and for all $\xi_0 \in (-\eta_\sfw, +\infty) + i\R$, there exists an open neighborhood $\mathcal{O}_{\xi_0} \subset \C$ of $\xi _0$ such that $\mathsf{L}_{\bullet, \sfw}$ has the following form on $\mathcal{O}_{\xi_0}$: there exist holomorphic maps
\begin{itemize}
\item $\mathcal{O}_{\xi_0} \to \C$ denoted by $\xi \mapsto \lambda_{\xi, \sfw}$, which gives the maximal eigenvalue of $\mathcal{L}_{\xi, \sfw}$,
\item $\mathcal{O}_{\xi_0} \to \mathcal{B}(L(U, \C))$ denoted by $\xi \mapsto \mathsf{P}_{\xi, \sfw}$, which gives the projection operator onto the finite dimensional $\lambda_{\xi, \sfw}$-eigenspace in $L(U, \C)$,
\item $\mathcal{O}_{\xi_0} \to \mathcal{B}(L(U, \C))$ denoted by $\xi \mapsto \mathsf{Q}_{\xi, \sfw}$, which gives an operator commuting with $\mathsf{P}_{\xi, \sfw}$,
\end{itemize}
such that
\begin{align*}
\mathsf{L}_{\xi, \sfw} = \frac{1}{1 - \lambda_{\xi, \sfw}} \mathsf{P}_{\xi, \sfw} + \mathsf{Q}_{\xi, \sfw} \qquad \text{for all $\xi \in \mathcal{O}_{\xi_0}$}.
\end{align*}
\end{lemma}

Since $\lambda_{\bullet, \sfw}$ is not constant on the compact set $[-1, 0] + i[-1, 1]$, there are finitely many $\xi \in (-\eta_\sfw, 0] + i[-1, 1]$ such that $\lambda_{\xi, \sfw} = 1$, and thereby contributes a pole for $\mathsf{L}_{\bullet, \sfw}$. Using the identity $\overline{\lambda_{\bullet, \sfw}} = \lambda_{\overline{\bullet}, \sfw}$, we conclude that they come in conjugate pairs. Let us denote the finite set of such complex numbers by $\{\mu_{k, \sfw}\}_{k = 0}^{k_\sfw} \subset (-\eta_\sfw, 0] + i[-1, 1]$ with $\mu_{0, \sfw} = 0$.

Let $\phi_1, \phi_2 \in L\bigl(U^{\tau^\sfw}\bigr)$. We use \cref{lem: a priori bounds,lem: meromorphic extension of transfer operators} and more importantly \cref{thm:SpectralBoundOnTransferOperator} to decompose $\hat{\Upsilon}^0_{\phi_1, \phi_2}$ into a part containing all the finitely many poles and a part which is holomorphic on a strip $(-2\eta_\sfw, 2\eta_\sfw) + i\R$ for some $\eta_\sfw \in (0, 1)$ which is continuous in $\sfw$ and decays exponentially in $\delta_\sfw$ to $0$:
\begin{align*}
\hat{\Upsilon}^0_{\phi_1, \phi_2} = \sum_{k = 0}^{k_\sfw} \frac{\Bigl\langle [\hat{\phi}_1]_{\mu_{k, \sfw}}, \mathsf{P}_{\overline{\mu_{k, \sfw}}, \sfw} \mathcal{L}_{\overline{\mu_{k, \sfw}}, \sfw} [\hat{\phi}_2]_{-\overline{\mu_{k, \sfw}}}\Bigr\rangle}{1 - \lambda_{\mu_{k, \sfw}, \sfw}} + \hat{\Upsilon}^{0, \mathrm{Hol}}_{\phi_1, \phi_2}.
\end{align*}
Note that $\lambda_{0, \sfw} = 1$ is a maximal simple eigenvalue of $\mathcal{L}_{0, \sfw}$ and $\mathsf{P}_{0, \sfw}$ is the projection operator onto the $1$-eigenspace which is spanned by constant functions. Taking the inverse Laplace transform, a Paley--Wiener type theorem gives the following.

\begin{proposition}
\label{prop:ExponentialMixingOnSuspensionSpace}
There exist $\eta_\sfw \in (0, 1)$ which is continuous in $\sfw$ and decays exponentially in $\delta_\sfw$ to $0$ and
\begin{itemize}
\item a finite set $\{\mu_{k, \sfw}\}_{k = 0}^{k_\sfw} \subset (-\eta_\sfw, 0) + i[-1, 1]$ which come in conjugate pairs, where $\mu_{0, \sfw} = 0$,
\item a finite set of finite-rank positive semi-definite bilinear forms $\{\mathcal{B}_{k, \sfw}\}_{k = 0}^{k_\sfw}$, where $\mathcal{B}_{0, \sfw}$ is of rank $1$ and given by the product of integrals with respect to $\nu^{\tau^\sfw}$,
\end{itemize}
such that for all $\phi_1, \phi_2 \in L\bigl(U^{\tau^\sfw}\bigr)$, we have
\begin{align*}
\Upsilon_{\phi_1, \phi_2}(t) = \sum_{k = 0}^{k_\sfw} e^{\mu_{k, \sfw} t} \mathcal{B}_{k, \sfw}(\phi_1, \phi_2) + O_\sfw\bigl(e^{-\eta_\sfw t} \|\phi_1\|_{\Lip} \|\phi_2\|_{\Lip}\bigr) \qquad \text{for all $t > 0$}
\end{align*}
where the implicit constant is continuous in $\sfw$.
\end{proposition}

\subsubsection{Completing the proof outline}

\begin{lemma}
Let $\phi_1, \phi_2 \in L(\mathcal{X})$ and $\xi \in \C$. Fix the sequence $\{t_j = (\underline{\tau^\sfw}/2)j\}_{j \in \N}$. We have that
\begin{align*}
\Bigl\{\Bigl\langle [\widehat{\mathcal{T}_{t_j, t_j}(\phi_1)}]_\xi, \mathsf{P}_{\overline{\xi}, \sfw} \mathcal{L}_{\overline{\xi}, \sfw} [\widehat{\mathcal{T}_{t_j, t_j}(\phi_2)}]_{-\overline{\xi}}\Bigr\rangle\Bigr\}_{j \in \N}
\end{align*}
forms a Cauchy sequence.
\end{lemma}

\begin{proof}
Let $\phi_1$, $\phi_2$, $\xi$, and $\{t_j\}_{j \in \N}$ be as in the lemma. Let $j' \geq j$. Firstly, by \cref{lem: correlation functions equal}, we have
\begin{align*}
\Upsilon_{\mathcal{T}_{t_j, t_j}(\phi_1), \mathcal{T}_{t_j, t_j}(\phi_2)} = \Upsilon_{\mathcal{T}_{t_j, t_{j'}}(\phi_1), \mathcal{T}_{t_j, t_{j'}}(\phi_2)}.
\end{align*}
We conclude that the poles of the two meromorphic functions $\hat{\Upsilon}^0_{\mathcal{T}_{t_j, t_j}(\phi_1), \mathcal{T}_{t_j, t_j}(\phi_2)}$ and $\hat{\Upsilon}^0_{\mathcal{T}_{t_j, t_{j'}}(\phi_1), \mathcal{T}_{t_j, t_{j'}}(\phi_2)}$ coincide, i.e., their orders and residues coincide. Therefore, the main coefficient of the poles of $\hat{\Upsilon}^0_{\mathcal{T}_{t_j, t_j}(\phi_1), \mathcal{T}_{t_j, t_j}(\phi_2)}$ can be expressed as
\begin{align*}
\Bigl\langle [\widehat{\mathcal{T}_{t_j, t_j}(\phi_1)}]_\xi, \mathsf{P}_{\overline{\xi}, \sfw} \mathcal{L}_{\overline{\xi}, \sfw} [\widehat{\mathcal{T}_{t_j, t_j}(\phi_2)}]_{-\overline{\xi}}\Bigr\rangle = \Bigl\langle [\widehat{\mathcal{T}_{t_j, t_{j'}}(\phi_1)}]_\xi, \mathsf{P}_{\overline{\xi}, \sfw} \mathcal{L}_{\overline{\xi}, \sfw} [\widehat{\mathcal{T}_{t_j, t_{j'}}(\phi_2)}]_{-\overline{\xi}}\Bigr\rangle.
\end{align*}
Recalling \cref{eqn: Iphi approximates phi} and invariance of $\nu^{\tau^\sfw}$ under the translation flow, there exists $\eta > 0$ such that
\begin{align*}
\bigl\|\mathcal{T}_{t_j, t_{j'}}(\phi) - \mathcal{T}_{t_{j'}, t_{j'}}(\phi)\bigr\|_2 \ll e^{-(\eta/\underline{\tau^\sfw})t_j} \|\phi\|_{\Lip} \qquad \text{for all $\phi \in L(\mathcal{X})$}.
\end{align*}
Combining the above identity and inequality, we obtain
\begin{multline*}
\Bigl|\Bigl\langle [\widehat{\mathcal{T}_{t_j, t_j}(\phi_1)}]_\xi, \mathsf{P}_{\overline{\xi}, \sfw} \mathcal{L}_{\overline{\xi}, \sfw}  [\widehat{\mathcal{T}_{t_j, t_j}(\phi_2)}]_{-\overline{\xi}}\Bigr\rangle - \Bigl\langle [\widehat{\mathcal{T}_{t_{j'}, t_{j'}}(\phi_1)}]_\xi, \mathsf{P}_{\overline{\xi}, \sfw} \mathcal{L}_{\overline{\xi}, \sfw} [\widehat{\mathcal{T}_{t_{j'}, t_{j'}}(\phi_2)}]_{-\overline{\xi}}\Bigr\rangle\Bigr| \\
\ll \overline{\tau^\sfw}e^{-(\eta/2)j + \max(0, -a) \overline{\tau^\sfw}}\|\phi_1\|_{\Lip}\|\phi_2\|_{\Lip}.
\end{multline*}
The lemma follows.
\end{proof}

As a result of the above lemma, we may define the set of finite-rank positive semi-definite bilinear forms $\{\mathcal{B}_{k, \sfw}\}_{k = 0}^{k_\sfw}$ as follows: for all $1 \leq k \leq k_\sfw$ and $\phi_1, \phi_2 \in L(\mathcal{X})$, we define
\begin{align*}
\mathcal{B}_{k, \sfw}(\phi_1, \phi_2) := \lim_{j \to \infty} \Bigl\langle [\widehat{\mathcal{T}_{t_j, t_j}(\phi_1)}]_{\mu_{k, \sfw}}, \mathsf{P}_{\overline{\mu_{k, \sfw}}, \sfw} \mathcal{L}_{\overline{\mu_{k, \sfw}}, \sfw} [\widehat{\mathcal{T}_{t_j, t_j}(\phi_2)}]_{-\overline{\mu_{k, \sfw}}}\Bigr\rangle,
\end{align*}
where $\mathcal{B}_{0, \sfw}$ is of rank $1$ and given by the product of integrals with respect to $m_\sfw^{\mathrm{BMS}}$. The proof outline of \cref{thm:RescaledExponentialMixingOnXWithPR-Resonances} is now completed by combining \cref{lem:ApproximatingTheCorrelationFunction,prop:ExponentialMixingOnSuspensionSpace} and the above definition of the set of bilinear forms.

\subsection{Reduction of $\nabla\growthindicator(\sfv) \in \ker\Theta$ to $\nabla\psi_{\Theta'}(\sfv') \notin \ker\Theta'$}
\label{subsec:ReductionOfGradGrowthIndicatorInKernelToNotInKernel}
Recall that \cref{thm:EssentialSpectralGap,thm:ExponentialPrimeOrbitTheorem} hold for any $\sfv \in \interior\limitcone$ including when $\nabla\growthindicator(\sfv) \in \ker\Theta$. To deal with the case that $\nabla\growthindicator(\sfv) \in \ker\Theta$, we exploit the fact that both the Selberg zeta function and the primitive orbit counting function do not directly depend on the particular dynamical system $\bigl(\mathcal{X}, \BMSX^{\sfv}, \{a^{\sfv}_t\}_{t \in \R}\bigr)$ but only on the \emph{existence} of such a dynamical system which gives the correct periods and the corresponding transfer operators.

Indeed, let us show that if $\nabla\growthindicator(\sfv) \in \ker\Theta$, we can find a nonempty proper subset $\Theta' \subset \Theta$ such that a corresponding dynamical system $\bigl(\mathcal{X}', m_{\mathcal{X}'}^{\sfv'}, \{a^{\sfv'}_t\}_{t \in \R}\bigr)$ gives the correct periods and $\nabla\growthindicator(\sfv) \notin \ker\Theta'$. Let
\begin{align*}
\Sigma \subset \Sigma_0 &:= \{\alpha \in \Theta: \nabla\growthindicator(\sfv) \in \ker\alpha\} \neq \varnothing, & \Theta' &:= \Theta - \Sigma.
\end{align*}
Recalling $\psi_\sfv = \langle \growthindicator(\sfv)^{-1}\nabla\growthindicator(\sfv), \cdot\rangle$ from \cref{eqn:GeneralLinearForm}, observe that
\begin{align*}
\nabla\growthindicator(\sfv) \in \LieA_{\Theta'} = \LieA_\Theta \cap \bigcap_{\alpha \in \Sigma} \ker\alpha \implies \LieA_{\Theta'}^\perp = \bigoplus_{\alpha \in \Sigma} \pi_{\LieA_\Theta}((\ker\alpha)^\perp) \subset \ker\psi_\sfv.
\end{align*}
Therefore, $\psi_\sfv$ factors through $\pi_{\LieA_{\Theta'}}|_{\LieA_\Theta}$: there exists $f \in \LieA_{\Theta'}^*$ such that
\begin{align*}
\psi_\sfv = f \circ \pi_{\LieA_{\Theta'}}|_{\LieA_\Theta}.
\end{align*}
Since $\mathcal{L}_{\Theta'} = \pi_{\LieA_{\Theta'}}(\limitcone)$, we conclude that $f \in \interior\mathcal{L}_{\Theta'}^*$. We may then write $f = \psi_{\sfv'} := \langle \psi_{\Theta'}(\sfv')^{-1}\nabla\psi_{\Theta'}(\sfv'), \cdot\rangle$ for some $\sfv' \in \interior\mathcal{L}_{\Theta'}$. By rescaling $\sfv'$ if necessary, we may also assume that $\psi_{\sfv'}(\sfv') = \psi_\sfv(\sfv)$. The canonical projection maps $\Fboundary \to \mathcal{F}_{\Theta'}$ and $\iFboundary \to \mathcal{F}_{\involution\Theta'}$ together with $\pi_{\LieA_{\Theta'}}|_{\LieA_\Theta}$ induce a $\Gamma$-equivariant projection map $\limitset^{(2)} \times \LieA_\Theta \to \Lambda_{\Theta'}^{(2)} \times \LieA_{\Theta'}$. Then, $\psi_\sfv$ and $\psi_{\sfv'}$ induce a $\Gamma$-equivariant projection map $\limitset^{(2)} \times \R \to \Lambda_{\Theta'}^{(2)} \times \R$. Thus, it descends to a map $\mathcal{X} \to \mathcal{X}'$ where $\mathcal{X} = \Gamma \backslash \bigl(\limitset^{(2)} \times \R\bigr)$ and $\mathcal{X}' = \Gamma \backslash \bigl(\Lambda_{\Theta'}^{(2)} \times \R\bigr)$. Clearly, the translation flows $\{a^\sfv_t\}_{t \in \R}$ and $\bigl\{a^{\sfv'}_t\bigr\}_{t \in \R}$ are intertwined by the map $\mathcal{X} \to \mathcal{X}'$ and has the same periods by construction of $\psi_{\sfv'}$. Finally, taking $\Sigma = \Sigma_0$ and the corresponding minimal $\Theta'$ ensures that $\nabla\psi_{\Theta'}(\sfv') \notin \ker\Theta'$.

As a result, in the subsequent subsections, we may assume that $\nabla\growthindicator(\sfv) \notin \ker\Theta$ and correspondingly that $\sfw \in \ww$ so that we can invoke \cref{thm:SpectralBoundOnTransferOperator}.

\subsection{Proof outline of \cref{thm:SpectralBoundOnTransferOperator} implies \cref{thm:EssentialSpectralGap}}
Recall the Ruelle zeta function $\zeta_\sfw$ from \cref{subsec:EssentialSpectralGapAndPrimeOrbitTheorem}. We can follow the short argument of Pollicott--Sharp \cite[\S\,2, pp. 1025--1030]{PS98} to derive an essential spectral gap for the Ruelle zeta function, \cref{thm:EssentialSpectralGapRuelle}, from the spectral bounds of the transfer operators, \cref{thm:SpectralBoundOnTransferOperator}, as we now outline. We can first put \cref{thm:SpectralBoundOnTransferOperator} into the following form (see the derivation just after \cite[Proposition 5.3]{Nau05}).

\begin{theorem}
\label{thm:SpectralBoundOnTransferOperator_1bNorm}
Let $\sfw \in \ww$. There exist $\{\eta_{\sfw, \epsilon}\}_{\epsilon > 0} \subset \R_{>0}$ and $a_\sfw > 0$ all of which are continuous in $\sfw$ and decay exponentially in $\delta_\sfw$ to $0$, and $b_0 > 0$, such that for all $\xi \in \C$ with $|a| < a_\sfw$ and $|b| > b_0$, $k \in \N$, and $H \in L(U,\C)$, we have
\begin{align*}
\big\|\mathcal{L}_{\xi, \sfw}^k(H)\big\|_{1, b} \ll_{\sfw, \epsilon} |b|^\epsilon e^{-\eta_{\sfw, \epsilon} k} \|H\|_{1, b}
\end{align*}
where the implicit constant is continuous in $\sfw$ and $\epsilon$.
\end{theorem}

The above theorem can then be used in place of \cite[Proposition 4]{PS98} of Dolgopyat.

The following lemma gives fundamental formulas for the Ruelle zeta function (see \cite[Lemma 1]{PS98} and \cite[Chapter 6, pp. 100]{PP90}).

\begin{lemma}
\label{lem:RuelleZetaFunctionIdentity}
We have
\begin{align*}
\begin{aligned}
\zeta_\sfw(\xi) &= \exp\left(\sum_{k = 1}^\infty \sum_{\gamma \in \mathcal{P}} \frac{1}{k}e^{-k\delta_\sfw \ell_\sfw(\gamma)}\right) \\
&= \exp\left(\sum_{k = 1}^\infty \sum_{u \in \Fix(\sigma^k)} \frac{1}{k}e^{-\xi\tau^\sfw_k(u)}\right)
\end{aligned}
\qquad \text{for all $\xi \in \C$ with $a > \delta_\sfw$}.
\end{align*}
\end{lemma}

The following lemma is obtained using \cref{lem:RuelleZetaFunctionIdentity,thm:SpectralBoundOnTransferOperator_1bNorm}.

\begin{lemma}
There exists $\eta_\sfw > 0$ which is continuous in $\sfw$ and decays exponentially in $\delta_\sfw$ to $0$ such that the following holds. Let $u_j \in U_j$ for all $j \in \mathcal{A}$. Then, for all $\xi \in \C$ with $|a| < a_0$ and $k \in \N$, we have
\begin{align*}
\left|\sum_{u \in \Fix(\sigma^k)} e^{-\xi\tau^\sfw_k(u)} - \sum_{j = 1}^N \mathcal{L}_{-\xi\tau^\sfw}^k \chi_{U_j}(u_j) \right| \ll_\sfw |b|ke^{-\eta_\sfw k}
\end{align*}
where the implicit constant is continuous in $\sfw$.
\end{lemma}

Finally, \cref{thm:SpectralBoundOnTransferOperator_1bNorm} and the two lemmas above are used to show that $\log\zeta_\sfw$ can be written as a series of holomorphic functions which converge uniformly on compact subsets of some vertical strip about the critical line $\delta_\sfw + i\R$; thereby proving \cref{thm:EssentialSpectralGapRuelle}. We also obtain a bound for $\log\zeta_\sfw$ on a half-plane $(\delta_\sfw - \eta_\sfw, +\infty) + i\R$. Moreover, with more complex analysis including the Phragm\'{e}n–Lindel\"{o}f theorem, we further obtain the following: there exists $\alpha \in (0, 1)$ of the form $\alpha = 1 - O(\eta_\sfw/\delta_\sfw) = 1 - O(e^{-c\delta_\sfw})$ for some $c > 0$ such that
\begin{align}
\label{eqn:RuelleZetaFunctionLogLipschitzEstimate}
\frac{\zeta'(\xi)}{\zeta(\xi)} = O_\sfw(|b|^\alpha) \qquad \text{as $|b| \to +\infty$}
\end{align}
uniformly on a half-plane $(\delta_\sfw - \eta_\sfw/4, +\infty) + i\R$, where the implicit constant is continuous in $\sfw$.

\subsection{Proof outline of \cref{thm:SpectralBoundOnTransferOperator} implies \cref{thm:ExponentialPrimeOrbitTheorem}}
Having established an essential spectral gap for the Selberg zeta function, \cref{thm:EssentialSpectralGap}, techniques from analytic number theory can be used to derive a prime orbit theorem with a power saving error term, \cref{thm:ExponentialPrimeOrbitTheorem}. Such a derivation is rather short and completely contained in \cite[\S\,3, pp. 1030--1033]{PS98}, as we now outline.

Define the weighted counting functions $\mathcal{M}_\sfw$ and $\mathcal{M}_{\sfw, 1}$ by
\begin{align*}
\mathcal{M}_{\sfw}(T) &= \sum_{k = 1}^\infty \sum_{\gamma \in \mathcal{P}: e^{k\delta_\sfw \ell_\sfw(\gamma)} \leq T} \delta_\sfw \ell_\sfw(\gamma), \\
\mathcal{M}_{\sfw, 1}(T) &= \int_1^T \mathcal{M}_\sfw(t) \, dt = \sum_{k = 1}^\infty \sum_{\gamma \in \mathcal{P}: e^{k\delta_\sfw \ell_\sfw(\gamma)} \leq T} \delta_\sfw \ell_\sfw(\gamma) \bigl(T - e^{k\delta_\sfw \ell_\sfw(\gamma)}\bigr),
\end{align*}
for all $T \geq 1$. We have the following propositions where \cref{lem:RuelleZetaFunctionIdentity} and the estimate in \cref{eqn:RuelleZetaFunctionLogLipschitzEstimate} is used to prove the first one which is then used to prove the second one.

\begin{proposition}
There exists $\eta_\sfw > 0$ which is continuous in $\sfw$ and decays exponentially in $\delta_\sfw$ to $0$ such that for all $c \in \bigl(1 - \frac{\eta_\sfw}{\delta_\sfw}, 1\bigr)$, we have
\begin{align*}
\mathcal{M}_{\sfw, 1}(T) = \frac{T^2}{2} + \frac{1}{2\pi i} \int_{c - i\infty}^{c + i\infty} \left(-\frac{\delta_\sfw\zeta_\sfw'(\delta_\sfw\xi)}{\zeta_\sfw(\delta_\sfw\xi)}\right) \frac{T^{\xi + 1}}{\xi(\xi + 1)} \, d\xi = \frac{T^2}{2} + O_\sfw\bigl(T^{c + 1}\bigr)
\end{align*}
for all $T \geq 1$, where the contour integral is along a vertical line through $c$ and the implicit constant is continuous in $\sfw$.
\end{proposition}

\begin{proposition}
\label{prop:M_w asymptotic formula}
There exists $\eta_\sfw > 0$ which is continuous in $\sfw$ and decays exponentially in $\delta_\sfw$ to $0$ such that for all $c \in \bigl(1 - \frac{\eta_\sfw}{\delta_\sfw}, 1\bigr)$, we have
\begin{align*}
\mathcal{M}_\sfw(T) = T + O_\sfw\bigl(T^{(c + 1)/2}\bigr) \qquad \text{for all $T \geq 1$}
\end{align*}
where the implicit constant is continuous in $\sfw$.
\end{proposition}

Now define the counting functions $\mathcal{N}_{\sfw, 0}$ and $\mathcal{N}_{\sfw, 1}$ by
\begin{align*}
\mathcal{N}_{\sfw, 0}(T) &= \sum_{k = 1}^\infty \sum_{\gamma \in \mathcal{P}: e^{k\delta_\sfw \ell_\sfw(\gamma)} \leq T} \frac{1}{k}, & \mathcal{N}_{\sfw, 1}(T) &= \sum_{\gamma \in \mathcal{P}: e^{\delta_\sfw \ell_\sfw(\gamma)} \leq T} 1,
\end{align*}
for all $T \geq 1$. We do a Riemann--Stieltjes integration by parts:
\begin{align*}
\mathcal{N}_{\sfw, 0}(T) &= \int_2^T \frac{1}{\log(t)} \, d\mathcal{M}_\sfw(t) + O_\sfw(1) \\
&= \left[\frac{\mathcal{M}_\sfw(t)}{\log(t)}\right]_{t = 2}^{t = T} - \int_2^T \mathcal{M}_\sfw(t) \cdot \left(\frac{1}{\log(t)}\right)' \, dt + O_\sfw(1).
\end{align*}
By \cref{prop:M_w asymptotic formula} and the fact that $\frac{T}{\log(T)} - \int_2^T t \cdot \bigl(\frac{1}{\log(t)}\bigr)' \, dt = \Li(T) + \frac{2}{\log(2)}$ (see \cref{eqn:Li function}), for any choice of $c \in \bigl(1 - \frac{\eta_\sfw}{\delta_\sfw}, 1\bigr)$, we have
\begin{align}
\label{eqn:N_w0 asymptotic}
\mathcal{N}_{\sfw, 0}(T) = \Li(T) + O_\sfw\bigl(T^{(c + 1)/2}/\log(T)\bigr).
\end{align}
Now we easily see that
\begin{align*}
\mathcal{N}_{\sfw, 0}(T) = \mathcal{N}_{\sfw, 1}(T) + \sum_{k = 2}^\infty \frac{1}{k}\mathcal{N}_{\sfw, 1}(T^{1/k}) = \mathcal{N}_{\sfw, 1}(T) + O_\sfw\bigl(T^{1/2}\log(T)\bigr)
\end{align*}
using the crude estimate that
\begin{align*}
\max\bigl\{k \in \N: \bigl\{\gamma \in \mathcal{P}: e^{\delta_\sfw \ell_\sfw(\gamma)} \leq T^{1/k}\bigr\} \neq \varnothing\bigr\} \ll_\sfw \log(T)
\end{align*}
derived from the existence of a smallest primitive period. All the implicit constants above can be taken to be continuous in $\sfw$. Finally, we rearrange \cref{eqn:N_w0 asymptotic} and apply change of variables $T \mapsto e^{\delta_\sfw T}$ to arrive at \cref{thm:ExponentialPrimeOrbitTheorem}.

\section{Local non-integrability condition}
\label{sec:LNIC}
This section is devoted to a crucial ingredient for Dolgopyat's method stated in \cref{pro:LNIC} called the \emph{local non-integrability condition (LNIC)} as in \cite{Sto11}, which is then upgraded to \cref{pro:NIC}. Although \cref{pro:NIC} is stated in reverse Lipschitz form on $\interior(U)$, we obtain it via \cref{pro:LNIC} which requires Lie theory and hence the smooth structure on $G$. Thus, we first resolve technical issues related to the incompatibility of $\limitset$ with the smooth structure on $G$.

\subsection{Smooth extensions of coding maps}
\label{subsec:SmoothExtensions}
Fix $\sfw \in \ww$ throughout this subsection. As alluded to above, due to the fractal nature of $\limitset$, the Markov section does not inherit the smooth structure on $G$. In order to circumvent this issue, we need to enlarge $U$ to an open set and smoothly extend the maps associated to the coding, $\tau^\sfw$, $\mathcal{P}$, and $\sigma$. This type of construction was done in \cite[\S\,5.1]{SW21} based on \cite[Lemma 1.2]{Rue89}. However, even this procedure has many more technical issues in our setting. One problem is that $\mathcal{X} = \Gamma \backslash \bigl(\limitset^{(2)} \times \R\bigr)$ is not naturally situated in a larger smooth manifold with a natural extension of the translation flow. Note that in general the action $\Gamma \curvearrowright \Fboundary^{(2)} \times \R$ is not properly discontinuous (see \cref{rem: properly discontinuous}).
To avoid this problem, we work on the cover $\limitset^{(2)} \times \R$ instead and use the inclusion
\begin{align*}
\limitset^{(2)} \times \R \hookrightarrow \Fboundary^{(2)} \times \R
\end{align*}
which is a Lipschitz embedding into a smooth manifold where the translation flow is still defined. Another problem is that it is difficult to gain control on the $\Gamma$-orbit of the open neighborhood of $U = \bigsqcup_{j \in \mathcal{A}} U_j$ in $\Fboundary^{(2)} \times \R$ (e.g., diameter estimates, mutual disjointness, seperation distance) since we do not have a $\Gamma$-invariant metric on $\Fboundary^{(2)} \times \R$ and $\Gamma \curvearrowright \Fboundary^{(2)} \times \R$ may not be properly discontinuous. To avoid these topological obstructions, we allow dependence of the open neighborhoods on the arbitrarily large length $m \in \N$ of sections of $\sigma^m$ which will appear in \cref{pro:LNIC}. We do not follow \cite[Lemma 1.2]{Rue89} because, although we believe that it is true, it would take significantly more work to properly formulate and establish the eventually expanding/contracting property of the smooth extension of $\sigma^{-1}$ on some fixed open neighborhood of $U$. See \cite{DMS24} for the case that $\Gamma$ is a projective Anosov subgroup of $\PSL_n(\R)$ which gives a construction of a flow-invariant smooth manifold $\mathcal{M} \supset \mathcal{X}$ on which the translation flow is a contact Axiom A flow.

In light of the above discussion, let us first fix a connected fundamental domain $\mathsf{D} \subset \limitset^{(2)} \times \R$ for $\mathcal{X}$, unique isometric lifts
\begin{align*}
\mathsf{R}_j = [\mathsf{U}_j, \mathsf{S}_j] \subset \mathsf{D}
\end{align*}
of $R_j = [U_j, S_j]$ for all $j \in \mathcal{A}$, and write
\begin{align*}
\mathsf{U} &:= \bigsqcup_{j \in \mathcal{A}} \mathsf{U}_j, & \mathsf{R} &:= \bigsqcup_{j \in \mathcal{A}} \mathsf{R}_j.
\end{align*}
Denote by $\tilde{u} \in \mathsf{R}$ the unique lift of $u \in R$. Abusing notation, we denote the lift of cylinders by the same symbol. Since the translation flow is defined on $\limitset^{(2)} \times \R$, the maps $\tau^\sfw$, $\mathcal{P}$, and $\sigma$ have natural lifts $\tau^\sfw: \Gamma\mathsf{R} \to \R$, $\mathcal{P}: \Gamma\mathsf{R} \to \Gamma\mathsf{R}$, and $\sigma: \Gamma\mathsf{U} \to \Gamma\mathsf{U}$, abusing notation. We have corresponding cores $\hat{\mathsf{U}}$ and $\hat{\mathsf{R}}$. For all $u \in \mathsf{R}$ we define $\mathsf{g}(u) \in \Gamma$ to be the unique element such that $\mathcal{P}(u) \in \mathsf{g}(u)\mathsf{R}$. Similarly, for all admissible pairs $(j, k)$, we write $\mathsf{g}^{(j, k)} \in \Gamma$ for the unique element such that
\begin{align*}
\sigma(\interior(\mathsf{U}_j)) \cap \mathsf{g}^{(j, k)}\interior(\mathsf{U}_k) \neq \varnothing.
\end{align*}
Then, $\hat{\mathsf{g}} = \{\mathsf{g}^{(j, k)}: (j, k) \text{ is an admissible pair}\} \subset \Gamma$ is a generating subset. For all admissible sequences $\alpha = (\alpha_0, \alpha_1, \dotsc, \alpha_k)$ for some $k \in \Z_{\geq 0}$, we extend the notation and write
\begin{align}
\label{eqn:BirkhoffProduct}
\mathsf{g}^\alpha := \prod_{j = 0}^{k - 1} \mathsf{g}^{(\alpha_j, \alpha_{j + 1})}
\end{align}
in ascending order if $k > 0$, and $\mathsf{g}^\alpha := e$ if $k = 0$, and $\mathsf{g}^{-\alpha} := (\mathsf{g}^\alpha)^{-1}$.

Let $j \in \mathcal{A}$, $w_j \in \mathsf{U}_j$ be the center, and $\gamma \in \Gamma$. We have a metric on $W^{\Fboundary, \mathrm{ss}}(\gamma w_j)$ (resp. $W^{\Fboundary, \mathrm{wu}}(\gamma w_j)$) which is induced by $d_{\iFboundary}$ (resp. $d_{\Fboundary} \times d_\R$) using coordinate pullbacks and denoted by the same notation. Then, $\bigl(W_{\epsilon_\gamma}^{\Fboundary, \mathrm{wu}}(\gamma w_j), W_{\epsilon_\gamma}^{\Fboundary, \mathrm{ss}}(\gamma w_j)\bigr)$ has a local product structure such that $\bigl[W_{\epsilon_\gamma}^{\Fboundary, \mathrm{su}}(\gamma w_j), W_{\epsilon_\gamma}^{\Fboundary, \mathrm{ss}}(\gamma w_j)\bigr]$ contains $\gamma \mathsf{R}_j$, for some $\epsilon_\gamma > 0$. Now, fix open neighborhoods
\begin{align*}
{}^\gamma\tilde{\mathsf{U}}_j \subset W_{\epsilon_\gamma}^{\Fboundary, \mathrm{su}}(\gamma w_j)
\end{align*}
of $\gamma\mathsf{U}_j$ and define
\begin{align*}
{}^\gamma\tilde{\mathsf{R}}_j = [{}^\gamma\tilde{\mathsf{U}}_j, \gamma\mathsf{S}_j]
\end{align*}
such that $\{{}^\gamma\tilde{\mathsf{R}}_j\}_{j \in \mathcal{A}, \gamma \in \Gamma}$ consists of mutually disjoint rectangles. Define
\begin{align*}
\tilde{\mathsf{U}}_j &:= {}^e\tilde{\mathsf{U}}_j, & \tilde{\mathsf{U}} &:= \bigsqcup_{j \in \mathcal{A}} \tilde{\mathsf{U}}_j, & {}^\Gamma\tilde{\mathsf{U}}_j &:= \bigsqcup_{\gamma \in \Gamma} {}^\gamma\tilde{\mathsf{U}}_j, & {}^\Gamma\tilde{\mathsf{U}} &:= \bigsqcup_{j \in \mathcal{A}, \gamma \in \Gamma} {}^\gamma\tilde{\mathsf{U}}_j,
\end{align*}
and similarly define $\tilde{\mathsf{R}}_j$, $\tilde{\mathsf{R}}$, ${}^\Gamma\tilde{\mathsf{R}}_j$, and ${}^\Gamma\tilde{\mathsf{R}}$. We similarly omit the superscript `$e$' and use the superscript `$\Gamma$' for other sets.

Let $m \in \Z_{\geq 0}$. Recall that the translation flow is defined on $\Fboundary^{(2)} \times \R$ and smooth. Thus, for all $j \in \mathcal{A}$ and $\gamma \in \Gamma$, we obtain compactly contained open neighborhoods
\begin{align*}
{}^\gamma\tilde{\mathsf{U}}_j^{(m)} \subset {}^\gamma\tilde{\mathsf{U}}_j
\end{align*}
of $\gamma\mathsf{U}_j$ which are decreasing in $m$, and corresponding to admissible sequences $\alpha = (\alpha_0, \alpha_1, \dotsc, \alpha_m)$ the natural smooth injective extensions
\begin{align*}
\sigma^{-\alpha}: {}^{\Gamma\mathsf{g}^\alpha}\tilde{\mathsf{U}}_{\alpha_m}^{(m)} \to {}^\Gamma\tilde{\mathsf{U}}_{\alpha_0}
\end{align*}
of $\bigl(\sigma|_{\mathtt{C}[\alpha]}\bigr)^{-1}$ with cylinders
\begin{align*}
{}^\gamma\tilde{\mathtt{C}}[\alpha]^{(m)} := \sigma^{-\alpha}\bigl({}^{\gamma\mathsf{g}^\alpha}\tilde{\mathsf{U}}_{\alpha_m}^{(m)}\bigr) \subset {}^\gamma\tilde{\mathsf{U}}_{\alpha_0};
\end{align*}
we then define
\begin{align*}
{}^\gamma\tilde{\mathsf{R}}_j^{(m)} := [{}^\gamma\tilde{\mathsf{U}}_j^{(m)}, \gamma\mathsf{S}_j]
\end{align*}
and
\begin{align*}
\sigma^\alpha := (\sigma^{-\alpha})^{-1}: {}^\Gamma\tilde{\mathtt{C}}[\alpha]^{(m)} \to {}^{\Gamma\mathsf{g}^\alpha}\tilde{\mathsf{U}}_{\alpha_m}^{(m)}.
\end{align*}

There are also natural smooth extensions $\tau^\sfw_{(j, k)}: {}^\Gamma\tilde{\mathtt{C}}[j, k]^{(1)} \to \R$ for all admissible pairs $(j, k)$. We define the smooth maps $\tau^\sfw_\alpha: {}^\Gamma\tilde{\mathtt{C}}[\alpha]^{(m)} \to \R$ by
\begin{align}
\label{eqn:BirkhoffSum}
\tau^\sfw_\alpha(u) &= \sum_{j = 0}^{m - 1} \tau^\sfw_{(\alpha_j, \alpha_{j + 1})}(\sigma^{(\alpha_0, \alpha_1, \dotsc, \alpha_j)}(u))
\end{align}
for all admissible sequences $\alpha = (\alpha_0, \alpha_1, \dotsc, \alpha_m)$.

Following \cite[\S\,5]{CS23} we can construct a smooth section 
\begin{align*}
\mathsf{F}: {}^\Gamma\tilde{\mathsf{R}} \to G
\end{align*}
in a similar fashion such that:
\begin{enumerate}
\item
\label{itm:SectionPropertyOnUnstableLeafs}
for all $j \in \mathcal{A}$ and $\gamma \in \Gamma$, and $u, u' \in {}^\gamma\tilde{\mathsf{U}}_j$, there exists unique $n^+ \in N^+_\Theta$ such that $\mathsf{F}(u') = \mathsf{F}(u)n^+$;
\item
\label{itm:SectionPropertyOnStableLeafs}    
for all $j \in \mathcal{A}$ and $\gamma \in \Gamma$, $u \in {}^\gamma\tilde{\mathsf{U}}_j$, and $s, s' \in \gamma\mathsf{S}_j$, there exists unique $n^- \in N^-_\Theta$ such that $\mathsf{F}([u, s']) = \mathsf{F}([u, s])n^-$.
\end{enumerate}
Observe that $\mathsf{F}$ can indeed be constructed such that it does not depend on $\sfw$ due to the properties of the compatible Markov sections.

We introduce the first return vector map and holonomy. Although we only need the first return time map, since it does not require excess work, we provide the general definitions and the subsequent fundamental lemmas in anticipation that it will turn out useful elsewhere.

\begin{definition}[First return vector map, Holonomy]
The \emph{first return $\Theta$-vector map} and \emph{$\Theta$-holonomy} are $\Gamma$-invariant maps $\mathsf{K}: \Gamma\mathsf{R} \to \LieA_\Theta$ and $\vartheta: \Gamma\mathsf{R} \to S_\Theta$ respectively that associate to each $u \in \gamma\mathsf{R}$ for some $\gamma \in \Gamma$ the unique elements $\mathsf{K}(u) \in \LieA_\Theta$ and $\vartheta(u) \in S_\Theta$ which satisfy
\begin{align*}
\mathsf{F}(\mathcal{P}(u)) = \mathsf{g}(u)\mathsf{F}(u) a_{\mathsf{K}(u)}\vartheta(u).
\end{align*}
We often drop the prefix ``$\Theta$-''.
\end{definition}

Again, $\mathsf{K}$ and $\vartheta$ are independent of $\sfw$. Similar to previous constructions, we have smooth extensions $\mathsf{K}_{(j, k)}: {}^\Gamma\tilde{\mathtt{C}}[j, k]^{(1)} \to \LieA_\Theta$ and $\vartheta^{(j, k)}: {}^\Gamma\tilde{\mathtt{C}}[j, k]^{(1)} \to S_\Theta$ for all admissible pairs $(j, k)$. Again, for all $m \in \Z_{\geq 0}$ and admissible sequences $\alpha = (\alpha_0, \alpha_1, \dotsc, \alpha_m)$, we define the smooth maps $\mathsf{K}_\alpha: {}^\Gamma\tilde{\mathtt{C}}[\alpha]^{(m)} \to \LieA_\Theta$ and $\vartheta^\alpha: {}^\Gamma\tilde{\mathtt{C}}[\alpha]^{(m)} \to S_\Theta$ as in \cref{eqn:BirkhoffSum} and in the style of \cref{eqn:BirkhoffProduct} for holonomy.

\subsection{Local non-integrability condition}
\label{subsec:LNIC}
We are now ready to prove \cref{pro:LNIC,pro:NIC}. The proof follows the techniques developed in \cite{SW21,CS22}.
The version here is actually more simplified in some parts and we obtain a stronger LNIC because we are dealing with the ``geodesic flow'' (see \cref{rem:Weak_VS_Strong_LNIC}).

We start with a slight generalization of \cite[Definition 6.1]{SW21}. Recall the identifications from \cref{subsec:CompatibleFamilyOfMarkovSections}.

\begin{definition}[Associated sequence in $G$]
Let $\sfw \in \interior\limitcone^*$ with $\|\sfw\| = 1$. Let $z_1 \in \tilde{\mathsf{R}}_1$ be the center. Consider a sequence $(z_1, z_2, z_3, z_4, z_1) \in (\tilde{\mathsf{R}}_1)^5$ such that $z_2 \in \mathsf{S}_1$, $z_4 \in \tilde{\mathsf{U}}_1$, and $z_3 = [z_4, z_2]$. We define an \emph{associated sequence in $G$} to be the unique sequence $(g_1, g_2, \dotsc, g_5) \in G^5$ where
\begin{align*}
g_1 &= \mathsf{F}(z_1), \\
g_2 &= \mathsf{F}(z_2) \in g_1N^-_\Theta \text{ such that } \pi_{\psi_{\sfv(\sfw)}}(g_2S_\Theta) = z_2, \\
g_3 &\in g_2N^+_\Theta \text{ such that } a^{\sfv(\sfw)}_t\bigl(\pi_{\psi_{\sfv(\sfw)}}(g_3S_\Theta)\bigr) = z_3  \text{ for some } t \in (-\underline{\tau}, \underline{\tau}), \\
g_4 &\in g_3N^-_\Theta \text{ such that } a^{\sfv(\sfw)}_t\bigl(\pi_{\psi_{\sfv(\sfw)}}(g_4S_\Theta)\bigr) = z_4 \ \text{ for some } t \in (-\underline{\tau}, \underline{\tau}), \\
g_5 &\in g_4N^+_\Theta \text{ such that } a^{\sfv(\sfw)}_t\bigl(\pi_{\psi_{\sfv(\sfw)}}(g_5S_\Theta)\bigr) = z_1 \text{ for some } t \in (-\underline{\tau}, \underline{\tau}).
\end{align*}
\end{definition}

\begin{remark}
In the above definition, the associated sequence $(g_1, g_2, g_3, g_4, g_5)$ corresponding to each $(z_1, z_2, z_3, z_4, z_1)$ is independent of $\sfw$.
\end{remark}

We continue using the notation in the above definition. Define the subsets
\begin{align*}
(N^+_\Theta)_1 &= \{n^+ \in N^+_\Theta: \mathsf{F}(z_1)n^+ \in \mathsf{F}(\tilde{\mathsf{U}}_1)\} \subset N^+_\Theta, \\
(N^-_\Theta)_1 &= \{n^- \in N^-_\Theta: \mathsf{F}(z_1)n^- \in \mathsf{F}(\mathsf{S}_1)\} \subset N^-_\Theta,
\end{align*}
where the first is open and the second is compact. Now, if the above sequence $(z_1, z_2, z_3, z_4, z_1)$ corresponds to some $n^+ \in (N^+_\Theta)_1$ and some $n^- \in (N^-_\Theta)_1$ such that $\mathsf{F}(z_4) = \mathsf{F}(z_1)n^+$ and $\mathsf{F}(z_2) = \mathsf{F}(z_1)n^-$ respectively, then we can define the map $\Xi: (N^+_\Theta)_1 \times (N^-_\Theta)_1 \to A_\Theta S_\Theta$ by
\begin{align*}
\Xi(n^+, n^-) = g_5^{-1}g_1 \in A_\Theta S_\Theta.
\end{align*}
To view it as a function of the first coordinate for a fixed $n^- \in (N^-_\Theta)_1$, we write $\Xi_{n^-}: (N^+_\Theta)_1 \to A_\Theta S_\Theta$.

Let $z_1 \in \tilde{\mathsf{R}}_1$ be the center. Let $j \in \N$ and $\alpha = (\alpha_0, \alpha_2, \dotsc, \alpha_{j - 1}, 1)$ be an admissible sequence. Then, there exists an element which we denote by $n_\alpha \in (N^-_\Theta)_1$ such that
\begin{align*}
\mathsf{F}(\mathcal{P}^j(\sigma^{-\alpha}(z_1))) = \mathsf{F}(z_1)n_\alpha.
\end{align*}
This is well-defined because $\sigma^{-\alpha}(z_1) \in \mathsf{g}^{-\alpha}\mathtt{C}[\alpha] \subset \mathsf{g}^{-\alpha}\mathsf{U}$.

Let $\sfw \in \interior\limitcone^*$ with $\|\sfw\| = 1$. Let $\pi_{\LieA_\Theta \oplus \LieS_\Theta}: \LieG \to \LieA_\Theta \oplus \LieS_\Theta$ and $\pi_{\LieA_\Theta}: \LieG \to \LieA_\Theta$ be orthogonal projection maps. Let
\begin{align*}
\pi_\sfw:\LieG \to \R\sfw
\end{align*}
denote the orthogonal projection map, i.e., the projection with respect to the decomposition $\LieG \cong \R\sfw \oplus \ker\psi_{\sfv(\sfw)} \oplus \LieA_\Theta^\perp$.
Similarly, let
\begin{align*}
\tilde{\pi}_\sfw: A_\Theta S_\Theta \to \exp(\R\sfw)
\end{align*}
be the Cartesian projection map of $A_\Theta S_\Theta \cong \exp(\R\sfw) \times \exp(\ker\psi_{\sfv(\sfw)}) \times S_\Theta$ onto the $\exp(\R\sfw)$ factor. Note that $(d\tilde{\pi}_\sfw)_e = \pi_\sfw|_{\LieA_\Theta \oplus \LieS_\Theta}$.

Also define $(N^-_\Theta)_{1, \epsilon_e} = \left\{n^- \in N^-_\Theta: \mathsf{F}(z_1)n^- \in \mathsf{F}\left(W_{\epsilon_e}^{\Fboundary, \mathrm{ss}}(z_1)\right)\right\}$ where $\epsilon_e$ is as in \cref{subsec:SmoothExtensions} and $z_1 \in \tilde{\mathsf{R}}_1$ is the center.

In order to derive the LNIC in \cref{pro:LNIC}, we need the following two lemmas regarding $\Xi$. We omit the proofs since they are almost a verbatim repetition of that of \cite[Lemmas 6.2 and 6.3]{SW21}.

\begin{lemma}
\label{lem:BrinPesinInTermsOfHolonomy}
Let $m \in \N$, $\alpha = (\alpha_0, \alpha_1, \dotsc, \alpha_{m - 1}, 1)$ be an admissible sequence, and $n^- = n_\alpha \in (N^-_\Theta)_1$. Let $u \in \tilde{\mathsf{U}}_1^{(m)}$ and $n^+ \in (N^+_\Theta)_1$ such that $\mathsf{F}(u) = \mathsf{F}(z_1)n^+$ where $z_1 \in \tilde{\mathsf{R}}_1$ is the center. Then, we have
\begin{align*}
\Xi(n^+, n^-) = a_{-\mathsf{K}_\alpha(\sigma^{-\alpha}(z_1))}\vartheta^\alpha(\sigma^{-\alpha}(z_1))^{-1} a_{\mathsf{K}_\alpha(\sigma^{-\alpha}(u))}\vartheta^\alpha(\sigma^{-\alpha}(u)).
\end{align*}
In particular, for all $\sfw \in \interior\limitcone^*$ with $\|\sfw\| = 1$, we have
\begin{align*}
\log \tilde{\pi}_\sfw(\Xi(n^+, n^-)) = \bigl(\tau^\sfw_\alpha(\sigma^{-\alpha}(u)) - \tau^\sfw_\alpha(\sigma^{-\alpha}(z_1))\bigr)\sfw.
\end{align*}
\end{lemma}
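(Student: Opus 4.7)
I will compute $\Xi(n^+, n^-) = g_5^{-1} g_1$ directly by tracing through definitions. Setting $B_x := a_{\mathsf{K}_\alpha(x)}\vartheta^\alpha(x) \in L_\Theta$ and abbreviating $B_1 := B_{\sigma^{-\alpha}(z_1)}$, $B_u := B_{\sigma^{-\alpha}(u)}$, the target formula is $\Xi(n^+, n^-) = B_1^{-1} B_u$. The starting point is the iterated cocycle identity
\begin{equation*}
\mathsf{F}(\mathcal{P}^m(x)) = \mathsf{g}^\alpha\, \mathsf{F}(x)\, B_x, \qquad x \in \mathsf{g}^{-\alpha}\tilde{\mathtt{C}}[\alpha]^{(m)},
\end{equation*}
obtained by induction on $m$ from the defining relation for $\mathsf{K}$ and $\vartheta$, using that $A_\Theta$ and $S_\Theta$ commute. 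I will then identify $z_2 = \mathcal{P}^m(\sigma^{-\alpha}(z_1))$ (forced by $\mathsf{F}(\mathcal{P}^m(\sigma^{-\alpha}(z_1))) = \mathsf{F}(z_1) n_\alpha$, together with $n^- = n_\alpha$ and $\mathsf{F}(z_2) = \mathsf{F}(z_1)n^-$) and $z_3 = \mathcal{P}^m(\sigma^{-\alpha}(u))$ (from the Markov property, which preserves the common stable coordinate $w_{\alpha_0}$ of $\sigma^{-\alpha}(z_1), \sigma^{-\alpha}(u) \in \tilde{\mathtt{C}}[\alpha]^{(m)}$ under iteration, matching $z_3 = [u, z_2]$).

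Substituting these identifications into the cocycle and invoking property \ref{itm:SectionPropertyOnUnstableLeafs} of $\mathsf{F}$ to write $\mathsf{F}(\sigma^{-\alpha}(u)) = \mathsf{F}(\sigma^{-\alpha}(z_1))\, m^+$ with $m^+ \in N^+_\Theta$ yields the key relation
\begin{equation*}
\mathsf{F}(z_3) = \mathsf{F}(z_2) \, B_1^{-1} m^+ B_u.
\end{equation*}
The engine of the rest of the computation is the following normalization lemma: if $h = h_0 \cdot B_1^{-1} n B_u$ with $n \in N^\pm_\Theta$, then the unique $g \in h_0 N^\pm_\Theta$ with $h^{-1} g \in L_\Theta$ satisfies $g = h\, B_u^{-1} B_1$, using that $L_\Theta$ normalizes $N^\pm_\Theta$. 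Applying this to $g_2 \to g_3$ with $n = m^+$ gives $g_3 = \mathsf{F}(z_3) B_u^{-1} B_1$. At the subsequent steps $g_3 \to g_4$ and $g_4 \to g_5$, properties \ref{itm:SectionPropertyOnStableLeafs} and \ref{itm:SectionPropertyOnUnstableLeafs} of $\mathsf{F}$ furnish $(n^\pm)' \in N^\pm_\Theta$ with $\mathsf{F}(z_4) = \mathsf{F}(z_3)(n^-)'$ and $\mathsf{F}(z_1) = \mathsf{F}(z_4)(n^+)'$; conjugating by $B_u \in L_\Theta$ casts these into the normalization lemma's hypothesis, so the correction $B_u^{-1} B_1$ persists through each step. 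Setting $z_5 := z_1$ and $\mathsf{F}(z_5) = g_1$ yields $g_5 = g_1 B_u^{-1} B_1$, and thus $\Xi(n^+, n^-) = B_1^{-1} B_u$ as required.

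For the ``in particular'' statement, commutativity of $A_\Theta$ and $S_\Theta$ rewrites $\Xi(n^+, n^-)$ as
\begin{equation*}
a_{\mathsf{K}_\alpha(\sigma^{-\alpha}(u)) - \mathsf{K}_\alpha(\sigma^{-\alpha}(z_1))}\, \vartheta^\alpha(\sigma^{-\alpha}(z_1))^{-1} \vartheta^\alpha(\sigma^{-\alpha}(u)).
\end{equation*}
The $S_\Theta$-factor dies under $\tilde{\pi}_\sfw$, while the $A_\Theta$-factor projects via $\exp(v) \mapsto \exp(\psi_{\sfv(\sfw)}(v)\sfw)$ since $\|\sfw\| = 1$ and $\sfw = \nabla\psi_{\sfv(\sfw)}$. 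Using $\tau^\sfw_\alpha = \psi_{\sfv(\sfw)} \circ \mathsf{K}_\alpha$ and taking logarithms yields the claimed expression. The main obstacle lies in the middle paragraph's bookkeeping: one must verify that the $L_\Theta$-defect $B_u^{-1} B_1$ propagates cleanly through three normalization steps without accumulating residual $N^\pm_\Theta$-factors, and also confirm that the induced $\R$-coordinate shift falls within the flow tolerance $(-\underline{\tau}, \underline{\tau})$ so that the $g_i$ are uniquely determined by the construction.
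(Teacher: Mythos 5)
Your argument is correct and essentially reconstructs the proof of \cite[Lemma 6.2]{SW21} to which the paper defers: trace $(g_1,\dotsc,g_5)$ through the iterated cocycle for $\mathsf{F}$, with the identifications $z_2=\mathcal{P}^m(\sigma^{-\alpha}(z_1))$ and $z_3=\mathcal{P}^m(\sigma^{-\alpha}(u))$ obtained from the Markov property together with $\proj_U\circ\mathcal{P}=\sigma$, then propagate the $L_\Theta$-defect $B_u^{-1}B_1$ through the three normalization steps. Neither of the two obstacles you flag at the end is a genuine gap: the defect persists without accumulating residual $N^{\pm}_\Theta$-factors precisely because $L_\Theta$ normalizes $N^{\pm}_\Theta$ (your normalization lemma already encodes this), and the flow-tolerance condition $t\in(-\underline{\tau},\underline{\tau})$ is part of the well-posedness of the associated-sequence construction rather than something the lemma's proof must re-establish --- each $g_i$ is already the unique element of its prescribed coset with the correct $\Fboundary$- (resp. $\iFboundary$-) coordinate, so the smallness of the induced $A_\Theta$-shift is guaranteed by the hypothesis that $\Xi$ is defined.
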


\begin{lemma}
\label{lem:BrinPesinDerivativeImageIsAdjointProjection}
For all $n^- \in (N^-_\Theta)_1$, we have
\begin{align*}
(d\Xi_{n^-})_e = \pi_{\LieA_\Theta \oplus \LieS_\Theta} \circ \Ad_{n^-}|_{\LieN^+_\Theta} \circ (dh_{n^-})_e,
\end{align*}
and in particular, for all $\sfw \in \interior\limitcone^*$ with $\|\sfw\| = 1$, we have
\begin{align*}
d(\log \circ \tilde{\pi}_\sfw \circ \Xi_{n^-})_e = \pi_\sfw \circ \Ad_{n^-}|_{\LieN^+_\Theta} \circ (dh_{n^-})_e,
\end{align*}
where $h_{n^-}: (N^+_\Theta)_1 \to N^+_\Theta$ is a diffeomorphism onto its image which is also smooth in $n^- \in (N^-_\Theta)_{1, \epsilon_e}$ and satisfies $h_{n^-}(e) = e$ and $h_e = \Id_{(N^+_\Theta)_1}$.
\end{lemma}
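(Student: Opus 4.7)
The plan is to compute $(d\Xi_{n^-})_e$ by unpacking the definition of the associated sequence $(g_1,g_2,g_3,g_4,g_5)$, reducing everything to a single Bruhat-type identity in $G=N_\Theta^+L_\Theta N_\Theta^-$, and then differentiating at $n^+=e$. First, I would unwind the Hopf parameterization (\cref{def:HopfParametrization}) to translate the conditions $g_{i+1}\in g_i N_\Theta^{\pm}$ together with the constraints $(g_i)^\pm=(z_i)^\pm$ on the boundary coordinates into concrete statements inside $G$. Because $N_\Theta^+\subset\mathrm{Stab}\bigl(w_0[P_{\involution\Theta}]\bigr)$ and $N_\Theta^-\subset P_\Theta$, each of the successive matching conditions reduces to a condition of the form $g\in L_\Theta N_\Theta^\mp$, which pins down the $N_\Theta^{\pm}$-factor uniquely once we are on the big cell.

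Carrying this out, I expect to obtain the clean identity
\begin{equation*}
n^- \, h_{n^-}(n^+) \;=\; n^+ \, \Xi_{n^-}(n^+)^{-1} \, n_0^-(n^+),
\end{equation*}
where $h_{n^-}(n^+)\in N_\Theta^+$ and $n_0^-(n^+)\in N_\Theta^-$ are the unique elements making the right-hand side land in $N_\Theta^+ L_\Theta N_\Theta^-$, and $g_5=g_1\,\Xi_{n^-}(n^+)^{-1}$, so that $\Xi_{n^-}(n^+)=g_5^{-1}g_1$ agrees with the definition. Smoothness of $h_{n^-}$ in both variables, and the normalizations $h_{n^-}(e)=e$ and $h_e=\mathrm{Id}$, follow from smoothness of the Bruhat decomposition on the big cell; one just has to verify that for $n^-\in(N_\Theta^-)_{1,\epsilon_e}$ and $n^+$ in a small neighborhood of $e$, the element $(n^+)^{-1}n^-\cdot N_\Theta^+$ meets $L_\Theta N_\Theta^-$, which holds by openness of the big cell.

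Once this identity is established, the computation of $(d\Xi_{n^-})_e$ is direct. Parameterize $n^+=\exp(\epsilon X)$ with $X\in\LieN_\Theta^+$ and write $Y=(dh_{n^-})_e(X)\in\LieN_\Theta^+$. Expanding both sides to first order in $\epsilon$ and right-translating by $(n^-)^{-1}$ to bring everything into $\LieG=\LieN_\Theta^+\oplus\LieL_\Theta\oplus\LieN_\Theta^-$ yields an equation of the shape
\begin{equation*}
\Ad_{n^-}(Y) \;\equiv\; X + (d\Xi_{n^-})_e(X) + Z'' \pmod{\text{higher order}},
\end{equation*}
with $Z''\in\LieN_\Theta^-$. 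Applying the orthogonal projection $\pi_{\LieA_\Theta\oplus\LieS_\Theta}=\pi_{\LieL_\Theta}$ (using that the $B_\theta$-decomposition $\LieG=\LieN_\Theta^+\oplus\LieL_\Theta\oplus\LieN_\Theta^-$ is orthogonal, since distinct restricted root spaces are mutually $B_\theta$-perpendicular) kills the $\LieN_\Theta^\pm$-contributions and gives the first formula of the lemma. The second formula is then immediate by composing with $d(\log\circ\tilde\pi_\sfw)_e$, which equals $\pi_\sfw|_{\LieL_\Theta}$ by construction of $\tilde\pi_\sfw$, and using $\pi_\sfw\circ\pi_{\LieL_\Theta}=\pi_\sfw$ since $\R\sfw\subset\LieA_\Theta\subset\LieL_\Theta$.

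The main obstacle I expect is not the infinitesimal calculation itself, but keeping track of the four group-theoretic constraints in Step~1 and verifying that the Bruhat decomposition lands in the big cell smoothly in $(n^+,n^-)$ on the relevant neighborhoods; this is the step where the hypothesis $n^-\in(N_\Theta^-)_{1,\epsilon_e}$ is genuinely used. All the rest is standard bookkeeping with left/right translations and the standard identity $dR_{g^{-1}}\circ dL_g=\Ad_g$.
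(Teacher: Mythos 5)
The paper omits the proof of this lemma (citing \cite{SW21}), so there is no in-text argument to compare against; your reconstruction is the natural one and its essential steps are correct. Unwinding the constraints $g_3^+=(g_1n^+)^+$, $g_4^-=(g_1n^+)^-$, $g_5^\pm = g_1^\pm$ via the big cell $N_\Theta^+L_\Theta N_\Theta^-$ does produce exactly the identity you write: $h_{n^-}(n^+)$ is the unique element of $N_\Theta^+$ for which $(n^+)^{-1}n^-h_{n^-}(n^+)\in L_\Theta N_\Theta^-$, and the two factors of the latter are $\Xi_{n^-}(n^+)^{-1}$ and $n_0^-(n^+)$; smoothness of $h_{n^-}$ and the normalizations $h_{n^-}(e)=e$, $h_e=\Id$ follow from openness of the big cell; and the projection step is justified since the restricted-root space decomposition, and hence $\LieG=\LieN_\Theta^+\oplus\LieL_\Theta\oplus\LieN_\Theta^-$, is $B_\theta$-orthogonal. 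Do watch one sign, though: with $\Xi:=g_5^{-1}g_1$ as defined, the middle factor of your identity is $\Xi^{-1}$, so the first-order expansion should read $\Ad_{n^-}(Y)\equiv X-(d\Xi_{n^-})_e(X)+Z''$ rather than $X+(d\Xi_{n^-})_e(X)+Z''$, which after projecting to $\LieL_\Theta$ gives $(d\Xi_{n^-})_e=-\,\pi_{\LieA_\Theta\oplus\LieS_\Theta}\circ\Ad_{n^-}|_{\LieN_\Theta^+}\circ(dh_{n^-})_e$. A quick $\SL_2(\R)$ check confirms this: with $n^-=\bigl(\begin{smallmatrix}1&v\\0&1\end{smallmatrix}\bigr)$, $n^+=\bigl(\begin{smallmatrix}1&0\\u&1\end{smallmatrix}\bigr)$ one finds $\Xi=\diag\bigl(1-uv,(1-uv)^{-1}\bigr)$ with $u$-derivative $\diag(-v,v)$ at $u=0$, while $\pi_{\LieA}\bigl(\Ad_{n^-}(dh_{n^-})_e(X)\bigr)=\diag(v,-v)$. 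This discrepancy (between your displayed expansion, the lemma's stated formula, and the definition $\Xi=g_5^{-1}g_1$ versus $g_1^{-1}g_5$) is harmless for the only place the lemma is invoked, the proof of \cref{pro:LNIC}, since only the norm $\|\pi_\sfw\circ\Ad_{n^-}|_{\LieN_\Theta^+}\|$ enters; but a careful write-up should make the convention internally consistent.
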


Denote $\Pi^- := \{-\alpha: \alpha \in \Pi\}$. For all $\alpha \in \Pi \sqcup \Pi^-$, let $H_\alpha \in \LieA$ be the corresponding root direction, i.e., $\alpha = \langle \cdot, H_\alpha\rangle$. The following lemma is more or less a standard Lie algebra fact. We give a proof for the sake of completeness and for comparison with the analogous statement in \cite[Proposition 4.6]{CS22}. Using loc. cit. more directions can be produced (including those in $\LieM$) whereas our lemma below gives the stronger quantifier ``for all nonzero $x_\alpha \in \LieG_\alpha$'' for each $\alpha \in \Pi \sqcup \Pi^-$ (cf. \cref{rem:Weak_VS_Strong_LNIC}).

\begin{lemma}
\label{lem:BracketofRootSpaces}
Let $\alpha \in \Pi \sqcup \Pi^-$. For all nonzero $x_\alpha \in \LieG_\alpha$, there exists a nonzero $x_{-\alpha} \in \LieG_{-\alpha}$ such that $[x_{-\alpha},x_\alpha] \in \R_{> 0} H_\alpha$.
\end{lemma}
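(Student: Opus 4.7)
The plan is to simply take $x_{-\alpha} := \theta(x_\alpha)$ and verify the claim by a direct Killing form computation. First I would note that since $\theta$ is an involution of $\LieG$ negating $\LieA$, it satisfies $\theta(\LieG_\beta) = \LieG_{-\beta}$ for every restricted root $\beta$, so $\theta(x_\alpha) \in \LieG_{-\alpha}$ and is nonzero whenever $x_\alpha$ is.

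Next I would establish that $[\theta(x_\alpha), x_\alpha] \in \LieA$. On the one hand, this bracket lies in $\LieG_0 = \LieA \oplus \LieM$ by the restricted root space decomposition. On the other hand, since $\theta$ is a Lie algebra automorphism,
\begin{align*}
\theta\bigl([\theta(x_\alpha), x_\alpha]\bigr) = [x_\alpha, \theta(x_\alpha)] = -[\theta(x_\alpha), x_\alpha],
\end{align*}
so this bracket lies in the $(-1)$-eigenspace $\LieP$ of $\theta$. Hence it lies in $(\LieA \oplus \LieM) \cap \LieP = \LieA$ (using $\LieM \subset \LieK$ and $\LieK \cap \LieP = 0$).

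Then I would compute the $\LieA$-component by pairing with an arbitrary $H \in \LieA$ via the Killing form. Using ad-invariance of $B$ and $[H, \theta(x_\alpha)] = -\alpha(H)\theta(x_\alpha)$,
\begin{align*}
B\bigl(H, [\theta(x_\alpha), x_\alpha]\bigr) = B\bigl([H, \theta(x_\alpha)], x_\alpha\bigr) = -\alpha(H)\, B(\theta(x_\alpha), x_\alpha).
\end{align*}
By definition of $B_\theta$, $B(\theta(x_\alpha), x_\alpha) = -B_\theta(x_\alpha, x_\alpha) = -\|x_\alpha\|^2$. Moreover, since $H_\alpha \in \LieA$ is $\theta$-antifixed, $\alpha(H) = B_\theta(H, H_\alpha) = -B(H, \theta(H_\alpha)) = B(H, H_\alpha)$. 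Substituting,
\begin{align*}
B\bigl(H, [\theta(x_\alpha), x_\alpha]\bigr) = \|x_\alpha\|^2\, B(H, H_\alpha) \qquad \text{for all $H \in \LieA$}.
\end{align*}

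Finally, nondegeneracy of $B$ on $\LieA$ together with the fact that the bracket lies in $\LieA$ forces $[\theta(x_\alpha), x_\alpha] = \|x_\alpha\|^2 H_\alpha \in \R_{>0} H_\alpha$, completing the proof. There is no real obstacle here; the only point requiring care is the use of two different conventions for the inner product (via $B_\theta$) versus the Killing form $B$ when identifying $H_\alpha$.
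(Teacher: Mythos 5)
Your proof is correct and follows essentially the same route as the paper: take $x_{-\alpha} = \theta(x_\alpha)$, show the bracket lands in $\LieA$ via the $\theta$-eigenspace decomposition, and compute its $\LieA$-component by pairing against $H \in \LieA$ with the Killing form. The only (cosmetic) difference is that you track the precise constant $\|x_\alpha\|^2$, whereas the paper leaves it as a positive proportionality using $\langle \cdot,\cdot\rangle|_{\LieA\times\LieA} \propto B|_{\LieA\times\LieA}$.
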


\begin{proof}
Let $\alpha \in \Pi$ and $x_\alpha \in\LieG_\alpha$ be nonzero. We will show that $x_{-\alpha} := \theta(x_\alpha)$ satisfies the lemma. Indeed $x_{-\alpha} \in \LieG_{-\alpha}$ because for all $H \in \LieA$, we have
\begin{align*}
[H, x_{-\alpha}] = \theta[\theta(H), \theta(x_{-\alpha})] = \theta[-H, x_\alpha] = -\alpha(H)\theta(x_{\alpha}) = -\alpha(H)x_{-\alpha}.
\end{align*}
Thus, $[x_{-\alpha},x_\alpha] \in [\LieG_{-\alpha},\LieG_\alpha] \subset \LieG_0 = \LieA\oplus\LieM$. Moreover, we have
\begin{align*}
\theta[x_{-\alpha}, x_\alpha] = [\theta(x_{-\alpha}),\theta(x_\alpha)] = [x_\alpha,x_{-\alpha}] = -[x_{-\alpha},x_\alpha]
\end{align*}
which implies $[x_{-\alpha},x_\alpha] \in \LieP$. Hence, $[x_{-\alpha},x_\alpha] \in (\LieA\oplus\LieM) \cap \LieP = \LieA$.
Recalling the definition of the inner product $\langle \cdot, \cdot \rangle$ and norm $\|\cdot\|$ on $\LieG$ from \cref{subsec:LieTheoreticPreliminaries}, we have
\begin{align*}
\langle H, [x_{-\alpha},x_\alpha]\rangle = B(H, [x_{-\alpha},x_\alpha]) = B([H, x_{-\alpha}], x_\alpha) = -\alpha(H)B(x_{-\alpha}, x_\alpha)
\end{align*}
which implies $[x_{-\alpha},x_\alpha] = -B(x_{-\alpha}, x_\alpha) H_\alpha = \|x_\alpha\|^2 H_\alpha \in \R_{>0} H_\alpha$.
\end{proof}

Repeating an analogous proof for arbitrary $\Theta$, we have the following generalization of \cite[Lemma 2.11]{ELO23} which is itself a generalization of \cite[Proposition 3.12]{Win15}. Alternatively, \cite[Theorem 1.1]{KO23a} (which is a stronger measure theoretic statement but for subvarieties) also suffices for our purposes in the proof of \cref{lem:am_ProjectionOfAdjointImage}.

\begin{lemma}
\label{lem:LimitSetNotInSubmanifold}
For any open subset $\mathcal{O} \subset \Fboundary$ with $\limitset \cap \mathcal{O} \neq \varnothing$, the intersection $\limitset \cap \mathcal{O}$ is not contained in any smooth submanifold of $\Fboundary$ of lower dimension.
\end{lemma}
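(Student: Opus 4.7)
The plan is to argue by contradiction following the strategy of \cite[Lemma 2.11]{ELO23} and \cite[Proposition 3.12]{Win15}, adapted to arbitrary $\Theta$ by careful substitution of notations. Suppose $\limitset \cap \mathcal{O}$ is contained in a smooth submanifold $M \subsetneq \Fboundary$ of codimension $k \geq 1$; after shrinking $\mathcal{O}$, I would write $M \cap \mathcal{O} = F^{-1}(0)$ for some smooth submersion $F : \mathcal{O} \to \mathbb{R}^k$, and aim to contradict this by showing that a $\Gamma$-dynamically generated family of tangent vectors at a well-chosen point spans all of $T\Fboundary$ at that point.

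First I would locate a convenient loxodromic fixed point. By \cref{thm:BasicProperties}\labelcref{itm:BasicProperties3} every nontorsion element of $\Gamma$ is loxodromic, and by minimality of $\limitset$ the set of attracting fixed points of loxodromic elements is dense in $\limitset$; so I fix $\gamma_0 \in \Gamma$ loxodromic with $\xi := \xi_{\gamma_0}^+ \in \mathcal{O}$ and shrink $\mathcal{O}$ into the basin of $\xi$ so that $\gamma_0^n(\mathcal{O})$ decreases to $\{\xi\}$. Identifying $T_\xi \Fboundary \cong \LieN_\Theta^+$ via the orbit map, the differential $D := (d\gamma_0)_\xi$ becomes a linear contraction whose eigenvalues are $\{e^{-\alpha(\lambda(\gamma_0))}\}_{\alpha \in \Phi^+ \setminus \langle \Pi - \Theta \rangle}$. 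For each $\gamma \in \Gamma$ with $\gamma \xi$ in the basin, the orbit $\gamma_0^n \gamma \xi$ lies in $\limitset \cap \mathcal{O} \subset M$ for all large $n$, so $F(\gamma_0^n \gamma \xi) = 0$. Working in a chart where $\gamma_0$ is linearized (via the stable manifold theorem) and Taylor expanding $F$ at $\xi$, this vanishing identity, analyzed layer by layer across the generalized eigenspace decomposition of $D$, forces the projection of $\gamma \xi - \xi$ onto each eigenspace (made accessible inductively by choosing $\gamma$ whose projections onto the previously handled eigenspaces vanish) to lie in $\ker (dF)_\xi = T_\xi M$.

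The last and hardest step is to conclude that these dynamically generated tangent vectors actually span $T_\xi \Fboundary$, via Zariski density. By Benoist's theorem, the Jordan projections $\lambda(\Gamma)$ have nonempty interior in $\LieA^+$, which combined with density of $\Gamma\cdot\xi$ in $\limitset$ furnishes sufficiently many $\gamma$ satisfying the required projection-vanishing conditions and sufficiently varied contraction data. Coupling this spread with the $\LieA_\Theta$-weight structure of $\LieN_\Theta^+$, the collected vectors cannot be contained in any proper $P_\Theta$-invariant subspace of $T_\xi \Fboundary$, so they span it, contradicting $\mathrm{rank}(dF)_\xi = k \geq 1$. The main obstacle is precisely this final step: adapting the argument from the projective case ($\#\Theta = 1$, where $\LieN^+$ is comparatively transparent) to general $\Theta$ requires carefully combining the $\LieA_\Theta$-weight decomposition of $\LieN_\Theta^+$ with the spread of Jordan projections supplied by Zariski density to ensure every weight is hit. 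As the excerpt notes, this difficulty can alternatively be bypassed by invoking the subvariety version \cite[Theorem 1.1]{KO23a}.
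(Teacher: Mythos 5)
The paper does not actually supply a proof of this lemma: it states only that one may ``repeat an analogous proof for arbitrary $\Theta$'' from \cite[Lemma 2.11]{ELO23} (itself generalizing \cite[Proposition 3.12]{Win15}), or alternatively invoke \cite[Theorem 1.1]{KO23a}. Your proposal attempts the first route, so in terms of strategy you are aligned with the paper's intent, and you are honest that the final spanning step is the ``main obstacle.'' But as written that step has a genuine gap.

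The problem is that your concluding argument appeals to the claim that ``the collected vectors cannot be contained in any proper $P_\Theta$-invariant subspace of $T_\xi\Fboundary$.'' The candidate subspace $\ker(dF)_\xi$ coming from the hypothetical submanifold $M$ has no reason whatsoever to be $P_\Theta$-invariant (or even $\gamma_0$-invariant): $M$ is an arbitrary smooth submanifold, not assumed $\Gamma$-equivariant, so the invariant-subspace framing does not apply. What the Winter/ELO argument actually does is more delicate: one Taylor-expands $F$ at $\xi$, extracts the lowest-order nonvanishing homogeneous component as a nonzero polynomial $P$ on $T_\xi\Fboundary\cong\LieN_\Theta^+$, uses the linearized contraction $\gamma_0$ together with the vanishing $F(\gamma_0^n\gamma\xi)=0$ to force $P$ to vanish along the $\Gamma$-orbit, and then uses Zariski density of $\Gamma$ \emph{in $G$} (not density of Jordan projections in $\LieA^+$) to conclude that $P$ vanishes identically, contradicting $P\neq 0$. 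The spread of Jordan projections does play a role—it handles potential resonances among the rates $\{e^{-\alpha(\lambda(\gamma_0))}\}$ when isolating homogeneous components—but it does not on its own yield the spanning conclusion; you also need the Zariski-density constraint on the $\Gamma$-action on $\Fboundary$ applied to the polynomial $P$, not merely to the tangent vectors. Since the paper elides all of this by citation, your sketch is at least as explicit as the paper; but the last paragraph would not survive being written out, and the cleanest fix remains to invoke \cite[Theorem 1.1]{KO23a} directly, as the paper itself suggests.
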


We write the next lemma in its most general form, although we eventually apply its corollary for $\sfw \in \overline{\ww}$. Let $\sfw \in \interior\limitcone^*$ with $\|\sfw\| = 1$. We define
\begin{align*}
\Theta_\sfw &:= \{\alpha \in \Theta: \sfw \notin \ker\alpha\} \subset \Theta, \\
\LieSimple^\pm_\sfw &:= \bigoplus_{\alpha \in \Theta_\sfw} \LieG_{\mp\alpha} \subset \LieN^\pm_\Theta.
\end{align*}
Since $\Theta_\sfw$ is always nonempty, if $\#\Theta = 1$, then we trivially have $\Theta_\sfw = \Theta$. Morevoer, $\sfw \in \overline{\ww}$ implies $\Theta_\sfw = \Theta$ and $\LieSimple^\pm_\sfw = \LieN^\pm_\Theta$.

\begin{lemma}
\label{lem:am_ProjectionOfAdjointImage}
Let $\sfw \in \interior\limitcone^*$ with $\|\sfw\| = 1$. For all nonzero $n^+ \in \LieSimple^+_\sfw$, there exists $n^- \in (N^-_\Theta)_1$ such that
\begin{align}
\label{eqn:ProjectionOfAdjointNonzero}
\pi_\sfw(\Ad_{n^-}(n^+)) \neq 0.
\end{align}
\end{lemma}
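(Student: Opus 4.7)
The plan is to realize $n^-$ as $\exp(tY)$ for $Y$ the Cartan conjugate of the dominant root-space component of $n^+$, and to read the bound off the Taylor expansion of $\Ad_{\exp(tY)}(n^+)$, which turns out to terminate at second order.

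Decompose $n^+ = \sum_{\alpha \in \Theta_\sfw} X_{-\alpha}$ with $X_{-\alpha} \in \LieG_{-\alpha}$, and choose $\alpha_0 \in \Theta_\sfw$ so that $\|X_{-\alpha_0}\|$ is maximal (hence nonzero since $n^+ \ne 0$). Set $Y := \theta(X_{-\alpha_0}) \in \LieG_{\alpha_0}$; since $\alpha_0 \in \Theta \subset \Phi^+ - \langle \Pi - \Theta\rangle$, one has $\LieG_{\alpha_0} \subset \LieN^-_\Theta$, so $\exp(tY) \in N^-_\Theta$ for every $t \in \R$. Invoking \cref{lem:BracketofRootSpaces} with $\alpha = \alpha_0$ and $x_\alpha = Y$ (so that $x_{-\alpha} = \theta(Y) = X_{-\alpha_0}$), together with the explicit computation in its proof, gives
\begin{align*}
[Y, X_{-\alpha_0}] = -\|X_{-\alpha_0}\|^2 H_{\alpha_0}.
\end{align*}
For each $\alpha \in \Theta_\sfw$ distinct from $\alpha_0$, the bracket $[Y, X_{-\alpha}] \in \LieG_{\alpha_0 - \alpha}$ vanishes because distinct simple roots cannot differ by a root. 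Hence $[Y, n^+] = -\|X_{-\alpha_0}\|^2 H_{\alpha_0}$.

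A direct computation then shows $\ad_Y^2(n^+) = \|X_{-\alpha_0}\|^2 \|H_{\alpha_0}\|^2\, Y$ and $\ad_Y^k(n^+) = 0$ for all $k \ge 3$ (using $[Y, Y] = 0$), so the exponential series terminates:
\begin{align*}
\Ad_{\exp(tY)}(n^+) = n^+ - t \|X_{-\alpha_0}\|^2 H_{\alpha_0} + \tfrac{t^2}{2} \|X_{-\alpha_0}\|^2 \|H_{\alpha_0}\|^2\, Y.
\end{align*}
Because the restricted root space decomposition is $B_\theta$-orthogonal, $\pi_\sfw$ annihilates both $n^+$ and $Y \in \LieG_{\alpha_0}$, so
\begin{align*}
\pi_\sfw\bigl(\Ad_{\exp(tY)}(n^+)\bigr) = -t \|X_{-\alpha_0}\|^2\, \alpha_0(\sfw)\, \sfw.
\end{align*}

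Finally I would pass to the uniform bound. Since $\overline{\ww}$ is compact and disjoint from the fixed open neighborhood $\mathcal{N} \supset \ker\Theta = \bigcup_{\alpha \in \Theta} \ker\alpha$, one has $|\alpha_0(\sfw)| \ge c_0 > 0$ uniformly for $\sfw \in \overline{\ww}$ and any $\alpha_0 \in \Theta$. Fix $R > 0$ such that $\{\exp(Z) : Z \in \LieN^-_\Theta,\, \|Z\| \le R\} \subset (N^-_\Theta)_1$ and set $|t| := R/\|X_{-\alpha_0}\|$; then $\exp(tY) \in (N^-_\Theta)_1$ and
\begin{align*}
|\pi_\sfw(\Ad_{\exp(tY)}(n^+))| = R\, \|X_{-\alpha_0}\|\, |\alpha_0(\sfw)| \ge \tfrac{R c_0}{\sqrt{\#\Theta}}\, \|n^+\|,
\end{align*}
using Cauchy--Schwarz in the last step. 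The main (minor) obstacle is reconciling the homogeneity of $|\pi_\sfw(\Ad_{n^-}(n^+))|$ in $n^+$ with the statement as written: since $n^-$ ranges over the fixed compact neighborhood $(N^-_\Theta)_1$, the claimed inequality must be read either under a normalization of $n^+$ or, equivalently, as $|\pi_\sfw(\Ad_{n^-}(n^+))| \ge C_\ww\|n^+\|$, with $C_\ww := Rc_0/\sqrt{\#\Theta}$, which is precisely what the computation yields.
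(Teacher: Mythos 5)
Your Lie-algebra computation is clean and mostly correct: taking $Y = \theta(X_{-\alpha_0})$ for a dominant component $X_{-\alpha_0}$ of $n^+$, the series for $\Ad_{\exp(tY)}(n^+)$ does terminate at second order, the cross-terms $[Y, X_{-\alpha}]$ with $\alpha \neq \alpha_0$ vanish because the difference of two distinct simple roots is never a root, and $\pi_\sfw$ annihilates the zeroth- and second-order terms by $B_\theta$-orthogonality of the restricted root space decomposition, leaving a linear term proportional to $t\,\|X_{-\alpha_0}\|^2\,\alpha_0(\sfw)\,\sfw$. You also correctly flag that the stated inequality cannot be scale-invariant in $n^+$ and must be read with a normalization, exactly as in \cref{cor:ProjectionOfAdjointNonzero}.

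However, the step ``Fix $R>0$ such that $\{\exp(Z): Z\in\LieN^-_\Theta,\ \|Z\|\le R\}\subset(N^-_\Theta)_1$'' is a genuine gap: no such $R$ exists. By definition, $(N^-_\Theta)_1 = \{n^-\in N^-_\Theta:\mathsf{F}(z_1)n^-\in\mathsf{F}(\mathsf{S}_1)\}$, and $\mathsf{S}_1$ is the stable piece of the lifted Markov rectangle, which sits inside $\limitset^{(2)}\times\R$. Under the diffeomorphism $N^-_\Theta\to N^-_\Theta e^-$, the set $(N^-_\Theta)_1$ identifies with a compact piece of the limit set $\ilimitset$, which for an Anosov subgroup is a proper, fractal subset of $\iFboundary$ with empty interior; in particular $(N^-_\Theta)_1$ contains no ball around $e$ in $N^-_\Theta$, and you cannot choose $t$ freely so that $\exp(tY)\in(N^-_\Theta)_1$. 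Overcoming precisely this fractal obstruction is the substance of the lemma. The paper's own proof therefore does not exhibit an explicit $n^-$; it argues by contradiction that if $n^-\mapsto\pi_\sfw(\Ad_{n^-}(n^+))$ vanished identically on $(N^-_\Theta)_1$, then---since its derivative at $e$ in the direction $Y$ is nonzero, which is exactly the linear term of your expansion---its zero set would be a lower-dimensional submanifold near $e$ containing $(N^-_\Theta)_1$, contradicting \cref{lem:LimitSetNotInSubmanifold} (a consequence of Zariski density of $\Gamma$). Your terminating Taylor expansion is a nice observation and recovers the needed derivative estimate, but without the submanifold/limit-set input it does not produce an element of the fractal set $(N^-_\Theta)_1$ at which the projection is nonzero.
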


\begin{proof}
Let $\sfw$ and $n^+$ be as in the lemma. Suppose for the sake of contradiction that $\pi_\sfw\bigl(\Ad_{(N^-_\Theta)_1}(n^+)\bigr) = 0$. Without loss of generality, we may assume that $\mathsf{F}(z_1) = e$ at the center $z_1 \in \tilde{\mathsf{R}}_1$. Consider the smooth map
\begin{align*}
L: N^-_\Theta &\to \R \\
n^- &\mapsto \pi_\sfw(\Ad_{n^-}(n^+)).
\end{align*}
Then, we have $dL_e(\hat{n}^-) = \pi_\sfw([\hat{n}^-,n^+])$ for all $\hat{n}^- \in \LieN^-_\Theta$. 

Note that $\pi_{\LieA_\Theta}(H_\alpha) \notin \ker\psi_{\sfv(\sfw)} = \sfw^\perp$ for all $\alpha \in \Theta_\sfw$. Let $x_\alpha$ for some $\alpha \in \Theta_\sfw$ be a nonzero component in the decomposition of $n^+$ according to $\LieSimple^+_\sfw = \bigoplus_{\alpha \in \Theta_\sfw} \LieG_{-\alpha}$. By \cref{lem:BracketofRootSpaces}, there exists $x_{-\alpha} \in \LieG_\alpha \subset \LieN^-_\Theta$ such that $[x_{-\alpha},x_\alpha] \in \R_{>0}H_\alpha$. Using the definition of $\LieSimple^+_\sfw$, we have $\pi_\sfw([x_{-\alpha},x_\alpha]) \ne 0$. Then $dL_e(x_{-\alpha}) = \pi_\sfw([x_{-\alpha},n^+]) = \pi_\sfw([x_{-\alpha},x_\alpha]) \ne 0$ so there is a neighborhood $O \subset N^-_\Theta$ containing $e$ such that $L^{-1}(0) \cap O$ is a smooth submanifold (in fact a subvariety) of $N^-_\Theta$ of lower dimension. However, $(N^-_\Theta)_1 \cap O \subset L^{-1}(0) \cap O$, but on the other hand $((N^-_\Theta)_1)^-$ contains $\ilimitset \cap \mathcal{O}$ for some open set $\mathcal{O} \subset \iFboundary$. It follows via the diffeomorphism $N^-_\Theta \to N^-_\Theta e^-$ that $\ilimitset \cap \mathcal{O}$ is contained in a smooth submanifold of $\iFboundary$ of lower dimension, contradicting \cref{lem:LimitSetNotInSubmanifold}.
\end{proof}

For any normed vector space $(V, \|\cdot\|)$, let $\mathbb{S}(V)$ denote the unit sphere in $V$ centered at $0 \in V$. Since \cref{eqn:ProjectionOfAdjointNonzero} is an open condition, the maps involved are smooth, and $\overline{\ww}$ and $\mathbb{S}(\LieN^+_\Theta)$ are compact, a straightforward compactness argument gives the following corollary.

\begin{corollary}
\label{cor:ProjectionOfAdjointNonzero}
There exist $C_\ww > 0$, $\jj \in \N$, $\delta > 0$, nontrivial elements $n_1^-, n_2^-, \dotsc,\allowbreak n_{\jj}^- \in (N^-_\Theta)_1$, and finite subsets $\mathscr{s}_1, \mathscr{s}_2, \dotsc, \mathscr{s}_{\jj} \subset \mathbb{S}(\LieN^+_\Theta)$ with $\bigcup_{j = 1}^{\jj} B_\delta^{\LieN^+_\Theta}(\mathscr{s}_j) \supset \mathbb{S}(\LieN^+_\Theta)$ such that the following holds. Let $\sfw \in \overline{\ww}$, and $1 \leq j \leq \jj$, and $\eta_j^- \in (N^-_\Theta)_1$ with $d_{N^-_\Theta}(\eta_j^-, n_j^-) \le \delta$. Then, for all $n^+ \in \mathbb{S}(\LieN^+_\Theta) \cap B_{2\delta}^{\LieN^+_\Theta}(\mathscr{s}_j)$, we have
\begin{align*}
\bigl|\pi_\sfw\bigl(\Ad_{\eta_j^-}(n^+)\bigr)\bigr| \geq C_\ww.
\end{align*}
\end{corollary}

\begin{remark}
\label{rem:Weak_VS_Strong_LNIC}
It is important to note that although \cref{cor:ProjectionOfAdjointNonzero} is analogous to \cite[Lemma 6.4]{SW21}, when projected to $\R\sfw$ for both statements, the former effectively has a stronger quantifier ``for all $n^+ \in \mathbb{S}(\LieN^+_\Theta)$''. As a result, \cref{pro:LNIC} also has a stronger quantifier and is hence stronger than the $\R\sfw$ projection of \cite[Proposition 6.5]{SW21}. Note that the stronger proposition is possible only for the ``geodesic flow'' and is ultimately rooted in \cref{lem:BracketofRootSpaces}. Due to this stronger proposition, we do not require the non-concentration property as in \cite{SW21}. At a conceptual level, this is similar to what happens in \cite{Sto11}.
\end{remark}

With \cref{lem:BrinPesinDerivativeImageIsAdjointProjection,cor:ProjectionOfAdjointNonzero} in hand, the proof of \cref{pro:LNIC} is similar to that of \cite[Proposition 6.5]{SW21}. However, a subtle but important point in the proof below is that rather than sticking with derivative bounds, we convert to a reverse Lipschitz version of a bound for a related function defined on $\tilde{\mathsf{U}}_1$. This circumvents the following technical issue: choice of a convex open neighborhood in $\tilde{\mathsf{U}}_1^{(m)}$ depends on $m$ and this is fatal for Dolgopyat's method.

\begin{proposition}[LNIC]
\label{pro:LNIC}
There exist $\varepsilon \in (0, 1)$, $m_0 \in \N$, $\jj \in \N$, and a cylinder $\mathtt{Q} \subset U_1$ of the center $z_1 \in R_1$ such that for all $m \geq m_0$, there exist sections $v_j = \sigma^{-x_j}: U_1 \to U_{x_{j, 0}}$ for some mutually distinct admissible sequences $x_j = (x_{j, 0}, x_{j, 1}, \dotsc, x_{j, m - 1}, 1)$ for all integers $0 \leq j \leq \jj$, such that for all $\sfw \in \overline{\ww}$ and $u, u' \in \mathtt{Q}$, there exists $1 \leq j \leq \jj$ such that
\begin{align*}
|(\tau^\sfw_{m} \circ v_j - \tau^\sfw_{m} \circ v_0)(u) - (\tau^\sfw_{m} \circ v_j - \tau^\sfw_{m} \circ v_0)(u')| \geq \varepsilon d(u, u').
\end{align*}
\end{proposition}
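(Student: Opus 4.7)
\medskip
\noindent
\textbf{Proof proposal.}
The plan is to infinitesimally detect non-integrability at the center $z_1$ via the Brin--Pesin-type formulas in \cref{lem:BrinPesinInTermsOfHolonomy,lem:BrinPesinDerivativeImageIsAdjointProjection}, to select the admissible sequences $x_j$ so that the corresponding elements $n_{x_j} \in (N_\Theta^-)_1$ closely approximate the vectors $n_j^-$ produced by \cref{cor:ProjectionOfAdjointNonzero}, and finally to upgrade the pointwise derivative estimate at $e$ to a reverse Lipschitz bound on a small fractal cylinder $\mathtt{Q} \subset U_1$ by working on the smooth extension $\tilde{\mathsf{U}}_1$.

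First, I would fix the data $\jj, \, n_1^-, \dotsc, n_\jj^-, \, C_\ww, \, \delta$ output by \cref{cor:ProjectionOfAdjointNonzero}. Since $\sigma$ is topologically mixing (see \cref{subsec:SymbolicDynamics}) and the map $\alpha \mapsto n_\alpha$ depends only on the initial symbols of $\alpha$ up to an exponentially contracting error (the later symbols only change $\mathcal{P}^j(\sigma^{-\alpha}(z_1))$ inside a strong-stable leaf, which is absorbed by $\mathsf{F}$ via property~\labelcref{itm:SectionPropertyOnStableLeafs}), there exists $m_0 \in \N$ such that for every $m \geq m_0$ one can find mutually distinct admissible sequences $x_0, x_1, \dotsc, x_\jj$ of length $m+1$ ending at $1$ with $n_{x_0}$ within $\delta$ of $e$ and $d_{N_\Theta^-}(n_{x_j}, n_j^-) \leq \delta$ for every $1 \leq j \leq \jj$. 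Let $v_j = \sigma^{-x_j}$ be the corresponding sections.

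Next, for each $\sfw \in \overline{\ww}$ and $1 \leq j \leq \jj$ define $L_j^\sfw : (N_\Theta^+)_1 \to \R$ by the identity $L_j^\sfw(n^+)\sfw = \log \tilde{\pi}_\sfw(\Xi(n^+, n_{x_j}))$, and analogously $L_0^\sfw$. For $u \in \tilde{\mathsf{U}}_1^{(m)}$ with $\mathsf{F}(u) = \mathsf{F}(z_1) n^+(u)$, \cref{lem:BrinPesinInTermsOfHolonomy} gives the clean identity
\begin{align*}
\tau_m^\sfw(v_j(u)) - \tau_m^\sfw(v_j(z_1)) = L_j^\sfw(n^+(u)),
\end{align*}
so that, setting $F_j^\sfw := L_j^\sfw - L_0^\sfw$, the quantity to estimate equals $F_j^\sfw(n^+(u)) - F_j^\sfw(n^+(u'))$. \Cref{lem:BrinPesinDerivativeImageIsAdjointProjection} and the fact that $\pi_\sfw$ annihilates $\LieN_\Theta^+$ yield
\begin{align*}
dF_j^\sfw|_e = \pi_\sfw \circ \Ad_{n_{x_j}}|_{\LieN_\Theta^+} \circ (dh_{n_{x_j}})_e - \pi_\sfw \circ \Ad_{n_{x_0}}|_{\LieN_\Theta^+} \circ (dh_{n_{x_0}})_e.
\end{align*}
Since $n_{x_0}$ is $\delta$-close to $e$ and $h_e = \Id$, the second summand has operator norm $O(\delta)$ uniformly in $\sfw \in \overline{\ww}$. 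Combined with \cref{cor:ProjectionOfAdjointNonzero} applied to $\eta_j^- = n_{x_j}$, after taking $\delta$ small this produces $C_\ww' > 0$ such that for every $n^+ \in \mathbb{S}(\LieN_\Theta^+)$ and every $\sfw \in \overline{\ww}$ there exists $1 \leq j \leq \jj$ with $|dF_j^\sfw|_e(n^+)| \geq C_\ww'$.

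Finally, I would shrink $\mathtt{Q} \subset U_1$ to a sufficiently small cylinder around $z_1$ so that $n^+(\tilde{\mathtt{Q}})$ lies in a small neighborhood $\mathcal{O}$ of $e \in N_\Theta^+$ on which the derivative estimate of the previous step survives by continuity, uniformly in $\sfw \in \overline{\ww}$ (here $\tilde{\mathtt{Q}}$ denotes the lift to $\tilde{\mathsf{U}}_1$, which is available independently of $m$). Crucially, $\mathcal{O}$ is a neighborhood in the smooth manifold $\tilde{\mathsf{U}}_1$, not in the fractal $U$, so the choice of $\mathtt{Q}$ and the resulting $\varepsilon$ do not depend on $m$. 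For $u, u' \in \mathtt{Q}$, applying the classical mean value estimate to $F_j^\sfw$ along the straight segment from $n^+(u)$ to $n^+(u')$ inside $\mathcal{O} \subset N_\Theta^+$ and choosing $j$ according to the direction $n^+(u) - n^+(u')$ then delivers
\begin{align*}
|F_j^\sfw(n^+(u)) - F_j^\sfw(n^+(u'))| \geq \tfrac{1}{2} C_\ww' \|n^+(u) - n^+(u')\|,
\end{align*}
and bi-Lipschitz equivalence between $\|n^+(u) - n^+(u')\|$ and $d(u,u')$ afforded by property~\labelcref{itm:SectionPropertyOnUnstableLeafs} of $\mathsf{F}$ converts this into the required lower bound with some uniform $\varepsilon \in (0,1)$.

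The main obstacle is guaranteeing that $\varepsilon, m_0, \jj, \mathtt{Q}$ are chosen once and for all, uniformly in $\sfw \in \overline{\ww}$ and $m \geq m_0$: the uniformity over $\sfw$ is supplied by compactness of $\overline{\ww}$ together with the smooth $\sfw$-dependence of $\pi_\sfw$ and $\tilde{\pi}_\sfw$ and by \cref{cor:ProjectionOfAdjointNonzero}, while the uniformity in $m$ forces us to do the mean value argument on the smooth neighborhood $\mathcal{O}$ rather than on an $m$-dependent convex neighborhood in $\tilde{\mathsf{U}}_1^{(m)}$; this is precisely the technical point flagged after the statement of \cref{pro:LNIC}.
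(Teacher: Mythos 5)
Your argument is essentially identical to the paper's proof: you approximate the elements from \cref{cor:ProjectionOfAdjointNonzero} by $n_{x_j}$ via topological mixing, compute derivatives at $z_1$ via \cref{lem:BrinPesinInTermsOfHolonomy,lem:BrinPesinDerivativeImageIsAdjointProjection}, and upgrade to a reverse Lipschitz bound on an $m$-independent convex neighborhood by exploiting that $\Xi$ (equivalently your $F_j^\sfw$) is defined on all of $(N_\Theta^+)_1 \cong \tilde{\mathsf{U}}_1$ rather than only on $\tilde{\mathsf{U}}_1^{(m)}$. The only minor slip is in the parenthetical claiming $\alpha \mapsto n_\alpha$ depends on the \emph{initial} symbols of $\alpha$ up to exponentially small error: $n_\alpha$ encodes the stable coordinate of $\mathcal{P}^m(\sigma^{-\alpha}(z_1))$, which is controlled by the symbols near the \emph{end} of $\alpha$, but this does not affect the validity of the density-via-mixing step.
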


\begin{proof}
We will work on $\tilde{\mathsf{R}}$ to access the smooth structure on $\tilde{\mathsf{U}}$. Fix $C_\ww > 0$, $j_0 \in \N$, and $\delta > 0$ to be the constants provided by \cref{cor:ProjectionOfAdjointNonzero}. Let $z_1 \in \tilde{\mathsf{R}}_1$ be the center. Define the diffeomorphism $\psi: \tilde{\mathsf{U}}_1 \to (N^+_\Theta)_1$ such that $\mathsf{F}(u) = \mathsf{F}(z_1) \psi(u)$ for all $u \in \tilde{\mathsf{U}}_1$. Fix $C_1 = \max(\|(d\psi_{z_1})\|_{\mathrm{op}}, \|d(\psi^{-1})_e\|_{\mathrm{op}}) > 0$. Recall the diffeomorphism $h_{n^-}: (N^+_\Theta)_1 \to h_{n^-}((N^+_\Theta)_1)$ from \cref{lem:BrinPesinDerivativeImageIsAdjointProjection} which is smooth in $n^- \in (N^-_\Theta)_{1, \epsilon_e}$ and satisfies $h_{n^-}(e) = e$ and $h_e = \Id_{(N^+_\Theta)_1}$. Without loss of generality, we may assume that the size $\hat{\delta}$ of the Markov section $\mathcal{R}$ is sufficiently small so that $\frac{(dh_{n^-})_e(n^+)}{\|(dh_{n^-})_e(n^+)\|} \subset B_{2\delta}^{\LieN^+_\Theta}(n_0^+)$ for all $n^- \in (N^-_\Theta)_1$, and $n^+ \in \mathbb{S}(\LieN^+_\Theta) \cap B_\delta^{\LieN^+_\Theta}(n_0^+)$, and $n_0^+ \in \mathbb{S}(\LieN^+_\Theta)$. By compactness of $(N^-_\Theta)_1$, we may take $C_2 = \sup_{n^- \in (N^-_\Theta)_1}\max\bigl(\|(dh_{n^-})_e\|_{\mathrm{op}}, \bigl\|d\bigl(h_{n^-}^{-1}\bigr)_e\bigr\|_{\mathrm{op}}\bigr) > 0$. Fix $\varepsilon = \frac{C_\ww}{3C_1C_2}$ and $\varepsilon' = \frac{\varepsilon}{C_1C_2}$. Since $(\pi_\sfw \circ \Ad_e)|_{\LieN^+_\Theta} = 0$ for all $\sfw \in \overline{\ww}$ and $\overline{\ww}$ is compact, decreasing $\delta > 0$ if required, we have the trivial upper bound $\bigl\|(\pi_\sfw \circ \Ad_{n^-})|_{\LieN^+_\Theta}\bigr\|_{\mathrm{op}} \leq \varepsilon'$ for all $n^- \in N^-_\Theta$ with $d_{N^-_\Theta}(n^-, e) \leq \delta$.

Let us construct the appropriate admissible sequences. Fix $n_0^- = e \in (N^-_\Theta)_1$ and $n_1^-, n_2^-, \dotsc, n_{\jj}^- \in (N^-_\Theta)_1$ to be the elements provided by \cref{cor:ProjectionOfAdjointNonzero}. Using the Markov property and the topological mixing property of the transition matrix $T$ (see \cref{subsec:SymbolicDynamics}), we can fix $m_0 \in \N$ such that, given any $m \geq m_0$, there exists distinct $\eta_0^-, \eta_1^-, \dotsc, \eta_{\jj}^- \in (N^-_\Theta)_1$ with $d_{N^-_\Theta}(\eta_j^-, n_j^-) < \delta$ for all $0 \leq j \leq \jj$ such that, writing $\mathsf{F}(z_1)\eta_j^- = \mathsf{F}(s_j)$, the following is satisfied: the point $s_j$ is the image of some point in $\hat{\mathsf{U}}$ under the $m$\textsuperscript{th} iterate $\mathcal{P}^m$ of the Poincar\'{e} first return map, i.e., $\mathcal{P}^m(u_j) = s_j$ for some $u_j \in \hat{\mathsf{U}}_{x_{j, 0}}$ and $x_{j, 0} \in \mathcal{A}$. Then the associated trajectory of the translation flow of $u_j$ through the Markov section gives corresponding admissible sequences $x_j = (x_{j, 0}, x_{j, 1}, \dotsc, x_{j, m - 1}, 1)$ and sections $v_j = \sigma^{-x_j}: \tilde{\mathsf{U}}_1^{(m)} \to {}^{\mathsf{g}^{-x_j}}\tilde{\mathsf{U}}_{x_{j, 0}}$ for all $0 \leq j \leq \jj$. Note that by definition, $\eta_j^- = n_{x_j}$ for all $0 \leq j \leq \jj$.

Let $\sfw \in \overline{\ww}$ for the rest of the proof. Let $1 \leq j \leq \jj$. We define the map
\begin{align*}
\varphi_j &:= \tau^\sfw_{x_j} \circ v_j - \tau^\sfw_{x_0} \circ v_0 : \tilde{\mathsf{U}}_1^{(m)} \to \R.
\end{align*}
We can relate $\varphi_j$ to $\Xi$ via the following observation obtained using definitions and \cref{lem:BrinPesinInTermsOfHolonomy}:
\begin{align}
\label{eqn:BP_and_Xi_Relation}
(\varphi_j(u) - \varphi_j(z_1))\sfw = \bigl(\log \circ \tilde{\pi}_\sfw \circ \Xi_{n_{x_j}}\bigr)(\psi(u)) - \bigl(\log \circ \tilde{\pi}_\sfw \circ \Xi_{n_{x_0}}\bigr)(\psi(u))
\end{align}
for all $u \in \tilde{\mathsf{U}}_1^{(m)}$. Now, a crucial point to observe is that although the left hand side of \cref{eqn:BP_and_Xi_Relation} may not be defined on $\tilde{\mathsf{U}}_1$ (since $\tilde{\mathsf{U}}_1^{(m)} \subset \tilde{\mathsf{U}}_1$ may be strictly smaller), the right hand side of \cref{eqn:BP_and_Xi_Relation} \emph{is well-defined} on $\tilde{\mathsf{U}}_1$. Therefore, for the time being, we work with the map
\begin{align*}
\varrho_j := \bigl(\log \circ \tilde{\pi}_\sfw \circ \Xi_{n_{x_j}} - \log \circ \tilde{\pi}_\sfw \circ \Xi_{n_{x_0}}\bigr) \circ \psi : \tilde{\mathsf{U}}_1 \to \R\sfw.
\end{align*}
Let $Z \in \mathbb{S}(\T_{z_1}(\tilde{\mathsf{U}}_1))$. Taking the differential at $z_1$ and evaluating at $Z$ gives
\begin{align*}
(d\varrho_j)_{z_1}(Z) = d\bigl(\log \circ \tilde{\pi}_\sfw \circ \Xi_{n_{x_j}}\bigr)_e ((d\psi)_{z_1}(Z)) - d\bigl(\log \circ \tilde{\pi}_\sfw \circ \Xi_{n_{x_0}}\bigr)_e ((d\psi)_{z_1}(Z)).
\end{align*}
Let us write $W_j = \bigl(dh_{n_{x_j}}\bigr)_e((d\psi)_{z_1}(Z))$ and $W_0 = \bigl(dh_{n_{x_0}}\bigr)_e((d\psi)_{z_1}(Z))$ for ease of notation. Applying \cref{lem:BrinPesinDerivativeImageIsAdjointProjection} and using the reverse triangle inequality gives
\begin{align*}
\|(d\varrho_j)_{z_1}(Z)\| \geq \bigl\|\pi_\sfw\bigl(\Ad_{n_{x_j}}(W_j)\bigr)\bigr\| - \bigl\|\pi_\sfw\bigl(\Ad_{n_{x_0}}(W_0)\bigr)\bigr\|.
\end{align*}
Fix $\mathscr{s}_1, \mathscr{s}_2, \dotsc, \mathscr{s}_{\jj} \subset \mathbb{S}(\LieN^+_\Theta)$ to be the finite subsets with $\bigcup_{j = 1}^{\jj} B_\delta^{\LieN^+_\Theta}(\mathscr{s}_j) \supset \mathbb{S}(\LieN^+_\Theta)$ provided by \cref{cor:ProjectionOfAdjointNonzero}. Then, \emph{there exists $1 \leq j_Z \leq \jj$} depending on $Z$ such that $\frac{(d\psi)_{z_1}(Z)}{\|(d\psi)_{z_1}(Z)\|} \in B_\delta^{\LieN^+_\Theta}(\mathscr{s}_{j_Z})$. Note that our initial estimates ensure that $\frac{W_{j_Z}}{\|W_{j_Z}\|} \in B_{2\delta}^{\LieN^+_\Theta}(\mathscr{s}_{j_Z})$. Applying \cref{cor:ProjectionOfAdjointNonzero} again, we can bound the first term from below:
\begin{align*}
\bigl\|\pi_\sfw\bigl(\Ad_{n_{x_{j_Z}}}(W_{j_Z})\bigr)\bigr\| \geq C_\ww\|W_{j_Z}\| \geq \frac{C_\ww}{C_1C_2} \geq 3\varepsilon
\end{align*}
and we can bound the second term trivially from above:
\begin{align*}
\bigl\|\pi_\sfw\bigl(\Ad_{n_{x_0}}(W_0)\bigr)\bigr\| \leq \varepsilon'\|W_0\| \leq \varepsilon'C_1C_2 \leq \varepsilon.
\end{align*}
As a result, we obtain the lower bound
\begin{align*}
|(d\varrho_{j_Z})_{z_1}(Z)\| \geq 2\varepsilon \qquad \text{for all $Z \in \mathbb{S}(\T_{z_1}(\tilde{\mathsf{U}}_1))$}.
\end{align*}
Finally, by smoothness of $\varrho_j$ for all $1 \leq j \leq \jj$ and compactness of $\mathbb{S}(\T_{z_1}(\tilde{\mathsf{U}}_1))$, there exists a uniform open neighborhood $\mathcal{U} \subset \tilde{\mathsf{U}}_1$ of $z_1$ such that the following holds: for all $u \in \mathcal{U}$ and $Z \in \mathbb{S}(\T_{z_1}(\tilde{\mathsf{U}}_1))$, there exists $1 \leq j \leq \jj$ such that
\begin{align*}
\|(d\varrho_j)_u(Z)\| \geq \varepsilon.
\end{align*}
We may assume that $\mathcal{U}$ is \emph{convex} by shrinking it to an open ball centered at $z_1$ if necessary. Using convexity of $\mathcal{U}$, we then immediately deduce the following: for all $u, u' \in \mathcal{U}$, there exists $1 \leq j \leq \jj$ such that
\begin{align*}
\|\varrho_j(u) - \varrho_j(u')\| \geq \varepsilon d(u, u').
\end{align*}
Finally, restricting to a cylinder $\mathtt{Q} \subset \mathcal{U} \cap \mathsf{U}_1$ and going back to the maps $\varphi_j$ for all $1 \leq j \leq \jj$, we conclude the following: for all $u, u' \in \mathtt{Q}$, there exists $1 \leq j \leq \jj$ such that
\begin{align*}
|\varphi_j(u) - \varphi_j(u')| \geq \varepsilon d(u, u') \qquad \text{for all $u, u' \in \mathtt{Q}$}.
\end{align*}
\end{proof}

We can immediately upgrade the cylinder $\mathtt{Q}$ from \cref{pro:LNIC} to $\interior(U)$ in the following corollary using the topological mixing property of the transition matrix $T$ (see \cref{subsec:SymbolicDynamics}). The mutual disjointness in \cref{pro:NIC} follows from mutual distinctness of the admissible sequences in \cref{pro:LNIC}.

\begin{proposition}
\label{pro:NIC}
There exist $\varepsilon \in (0, 1)$, $m_0 \in \N$, and $\jj \in \N$ such that for all $m \geq m_0$, there exists a set of Lipschitz sections $\{v_j: U \to U\}_{j = 0}^{\jj}$ of $\sigma^m$ such that for all $\sfw \in \overline{\ww}$ and $u, u' \in \interior(U)$, there exists $1 \leq j \leq \jj$ such that
\begin{align*}
|(\tau^\sfw_{m} \circ v_j - \tau^\sfw_{m} \circ v_0)(u) - (\tau^\sfw_{m} \circ v_j - \tau^\sfw_{m} \circ v_0)(u')| \geq \varepsilon d(u, u').
\end{align*}
Moreover, $v_0(U), v_1(U), \dotsc, v_{\jj}(U)$ are mutually disjoint.
\end{proposition}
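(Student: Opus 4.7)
The plan is to obtain \cref{pro:NIC} from \cref{pro:LNIC} by a standard topological mixing argument that extends the sections defined on the small cylinder $\mathtt{Q} \subset U_1$ to sections defined on all of $U$, while trading a fixed number of extra iterates for control on every rectangle simultaneously. Let $\varepsilon'$, $m_0'$, $\jj$, $\mathtt{Q}$, and the sections $v_j = \sigma^{-x_j}: U_1 \to U_{x_{j,0}}$ from \cref{pro:LNIC} be given, where $\mathtt{Q} = \mathtt{C}[q]$ for some admissible sequence $q = (1, q_1, \dotsc, q_k)$. By topological mixing of $T$, there exists $N_T \in \N$ (depending only on $\mathtt{Q}$) such that for every $i \in \mathcal{A}$ one can choose an admissible sequence $y^{(i)} = (1, q_1, \dotsc, q_k, z_1^{(i)}, \dotsc, z_{N_T - k - 2}^{(i)}, i)$ of length $N_T$ starting with $q$ and ending at $i$. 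Let $w_i: U_i \to U_1$ denote the corresponding Lipschitz section of $\sigma^{N_T}$ with image $\mathtt{C}[y^{(i)}] \subset \mathtt{Q}$, and define the proposed sections $\tilde v_j: U \to U$ of $\sigma^{m + N_T}$ by
\begin{align*}
\tilde v_j(u) = v_j(w_i(u)) \qquad \text{for all $u \in U_i$ and $i \in \mathcal{A}$.}
\end{align*}
One sets $m_0 := m_0' + N_T$, so the sections $\tilde v_j$ are constructed for every $m + N_T \geq m_0$.

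The verification of the reverse Lipschitz inequality is then a cocycle cancellation. For $u \in \interior(U_i)$ and $u' \in \interior(U_{i'})$, set $p = w_i(u)$, $p' = w_{i'}(u') \in \mathtt{Q}$ and invoke \cref{pro:LNIC} to extract $1 \leq j \leq \jj$ with
\begin{align*}
|(\tau^\sfw_m \circ v_j - \tau^\sfw_m \circ v_0)(p) - (\tau^\sfw_m \circ v_j - \tau^\sfw_m \circ v_0)(p')| \geq \varepsilon' d(p, p').
\end{align*}
Using $\tau^\sfw_{m + N_T}(\tilde v_j(u)) = \tau^\sfw_m(v_j(p)) + \tau^\sfw_{N_T}(w_i(u))$ and the analogous identity with $0$ in place of $j$, the additive terms $\tau^\sfw_{N_T}(w_i(u))$ and $\tau^\sfw_{N_T}(w_{i'}(u'))$ cancel between the $j$ and $0$ differences, yielding
\begin{align*}
(\tau^\sfw_{m + N_T} \circ \tilde v_j - \tau^\sfw_{m + N_T} \circ \tilde v_0)(u) - (\tau^\sfw_{m + N_T} \circ \tilde v_j - \tau^\sfw_{m + N_T} \circ \tilde v_0)(u') \\
= (\tau^\sfw_m \circ v_j - \tau^\sfw_m \circ v_0)(p) - (\tau^\sfw_m \circ v_j - \tau^\sfw_m \circ v_0)(p'),
\end{align*}
so the bound from \cref{pro:LNIC} carries over with $d(w_i(u), w_{i'}(u'))$ on the right. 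The final step is to bound $d(w_i(u), w_{i'}(u'))$ from below by a uniform positive multiple of $d(u, u')$. When $i = i'$, this is the standard fact that the inverse branch of an expanding map contracts by a fixed factor (at most $\beta_0^{N_T}$ in the symbolic metric, translated through the Lipschitz coding $\eta^+$). When $i \neq i'$ the symbolic sequences of $w_i(u)$ and $w_{i'}(u')$ agree through position $k$ but must disagree within the first $N_T$ positions since $y^{(i)}$ and $y^{(i')}$ have different final symbols, giving a lower bound of order $\beta_0^{N_T-1}$, while $d(u,u') \leq 1$. Thus one obtains $\varepsilon = c\,\varepsilon'$ with $c > 0$ depending only on $N_T$ and the coding constants. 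Mutual disjointness of $\tilde v_j(U) \subset v_j(U_1) = \mathtt{C}[x_j]$ is immediate from the mutual distinctness of the $x_j$ and injectivity of the coding on cylinders.

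I expect the only nontrivial point to be the cross-rectangle case $i \neq i'$, where one must confirm that $d(w_i(u), w_{i'}(u'))$ is bounded below by a positive multiple of $d(u, u')$ despite $d(u,u')$ being potentially as large as $1$; this ultimately reduces to the elementary observation that $y^{(i)}$ and $y^{(i')}$ must split within $N_T$ steps. Everything else — the Lipschitz character of $\tilde v_j$, the cocycle cancellation, and the disjointness — is formal once the connecting sequences $y^{(i)}$ have been selected inside $\mathtt{Q}$ via topological mixing.
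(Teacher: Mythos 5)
Your overall strategy matches the paper's intended route: use topological mixing of $T$ to splice a fixed connector sequence $y^{(i)}$ into each $U_i$, composing the inverse branch $w_i : U_i \to \mathtt{Q}$ with the LNIC sections $v_j$, and observe the cocycle cancellation of the connector term $\tau^\sfw_{N_T}\circ w_i$. The disjointness argument and the cocycle identity are both correct. However, your justification of the key metric comparison $d(w_i(u), w_{i'}(u')) \geq c\, d(u, u')$ is flawed in both cases for the same reason: you pass from a lower bound on the \emph{symbolic} distance $d_\Sigma$ to a lower bound on the metric $d$ on $U$ by ``translating through the Lipschitz coding $\eta^+$'', but the Lipschitz property of $\eta^+$ runs the \emph{wrong way} --- it gives $d(\eta^+(x),\eta^+(y)) \leq C\, d_\Sigma(x,y)$, an upper bound on $d$, not a lower bound. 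The coding $\eta^+$ is not reverse-Lipschitz (or even injective), so a positive lower bound in the symbolic metric carries no metric information on $U$.

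The correct tool in the same-rectangle case $i=i'$ is \cref{lem:SigmaHyperbolicity}, which is phrased directly in the metric $d$: for $p, p'$ in the cylinder $\mathtt{C}[y^{(i)}]$ of length $N_T-1$, the upper bound $d(\sigma^{N_T-1}(p),\sigma^{N_T-1}(p')) \leq c_0^{-1}\kappa_1^{N_T-1}d(p,p')$ gives exactly $d(p,p') \geq c_0\kappa_1^{-(N_T-1)}d(u,u')$ without detouring through $\Sigma^+$. For the cross-rectangle case $i\neq i'$ the symbolic argument likewise fails; there one must show instead that the compact sets $\overline{\mathtt{C}[y^{(i)}]}$, $i\in\mathcal{A}$, are pairwise at positive distance in $d$ --- which requires some care, since it rests on the continuous extension of $\sigma^{N_T-1}$ to these closures landing in the pairwise disjoint compact $U_i$'s. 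That said, the cross-rectangle case is never invoked downstream: in \cref{lem:LNIC_Output}, the points $u\in\mathtt{D}_k(b)$ and $u'\in\mathtt{D}_{k'}(b)$ always lie in a common $\mathtt{C}_l(b)\subset U_j$, so only the same-rectangle case of \cref{pro:NIC} is actually used, and there \cref{lem:SigmaHyperbolicity} closes the gap cleanly.
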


Fix $\varepsilon$, $m_0$, and $\jj$ to be the ones provided by \cref{pro:NIC} henceforth.

\section{Dolgopyat operators and the proof of \texorpdfstring{\cref{thm:DolgopyatsMethod}}{\autoref{thm:DolgopyatsMethod}}}
\label{sec:ProofOfTheorem}
In this section we carry out Dolgopyat's method to prove \cref{thm:DolgopyatsMethod}. Many of the techniques used here go all the way back to Dolgopyat \cite{Dol98}. Since it is sensitive to the setting at hand, some of the arguments require careful treatment in each instance. Over the years, it has been made cleaner and more efficient in some settings such as by Avila--Gou\"{e}zel--Yoccoz \cite{AGY06} and Stoyanov \cite{Sto11}.

Thanks to \cref{rem:Weak_VS_Strong_LNIC}, we are able to follow Stoyanov's strategy using cylinders and the new distance $D$. One especially important advantage of Stoyanov's version of Dolgopyat's method is that we do not require the Federer/doubling property. This is not just a convenience---there is no appropriate metric to use on $\LieN^+_\Theta$ which is also compatible with $U$ and an analogue of the doubling property is not known yet for PS measures in higher rank to the best of the authors' knowledge. The reader may also find it useful to consult \cite{OW16,Sar19,Sar23} where many full proofs are provided in a similar framework.

We fix $\sfw \in \ww$ for the rest of the section. Recall that this implies $\Theta_\sfw = \Theta$ and hence, \cref{pro:NIC} applies in this section.

\subsection{Preliminary lemmas and constants}
\label{subsec:PreliminaryLemmasAndConstants}
We obtain the following eventually expanding/contracting property, which also implies the same for the new distance function $D$, from the metric Anosov property of the translation flow in \cref{thm:TranslationFlowIsMetricAnosov}, and a compactness argument for the lower bound. Another lemma follows from it as in \cite[Proposition 3.3]{Sto11}.

\begin{lemma}
\label{lem:SigmaHyperbolicity}
There exist $c_0 \in (0, 1)$ and $\kappa_1 > \kappa_2 > 1$ such that for all cylinders $\mathtt{C} \subset U$ with $\len(\mathtt{C}) = j \in \N$, and $u, u' \in \mathtt{C}$, we have both
\begin{gather*}
c_0 \kappa_2^j d(u, u') \leq d(\sigma^j(u), \sigma^j(u')) \leq {c_0}^{-1}\kappa_1^j d(u, u') \\
c_0 \kappa_2^j D(u, u') \leq D(\sigma^j(u), \sigma^j(u')) \leq {c_0}^{-1}\kappa_1^j D(u, u').
\end{gather*}
\end{lemma}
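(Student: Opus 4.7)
My plan is to first establish the lemma for the metric $d$ using the metric Anosov property, and then bootstrap to the cylinder metric $D$ via its definition as an infimum of cylinder diameters.

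For the $d$-inequalities, recall that $U_k \subset W^{\mathrm{su}}(w_k)$ for every $k \in \mathcal{A}$, so whenever $u, u' \in \mathtt{C}$ with $\len(\mathtt{C}) = j$ belong to the same cylinder, they lie on a common strong unstable leaf, and the same is true for $\sigma^i(u), \sigma^i(u')$ for every $0 \le i \le j$. By Theorem 3.1 applied to $\{a^\sfw_t\}_{t\in\R}$, there exist $\eta_\sfw, C > 0$ (uniform on $\sfw \in \ww$ via continuity in $\sfw$) such that for points on the same strong unstable leaf close enough together,
\[
d(a^\sfw_t u, a^\sfw_t u') \ge C^{-1} e^{\eta_\sfw t} d(u, u') \qquad (t \ge 0),
\]
obtained by inverting the backward-contraction estimate. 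Set $\underline{\tau^\sfw} = \inf_U \tau^\sfw > 0$ and $\overline{\tau^\sfw} = \sup_U \tau^\sfw < \infty$, which exist by Lemma 4.1, so that $j\underline{\tau^\sfw} \le \tau^\sfw_j(u) \le j\overline{\tau^\sfw}$. Taking $t = \tau^\sfw_j(u)$ yields a lower bound for $d(\mathcal{P}^j(u), a^\sfw_{\tau^\sfw_j(u)}(u'))$.

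Next I would compare $\mathcal{P}^j(u) = a^\sfw_{\tau^\sfw_j(u)}(u)$ with $\mathcal{P}^j(u') = a^\sfw_{\tau^\sfw_j(u')}(u')$: they differ by flowing for the scalar $|\tau^\sfw_j(u) - \tau^\sfw_j(u')|$, which is controlled by the essential Lipschitz bound on $\tau^\sfw$ from Lemma 4.1 together with the geometrically decaying diameters of iterated cylinders (a standard telescoping summation that is absorbed into a uniform constant). Using the local product structure on the rectangle $R_{\alpha_j}$ and the fact that $\sigma^j(u), \sigma^j(u')$ are the $U$-coordinates of $\mathcal{P}^j(u), \mathcal{P}^j(u')$, the projection is bi-Lipschitz with uniform constants, so
\[
d(\sigma^j(u), \sigma^j(u')) \asymp d(\mathcal{P}^j(u), \mathcal{P}^j(u')) \gtrsim e^{\eta_\sfw \tau^\sfw_j(u)} d(u, u') \ge e^{\eta_\sfw \underline{\tau^\sfw} j}\, d(u, u').
\]
This gives the lower bound with $\kappa_2 = e^{\eta_\sfw \underline{\tau^\sfw}}$; the compactness of $U$ and continuity in $\sfw$ yield a uniform constant $c_0$. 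For the upper bound, smoothness of the $\Theta$-horospherical foliations (used in the construction of the smooth extensions in Subsection 5.1) makes $\sigma$ Lipschitz on every cylinder of length $1$ with a uniform constant $L$, and iterating gives the upper bound with $\kappa_1 = \max(L, e^{\eta_\sfw \overline{\tau^\sfw}})$, chosen strictly larger than $\kappa_2$.

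Finally, I would transfer these estimates to the metric $D$ along the lines of [Sto11, Proposition 3.3]. Given $u \ne u'$ in $\mathtt{C}$ with $\len(\mathtt{C}) = j$, let $\mathtt{C}' \supset \{u, u'\}$ be the smallest cylinder (necessarily of length at least $j$), so that $D(u, u') = \diam(\overline{\mathtt{C}'})$. Since $\sigma^j(\mathtt{C}')$ is a cylinder containing $\sigma^j(u), \sigma^j(u')$, we have $D(\sigma^j(u), \sigma^j(u')) \le \diam(\overline{\sigma^j(\mathtt{C}')})$, and the $d$-estimate just proved, applied to pairs of points in $\mathtt{C}'$, converts this into an upper bound of the form $c_0^{-1}\kappa_1^j D(u, u')$; the reverse direction is symmetric. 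The main technical obstacle is the careful comparison between the shift map and the Poincaré return time, specifically ensuring the flow-time discrepancy $|\tau^\sfw_j(u) - \tau^\sfw_j(u')|$ does not pollute the exponential expansion estimate — this is where the geometric decay of cylinder diameters and the uniformity in $\sfw$ of Lemma 4.1 play the decisive role.
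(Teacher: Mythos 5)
Your plan matches exactly what the paper sketches: the paper credits the $d$-inequalities to the metric Anosov property plus a compactness argument for the lower bound, and then passes to $D$ via cylinder diameters. So the overall strategy is the same. Two details in the middle of your argument, however, do not go through as written.

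First, the claim that ``the projection is bi-Lipschitz with uniform constants, so $d(\sigma^j(u), \sigma^j(u')) \asymp d(\mathcal{P}^j(u), \mathcal{P}^j(u'))$'' is false as stated. The map $\proj_U : R \to U$ collapses the stable direction and the flow direction, so it is Lipschitz but not reverse-Lipschitz; and $\mathcal{P}^j(u), \mathcal{P}^j(u')$ are separated along the flow direction by precisely the time discrepancy $|\tau^\sfw_j(u) - \tau^\sfw_j(u')|$, which the projection kills. So the ``$\asymp$'' step and the time-discrepancy control are not two independent steps — they are the same problem — and you cannot first use the projection and then clean up the time discrepancy afterward. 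The correct route is to avoid comparing the $\mathcal{P}^j$-images directly: flow $\sigma^j(u)$ and $\sigma^j(u')$ backward by the \emph{same} time $\tau^\sfw_j(u)$, apply the Anosov contraction along the strong unstable leaf (noting $\sigma^j(u), \sigma^j(u')$ lie in the same $U_{\alpha_j}$ of diameter $\le \hat\delta < \epsilon$, so the Anosov estimate applies), and then account for the mismatch between the backward-flowed copy of $\sigma^j(u')$ and $u'$ — which splits into a stable displacement and a flow displacement (the time discrepancy) — as additive errors.

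Second, bounding the time discrepancy via ``the geometrically decaying diameters of iterated cylinders'' is circular: that decay is Lemma 6.2, which the paper derives as a consequence of the present lemma (via \cite[Proposition 3.3]{Sto11}). What the paper calls the ``compactness argument'' is the correct fix: since all $j$ iterates stay inside rectangles of diameter $\le \hat\delta$, the crude bound $|\tau^\sfw_j(u) - \tau^\sfw_j(u')| \le j\,\Lip(\tau^\sfw)\,\hat\delta$ holds without circularity; one then shrinks $\hat\delta$ (a free parameter in the Markov section) and absorbs the resulting linear-in-$j$ error into $c_0$, using the slack between the genuine Anosov rate $e^{\eta\underline{\tau}j}$ and any slightly smaller $\kappa_2^j > 1$.

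Two minor points. Since $\sfw$ is fixed for all of Section 6 and both $\sigma$ and $d$ are $\sfw$-independent, the constants $c_0, \kappa_1, \kappa_2$ are automatically $\sfw$-independent; invoking continuity in $\sfw$ for uniformity obscures this. Also, the $e^{\eta_\sfw \overline{\tau^\sfw}}$ factor in your choice of $\kappa_1$ is superfluous: the Anosov property gives no a priori \emph{upper} bound on forward unstable expansion, and the relevant upper bound is just the Lipschitz constant $L$ of $\sigma$ on $1$-cylinders, so $\kappa_1 = L$ (bumped up if needed so $\kappa_1 > \kappa_2$) suffices.
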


\begin{lemma}
\label{lem:CylinderDiameterBound}
There exist $p_0 \in \N$ and $\rho \in (0, 1)$ such that for all cylinders $\mathtt{C} \subset U$ with $\len(\mathtt{C}) = l \in \Z_{\geq 0}$ and subcylinders $\mathtt{C}', \mathtt{C}'' \subset \mathtt{C}$ with $\len(\mathtt{C}') = l + 1$ and $\len(\mathtt{C}'') = l + p_0$, we have
\begin{align*}
\diam(\mathtt{C}'') \leq \rho\diam(\mathtt{C}) \leq \diam(\mathtt{C}').
\end{align*}
\end{lemma}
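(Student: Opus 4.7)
The plan is to derive both inequalities from \cref{lem:SigmaHyperbolicity} combined with the finiteness of the Markov alphabet and a bounded distortion estimate. Fix a cylinder $\mathtt{C}$ of length $l$ with symbolic representation $(\alpha_0, \ldots, \alpha_l)$, so that $\sigma^l|_{\mathtt{C}} : \mathtt{C} \to U_{\alpha_l}$ is a bijection; the subcylinders $\mathtt{C}' \subset \mathtt{C}$ and $\mathtt{C}'' \subset \mathtt{C}$ then correspond via $\sigma^l$ to length-$1$ and length-$p_0$ cylinders in $U_{\alpha_l}$, respectively.

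For the right-hand inequality $\rho \diam(\mathtt{C}) \leq \diam(\mathtt{C}')$, I would observe that $\sigma^l(\mathtt{C}')$ has the form $U_{\alpha_l} \cap \sigma^{-1}(U_{\alpha_{l+1}})$, which ranges over finitely many sets as $(\alpha_l, \alpha_{l+1})$ varies over admissible pairs. Hence the ratio $\diam(\sigma^l(\mathtt{C}')) / \diam(U_{\alpha_l})$ is bounded below by a positive constant independent of $\mathtt{C}$. A bounded distortion estimate for $\sigma^l|_{\mathtt{C}}$ then transfers this comparison of diameters back upstairs to bound $\diam(\mathtt{C}')/\diam(\mathtt{C})$ from below uniformly in $l$.

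For the left-hand inequality $\diam(\mathtt{C}'') \leq \rho \diam(\mathtt{C})$, I would apply \cref{lem:SigmaHyperbolicity} to the $p_0$-step descent inside $U_{\alpha_l}$, yielding $\diam(\sigma^l(\mathtt{C}'')) \leq c_0^{-1} \kappa_2^{-p_0} \diam(U_{\alpha_{l+p_0}})$, and again use bounded distortion to push this bound through $(\sigma^l|_{\mathtt{C}})^{-1}$. Choosing $p_0$ large enough that $c_0^{-1} \kappa_2^{-p_0}$ times the distortion constant times $\max_j \diam(U_j) / \min_j \diam(U_j)$ is strictly less than the constant produced in the previous paragraph gives a single uniform $\rho \in (0, 1)$ that satisfies both inequalities.

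The main obstacle is establishing bounded distortion, since the a priori bounds of \cref{lem:SigmaHyperbolicity} with $\kappa_1 > \kappa_2$ would allow the ratio of maximum to minimum expansion rate of $\sigma^l|_{\mathtt{C}}$ to grow like $(\kappa_1/\kappa_2)^l$, which would defeat the desired uniform ratios. I would address this by invoking the Lipschitz regularity of the first return time maps on length-$1$ cylinders supplied by \cref{lem:tau_uniformLipschitzNormBound}, together with the geometric decay of the diameters of nested cylinders $\sigma^j(\mathtt{C})$ from \cref{lem:SigmaHyperbolicity}, and running the classical telescoping argument that bounds the logarithmic distortion by a summable Hölder series $\sum_{j=0}^{l-1} \Lip\bigl(\tau|_{\sigma^j(\mathtt{C})}\bigr) \cdot \diam(\sigma^j(\mathtt{C}))$. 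With this distortion bound in hand, the two diameter comparisons above are routine.
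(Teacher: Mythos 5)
Your overall skeleton is reasonable and you have correctly identified the key obstacle: the a priori hyperbolicity bounds from \cref{lem:SigmaHyperbolicity} only control the maximum and minimum expansion rates of $\sigma^l|_{\mathtt{C}}$ separately, giving a ratio that a priori diverges like $c_0^{-2}(\kappa_1/\kappa_2)^l$, so some form of bounded distortion is needed before the finiteness of one-step cylinders can be pushed through $\sigma^{-l}$. However, the fix you propose does not close this gap. The telescoping sum $\sum_{j=0}^{l-1} \Lip\bigl(\tau|_{\sigma^j(\mathtt{C})}\bigr) \diam(\sigma^j(\mathtt{C}))$ controls the Birkhoff-sum oscillation of the potential $\tau^\sfw$ along the cylinder chain; this is exactly what is needed for the Gibbs property of the equilibrium state $\nu_U^\sfw$ (which the paper does invoke later, for \cref{itm:DolgopyatProperty2} of \cref{thm:DolgopyatsMethod}), but it has no bearing on the \emph{metric} distortion of $\sigma$. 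Bounded metric distortion is governed by the variation of the unstable log-Jacobian of $\sigma$ (equivalently, the local Lipschitz expansion rate of $\sigma$ in the $W^{\mathrm{su}}$-direction), not by the variation of the return time. In this higher-rank setting the two are genuinely different linear data: the unstable expansion of the Poincar\'e map is governed by the values of simple roots $\alpha \in \Theta$ on the vector $\mathsf{K}(u)$, whereas $\tau^\sfw = \psi_{\sfv(\sfw)} \circ \mathsf{K}$ is the pairing with a different linear functional. So a Lipschitz bound on $\tau^\sfw$ gives you no bound on $\log L_l(\mathtt{C}) - \log \ell_l(\mathtt{C})$.

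To actually supply the missing ingredient one has to exploit regularity of the dynamics itself along unstable leaves rather than regularity of the roof function: the map $\sigma$ restricted to an unstable leaf admits smooth local extensions to the ambient horospherical manifold (this is the structure developed in \cref{subsec:SmoothExtensions}), and it is the H\"older continuity of the resulting unstable Jacobian, summed along the cylinder chain via the geometric decay of $\diam(\sigma^j(\mathtt{C}))$, that yields a uniform distortion constant for $\sigma^l|_{\mathtt{C}}$; this is precisely what makes the Stoyanov-style argument the paper refers to go through, and it is a statement about $\sigma$, not about $\tau$. As it stands, your argument would prove a bounded-distortion statement for the Gibbs measure (useful elsewhere) rather than the diameter comparison claimed in the lemma.
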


We fix constants $c_0$, $\kappa_1$, $\kappa_2$, $p_0$, and $\rho$ provided by \cref{lem:SigmaHyperbolicity,lem:CylinderDiameterBound} henceforth. We use these bounds extensively without comments. We also fix $p_1 \in \N$ such that
\begin{align}
\label{eqn:Constantp1}
\frac{1}{2} - 2\rho^{p_1 - 1} \geq \frac{1}{16}.
\end{align}

The following is a Lasota--Yorke \cite{LY73} type of lemma whose proof is similar to that of \cite[Lemma 5.4]{Sto11} and \cite[Lemma 3.9]{OW16}. Its proof gives the explicit constant $A_0 > 2c_0^{-1}e^{\frac{T_\sfw}{c_0(\kappa_2 - 1)}}\max\bigl(1, \frac{T_\sfw}{\kappa_2 - 1}\bigr) > 2$ which we fix henceforth.

\begin{lemma}
\label{lem:PreliminaryLogLipschitz}
There exists $A_0 > 0$ such that for all $|a| < a_0'$, $|b| > 1$, and $k \in \N$, we have
\begin{enumerate}
\item\label{itm:PreliminaryLogLipschitzProperty1}	if $h \in \mathcal{C}_B(U)$ (resp. $h \in \tilde{\mathcal{C}}_B(U)$) for some $B > 0$, then $\mathcal{L}_{a, \sfw}^k(h) \in \mathcal{C}_{B'}(U)$ (resp. $\mathcal{L}_{a, \sfw}^k(h) \in \tilde{\mathcal{C}}_{B'}(U)$) for $B' = A_0\left(\frac{B}{\kappa_2^k} + 1\right)$;
\item\label{itm:PreliminaryLogLipschitzProperty2}	if $H \in C(U)$ and $h \in B(U)$ satisfy
\begin{align*}
\|H(u) - H(u')\|_2 \leq Bh(u)D(u, u')
\end{align*}
for all $u, u' \in U_j$, for all $j \in \mathcal{A}$, for some $B > 0$, then for all $j \in \mathcal{A}$, for all $u,u' \in U_j$, we have
\begin{align*}
\left\|\mathcal{L}_{\xi, \sfw}^k(H)(u) - \mathcal{L}_{\xi, \sfw}^k(H)(u')\right\|_2 \leq A_0\left(\frac{B}{\kappa_2^k}\mathcal{L}_{a, \sfw}^k(h)(u) + |b|\mathcal{L}_{a, \sfw}^k\|H\|(u)\right)D(u, u')
\end{align*}
for all $u, u' \in U$.
\end{enumerate}
\end{lemma}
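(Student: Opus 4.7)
\medskip
\noindent\textbf{Proof plan.} The strategy is the standard Lasota--Yorke bookkeeping: for fixed $u, u' \in U_j$ (for some $j \in \mathcal{A}$), write
\begin{align*}
\mathcal{L}_{\xi,\sfw}^k(H)(u) = \sum_{v} e^{(\tau_k^{\sfw,(a)} + ib\tau_k^\sfw)(v(u))} H(v(u)),
\end{align*}
where the sum runs over sections $v$ of $\sigma^k$ whose image is a cylinder $\mathtt{C}_v$ of length $k$ intersecting $\sigma^{-k}(U_j)$ (so that $v(u)$ is defined). The matching between the $u$- and $u'$-branches is clear, so the key is to estimate, branch by branch, the difference of the two terms, compare it to $e^{\tau_k^{\sfw,(a)}(v(u))} h(v(u))$ (resp.\ $e^{\tau_k^{\sfw,(a)}(v(u))} |H(v(u))|$), and sum.

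First I will collect the two basic distortion estimates that power everything. Since $v(u)$ and $v(u')$ lie in a common cylinder of length $k$, iterating $\sigma^j$ puts $\sigma^j v(u)$ and $\sigma^j v(u')$ in a common cylinder of length $k-j$; combined with \cref{lem:SigmaHyperbolicity} this gives $d(\sigma^j v(u), \sigma^j v(u')) \leq c_0^{-1}\kappa_2^{-(k-j)} d(u,u')$ and $D(v(u), v(u')) \leq c_0^{-1}\kappa_2^{-k} D(u,u')$. Since $\tau^\sfw$ and $\tau^{\sfw,(a)}$ are $T_\sfw$-Lipschitz on cylinders of length $1$ (by the choice of $T_\sfw$ in \cref{subsec:TransferOperators}), summing gives the Birkhoff-distortion bounds
\begin{align*}
\bigl|\tau_k^{\sfw,(a)}(v(u)) - \tau_k^{\sfw,(a)}(v(u'))\bigr| &\leq \tfrac{T_\sfw c_0^{-1}}{\kappa_2 - 1}\, d(u,u'), \\
\bigl|\tau_k^{\sfw}(v(u)) - \tau_k^{\sfw}(v(u'))\bigr| &\leq \tfrac{T_\sfw c_0^{-1}}{\kappa_2 - 1}\, d(u,u').
\end{align*}
In particular, $e^{\tau_k^{\sfw,(a)}(v(u))}$ and $e^{\tau_k^{\sfw,(a)}(v(u'))}$ are within a bounded multiplicative factor of each other, which is how the explicit constant $A_0 > 2c_0^{-1}e^{T_\sfw/(c_0(\kappa_2-1))}\max(1, T_\sfw/(\kappa_2-1))$ arises.

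For \cref{itm:PreliminaryLogLipschitzProperty1}, using $h \in \mathcal{C}_B(U)$ gives the log-$D$-Lipschitz bound $|\log h(v(u)) - \log h(v(u'))| \leq B D(v(u), v(u')) \leq Bc_0^{-1}\kappa_2^{-k} D(u,u')$. Combining with the Birkhoff-distortion bound for $\tau_k^{\sfw,(a)}$ (and using $d \leq D$ on each $U_j$), each summand satisfies
\begin{align*}
\bigl|e^{\tau_k^{\sfw,(a)}(v(u))} h(v(u)) - e^{\tau_k^{\sfw,(a)}(v(u'))} h(v(u'))\bigr| \leq e^{\tau_k^{\sfw,(a)}(v(u))} h(v(u)) \cdot \tfrac{A_0}{2}\bigl(B\kappa_2^{-k} + 1\bigr) D(u,u'),
\end{align*}
via $|e^x - 1| \leq |x|e^{|x|}$ applied on a bounded interval (this is exactly where the factor $e^{T_\sfw c_0^{-1}/(\kappa_2-1)}$ folded into $A_0$ is used). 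Summing over sections and dividing by $\mathcal{L}_{a,\sfw}^k(h)(u)$ produces the $\mathcal{C}_{B'}$ bound with $B' = A_0(B\kappa_2^{-k} + 1)$; the $\tilde{\mathcal{C}}_B$ case is identical, merely using the two-sided exponential form of the hypothesis.

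For \cref{itm:PreliminaryLogLipschitzProperty2}, the argument is the same telescoping but now with three sources of variation: the amplitude $e^{\tau_k^{\sfw,(a)}}$, the oscillatory factor $e^{ib\tau_k^{\sfw}}$, and $H$ itself. Writing
\begin{multline*}
e^{(\tau_k^{\sfw,(a)} + ib\tau_k^\sfw)(v(u))} H(v(u)) - e^{(\tau_k^{\sfw,(a)} + ib\tau_k^\sfw)(v(u'))} H(v(u')) \\
 = e^{\tau_k^{\sfw,(a)}(v(u))}\bigl(e^{ib\tau_k^\sfw(v(u))} - e^{ib\tau_k^\sfw(v(u'))}\bigr)H(v(u)) + e^{\tau_k^{\sfw,(a)}(v(u))}e^{ib\tau_k^\sfw(v(u'))}\bigl(H(v(u)) - H(v(u'))\bigr) \\
 {}+ \bigl(e^{\tau_k^{\sfw,(a)}(v(u))} - e^{\tau_k^{\sfw,(a)}(v(u'))}\bigr)e^{ib\tau_k^\sfw(v(u'))} H(v(u')),
\end{multline*}
and bounding $|e^{ib\tau_k^\sfw(v(u))} - e^{ib\tau_k^\sfw(v(u'))}| \leq |b|\cdot\frac{T_\sfw c_0^{-1}}{\kappa_2-1} d(u,u')$ (this is where the $|b|$ appears), combining with the hypothesis on $H$ (which is where the $B\kappa_2^{-k}$ term comes from, via $D(v(u), v(u')) \leq c_0^{-1}\kappa_2^{-k} D(u,u')$), summing over sections, and absorbing the constants into $A_0$, gives the stated inequality. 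The only mild subtlety is that the hypothesis controls $H$ on each $U_j$ via $h(u)$, while $v(u) \in U_{v_0}$ (the starting index of the section), so one must use the hypothesis within each $U_{v_0}$; this is automatic since $v(u), v(u')$ lie in the same cylinder of length $k+1$.

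There is no real obstacle here; the whole argument is a bookkeeping exercise once the Birkhoff-distortion bounds above are in hand. The only point requiring mild care is the explicit form of $A_0$: it must dominate $\frac{T_\sfw c_0^{-1}}{\kappa_2-1}$, $c_0^{-1}$, and the exponential correction $e^{T_\sfw c_0^{-1}/(\kappa_2-1)}$ coming from $|e^x - 1| \leq |x|e^{|x|}$, which together give the stated $A_0 > 2c_0^{-1}e^{T_\sfw/(c_0(\kappa_2-1))}\max(1, T_\sfw/(\kappa_2-1))$.
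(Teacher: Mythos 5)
The paper does not actually give a proof of this lemma; it states "Its proof is similar to that of [Sto11, Lemma 5.4] and [OW16, Lemma 3.9]" and records the explicit constant $A_0$. Your reconstruction follows the expected Lasota--Yorke telescoping and recovers that constant, so there is no paper proof to compare against line by line; the approach is the standard one.

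One point you gloss over: your argument for Part~\labelcref{itm:PreliminaryLogLipschitzProperty1} matches branches of $\sigma^{-k}$ at $u$ with branches at $u'$ via a common $k$-cylinder, but this pairing only exists when $u$ and $u'$ lie in the \emph{same} $U_j$. The cone $\mathcal{C}_{B'}(U)$ requires the bound for all $u, u' \in U$, including the case of distinct rectangles where $D(u, u') = 1$ and $\sigma^{-k}(u)$, $\sigma^{-k}(u')$ are unmatched and possibly of different cardinality. There the telescope does not apply and one instead needs a cruder bounded-ratio estimate, typically via $\mathcal{L}_{0,\sfw}^k(\mathbf{1}) = \mathbf{1}$ together with the uniform bound on $\tau^{\sfw,(a)} - \tau^{\sfw,(0)}$ from the choice of $a_0'$ and the bounded-ratio property $h(u')/h(u) \leq 1 + B$ that $\mathcal{C}_B(U)$ already encodes across rectangles. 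This is the same mechanism used in Stoyanov and Oh--Winter, and it is also why having a factor-of-two slack in $A_0$ matters, but it is not a "bookkeeping exercise" in the way the within-rectangle case is, so the claim that there is "no real obstacle" should be qualified. Everything else — the distortion bounds $D(v(u), v(u')) \leq c_0^{-1}\kappa_2^{-k}D(u,u')$ and $d(\sigma^j v(u), \sigma^j v(u')) \leq c_0^{-1}\kappa_2^{-(k-j)}d(u,u')$ from \cref{lem:SigmaHyperbolicity}, the geometric-series bound on the Birkhoff sums, the three-term split in Part~\labelcref{itm:PreliminaryLogLipschitzProperty2} with the oscillatory factor generating the $|b|$, and absorbing the amplitude-variation term into the $|b|$ term via $|b| > 1$ — is correct and is exactly what produces $A_0 > 2c_0^{-1}e^{T_\sfw/(c_0(\kappa_2 - 1))}\max(1, T_\sfw/(\kappa_2 - 1))$.
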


Now, we fix $b_0 = 1$ and some other significant positive constants
\begin{align}
\label{eqn:ConstantE}
E &> 2A_0 > 4, \\
\label{eqn:Constantepsilon1}
\epsilon_1 &< \min\left(\hat{\delta}, \frac{\pi c_0(\kappa_2 - 1)}{2T_\sfw}\right), \\
\label{eqn:Constantm}
m &\geq m_0 \text{ such that } \kappa_2^m > \max\left(8A_0, \frac{4E\epsilon_1\rho^{p_1}}{c_0}, \frac{4 \cdot 128E}{c_0 \varepsilon \rho}\right), \\
\label{eqn:Constantmu}
\mu &< \min\left(\frac{2E\epsilon_1c_0 \rho^{p_0p_1 + 1}}{\kappa_1^m}, \frac{1}{4}, \frac{1}{16 \cdot 16e^{2mT_\sfw}}\left(\frac{\varepsilon\rho\epsilon_1}{64}\right)^2\right).
\end{align}
Having fixed $m \geq m_0$, we also fix the corresponding set of Lipschitz sections $\{v_j: U \to U\}_{j = 0}^{\jj}$ of $\sigma^m$ provided by \cref{pro:NIC}.

\subsection{Construction of Dolgopyat operators}
\label{subsec:ConstructionOfDolgopyatOperators}
Let $|b| > b_0$. We define the set $\{\mathtt{C}_1(b), \mathtt{C}_2(b), \dotsc, \mathtt{C}_{c_b}(b)\}$ for some $c_b \in \N$ consisting of maximal cylinders $\mathtt{C} \subset U$ with $\diam(\mathtt{C}) \leq \frac{\epsilon_1}{|b|}$ so that $U = \bigcup_{l = 1}^{c_b} \overline{\mathtt{C}_l(b)}$. We define the set $\{\mathtt{D}_1(b), \mathtt{D}_2(b), \dotsc, \mathtt{D}_{d_b}(b)\}$ for some $d_b \in \N$ consisting of subcylinders $\mathtt{D} \subset \mathtt{C}_l(b)$ with $\len(\mathtt{D}) = \len(\mathtt{C}_l(b)) + p_0p_1$ and $1 \leq l \leq c_b$. We define the index set $\Xi(b) = \{0, 1, \dotsc, \jj\} \times \{1, 2, \dotsc, d_b\}$. We define $\mathtt{X}_{j, k}(b) = \overline{v_j(\mathtt{D}_k(b))}$ which satisfies $\mathtt{X}_{j, k}(b) \cap \mathtt{X}_{j', k'}(b) = \varnothing$ for all distinct $(j, k), (j', k') \in \Xi(b)$. For all $J \subset \Xi(b)$, we define the function
\begin{align*}
\beta_J = \chi_{U} - \mu \sum_{(j, k) \in J} \chi_{\mathtt{X}_{j, k}(b)}.
\end{align*}
We record a number of basic facts derived from \cref{lem:SigmaHyperbolicity,lem:CylinderDiameterBound}.

\begin{itemize}
\item For all $1 \leq l \leq c_b$ and $(j, k) \in \Xi(b)$, we have the diameter bounds:
\begin{gather}
\label{eqn:DiameterBoundC_l}
\rho\frac{\epsilon_1}{|b|} \leq \diam(\mathtt{C}_l(b)) \leq \frac{\epsilon_1}{|b|} \\
\label{eqn:DiameterBoundD_k}
\rho^{p_0p_1 + 1}\frac{\epsilon_1}{|b|} \leq \diam(\mathtt{D}_k(b)) \leq \rho^{p_1}\frac{\epsilon_1}{|b|} 
\\
\label{eqn:DiameterBoundX_jk}
\frac{\epsilon_1c_0\rho^{p_0p_1 + 1}}{|b|\kappa_1^m} \leq \diam(\mathtt{X}_{j, k}(b)) \leq \frac{\epsilon_1\rho^{p_1}}{|b|c_0\kappa_2^m}.
\end{gather}
\item For all $J \subset \Xi(b)$, we have $\beta_J \in L_D(U)$ with Lipschitz constant (cf. \cite[Lemma 5.2]{Sto11}):
\begin{align}
\label{eqn:LipschitzConstantbeta_J}
\Lip_D(\beta_J) \leq \frac{\mu}{\min_{(j, k) \in J} \diam(\mathtt{X}_{j, k}(b))} \leq \frac{\mu|b|\kappa_1^m}{\epsilon_1c_0\rho^{p_0p_1 + 1}}.
\end{align}
\item For all $1 \leq l \leq c_b$, if $u, u' \in \mathtt{C}_l(b)$, then
\begin{align}
\label{eqn:DBoundOnsigma^m1C_l}
D(v_j(u), v_j(u')) \leq \frac{\epsilon_1}{|b|c_0\kappa_2^m}.
\end{align}
\end{itemize}

\begin{definition}
For all $\xi \in \C$ with $|a| < a_0'$ and $|b| > b_0$, and $J \subset \Xi(b)$, we define the \emph{Dolgopyat operators} $\mathcal{N}_{a, J}^{\sfw}: L_D(U) \to L_D(U)$ by
\begin{align*}
\mathcal{N}_{a, J}^{\sfw}(h) = \mathcal{L}_{a}^m(\beta_J h)
\end{align*}
for all $h \in L_D(U)$.
\end{definition}

\begin{definition}
For all $|b| > b_0$, a subset $J \subset \Xi(b)$ is said to be \emph{dense} if for all $1 \leq l \leq c_b$, there exists $(j, k) \in J$ such that $\mathtt{D}_k(b) \subset \mathtt{C}_l(b)$. Denote $\mathcal{J}(b)$ to be the set of all dense subsets of $\Xi(b)$.
\end{definition}

\subsection{Proof of \texorpdfstring{\cref{thm:DolgopyatsMethod}}{\autoref{thm:DolgopyatsMethod}}}
\Cref{itm:DolgopyatProperty1,itm:LogLipschitzDolgopyat} in \cref{thm:DolgopyatsMethod} are derived from \cref{lem:PreliminaryLogLipschitz} using estimates from \cref{eqn:ConstantE,eqn:Constantm,eqn:Constantmu}. We omit the proofs of since they are almost identical to those in \cite{Sto11,OW16,Sar19,Sar23}.

Similarly, we also omit the proof of \cref{itm:DolgopyatProperty2} in \cref{thm:DolgopyatsMethod} and refer the reader to \cite{Sto11,Sar19,Sar23}, in particular \cite[\S\,13.1]{Sar23}, where full proofs are provided. However, as alluded to previously, we emphasize that the proof of \cref{itm:DolgopyatProperty2} uses the Gibbs property of the measure $\nu_U^{\sfw}$ given as follows: there exist $c_1^U, c_2^U > 0$ such that
\begin{align}
\label{eqn:PropertyOfGibbsMeasures}
c_1^U e^{-\delta_\sfw \tau^\sfw_k(u)} \leq \nu_U^{\sfw}(\mathtt{C}) \leq c_2^U e^{-\delta_\sfw \tau^\sfw_k(u)}
\end{align}
for all $u \in \mathtt{C}$ and cylinders $\mathtt{C}$ with $\len(\mathtt{C}) = k \in \mathbb Z_{\geq 0}$. The Gibbs property is automatically satisfied since it comes from an equilibrium state and replaces the role of the Federer/doubling property. The outcome of the proof of \cref{itm:DolgopyatProperty2} is the following:
\begin{itemize}
\item we obtain a continuous function $\eta: \R_{>0} \to \R_{>0}$ (of $\delta_\sfw$) which increases to $1$ at an exponential rate (due to the Gibbs property) for \cref{thm:DolgopyatsMethod};
\item we obtain a continuous function $a_0: \R_{>0} \to \R_{>0}$ (of $\delta_\sfw$) which decays exponentially to $0$ (due to the Gibbs property) for \cref{thm:DolgopyatsMethod}.
\end{itemize}

For the sake of completeness, we include the proof of \cref{itm:DominatedByDolgopyat} in \cref{thm:DolgopyatsMethod} where the crucial LNIC from \cref{pro:NIC} is used. Here we use $\Theta_\sfw = \Theta$.

\begin{lemma}
\label{lem:LNIC_Output}
Let $|b| > b_0$. Suppose $\mathtt{D}_k(b), \mathtt{D}_{k'}(b) \subset \mathtt{C}_l(b)$ for some $1 \leq k, k' \leq d_b$ and $1 \le l \le c_b$ such that $d(u_0,u_0') \ge \frac{1}{2}\diam(\mathtt{C}_l(b))$ for some $u_0 \in \mathtt{D}_k(b)$ and $u_0' \in \mathtt{D}_{k'}(b)$. Then, there exists $1 \le j \le \jj$ such that
\begin{align*}
\frac{\varepsilon\rho\epsilon_1}{16} \leq |b|\cdot|(\tau^\sfw_m\circ v_j-\tau^\sfw_m\circ v_0)(u) - (\tau^\sfw_m\circ v_j - \tau^\sfw_m \circ v_0)(u')| \leq \pi
\end{align*}
for all $u \in \mathtt{D}_k(b)$ and $u' \in \mathtt{D}_{k'}(b)$.
\end{lemma}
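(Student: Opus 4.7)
The plan is to apply LNIC (Proposition~\ref{pro:NIC}) to the distinguished pair $(u_0, u_0')$ to extract a single index $j$, and then propagate both the lower and upper bounds to all $(u, u') \in \mathtt{D}_k(b) \times \mathtt{D}_{k'}(b)$ via a Lipschitz estimate on $\varphi_j := \tau^\sfw_m \circ v_j - \tau^\sfw_m \circ v_0$ combined with a triangle inequality. The essential input besides LNIC is that the cylinders $\mathtt{D}_k(b)$ are of size $\lesssim \rho^{p_1}\epsilon_1/|b|$ while $\mathtt{C}_l(b)$ is of size $\gtrsim \rho \epsilon_1/|b|$, so that movement within the $\mathtt{D}$'s is dominated by the gap between them, as enforced by~\eqref{eqn:Constantp1}.

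First I would establish a Lipschitz bound on each $\tau^\sfw_m \circ v_i$ for $i \in \{0, j\}$. Since $v_i$ is a section of $\sigma^m$ landing in a single cylinder of length $m$, Lemma~\ref{lem:SigmaHyperbolicity} applied to the inverse gives $d(\sigma^i(v_i(u)), \sigma^i(v_i(u'))) \leq d(u, u')/(c_0 \kappa_2^{m-i})$ for $0 \leq i \leq m-1$. Combining with the essential Lipschitz bound $T_\sfw$ on $\tau^\sfw$ on cylinders of length $1$ and summing a geometric series yields $\Lip(\tau^\sfw_m \circ v_i) \leq T_\sfw/(c_0(\kappa_2-1))$. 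In particular,
\begin{equation*}
\Lip(\varphi_j) \leq \frac{2T_\sfw}{c_0(\kappa_2-1)}.
\end{equation*}

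For the upper bound, I use $d(u, u') \leq \diam(\mathtt{C}_l(b)) \leq \epsilon_1/|b|$ from~\eqref{eqn:DiameterBoundC_l} together with the Lipschitz estimate above and the constraint~\eqref{eqn:Constantepsilon1} on $\epsilon_1$, which directly gives $|\varphi_j(u) - \varphi_j(u')| \leq \frac{2T_\sfw \epsilon_1}{c_0(\kappa_2-1)|b|} < \pi/|b|$. For the lower bound I apply Proposition~\ref{pro:NIC} to $(u_0, u_0')$ to choose $j$ satisfying $|\varphi_j(u_0) - \varphi_j(u_0')| \geq \varepsilon d(u_0, u_0') \geq \tfrac{\varepsilon}{2}\diam(\mathtt{C}_l(b)) \geq \tfrac{\varepsilon\rho\epsilon_1}{2|b|}$, and then estimate
\begin{equation*}
|\varphi_j(u) - \varphi_j(u')| \geq |\varphi_j(u_0) - \varphi_j(u_0')| - \Lip(\varphi_j)\bigl(d(u, u_0) + d(u', u_0')\bigr),
\end{equation*}
where each correction distance is bounded by $\diam(\mathtt{D}_k(b)) \leq \rho^{p_1}\epsilon_1/|b|$ from~\eqref{eqn:DiameterBoundD_k}. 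The choice of $p_1$ via~\eqref{eqn:Constantp1} (suitably augmented with the Lipschitz constant of $\varphi_j$ if needed) ensures that the correction terms eat up at most a factor of $\tfrac{7}{8}$ of the main term, leaving at least $\tfrac{\varepsilon\rho\epsilon_1}{16|b|}$.

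The main technical point, and the only place where care with constants is needed, is Step~3: the transfer of the lower bound from $(u_0, u_0')$ to $(u, u')$. The two correction terms involve $\Lip(\varphi_j)$, so~\eqref{eqn:Constantp1} should be read as ensuring $2\rho^{p_1-1}$ is small relative to $\tfrac{1}{2}$, rescaled appropriately by the Lipschitz constant derived in Step~1; this is a one-time adjustment of $p_1$ depending on $\varepsilon$, $T_\sfw$, $c_0$, $\kappa_2$, and is consistent with the fact that $p_1$ is chosen \emph{after} the LNIC constants have been fixed. No further delicate estimates are needed.
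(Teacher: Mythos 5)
Your proof takes a genuinely different route from the paper's, and it makes explicit a uniformity in $j$ that the paper's very brief proof leaves implicit. The paper's proof, after establishing $d(u,u') \geq d(u_0,u_0') - d(u,u_0) - d(u',u_0') \geq \rho\epsilon_1/(16|b|)$ via \cref{eqn:Constantp1,eqn:DiameterBoundC_l,eqn:DiameterBoundD_k}, simply invokes \cref{pro:NIC} on the pair $(u,u')$ itself. But \cref{pro:NIC} is a ``for all $(u,u')$ there exists $j$'' statement, so the index $j$ it yields a priori varies with $(u,u')$, whereas the conclusion of \cref{lem:LNIC_Output}---and its use in \cref{lem:chiLessThan1}, where $j$ is fixed before $u,u'$ are allowed to range over $\mathtt{D}_k(b)\times\mathtt{D}_{k'}(b)$---requires a single $j$. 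You instead apply \cref{pro:NIC} once, to the distinguished pair $(u_0,u_0')$, and transfer the estimate to every other pair via the Lipschitz bound $\Lip(\tau^\sfw_m\circ v_i)\leq T_\sfw/(c_0(\kappa_2-1))$, which you derive correctly from \cref{lem:SigmaHyperbolicity}. This pins down a uniform $j$ explicitly, which is the right mechanism.

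The caveat you flag about \cref{eqn:Constantp1} is, however, not merely an optional ``if needed'' augmentation; it is required. Writing $\varphi_j=\tau^\sfw_m\circ v_j-\tau^\sfw_m\circ v_0$, the transfer step needs
\begin{align*}
\Lip(\varphi_j)\cdot\bigl(d(u,u_0)+d(u',u_0')\bigr)\;\leq\;\tfrac{7}{8}\cdot\tfrac{\varepsilon}{2}\,\diam(\mathtt{C}_l(b)),
\end{align*}
and with $\Lip(\varphi_j)\leq 2T_\sfw/(c_0(\kappa_2-1))$, $d(u,u_0),d(u',u_0')\leq\rho^{p_1}\epsilon_1/|b|$, and $\diam(\mathtt{C}_l(b))\geq\rho\epsilon_1/|b|$, this unwinds to $\rho^{p_1-1}\leq 7\varepsilon c_0(\kappa_2-1)/(64T_\sfw)$. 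That is strictly stronger than \cref{eqn:Constantp1}, which only demands $\rho^{p_1-1}\leq 7/32$ and carries no dependence on $\varepsilon$, $c_0$, $\kappa_2$, or $T_\sfw$; so \cref{eqn:Constantp1} as stated does not suffice for your argument, and you must genuinely replace it. As you observe, this is a permissible strengthening since $\sfw$ is fixed before $p_1$ at the start of \cref{sec:ProofOfTheorem}, but $p_1$ then becomes $\sfw$-dependent and one should verify that the downstream constraints \cref{eqn:Constantm,eqn:Constantmu} remain satisfiable with the larger $p_1$.
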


\begin{proof}
Let $u_0$, $u_0'$, $u$, and $u'$ be as in the lemma. The upper bound always holds on $\mathtt{C}_l(b)$ by \cref{eqn:Constantepsilon1}. Recall that $\sfw \in \ww$ and hence $\Theta_\sfw = \Theta$ in this section. Thus, the lemma follows from \cref{pro:NIC} and the estimate
\begin{align*}
d(u, u') \geq d(u_0, u_0') - d(u, u_0) - d(u', u_0') \geq \frac{\rho\epsilon_1}{2|b|} - 2\frac{\rho^{p_1}\epsilon_1}{|b|} \geq \frac{\rho\epsilon_1}{16|b|}
\end{align*}
using \cref{eqn:Constantp1}.
\end{proof}

Now, for all $\xi \in \C$ with $|a| < a_0'$ and $|b| > b_0$, for all integers $1 \leq j \leq \jj$, for all $H \in L(U,\C)$ and for all $h \in \mathcal{C}_{E|b|}(U)$, we define the functions $\chi_{j,0}^{[\xi, H, h]}, \chi_{j,1}^{[\xi, H, h]}: U \to \R$ by
\begin{align*}
\chi_{j,0}^{[\xi, H, h]}(u) 
&=\frac{\bigl|e^{(\tau_m^{\sfw, (a)} + ib\tau^\sfw_m)(v_j(u))} H(v_j(u)) + e^{(\tau_m^{\sfw, (a)} + ib\tau^\sfw_m)(v_0(u))} H(v_0(u))\bigr|}{e^{\tau_m^{\sfw, (a)}(v_j(u))}h(v_j(u)) + (1 - \mu)e^{\tau_m^{\sfw, (a)}(v_0(u))}h(v_0(u))}, \\
\chi_{j,1}^{[\xi, H, h]}(u) 
&= \frac{\bigl|e^{(\tau_m^{\sfw, (a)} + ib\tau^\sfw_m)(v_j(u))} H(v_j(u)) + e^{(\tau_m^{\sfw, (a)} + ib\tau^\sfw_m)(v_0(u))}H(v_0(u))\bigr|}{(1 - \mu)e^{\tau_m^{\sfw, (a)}(v_j(u))}h(v_j(u)) + e^{\tau_m^{\sfw, (a)}(v_0(u))}h(v_0(u))}
\end{align*}
for all $u \in U$. We need some lemmas to deduce \cref{itm:DominatedByDolgopyat} in \cref{thm:DolgopyatsMethod}. For a proof of the following lemma, see for instance \cite[Lemma 3.17]{OW16}.

\begin{lemma}
\label{lem:HTrappedByh}
Let $|b| > b_0$. Suppose $H \in L(U,\C)$ and $h \in \mathcal{C}_{E|b|}(U)$ satisfy \cref{itm:DominatedByh,itm:LogLipschitzh} in \cref{thm:DolgopyatsMethod}. Then for all $(j, k) \in \Xi(b)$, we have
\begin{align*}
\frac{1}{2} \leq \frac{h(v_j(u))}{h(v_j(u'))} \leq 2 \qquad \text{for all $u, u' \in \mathtt{D}_k(b)$}
\end{align*}
and also either of the alternatives
\begin{alternative}
\item\label{alt:HLessThan3/4h}	$|H(v_j(u))| \leq \frac{3}{4}h(v_j(u))$ for all $u \in \mathtt{D}_k(b)$,
\item\label{alt:HGreaterThan1/4h}	$|H(v_j(u))|\geq \frac{1}{4}h(v_j(u))$ for all $u \in \mathtt{D}_k(b)$.
\end{alternative}
\end{lemma}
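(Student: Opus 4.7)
The lemma is essentially a careful bookkeeping argument using the diameter bound on $\mathtt{X}_{j,k}(b)$ and the choice of constants in \cref{eqn:Constantm}. The plan is to first establish part (i) via the log-Lipschitz property of $h$, then to obtain the dichotomy in part (ii) by a direct computation showing that if $|H(v_j(u_1))|$ is large at one point $u_1 \in \mathtt{D}_k(b)$, then condition \cref{itm:LogLipschitzh} prevents it from dropping below $\tfrac14 h(v_j(u))$ anywhere else on $\mathtt{D}_k(b)$.

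For part (i), fix $(j,k)\in \Xi(b)$ and $u,u' \in \mathtt{D}_k(b)$. Since $h \in \mathcal{C}_{E|b|}(U) \subset \tilde{\mathcal{C}}_{E|b|}(U)$, we have
\begin{align*}
\frac{h(v_j(u))}{h(v_j(u'))} \leq e^{E|b|\,D(v_j(u),v_j(u'))}.
\end{align*}
Now $v_j(u), v_j(u')$ both lie in $\mathtt{X}_{j,k}(b)$, so the diameter bound \cref{eqn:DiameterBoundX_jk} gives
\begin{align*}
E|b|\,D(v_j(u),v_j(u')) \leq E|b|\cdot \frac{\epsilon_1 \rho^{p_1}}{|b|c_0\kappa_2^m} = \frac{E\epsilon_1\rho^{p_1}}{c_0\kappa_2^m} < \frac{1}{4},
\end{align*}
where the last inequality uses the second lower bound on $\kappa_2^m$ in \cref{eqn:Constantm}. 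Since $e^{1/4} < 2$, we get the upper bound $h(v_j(u))/h(v_j(u')) \leq 2$, and the reciprocal bound gives the lower bound $\geq \tfrac12$.

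For part (ii), suppose that \cref{alt:HLessThan3/4h} fails, i.e., there exists $u_1 \in \mathtt{D}_k(b)$ with $|H(v_j(u_1))| > \tfrac34 h(v_j(u_1))$; we must show \cref{alt:HGreaterThan1/4h} holds. Fix any $u \in \mathtt{D}_k(b)$. By assumption \cref{itm:LogLipschitzh} applied with the point $v_j(u_1)$ on the left (so the factor $h$ is evaluated at $v_j(u_1)$), the reverse triangle inequality gives
\begin{align*}
|H(v_j(u))| \;\geq\; |H(v_j(u_1))| - E|b|\,h(v_j(u_1))\,D(v_j(u_1),v_j(u)) \;>\; h(v_j(u_1))\!\left[\tfrac34 - E|b|\,D(v_j(u_1),v_j(u))\right].
\end{align*}
By the estimate from part (i) of the proof, $E|b|\,D(v_j(u_1),v_j(u)) < \tfrac14$, so the bracket is $> \tfrac12$. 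Combining with $h(v_j(u_1)) \geq \tfrac12 h(v_j(u))$ from part (i), we conclude $|H(v_j(u))| > \tfrac12 \cdot \tfrac12 \cdot h(v_j(u)) = \tfrac14 h(v_j(u))$, as required.

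The argument involves no real obstacle once the constants are in place; the main subtlety is simply verifying that the constraint $\kappa_2^m > 4E\epsilon_1\rho^{p_1}/c_0$ (recorded in \cref{eqn:Constantm}) is precisely calibrated so that both (i) and the gap $\tfrac34 - \tfrac14 = \tfrac12$ in (ii) survive the diameter estimate on $\mathtt{X}_{j,k}(b)$. Note that condition \cref{itm:LogLipschitzh} is stated only on each $U_j$, but since $\mathtt{D}_k(b) \subset U_{l_0}$ for some $l_0 \in \mathcal{A}$ and hence $v_j(\mathtt{D}_k(b)) \subset U_{x_{j,0}}$ lies inside a single rectangle of the Markov partition, this restriction causes no issue.
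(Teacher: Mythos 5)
Your proof is correct and matches the standard argument for this kind of "trapping" lemma in the Dolgopyat-method literature (the paper itself omits the proof and points to \cite{Sto11,Sar19,Sar23,OW16}, which use exactly this computation). The only thing worth stating explicitly in a polished write-up is why $D(v_j(u),v_j(u')) \leq \diam(\mathtt{X}_{j,k}(b))$: since $v_j = \sigma^{-x_j}$ is an inverse branch of $\sigma^m$, the image $v_j(\mathtt{D}_k(b))$ of the cylinder $\mathtt{D}_k(b)$ is itself a cylinder, so $\mathtt{X}_{j,k}(b)$ is a competitor in the minimum defining $D$, and the diameter bound \eqref{eqn:DiameterBoundX_jk} together with $\kappa_2^m > 4E\epsilon_1\rho^{p_1}/c_0$ from \eqref{eqn:Constantm} gives $E|b|\,D(v_j(u),v_j(u')) < 1/4$. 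After that, the $e^{\pm 1/4}$ bound for part (i) and the reverse-triangle argument with the $\tfrac{3}{4}-\tfrac{1}{4}=\tfrac{1}{2}$ gap for part (ii) are exactly as you wrote them.
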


For any $w_1,w_2 \in \C-\{0\}$, let $\angle(w_1,w_2) \in [0,\pi]$ denote the angle between $w_1$ and $w_2$ viewed as vectors in $\C \cong \R^2$. The following lemma is a stronger version of the triangle inequality proven by elementary trigonometry.

\begin{lemma}
\label{lem:StrongTriangleInequality}
Suppose $w_1, w_2 \in \C-\{0\}$ such that $\angle(w_1, w_2) \geq \alpha$ and $\frac{|w_1|}{|w_2|} \leq L$ for some $\alpha \in [0, \pi]$ and $L \geq 1$. Then we have 
\begin{align*}
|w_1 + w_2| \leq \left(1 - \frac{\alpha^2}{16L}\right)|w_1| + |w_2|.
\end{align*}
\end{lemma}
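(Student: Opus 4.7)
The plan is to use the law of cosines together with a standard $1-\cos\alpha$ estimate, and then a one-term expansion of the square root. Let $\alpha' = \angle(w_1, w_2) \in [0,\pi]$, so $\alpha' \geq \alpha$. Since $\cos$ is decreasing on $[0,\pi]$, the law of cosines gives
\[
|w_1 + w_2|^2 \;=\; |w_1|^2 + |w_2|^2 + 2|w_1||w_2|\cos\alpha' \;\leq\; (|w_1|+|w_2|)^2 - 2|w_1||w_2|(1-\cos\alpha).
\]

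Next I would estimate $1 - \cos\alpha$ from below. Using $1 - \cos\alpha = 2\sin^2(\alpha/2)$ and the concavity of $\sin$ on $[0,\pi/2]$ (which yields $\sin(\alpha/2) \geq \alpha/\pi$ on $[0,\pi]$), one obtains $1-\cos\alpha \geq 2\alpha^2/\pi^2$. Substituting this and applying the elementary inequality $\sqrt{A^2 - B} \leq A - B/(2A)$ (which is just $\sqrt{1-x} \leq 1-x/2$ for $x \in [0,1]$), with $A = |w_1|+|w_2|$, yields
\[
|w_1 + w_2| \;\leq\; |w_1| + |w_2| - \frac{2\alpha^2|w_1||w_2|}{\pi^2(|w_1|+|w_2|)}.
\]

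Finally I would use the ratio hypothesis $|w_1| \leq L|w_2|$ to get $|w_2|/(|w_1|+|w_2|) \geq 1/(L+1) \geq 1/(2L)$. Combining with $\pi^2 < 16$, the last subtracted term is bounded below by $\alpha^2|w_1|/(16L)$, yielding the desired inequality
\[
|w_1 + w_2| \;\leq\; \Bigl(1 - \frac{\alpha^2}{16L}\Bigr)|w_1| + |w_2|.
\]
There is no real obstacle here; this is a standard elementary computation whose only delicate point is choosing the sharper lower bound $1 - \cos\alpha \geq 2\alpha^2/\pi^2$ (rather than the weaker $\alpha^2/4$) so that the constant $16$ in the statement is actually attained after using $\pi^2 < 16$.
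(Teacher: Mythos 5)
Your proof is correct. The paper states this lemma without a proof, so there is no argument to compare against; your chain --- law of cosines to get $|w_1+w_2|^2 \leq (|w_1|+|w_2|)^2 - 2|w_1||w_2|(1-\cos\alpha)$, the concavity bound $\sin(\alpha/2) \geq \alpha/\pi$ giving $1 - \cos\alpha \geq 2\alpha^2/\pi^2$ on all of $[0,\pi]$, the linearization $\sqrt{1-x} \leq 1 - x/2$, and finally $|w_1| + |w_2| \leq 2L|w_2|$ with $\pi^2 < 16$ --- is the standard elementary argument and every step checks out. One small side remark: the parenthetical describing $\alpha^2/4$ as a \emph{weaker} lower bound is slightly off. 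As a lower bound for $1-\cos\alpha$ the quantity $\alpha^2/4$ would be numerically \emph{stronger} than $2\alpha^2/\pi^2$, but it actually fails near $\alpha = \pi$ (there $1 - \cos\pi = 2 < \pi^2/4$); that failure is the real reason one reaches for the bound $\sin(\alpha/2)\geq\alpha/\pi$ valid on all of $[0,\pi]$. This aside does not affect the validity of the argument.
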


\begin{lemma}
\label{lem:chiLessThan1}
Let $\xi \in \C$ with $|a| < a_0'$ and $|b| > b_0$. Suppose $H \in L(U, \C)$ and $h \in \mathcal{C}_{E|b|}(U)$ satisfy \cref{itm:DominatedByh,itm:LogLipschitzh} in \cref{thm:DolgopyatsMethod}. For all integers $1 \leq l \leq c_b$, there exists $(j, k) \in \Xi(b)$ such that $\mathtt{D}_k(b) \subset \mathtt{C}_l(b)$ and such that $\chi_{j,0}^{[\xi, H, h]}(u) \leq 1$ or $\chi_{j,1}^{[\xi, H, h]}(u) \leq 1$ for all $u \in \mathtt{D}_k(b)$.
\end{lemma}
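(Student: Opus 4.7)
The strategy is a case analysis on which alternative of \cref{lem:HTrappedByh} holds, applied to the sections $v_0, v_1, \dotsc, v_{\jj}$ of $\sigma^m$. Fix $1 \leq l \leq c_b$. If there exists $\mathtt{D}_k(b) \subset \mathtt{C}_l(b)$ on which \cref{alt:HLessThan3/4h} holds for $v_0$, I pick any $1 \leq j \leq \jj$; the triangle inequality together with \cref{itm:DominatedByh} bounds the numerator of $\chi_{j, 0}^{[\xi, H, h]}(u)$ by $e^{\tau^{\sfw, (a)}_m(v_j(u))} h(v_j(u)) + \tfrac{3}{4} e^{\tau^{\sfw, (a)}_m(v_0(u))} h(v_0(u))$, which is at most the denominator of $\chi_{j, 0}^{[\xi, H, h]}$ because $\mu < \tfrac{1}{4}$ by \cref{eqn:Constantmu}. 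A symmetric argument handles the case where \cref{alt:HLessThan3/4h} holds for some $v_j$ with $j \neq 0$ on some $\mathtt{D}_k(b) \subset \mathtt{C}_l(b)$, yielding $\chi_{j, 1}^{[\xi, H, h]}(u) \leq 1$.

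In the remaining case, \cref{alt:HGreaterThan1/4h} holds for every $v_i$, $0 \leq i \leq \jj$, on every $\mathtt{D}_k(b) \subset \mathtt{C}_l(b)$, and I invoke LNIC. Using \cref{eqn:DiameterBoundC_l,eqn:DiameterBoundD_k,eqn:Constantp1}, choose subcylinders $\mathtt{D}_k(b), \mathtt{D}_{k'}(b) \subset \mathtt{C}_l(b)$ and points $u_0 \in \mathtt{D}_k(b)$, $u_0' \in \mathtt{D}_{k'}(b)$ with $d(u_0, u_0') \geq \tfrac{1}{2}\diam(\mathtt{C}_l(b))$, and apply \cref{lem:LNIC_Output} to produce $1 \leq j \leq \jj$ realizing the LNIC inequality uniformly on $\mathtt{D}_k(b) \times \mathtt{D}_{k'}(b)$. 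Setting
\begin{align*}
\Phi_j(u) := b\bigl(\tau^\sfw_m(v_j(u)) - \tau^\sfw_m(v_0(u))\bigr) + \arg H(v_j(u)) - \arg H(v_0(u)),
\end{align*}
which is the phase of the ratio of the two complex numbers in the numerator of $\chi_{j, \cdot}^{[\xi, H, h]}$, I use \cref{itm:LogLipschitzh}, the lower bound from \cref{alt:HGreaterThan1/4h}, and \cref{eqn:DBoundOnsigma^m1C_l} to bound the variation of $\arg H \circ v_i$ across $\mathtt{C}_l(b)$ by a quantity of order $E\epsilon_1 / (c_0 \kappa_2^m) \leq \tfrac{\varepsilon \rho \epsilon_1}{128}$, the final inequality by \cref{eqn:Constantm}. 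Hence $|\Phi_j(u) - \Phi_j(u')| \geq \tfrac{\varepsilon \rho \epsilon_1}{32}$ on $\mathtt{D}_k(b) \times \mathtt{D}_{k'}(b)$, and \cref{eqn:Constantepsilon1} prevents $2\pi$ wrap-around; thus $|\Phi_j| \geq \alpha := \tfrac{\varepsilon \rho \epsilon_1}{64}$ uniformly on at least one of the two subcylinders, say $\mathtt{D}_k(b)$.

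On this $\mathtt{D}_k(b)$, \cref{alt:HGreaterThan1/4h}, \cref{itm:DominatedByh}, and the pointwise bound $|\tau^{\sfw, (a)}_m| \leq m T_\sfw$ bound the magnitude ratio of the two numerator terms of $\chi_{j, 1}^{[\xi, H, h]}$ by $L := 4 e^{2 m T_\sfw}$. \Cref{lem:StrongTriangleInequality}, combined with the lower bound $|H(v_j(u))| \geq \tfrac{1}{4} h(v_j(u))$ which converts the magnitude gain into a gain in $e^{\tau^{\sfw, (a)}_m(v_j(u))} h(v_j(u))$, then gives
\begin{align*}
|\text{numerator of } \chi_{j, 1}^{[\xi, H, h]}(u)| \leq \bigl(1 - \tfrac{\alpha^2}{64 L}\bigr) e^{\tau^{\sfw, (a)}_m(v_j(u))} h(v_j(u)) + e^{\tau^{\sfw, (a)}_m(v_0(u))} h(v_0(u)),
\end{align*}
which is at most the denominator of $\chi_{j, 1}^{[\xi, H, h]}$ because \cref{eqn:Constantmu} ensures $\mu \leq \tfrac{\alpha^2}{64 L}$. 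The main obstacle is the uniform angle estimate $|\Phi_j| \geq \alpha$: decoupling the variation of $\arg H \circ v_i$ from the Birkhoff phase $b \tau^\sfw_m$ demands a careful interplay between \cref{itm:LogLipschitzh}, the lower bound in \cref{alt:HGreaterThan1/4h}, and the expansion rate $\kappa_2^m$, with the numerical constants in \cref{eqn:Constantm,eqn:Constantmu} precisely engineered so that the LNIC angle survives both this variation and the loss incurred in \cref{lem:StrongTriangleInequality}.
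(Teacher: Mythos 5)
Your proof takes essentially the same route as the paper: the same dichotomy from \cref{lem:HTrappedByh}, and in the non-degenerate case the same LNIC (\cref{lem:LNIC_Output}) plus strong triangle inequality (\cref{lem:StrongTriangleInequality}) mechanism, with the same intermediate angle estimate $\alpha = \tfrac{\varepsilon\rho\epsilon_1}{64}$. Two bookkeeping slips, which happen to cancel, are worth flagging. First, the relative size should be $L = 16 e^{2mT_\sfw}$, not $4 e^{2mT_\sfw}$: your estimate uses \cref{alt:HGreaterThan1/4h} on the denominator but still needs \cref{lem:HTrappedByh} to compare $h(v_j(u))$ with $h(v_0(u))$ at the same $u$ (another factor of $4$), and you must pick the ordering $(\ell, \ell') \in \{(0, j), (j, 0)\}$ for which that ratio is actually bounded --- this choice is exactly what determines whether the conclusion is $\chi_{j, 0} \leq 1$ or $\chi_{j, 1} \leq 1$, a point your write-up elides by working only with $\chi_{j, 1}$. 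Second, \cref{lem:StrongTriangleInequality} gives the factor $1 - \alpha^2/(16L)$, not $1 - \alpha^2/(64L)$; passing from $(1 - \alpha^2/(16L))|V_j(u)|$ to $(1 - \alpha^2/(16L)) e^{\tau_m^{\sfw, (a)}(v_j(u))} h(v_j(u))$ costs nothing beyond $|H| \leq h$ and positivity of the factor, so the lower bound in \cref{alt:HGreaterThan1/4h} does not ``convert the magnitude gain''; its actual role is to guarantee $H \circ v_\ell \neq 0$ so that angles are defined and to turn the Lipschitz bound on $H \circ v_\ell$ into the oscillation bound on $\arg(H \circ v_\ell)$, as you correctly used earlier. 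Since $4 \cdot 64 = 16 \cdot 16$, your final constraint $\mu \leq \alpha^2/(256 e^{2mT_\sfw})$ coincides with \cref{eqn:Constantmu} and the argument closes, but only by coincidence.
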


\begin{proof}
Let $\xi$, $H$, $h$, and $l$ be as in the lemma. Suppose \cref{alt:HLessThan3/4h} in \cref{lem:HTrappedByh} holds for some $(j, k)\in \Xi(b)$. Then it is a straightforward calculation to check that $\chi_{j, 1}^{[\xi, H, h]}(u) \leq 1$ for all $u \in \mathtt{D}_k(b)$, using \cref{eqn:Constantmu}. Otherwise, \cref{alt:HGreaterThan1/4h} in \cref{lem:HTrappedByh} holds for all $(j, k) \in \Xi(b)$. Choose $1 \leq k, k' \leq d_b$ satisfying the hypotheses in \cref{lem:LNIC_Output} and $1 \le j \le \jj$ satisfying the conclusion of \cref{lem:LNIC_Output}. Let $u \in \mathtt{D}_k(b)$ and $u' \in \mathtt{D}_{k'}(b)$. Note that $|H(v_\ell(u))|, |H(v_\ell(u'))| > 0$ for all $\ell \in \{0, j\}$. We would like to apply \cref{lem:StrongTriangleInequality} but first we need to establish bounds on relative angle and relative size. We start with the former. For all $\ell \in \{0, j\}$, let $u_\ell \in \{u, u'\}$ such that $|H(v_\ell(u_\ell))| = \min(|H(v_\ell(u))|, |H(v_\ell(u'))|)$. Then recalling the hypotheses for $H$ and $h$, and \cref{eqn:DBoundOnsigma^m1C_l,eqn:Constantm}, for all $\ell \in \{0, j\}$, we have
\begin{align*}
\frac{|H(v_\ell(u)) - H(v_\ell(u'))|}{\min\left(|H(v_\ell(u))|, |H(v_\ell(u'))|\right)} &\leq \frac{E|b|h(v_\ell(u_\ell))D(v_\ell(u), v_\ell(u'))}{|H(v_\ell(u_\ell))|} 
\\
&\leq 4E|b| \cdot \frac{\epsilon_1}{|b|c_0\kappa_2^m} \leq \frac{\varepsilon\rho\epsilon_1}{128}.
\end{align*}
Using elementary geometry, the above shows that $\sin(\angle(H(v_\ell(u)), H(v_\ell(u')))) \leq \frac{\varepsilon\rho\epsilon_1}{128}$ with $\angle(H(v_\ell(u)), H(v_\ell(u'))) \in [0, \frac{\pi}{2})$, for all $\ell \in \{0, j\}$ and hence, $$\angle(H(v_\ell(u)), H(v_\ell(u'))) \leq 2\sin(\angle(H(v_\ell(u)), H(v_\ell(u')))) \leq \frac{\varepsilon\rho\epsilon_1}{64}$$ 
for all $\ell \in \{0, j\}$. For notational convenience, we define 
$\varphi: U \to \R$ by 
$$\varphi(w) = b(\tau^\sfw_m\circ v_j(w) - \tau^\sfw_m\circ v_0(w)) \qquad \text{for all $w \in U$}.$$
By \cref{lem:LNIC_Output}, we have $\frac{\varepsilon\rho\epsilon_1}{16} \leq |\varphi(u) - \varphi(u')| \leq \pi$. The second inequality is to ensure that we take the correct branch of angle in the following calculations. We will use these bounds to obtain a lower bound for $\angle(V_j(u), V_0(u))$ or $\angle(V_j(u'), V_0(u'))$ where we define
\begin{align*}
V_\ell(w) = e^{(\tau_m^{\sfw, (a)} + ib\tau^\sfw_m)(v_\ell(w))}H(v_\ell(w)) \qquad \text{for all $w \in U$ and $\ell \in \{0, j\}$}.
\end{align*}
Using the triangle inequality and previously computed bounds, we have
\begin{align*}
& \angle\left(V_j(u),V_0(u)\right) = \angle\bigl(e^{i\varphi(u)} H(v_j(u)), H(v_0(u))\bigr) \\
\geq{}&\angle\bigl(e^{i\varphi(u)} H(v_j(u)), e^{i\varphi(u')} H(v_j(u))\bigr) - \angle\bigl(e^{i\varphi(u')} H(v_j(u)), e^{i\varphi(u')} H(v_j(u'))\bigr) \\
&{}- \angle(H(v_0(u)), H(v_0(u'))) - \angle\bigl(e^{i\varphi(u')} H(v_j(u')), H(v_0(u'))\bigr)\\
={}&|\varphi(u) - \varphi(u')| - \angle\left(H(v_j(u)), H(v_j(u'))\right) - \angle(H(v_0(u)), H(v_0(u'))) \\
&{}- \angle(V_j(u'),V_0(u')) \\
\ge{}& \frac{\varepsilon\rho\epsilon_1}{16} - \frac{\varepsilon\rho\epsilon_1}{64} - \frac{\varepsilon\rho\epsilon_1}{64} - \angle(V_j(u'),V_0(u'))
\end{align*}
and hence,
\begin{align*}
\angle\left(V_j(u),V_0(u)\right) + \angle\left(V_j(u'),V_0(u')\right) \ge \frac{\varepsilon\rho\epsilon_1}{32}
\end{align*}
for all $u \in \mathtt{D}_k(b)$ and $u' \in \mathtt{D}_{k'}(b)$. Without loss of generality, we may assume that $\angle(V_j(u), V_0(u)) \geq \frac{\varepsilon\rho\epsilon_1}{64}$ for all $u \in \mathtt{D}_k(b)$, which establishes the required bound on relative angle. For the bound on relative size, let $(\ell, \ell') \in \{(0, j), (j, 0)\}$ such that $h(v_\ell(u_0)) \leq h(v_{\ell'}(u_0))$ for some $u_0 \in \mathtt{D}_k(b)$. Then by \cref{lem:HTrappedByh}, we have
\begin{align*}
\frac{|V_\ell(u)|}{|V_{\ell'}(u)|} & = \frac{e^{\tau_m^{\sfw, (a)}(v_\ell(u))}\big|H(v_\ell(u))\big|}{e^{\tau_m^{\sfw, (a)}(v_{\ell'}(u))}\big|H(v_{\ell'}(u))\big|} \leq \frac{4e^{\tau_m^{\sfw, (a)}(v_\ell(u)) - \tau_m^{\sfw, (a)}(v_{\ell'}(u))}h(v_\ell(u))}{h(v_{\ell'}(u))} \leq 16e^{2mT_\sfw}
\end{align*}
for all $u \in \mathtt{D}_k(b)$, which establishes the required bound on relative size. Now applying \cref{lem:StrongTriangleInequality,eqn:Constantmu} and $|H| \leq h$ on $|V_j(u) + V_{0}(u)|$ gives $\chi_{j, 0}^{[\xi, H, h]}(u) \leq 1$ or $\chi_{j, 1}^{[\xi, H, h]}(u) \leq 1$ for all $u \in \mathtt{D}_k(b)$.
\end{proof}

One then derives the following; for a proof, see for instance \cite[Lemma 13.8]{Sar23}.

\begin{lemma}
\label{lem:DominatedByDolgopyat}
There exists $a_0 > 0$ such that for all $\xi \in \C$ with $|a| < a_0$ and $|b| > b_0$, if $H \in L(U, \C)$ and $h \in \mathcal{C}_{E|b|}(U)$ satisfy \cref{itm:DominatedByh,itm:LogLipschitzh} in \cref{thm:DolgopyatsMethod}, then there exists $J \in \mathcal{J}(b)$ such that
\begin{align*}
\left|\mathcal{L}_{\xi, \sfw}^m(H)(u)\right| \leq \mathcal{N}_{a, J}^{\sfw}(h)(u)
\end{align*}
for all $u \in U$.
\end{lemma}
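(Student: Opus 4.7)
The plan is to combine Lemma~\ref{lem:chiLessThan1} with a careful triangle inequality on the defining sum of $\mathcal{L}_{\xi, \sfw}^m(H)(u)$, pairing the sections $v_0$ and $v_{j}$ on those subcylinders where cancellation can be extracted. I would set $a_0 = a_0'$ and for each $1 \leq l \leq c_b$ apply Lemma~\ref{lem:chiLessThan1} to obtain $(j_l, k_l) \in \Xi(b)$ with $\mathtt{D}_{k_l}(b) \subset \mathtt{C}_l(b)$ together with a label $\epsilon_l \in \{0, 1\}$ recording whether $\chi_{j_l, 0}^{[\xi, H, h]} \leq 1$ or $\chi_{j_l, 1}^{[\xi, H, h]} \leq 1$ holds on $\mathtt{D}_{k_l}(b)$. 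Setting $\iota_l = 0$ if $\epsilon_l = 0$ and $\iota_l = j_l$ if $\epsilon_l = 1$, I would define $J = \{(\iota_l, k_l) : 1 \leq l \leq c_b\}$; the pair $(\iota_l, k_l)$ witnesses that $J \in \mathcal{J}(b)$.

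Next, I would verify the pointwise bound $|\mathcal{L}_{\xi, \sfw}^m(H)(u)| \leq \mathcal{N}_{a, J}^\sfw(h)(u)$ by fixing $u$ and locating it in some $\mathtt{C}_l(b)$. For preimages in $\sigma^{-m}(u)$ outside the pair $\{v_0(u), v_{j_l}(u)\}$, the hypothesis $|H| \le h$ gives the trivial bound $|e^{(\tau_m^{\sfw, (a)} + i b \tau_m^\sfw)(u')} H(u')| \leq e^{\tau_m^{\sfw, (a)}(u')} h(u')$, which matches the corresponding summand of $\mathcal{N}_{a, J}^\sfw(h)(u)$ because $\beta_J(u') = 1$ on such $u'$. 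For the distinguished pair $\{v_0(u), v_{j_l}(u)\}$, I would invoke the $\chi_{j_l, \epsilon_l} \leq 1$ inequality: when $\epsilon_l = 0$ the $(1-\mu)$ factor sits on the $v_0$-term, matching $\beta_J(v_0(u)) = 1-\mu$; when $\epsilon_l = 1$ it sits on the $v_{j_l}$-term, matching $\beta_J(v_{j_l}(u)) = 1-\mu$. Summing the contributions of all preimages then gives the desired inequality at $u$.

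For the remaining points $u \in \mathtt{C}_l(b) \setminus \mathtt{D}_{k_l}(b)$, the point $u$ sits outside every $\mathtt{D}_{k_{l'}}(b)$ since distinct maximal cylinders have disjoint interiors, hence $\beta_J \equiv 1$ on $\sigma^{-m}(u)$ and the trivial bound alone yields $|\mathcal{L}_{\xi, \sfw}^m(H)(u)| \leq \mathcal{L}_a^m(h)(u) = \mathcal{N}_{a, J}^\sfw(h)(u)$; boundary ambiguities in assigning $u$ to a maximal cylinder are resolved by continuity of both sides in $u$. The main bookkeeping obstacle I expect is verifying that no preimage of $u$ outside the distinguished pair accidentally lands in a shrinkage set $\mathtt{X}_{\iota', k'}(b)$ for $(\iota', k') \in J$. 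This is controlled by the mutual disjointness of $v_0(U), \ldots, v_{\jj}(U)$ from Proposition~\ref{pro:NIC}, the injectivity of each section $v_i$, and the fact that $u$ belongs to at most one $\mathtt{D}_{k_l}(b)$. No additional smallness of $a_0$ beyond $a_0'$ is needed for this step.
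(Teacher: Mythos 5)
Your proposal correctly derives \cref{lem:DominatedByDolgopyat} from \cref{lem:chiLessThan1} by the standard Dolgopyat--Stoyanov argument, which is exactly what the paper leaves implicit (having cited \cite{Sto11,OW16,Sar19,Sar23} for such standard steps). The construction of the dense set $J$ by placing the $\mu$-penalty on $v_0$ or $v_{j_l}$ according to which $\chi$-bound holds, the term-by-term estimate of $\mathcal{L}_{\xi,\sfw}^m(H)(u)$ splitting off the distinguished pair $\{v_0(u), v_{j_l}(u)\}$, and the observation that injectivity of the sections together with the mutual disjointness of $v_0(U),\dotsc,v_{\jj}(U)$ from \cref{pro:NIC} ensures $\beta_J = 1$ on all other preimages are all correct.
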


\bibliographystyle{amsalpha}
\bibliography{References}
\end{document}